\documentclass[11pt]{article}
 \usepackage[utf8]{inputenc}
 \usepackage[T1]{fontenc}
\usepackage[english]{babel} 

\usepackage{amsmath, amsfonts, amssymb, amstext, amsthm} 
 \usepackage{stmaryrd}
 \usepackage{graphicx, subfigure} 
 \usepackage{color} 
 \usepackage{enumitem}

 \usepackage[colorlinks=true,bookmarks=true,citecolor=blue]{hyperref}

\setlength{\textheight}{22.5cm}
\setlength{\textwidth}{16.0cm}
\setlength{\oddsidemargin}{0.0cm}
\setlength{\evensidemargin}{0.0cm}
\setlength{\topmargin}{-1cm} 

\def\R{\mathbb{R}}
\def\N{\mathbb{N}}
\def\e{\varepsilon}
\def\la{\lambda}
\def\vp{\varphi}

\numberwithin{equation}{section}

\newtheorem{Theorem}{Theorem}[section]
\newtheorem{Lemma}[Theorem]{Lemma}

\newtheorem{theorem}[Theorem]{Theorem}
\newtheorem{lemma}[Theorem]{Lemma}
\newtheorem{proposition}[Theorem]{Proposition}
\newtheorem{corollary}[Theorem]{Corollary}
\newtheorem{remark}[Theorem]{Remark}

\newtheorem{prop}[Theorem]{Proposition}

\newtheorem{cor}[Theorem]{Corollary}




\DeclareMathOperator{\dist}{dist}

\newcommand{\cI}{{\mathcal I}}

\newcommand{\cM}{{\mathcal M}}

\newcommand{\cO}{{\mathcal O}}
\newcommand{\cP}{{\mathcal P}}

 \newcommand{\cV}{{\mathcal V}}

\newcommand{\al}{\alpha}
\newcommand{\be}{\beta}
\newcommand{\ga}{\gamma}

\newcommand{\de}{\delta}
\newcommand{\ep}{\epsilon}

\newcommand{\om}{\omega}

\newcommand{\Om}{\Omega}
\newcommand{\La}{\Lambda}


\newcommand{\codt}{\cdot} 


\newcommand{\ol}{\overline}
\newcommand{\ul}{\underline}

\newcommand{\upto}{\nearrow}


\newcommand{\Lao}{\Lambda_{out}}
\newcommand{\Sii}{\Sigma_{in}}
\newcommand{\hp}{\hat p}
\newcommand{\hq}{\hat q}

\title{Large-time behavior of solutions of parabolic 
  equations on the real line with convergent initial data II:
 equal limits at infinity
} 
\author{Antoine Pauthier\thanks{corresponding author (\texttt{apauthie@uni-bremen.de}). Present address: 
	University of Bremen, Bibliothekstr. 5, MZH4130, 28359 Bremen, Germany. Telephone +49 421 218 63741} 
 ~and
  Peter Pol\'{a}\v{c}ik\footnote{Supported in part by the NSF
    Grant DMS-1856491}  \\
{\small School of Mathematics, University of Minnesota}\\
{\small Minneapolis, MN 55455}
}
 \date{\today}
\begin{document}
\maketitle
\begin{abstract}
  We continue our study of bounded solutions of the semilinear
  parabolic equation $u_t=u_{xx}+f(u)$ on the real line, where $f$ is
  a locally Lipschitz function on $\R.$ Assuming that the initial
  value $u_0=u(\cdot,0)$ of the solution has finite limits
  $\theta^\pm$ as $x\to\pm\infty$, our goal is to describe the
  asymptotic behavior of $u(x,t)$ as $t\to\infty$.  In a prior work,
  we showed that if the two limits are distinct, then the solution is
  quasiconvergent, that is, all its locally uniform limit profiles as
  $t\to\infty$ are steady states. It is known that this result is not
  valid in general if the limits are equal: $\theta^\pm=\theta_0$. In
  the present paper, we have a closer look at the equal-limits case.
  Under minor non-degeneracy assumptions on the nonlinearity, we show
  that the solution is quasiconvergent if either $f(\theta_0)\ne0$, or
  $f(\theta_0)=0$ and $\theta_0$ is a stable equilibrium of the
  equation $\dot \xi=f(\xi)$. If $f(\theta_0)=0$ and $\theta_0$ is an
  unstable equilibrium of the equation $\dot \xi=f(\xi)$, we also
  prove some quasiconvergence theorem making (necessarily) additional
  assumptions on $u_0$.  A major ingredient of our proofs of the
  quasiconvergence theorems---and a result of independent interest---is
  the classification of entire solutions of a certain type
  as steady states and
  heteroclinic connections between two disjoint sets of steady states.
\end{abstract}

{\emph{Key words}: Parabolic equations on $\R$, quasiconvergence,
  entire solutions, chains,  connecting orbits,
  zero number, spatial
  trajectories, Morse decompositions}

\footnotesize
\tableofcontents
\normalsize
\section{Introduction and main results}
\subsection{Background}
Consider the Cauchy problem
\begin{align}
  u_t=u_{xx}+f(u), & \qquad x\in\R,\ t>0, \label{eq:1}\\
  u(x,0)=u_0(x), & \qquad x\in\R, \label{ic1}
\end{align}
where $f$ is a locally Lipschitz function on $\R$ and
$u_0\in C_b(\R):=C(\R)\cap L^\infty(\R).$ We denote by
$u(\cdot,t,u_0),$ or simply $u(\cdot,t)$ if there is no danger of
confusion, the unique classical solution of (\ref{eq:1}), (\ref{ic1})
and by $T(u_0)\in(0,+\infty]$ its maximal existence time. If $u$ is
bounded on $\R\times[0,T(u_0))$, then necessarily $T(u_0)=+\infty,$
that is, the solution is global. In this paper, we are concerned with
the behavior of bounded solutions as $t\to\infty$. A basic question we
specifically want to address is whether, or to what extent, the
large-time behavior of bounded solutions is governed by steady states
of \eqref{eq:1}.

If the initial data $u_0$ admits limits as $x\to\pm\infty,$ then for
all time $t>0,$ the solution $u(\cdot,t)$ of \eqref{eq:1}, \eqref{ic1}
admits limits as $x\to\pm\infty.$ In other words, the function space
\begin{equation}\label{spacelimit}
  \mathcal{V}:=\left\{ v\in C_b(\R):\textrm{ the limits }
    v(-\infty),\,v(+\infty)\in \R \textrm{ exist}\right\}
\end{equation}
is invariant for (\ref{eq:1}). Continuing our study initiated in
\cite{Pauthier_Polacik1}, we examine the large time behavior of
bounded solutions in $\cV$. More specifically, we are interested in
the behavior of the solutions in bounded---albeit arbitrarily
large---spatial intervals, as $t\to\infty$. For that purpose, we
introduce the $\omega-$limit set of a bounded solution $u$, denoted by
$\omega(u)$ or $\omega(u_0)$ with $u_0=u(\codt,0)$, as follows:
\begin{equation}\label{defomega}
  \omega(u):=\left\{ \vp\in L^\infty(\R),\ u(\cdot,t_n)\to\vp \textrm{
      for some sequence }t_n\to\infty\right\}. 
\end{equation}
Here the convergence is in the topology of $L^\infty_{loc}(\R)$, that
is, the locally uniform convergence. By standard parabolic estimates,
the trajectory $\{ u(\cdot,t),\ t\geq1\}$ of a bounded solution is
relatively compact in $L^\infty_{loc}(\R).$ This implies that
$\omega(u)$ is nonempty, compact, and connected $L^\infty_{loc}(\R)$,
and it attracts the solution in (the metric space)
$L^\infty_{loc}(\R)$:
$$
\textrm{dist}_{L^\infty_{loc}(\R)}\left(
  u(\cdot,t),\omega(u)\right)\underset{t\to\infty}{\longrightarrow}0.
$$
If the $\omega-$limit set reduces to a single element $\varphi$, then
$u$ is \textit{convergent:} $u(\cdot,t)\to\vp$ in $L^\infty_{loc}(\R)$
as $t\to\infty$.  Necessarily, $\vp$ is a steady state of \eqref{eq:1}.
If all functions $\vp\in\omega(u)$ are steady states of (\ref{eq:1}),
the solution $u$ is said to be \textit{quasiconvergent}.

Convergence and quasiconvergence both express a relatively tame
character of the solution in question, entailing in particular the
property that $u_t(\cdot,t)$ approaches zero locally uniformly on $\R$
as $t\to \infty$. In some cases, quasiconvergence can be established
by means of energy estimates when bounded solutions in suitable energy
spaces are considered (see \cite{Feireisl_NoDEA97}, for example).
However, when no particular rate of approach of $u_0(x)$ to its limits
at $x=\pm\infty$ is assumed, energy techniques typically do not
apply. Nonetheless, several quasiconvergence results are available for
solutions in $\cV$ (see \cite{P:quasiconv-overview} for a general
overview).  These include quasiconvergence theorems of
\cite{Matano_Polacik_CPDE16} for nonnegative functions $u_0$ with
$u_0(\pm\infty)=0$ when $f(0)=0$---convergence theorems are available
under additional conditions on $u_0\ge 0$, see
\cite{Du_Matano,p-Du,Matano_Polacik_CPDE16}; or for generic $f$, see
\cite{p-Ma:1d-p2}---and a theorem of \cite{Polacik_terrasse} for
functions $u_0\in \cV$ satisfying $u_0(-\infty)>u_0> u_0(\infty)$ or
$u_0(-\infty)<u_0< u_0(\infty)$. An improvement over the latter
quasiconvergence result was achieved in \cite{Pauthier_Polacik1},
where we proved that the condition $u_0(-\infty)\neq u_0(+\infty)$
alone, with no relations involving $u_0(x)$ for $x\in\R$, is already
sufficient for the quasiconvergence of the solution if it is bounded.

It is also known that the $\om$-limit set of a bounded solution always
contains at least one equilibrium \cite{Gallay-S,Gallay-S2}.  However,
bounded solutions, even those in $\cV$, are not always quasiconvergent
(see \cite{P:examples,P:unbal}).  Moreover, as shown in
\cite{P:unbal}, non-quasiconvergent solutions occur in \eqref{eq:1} in
a persistent manner: they exist whenever $f$ is a $C^1$ nonlinearity
satisfying certain robust conditions (cp. \eqref{eq:47} below).  In
view of these results, the  
following question arises naturally. Given $f$, can one characterize
in some way the initial data $u_0\in\cV$ which yield quasiconvergence
solutions? Our previous work \cite{Pauthier_Polacik1} was our first
step in addressing this question: we proved the quasiconvergence in
the distinct-limits case: $u_0(-\infty)\neq u_0(+\infty)$. In the
present paper, we
consider the case when the limits are equal:
$u_0(-\infty)= u_0(+\infty):=\theta_0$. We assume the nonlinearity $f$
to be fixed and satisfy minor nondegeneracy conditions (see the next
section).

In our first main theorem, Theorem \ref{thm:1}, we show that if
$f(\theta_0)\ne 0$, or if $f(\theta_0)=0$ and $\theta_0$ is a stable
equilibrium of the equation $\dot \xi=f(\xi)$, then the solution $u$
of (\ref{eq:1}), (\ref{ic1}) is quasiconvergent if bounded. In the
examples of non-quasiconvergent solutions with
$u_0(\pm\infty)=\theta_0$, as given in \cite{P:examples,P:unbal},
$\theta_0$ is an unstable equilibrium of $\dot \xi=f(\xi)$. Thus, our
theorem shows that this is in fact necessary.  Other two results,
Theorems \ref{thm:2} and \ref{thm:3}, 
give sufficient conditions for the quasiconvergence of the solution in
the case that $\theta_0$ is an unstable equilibrium of the equation
$\dot \xi=f(\xi)$. A special case of the sufficient condition of
Theorem \ref{thm:3} is the condition 
that $u_0-\theta_0$ has compact support and only finitely many sign
changes. Theorem \ref{thm:2} has a somewhat surprising result
saying that any element 
 $\varphi$ of $\om(u)$ whose range is not included in the minimal
 bistable interval containing $\theta_0$ is necessarily a steady
 state.  

 We give formal statements of
these results in Subsection \ref{qcthms}, after first formulating our
hypotheses in Subsection \ref{standing-hyp}. In Subsection
\ref{outline}, we give an outline of our strategy of proving the
quasiconvergence theorems.

A quasiconvergence result closely related to our
Theorem \ref{thm:1} has   recently been proved by Risler. In
\cite{Risler_1}, he considers the Cauchy problem for
gradient  reaction-diffusion systems on $\R$,
where the initial data are assumed to converge at $\pm\infty$
to stable homogeneous steady states contained in the same
level of the potential function. Under 
certain generic conditions on the corresponding
stationary system,  he proves that bounded solutions of such Cauchy
problems 
are quasiconvergent in a localized
topology (in the companion paper \cite{Risler:terrace-1d}, the global
shape of such solutions at large times is investigated).
His approach is variational, which has an advantage that it applies to
gradient systems, as opposed to our techniques based on the zero
number, which only apply to scalar equations. In the scalar case,
our method seems to have some advantages. For example, it allows us 
to treat to some extent the case when the limit of the initial data at
$\pm\infty$ is unstable. Also, in principle, the method can be used
under much less stringent nondegeneracy conditions 
(cp. Subsection \ref{standing-hyp}) and, we believe, will eventually
allow us to  dispose of the nondegeneracy conditions altogether.

A key ingredient of our method of proof of the quasiconvergence
theorems is a classification result for a certain type of entire
solutions of \eqref{eq:1}, that is, solutions defined for all
$t\in\R$. Roughly speaking, the result shows that the entire
solutions are either steady states or connections between two disjoint
sets of steady states of \eqref{eq:1} (see Sections \ref{outline} and
\ref{sec:entire} for details).  Entire solutions play an important
role in qualitative analysis of solutions of parabolic equations, as
it can usually be proved that the large-time behavior
of bounded solutions is governed by entire solutions. In our setting,
for example, the $\om$-limit sets---or their generalized versions, as
defined in Section \ref{sec:inv}---of bounded solutions of \eqref{eq:1}
consist of orbits of entire solutions.  Entire solutions of
\eqref{eq:1} have been extensively studied and many different types of
such solutions have been found. These include, in addition to steady
states, spatially periodic heteroclinic orbits between steady states
(see \cite{Fiedler-B:neumann, Fiedler-R:conn, P:entire} for
example), traveling waves and many types of ``nonlinear
superpositions'' of traveling waves and other entire solutions (see
\cite{Chen-G:ex, Chen-G-N, Chen-G-N-Y, Guo-M-ent, Hamel-N-ent,
  Morita-N:entire, Morita-N:exposition} and references therein), as
well entire solutions involving colliding pulses \cite{p-Ma:entire}.
Unlike for equations on bounded intervals where rather general
classification results for entire solutions are available (see
\cite{Brunovsky-F:conn, Fiedler-R:conn, Wolfrum:jde} and references
therein), no such general classification is currently in sight for the vast
variety of entire solutions of \eqref{eq:1}. Our result classifying
certain entire solutions as connections between two sets of steady
state is a modest contribution in this area, exploring the asymptotic
behavior of entire solutions as $t\to\pm\infty$ in the topology of
$L^\infty_{loc}(\R)$.

\subsection{Standing hypotheses} \label{standing-hyp} As above, $f$ is
a locally Lipschitz function.  We also assume the following
nondegeneracy condition:
\begin{description}
\item[\bf(ND)] For each $\gamma\in f^{-1}\{0\}$, $f$ is of class $C^1$
 in a neighborhood of $\gamma$ and $f'(\gamma)\neq0.$
\end{description}

Our theorems can be proved under weaker conditions. To give an example
of how (ND) can be relaxed, set
\begin{equation}
  \label{eq:3}
  \displaystyle F(v):=\int_0^v f(s)ds,
\end{equation}
so  zeros of $f$ are critical points of $F$. The following
nondegeneracy conditions can be considered in place of (ND).
\begin{description}
\item[\rm (ND1)] Each $\gamma\in f^{-1}\{0\}$ is locally a point of
  strict maximum or strict minimum for $F.$
\item[\rm (ND2)] If $\gamma_1<\gamma_2$ are two consecutive
  local-maximum points of $F$ and $F(\gamma_1)=F(\gamma_2),$ then
  $\gamma_1$, $\gamma_2$ are nondegenerate critical points of $F$: $f$
  is of class $C^1$ in a neighborhood of $\gamma_{1,2}$ and
  $f'(\gamma_{1,2})<0.$
\end{description}

Relaxing (ND) to (ND1), (ND2) does not pose  major problems in the
proof of our results, but it would obscure the exposition a bit and
would require modification of some standard results we refer to.
Thus we decided to just work with (ND). All these nondegeneracy
conditions are just technical and we believe our theorems can be
proved by the same general method without them.  Clearly, condition
(ND) is generic with respect to ``reasonable'' topologies.  Note,
however, that we allow some nongeneric situations, for example, the
existence of two consecutive local-maximum points of $F$ at which $F$ takes
the same value.  The nondegeneracy condition constrains considerably
the complexity of possible phase portraits associated with equation
for the steady states of \eqref{eq:1}:
\begin{equation}\label{eq:steady}
  u_{xx}+f(u)=0,\qquad x\in\R.
\end{equation}
This is mainly how the nondegeneracy condition is useful in this
paper.

We will make another assumption on the nonlinearity.  It concerns the
behavior of $f(u)$ for large values of $|u|$ and it can be assumed
with no loss of generality. Indeed, our main quasiconvergence theorems
deal with individual bounded solutions only.  Thus we can modify $f$
freely outside the range of the given solution with no effect on the
validity of the theorems.  It will be convenient to assume that
\begin{description}
\item[\bf(MF)]$f$ is globally Lipschitz and there is $\kappa>0$ such
  that for all $s$ with $|s|>\kappa$ one has $ f(s)={s}/{2}.$
\end{description}

Hypotheses (ND) and (MF) are our \emph{standing hypotheses on $f$}.

Each zero of $f$ is of course an equilibrium of the equation
\begin{equation}\label{eq:ODE}
  \dot{\xi}(t)=f(\xi). 
\end{equation}
Hypothesis (ND) implies in particular that any such equilibrium is
either unstable from above and below, or asymptotically stable (this
property would also be implied by (ND1)).
 
As mentioned above, in this paper we take $u_0\in \cV$, assuming that
its limits at $\pm\infty$ are equal. Without loss of generality, we
assume the limits to be equal to zero:
\begin{equation}\label{localizedu0}
  u_0\in\mathcal{V},\qquad u_0(-\infty)=u_0(+\infty)=0.
\end{equation}
We distinguish the following two cases:
\begin{itemize}
\item[\bf(S)] Either $f(0)\neq 0$, or $f(0)=0$ and 0 is a stable
  equilibrium for (\ref{eq:ODE}).
\item[\bf(U)] $f(0)=0$ and 0 is an unstable equilibrium for
  (\ref{eq:ODE}).
\end{itemize}

\subsection{Quasiconvergence theorems} \label{qcthms}

If (S) holds, we have a general quasiconvergence theorem:
\begin{theorem}\label{thm:1}
  Assume that {\rm (S)} holds, and let $u_0$ be as in
  \eqref{localizedu0}. Then if the solution $u$ of \eqref{eq:1},
  \eqref{ic1} is bounded, it is quasiconvergent: $\omega(u)$ consists
  entirely of steady states of \eqref{eq:1}.
\end{theorem}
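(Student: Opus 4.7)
The plan is to argue by contradiction: assume $\omega(u)$ contains an element $\varphi$ which is \emph{not} a steady state of \eqref{eq:1}.

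First, I would realize $\varphi$ as the time-zero slice of a bounded entire solution of \eqref{eq:1} contained in $\omega(u)$. Writing $\varphi=\lim_{n\to\infty}u(\codt,t_n)$ in $L^\infty_{loc}(\R)$ and applying parabolic regularity and compactness to the time-translates $u(\codt,t_n+\codt)$, one obtains (along a subsequence) a bounded entire solution $U$ with $U(\codt,0)=\varphi$ and $\{U(\codt,t):t\in\R\}\subset\omega(u)$. By assumption $U$ is not a steady state. I would then invoke the classification of entire solutions announced in the introduction. Since $U$ is non-stationary, the classification yields steady states $\Phi^-,\Phi^+\in\omega(u)$, belonging to two \emph{disjoint} classes of the classification, such that $U(\codt,t)\to\Phi^\pm$ in $L^\infty_{loc}(\R)$ as $t\to\pm\infty$.

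The core of the argument is then to use (S) to pinpoint the behavior at infinity of every $\psi\in\omega(u)$. The boundary traces $u(\pm\infty,t)$ solve $\dot\xi=f(\xi)$ with $\xi(0)=0$; under (S) this ODE trajectory has a unique $\om$-limit, an asymptotically stable zero $\gamma^*$ of $f$, namely $\gamma^*=0$ if $0$ is stable and otherwise the nearest stable zero of $f$ on the side of $\sgn f(0)$. By a comparison argument on the half-lines $\{|x|\geq R\}$ that exploits the asymptotic stability of $\gamma^*$, one shows that for every $\e>0$ there is $R=R(\e)$ such that $|u(x,t)-\gamma^*|<\e$ whenever $|x|\geq R$ and $t$ is large enough. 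Passing to the $L^\infty_{loc}$ limit, this propagates to every $\psi\in\omega(u)$: $\psi(x)\to\gamma^*$ as $|x|\to\infty$. In particular each of $\Phi^-$ and $\Phi^+$ tends to $\gamma^*$ at both $\pm\infty$.

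Finally, I would derive the contradiction from the stationary phase portrait. Under (ND), the stability of $\gamma^*$ for \eqref{eq:ODE} forces $f'(\gamma^*)<0$, so $(\gamma^*,0)$ is a saddle of the planar system $v'=w,\ w'=-f(v)$ with first integral $H(v,w)=w^2/2+F(v)$. The bounded orbits of $v''+f(v)=0$ asymptotic to $\gamma^*$ at $\pm\infty$ lie on the level set $\{H=F(\gamma^*)\}$ and consist of the equilibrium itself together with translates of the (at most two) homoclinic loops through $(\gamma^*,0)$; all these profiles fall into a \emph{single} class of the classification (as spatial translates of a finite list of profiles sharing the same Hamiltonian value). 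This contradicts the conclusion of the previous step that $\Phi^-$ and $\Phi^+$ lie in disjoint classes. Where the classification distinguishes the constant $\gamma^*$ from the homoclinic translates, an auxiliary zero-number argument on $u(\codt,t)-\gamma^*$---which is non-increasing in $t$---would be used to rule out the remaining heteroclinic possibility.

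The main obstacle is the far-field localization claim $\psi(\pm\infty)=\gamma^*$ for every $\psi\in\omega(u)$: pointwise convergence of the boundary traces $u(\pm\infty,t)\to\gamma^*$ does not on its own prevent diffusion from transporting perturbations from the transient region out to the wings, so a quantitative super/sub-solution construction is required. This is most delicate in the subcase $f(0)=0$ with $0$ stable, where $f$ provides only linear attraction to $\gamma^*=0$ near infinity and the comparison functions must be chosen with care. Once the localization is in place, the rigidity of the stationary phase portrait under (ND), combined with the two-disjoint-classes conclusion, produces the contradiction.
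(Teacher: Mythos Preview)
Your argument has a genuine gap at the step you yourself flag as the main obstacle: the far-field localization claim is false. It is \emph{not} true in general that every $\psi\in\omega(u)$ satisfies $\psi(\pm\infty)=\gamma^*$, nor that $|u(x,t)-\gamma^*|<\varepsilon$ for $|x|\ge R(\varepsilon)$ and $t$ large. Here is a clean counterexample. Take $f$ satisfying (ND), (MF) with $f(0)=0$, $f'(0)<0$ (so (S) holds and $\gamma^*=0$), together with further zeros $0<b<c$ of $f$ with $f'(b)>0$, $f'(c)<0$ and $F(c)>F(0)$. Choose $u_0\in\cV$ with $u_0(\pm\infty)=0$, compactly supported, and close to $c$ on a sufficiently long interval. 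Then, by standard spreading results for bistable problems, $u(\cdot,t)\to c$ in $L^\infty_{loc}(\R)$, so $\omega(u)=\{c\}$. The constant $c$ is a steady state, so quasiconvergence holds, but $c(\pm\infty)=c\ne 0=\gamma^*$. Your comparison argument on half-lines cannot succeed here: the invading front sweeps through any fixed $R$, so no uniform-in-$t$ bound near $\gamma^*$ is available on $\{|x|\ge R\}$. The asymptotic stability of $\gamma^*$ for the ODE $\dot\xi=f(\xi)$ gives no barrier against PDE fronts propagating outward.

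Because the localization fails, the final contradiction also collapses: $\Phi^-$ and $\Phi^+$ need not have limits $\gamma^*$ at $\pm\infty$, so there is no reason for them to lie on the level set $\{H=F(\gamma^*)\}$. The paper's proof proceeds quite differently and avoids any claim about spatial limits of elements of $\omega(u)$. Instead it shows (Proposition~\ref{prop:3.3}) that if $\varphi\in\omega(u)$ is not a steady state, the entire solution $U$ through $\varphi$ has $\tau(\alpha(U))\subset\Sigma_{in}$ and $\tau(\omega(U))\subset\Lambda_{out}$ for a specific pair of chains with $\Sigma_{in}\subset\cI(\Sigma_{out})$. This yields a Morse decomposition of the flow on $\omega(u)$ indexed by the (finitely many, nested) chains meeting $\tau(\omega(u))$. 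The contradiction then comes not from spatial asymptotics but from the chain-recurrence of the flow on $\omega(u)$: a chain-recurrent flow cannot admit a nontrivial Morse decomposition, forcing all of $\tau(\omega(u))$ into a single chain and hence, by Lemma~\ref{le:2.7}, forcing every element of $\omega(u)$ to be a steady state. The role of (S) in the paper is only to guarantee that the inner chain $\Sigma_{in}$ is nontrivial (Lemma~\ref{le:3.7}), so that the classification applies without extra hypotheses on $u_0$; it is not used to pin down the spatial tails of limit profiles.
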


\begin{remark}
  \label{rm:noper}
  {\rm We will show, more precisely, that any element $\varphi$ of
    $\om(u)$ is a constant steady state, or a ground state at some
    level $\xi\in f^{-1}\{0\}$, or a standing wave of \eqref{eq:1}. See
    Section \ref{sec:22} for a description of the structure of steady
    states of \eqref{eq:1} and the meaning of the terminology used
    here. We will also show that there is a single chain in the phase
    plane of $\varphi_{xx}+f(\varphi)=0$ containing the trajectories
    of all steady states $\varphi\in\om(u)$ (the definition of a chain
    is also given in Section \ref{sec:22}). The same remarks apply to
    Theorem \ref{thm:3} below. }
\end{remark}

If (U) holds, then, as already noted in the introduction, a similar
quasiconvergence does not hold in general: the references
\cite{P:examples,P:unbal} provide examples of bounded solutions of
\eqref{eq:1}, \eqref{ic1} with $u_0(\pm \infty)=0$ which are not
quasiconvergent. More specifically, such solutions exist whenever
$f\in C^1$ and $0$ belongs to a \emph{bistable} interval of $f$: there are
$\ga_1, \ga_2\in \R$ such that
\begin{equation}
  \label{eq:47}
  \ga_1<0<\ga_2,\quad f(\ga_1)=f(\ga_2)=0, \quad
  f'(\ga_1),f'(\ga_2)<0,\quad \text{and }f\ne 0\text{ in
  }(\ga_1,0)\cup(0,\ga_2). 
\end{equation}
Whether the bistable nonlinearity $f$ is\emph{ balanced in $(\ga_1,\ga_2)$:}
$F(\ga_1)=F(\ga_2)$, or \emph{unbalanced:}
$F(\ga_1)\ne F(\ga_2)$, there always exists a continuous function
$u_0$ such that $u_0(\pm \infty)=0$, $\ga_1\le u_0\le \ga_2$ and the
solution $u$ of \eqref{eq:1}, \eqref{ic1} is not quasiconvergent.
Obviously, all limit profiles, stationary or not,
of the solution $u$ take also values between $\ga_1$, $\ga_2$.
One could naturally speculate that when the initial data are not
constrained by the assumption $\ga_1\le u_0\le \ga_2$, the behavior of the
corresponding solutions
can only get more complicated, with some nonstationary
limit profiles possibly occurring outside the interval
$[\ga_1,\ga_2]$. Surprisingly perhaps, this
turns out not to be the case.  In our next theorem, we show that 
any limit profile whose range is not contained in
$(\ga_1,\ga_2)$ is a steady state. Thus it is really the bistable
interval $[\ga_1,\ga_2]$ which is ``responsible'' for the
nonquasiconvergence of the solutions with $u_0(\pm\infty)=0$,
regardless of whether the range of $u_0$ is contained in
$[\ga_1,\ga_2]$ or not. 

Note that if (U) holds and $\ga_1$, $\ga_2$ are the zeros of $f$
immediately preceding and immediately succeeding 0, respectively,
assuming they exist, then the relations in \eqref{eq:47} are
satisfied.
\begin{theorem}
  \label{thm:2}
  Assume that {\rm (U)} and 
  \eqref{eq:47} hold.  Assume further that $u_0$ is as in
  \eqref{localizedu0} and the solution $u$ of \eqref{eq:1}, \eqref{ic1}
  is bounded. Then any function
  $\varphi\in \om(u)$ whose range is not contained in
  the interval $(\ga_1,\ga_2)$ is a  steady state of \eqref{eq:1}.
\end{theorem}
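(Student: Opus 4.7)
The plan is to pass from $\varphi\in\om(u)$ to an entire solution of \eqref{eq:1} and invoke the classification theorem of Section~\ref{sec:entire}. Without loss of generality, assume $\sup_{x}\varphi(x)\ge\gamma_2$; the case $\inf_{x}\varphi(x)\le\gamma_1$ is symmetric and is handled by running the same argument with $\gamma_1$ in place of $\gamma_2$.

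First, choose $t_n\to\infty$ with $u(\cdot,t_n)\to\varphi$ in $L^\infty_{loc}(\R)$. By standard parabolic estimates and a diagonal extraction, pass to a subsequence (not relabelled) along which the time-shifted solutions $u(\cdot,t_n+s)$ converge in $C^{2,1}_{loc}(\R^{2})$ to a bounded entire solution $U$ of \eqref{eq:1} with $U(\cdot,0)=\varphi$, and with $U(\cdot,s)\in\om(u)$ for every $s\in\R$.

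Next, extract zero-number information against the constant $\gamma_2$. Since (U) gives $f(0)=0$, the $\pm\infty$-limits of $u(\cdot,t)$ are preserved by the flow on $\cV$ and equal $0<\gamma_2$, so $u(\cdot,t)-\gamma_2$ has only finitely many zeros for every $t>0$. By Angenent's zero-number theory, $t\mapsto z(u(\cdot,t)-\gamma_2)$ is a finite, non-increasing, non-negative integer that drops strictly whenever multiple zeros occur; it therefore stabilizes to some constant $k$ for all large $t$, at which times all zeros of $u(\cdot,t)-\gamma_2$ are simple. Lower semicontinuity of the zero number, together with monotonicity of $s\mapsto z(U(\cdot,s)-\gamma_2)$ on the entire solution $U$, then forces $z(U(\cdot,s)-\gamma_2)$ to be constant on $\R$, with all of its zeros simple.

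These properties place $U$ in the scope of the classification theorem of Section~\ref{sec:entire}: $U$ is either a steady state of \eqref{eq:1}, or else a heteroclinic entire solution whose orbit, together with the trajectories of every steady state in its $\al$- and $\om$-limit sets, lies in a single chain of the phase plane of \eqref{eq:steady}. If $U$ is stationary, then so is $\varphi=U(\cdot,0)$ and we are done. In the heteroclinic case, the hypothesis $\sup_x\varphi(x)\ge\gamma_2$ forces the orbit of $U$ at time $s=0$ in the $(U,U_x)$-plane to reach or cross the line $\{u=\gamma_2\}$, so the chain containing this orbit must do so as well. Using (ND) together with \eqref{eq:47}, in particular $f'(\gamma_2)<0$, which makes $\gamma_2$ a saddle of \eqref{eq:steady}, I would enumerate the possible chains compatible with this constraint and verify in each case that a non-stationary motion between two disjoint end-point sets of steady states along such a chain would create or annihilate at some time a zero of $U(\cdot,s)-\gamma_2$, contradicting the constancy established above. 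Hence $U$ is a steady state and so is $\varphi$.

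The principal difficulty is precisely this last step: the phase-plane bookkeeping showing that no non-stationary heteroclinic entire solution whose spatial range meets $[\gamma_2,\infty)$ is compatible with the constant, simple zero number of $U-\gamma_2$. Handling the balanced and unbalanced subcases of \eqref{eq:47}, as well as the possible appearance of further chains above $\gamma_2$ allowed by (ND), is where the bulk of the technical work will lie.
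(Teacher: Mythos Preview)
Your proposal has a genuine gap at the decisive point. The classification result of Section~\ref{sec:entire} does not say what you attribute to it: for a non-stationary entire solution $U$ satisfying (HU), Proposition~\ref{prop:entire} gives $\tau(\alpha(U))\subset\Sigma_{in}$ and $\tau(\omega(U))\subset\Lambda_{out}$, where $\Sigma_{in}$ and the chain containing $\Lambda_{out}$ are two \emph{distinct} chains, while $\tau(U(\cdot,s))$ itself lies in the periodic region $\Pi$, not in any chain. So the alternative to ``$U$ is stationary'' is precisely a connecting orbit between two different sets of steady states, and the constancy of $z(U(\cdot,s)-\gamma_2)$ does not by itself forbid this: such a connection can perfectly well take place with a fixed, positive number of simple $\gamma_2$-crossings, or with none at all. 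Your proposed ``creation/annihilation of zeros'' argument therefore cannot succeed; the enumeration you sketch is not merely technical work still to be done, it is the wrong mechanism. (You also have not verified the hypothesis (HU), in particular $\tau(U(\cdot,t))\subset\bar\Pi$ for a single connected component $\Pi$ of $\mathcal P_0$; this is the content of Proposition~\ref{prop:3.3}(i) and requires its own argument.)

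The paper's proof operates on a different, global level. Rather than analyzing a single entire orbit, it constructs a Morse decomposition of the flow on the compact invariant set $\omega(u_0)$: one Morse set is $\mathcal M_0=\{\psi\in\omega(u_0):\tau(\psi)\subset\overline{\mathcal I}(\Sigma_0)\}$, where $\Sigma_0$ is the chain containing $\Lambda_{out}(\Pi_0)$, and the remaining Morse sets correspond to the chains outside $\overline{\mathcal I}(\Sigma_0)$ that meet $\tau(\omega(u_0))$. Proposition~\ref{prop:3.3}, applied only with $\Pi\ne\Pi_0$ (so neither (NC) nor (R) is needed), shows that every non-stationary entire orbit in $\omega(u_0)$ not trapped in a Morse set runs from a strictly lower to a strictly higher index. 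Chain recurrence of the flow on $\omega(u_0)$ then forces the decomposition to collapse, yielding $\tau(\omega(u_0))\subset\overline{\mathcal I}(\Sigma_0)$. A final application of Lemmas~\ref{le:3.4} and~\ref{le:3.5} shows that any $\varphi\in\omega(u_0)$ with $\tau(\varphi)\not\subset\Pi_0\cup\{(0,0)\}$ must have $\tau(\varphi)\subset\Sigma_0$, hence is a steady state by Lemma~\ref{le:2.7}. The contradiction comes from chain recurrence of the entire limit set, not from zero-number bookkeeping on one orbit.
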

A stronger version of this result will be given  in Theorem
\ref{thm:4} after some  needed terminology has been introduced.
Obviously, Theorem \ref{thm:2} implies that the solution $u$ is
quasiconvergent if no function $\varphi\in\om(u_0)$ has its range in
$(\ga_1,\ga_2)$.

Another aspect of the examples of non-quasiconvergent solutions
given in \cite{P:examples,P:unbal} is that the
solutions $u$ found there are always
highly oscillatory in space: for all $t>0$ the function
$u(\cdot,t)$ has infinitely many
critical points and infinitely many zeros. This raises another natural
question whether, in the case (U), spatially nonoscillatory solutions are
always quasiconvergent. Here, by a \emph{spatially nonoscillatory
  solution} we mean a 
solution satisfying the following condition
\begin{description}
\item[\bf (NC)] There is $t>0$ such that $u(\cdot,t)$ has only
  finitely many critical points.
\end{description}
A sufficient condition for (NC) in terms of $u_0$ is that there exist
$a<b$ such that the function $u_0$ is monotone and nonconstant on each
of the intervals $(-\infty,a)$, $(b,\infty)$.  For if this holds, then
one shows easily, using the comparison principle, that $u_x(x,t)\ne 0$
for all $x\in \R$ with $|x|\approx \infty$ and all sufficiently small
positive times $t$. Consequently, by standard zero number results
(cp. Section \ref{sub:zero}), $u_x(\cdot,t)$ has only a finite number
of zeros for all $t>0$.

Presently, we are able to prove the quasiconvergence  assuming
that (NC) holds together with the following technical condition:
\begin{description}
\item{\bf(R)} There are  sequences $a_n\to-\infty$,
  $b_n\to\infty$ such that the following holds.  For every
  $\la\in \{a_1,a_2,\dots\}\cup\{b_1,b_2,\dots\}$ there is $t\ge 0$
  such that the function
  $V_\la u(\cdot,t):=u(2\la-\codt,t)-u(\cdot,t)$ has only finitely
  many zeros.
\end{description}
Remark \ref{rm:suff-cond}(ii) below gives sufficient conditions for
(R) in terms of $u_0$.

\begin{theorem}\label{thm:3}
  Assume that {\rm (U)} holds together with {\rm (NC)} and {\rm (R)}, and let $u_0$ be
  as in \eqref{localizedu0}.  If the solution $u$ of \eqref{eq:1},
  \eqref{ic1} is bounded, it is quasiconvergent.
\end{theorem}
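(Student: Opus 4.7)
The plan is to reduce Theorem \ref{thm:3} to the classification of entire solutions advertised in the introduction, and then use hypotheses (NC) and (R) to rule out the heteroclinic alternative so that every element of $\omega(u)$ turns out to be stationary. Fix $\varphi\in\omega(u)$ and a sequence $t_n\to\infty$ with $u(\codt,t_n)\to\varphi$ in $L^\infty_{loc}(\R)$. Parabolic regularity together with a diagonal extraction produces an entire solution $U$ of \eqref{eq:1} with $U(\codt,0)=\varphi$ and $U(\codt,t)\in\omega(u)$ for all $t\in\R$. Hence it suffices to show that any such $U$ is a steady state.

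The next step is to transfer the combinatorial information in (NC) and (R) to $U$. Hypothesis (NC) gives a time $t_*$ at which $u_x(\codt,t_*)$ has only finitely many zeros; since $u_x$ solves a linear parabolic equation with bounded coefficients, the monotonicity of the zero number shows that the zeros of $u_x(\codt,t)$ are bounded by some integer $N$ for all $t\ge t_*$, so $U(\codt,t)$ has at most $N$ critical points for every $t\in\R$. Likewise, for each $\la\in\{a_n\}\cup\{b_n\}$ the function $V_\la u$ satisfies a linear parabolic equation, and hypothesis (R) together with the zero-number decay yields a uniform bound $z(V_\la U(\codt,t))\le k(\la)<\infty$ valid for every $t\in\R$. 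These bounds are inherited by the locally uniform limits of $U(\codt,t)$ as $t\to\pm\infty$.

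Now one invokes the classification of entire solutions described in the introduction: $U$ is either a steady state of \eqref{eq:1}, in which case $\varphi$ is stationary and we are done, or else $U$ is a heteroclinic connection between two disjoint sets $\cS^-,\cS^+$ of steady states as $t\to-\infty,+\infty$ respectively. To exclude the heteroclinic alternative, one pushes $\la$ to $\pm\infty$ along the sequences $\{a_n\}$, $\{b_n\}$ supplied by (R): the uniform finite zero count of $V_\la U(\codt,t)$ forces the profiles $U(\codt,t)$ to be monotone outside a compact set, with the same one-sided tails at $\pm\infty$ for all $t\in\R$. Combined with the uniform bound on critical points and the chain description of steady states recalled in Remark \ref{rm:noper}, this pins the asymptotic profiles of $U(\codt,t)$ to lie in the same chain of the phase plane of \eqref{eq:steady}, forcing $\cS^-=\cS^+$ and contradicting the disjointness asserted by the classification. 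Consequently $U$ is a steady state, and Theorem \ref{thm:3} follows.

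The main obstacle in carrying out this plan is the last step: making rigorous the claim that the combination of finitely many critical points and of a finite zero number of reflections $V_\la U$ at a sequence $\la\to\pm\infty$ really does lock the $t=\pm\infty$ limit chains together. This is particularly delicate in the case (U), where several chains may accumulate at the common height $\theta_0=0$ at which the solution has its tails; it requires a careful analysis of which chains in the phase portrait of $\varphi_{xx}+f(\varphi)=0$ can be asymptotic to $0$ at $\pm\infty$ and of how the zero number of a reflection $V_\la$ encodes the passage between distinct chains.
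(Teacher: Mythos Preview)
Your proposal has a genuine gap in the final step, and the gap is not merely technical: the strategy of ruling out the heteroclinic alternative by forcing $\cS^-=\cS^+$ cannot work as stated. The classification result you invoke (Proposition \ref{prop:3.3} and its abstract version Proposition \ref{prop:entire}) says precisely that if $U$ is not a steady state then $\tau(\alpha(U))\subset\Sigma_{in}$ and $\tau(\omega(U))\subset\Lambda_{out}$, where $\Sigma_{in}\subset\cI(\Lambda_{out})$: the two chains are \emph{always} distinct and strictly ordered. Nothing in (NC) or (R) prevents a single entire solution from connecting an inner chain to an outer loop; in fact such connecting orbits do exist abstractly. Your attempt to argue that ``the same one-sided tails at $\pm\infty$ for all $t$'' force the $t\to\pm\infty$ limit chains to coincide conflates the spatial limits $U(\pm\infty,t)$ with the temporal limit sets $\alpha(U)$, $\omega(U)$. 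The spatial limits are indeed controlled (cp.\ Lemma \ref{le:limits}), but an entire solution whose spatial tails sit at fixed equilibria can still drift, as $t$ runs over $\R$, from one chain to another in the phase plane.

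What the paper does instead is a \emph{global} argument on all of $\omega(u_0)$. The chains $\Sigma_1,\dots,\Sigma_k$ intersecting $\tau(\omega(u_0))$ are totally ordered by inclusion (Lemma \ref{le:insert} and Lemma \ref{le:3.5}), and the sets $\cM_j=\{\varphi\in\omega(u_0):\tau(\varphi)\subset\Sigma_j\}$ form a Morse decomposition for the flow on $\omega(u_0)$: by Proposition \ref{prop:3.3}, every nonstationary entire orbit goes from some $\cM_i$ to some $\cM_j$ with $i<j$. But the flow on $\omega(u_0)$ is chain recurrent (a standard property of $\omega$-limit sets, cf.\ Conley \cite{Conley_78}), and chain recurrence is incompatible with a nontrivial Morse decomposition. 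Hence $k=1$, all of $\tau(\omega(u_0))$ sits in a single chain, and Lemma \ref{le:2.7} finishes. Hypotheses (NC) and (R) enter only to make Proposition \ref{prop:3.3} go through in the delicate case $\Pi=\Pi_0$ (Sections \ref{sub33} and \ref{completionprop}); they are not used to collapse the two chains associated with a single heteroclinic orbit.
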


\begin{remark}\label{rm:suff-cond}
  {\rm (i) If (R) is strengthened so as to say that
    $V_\la u(\cdot,t):=u(2\la-\codt,t)-u(\cdot,t)$ has only finitely
    many zeros for \emph{every} $\la\in \R$, then the quasiconvergence
    theorem holds---without any extra condition like {\rm (NC)} on $u$ and
    without the nondegeneracy condition (ND) on $f$---due to a result
    of \cite{Pauthier_Polacik1} which we recall in Theorem
    \ref{thmPP1} below. This in particular applies when for some $t>0$
    the function $u(\codt,t)$ has an odd (finite) number of zeros, all
    of them simple.  Indeed, in this case $u(x,t)$ has opposite signs
    for $x\approx-\infty$ and $x\approx\infty$ and, consequently, for
    every $\la\in \R$ one has $V_\la u(x,t):=u(2\la-x,t)-u(x,t)\ne 0$
    if $|x|$ is large enough.  Since the zeros of $V_\la u(\codt,t)$
    are isolated (cp. Section \ref{sub:zero}), there are only finitely
    many of them.  Of course, it may easily happen for a function
    $\psi\in\cV$ with $\psi(\pm\infty)=0$ that $V_\la\psi$ has only
    finitely many zeros if $|\la|$ is sufficiently large, but has
    infinitely many of them if $|\la|$ is sufficiently small. An
    example is any continuous function such that
    \begin{equation*}
      \psi(x)=\left\{
        \begin{aligned}
          &Me^x\quad&&\text{for $x<-k$,}\\
          &e^{-x}(M+\sin x)\quad&&\text{for $x>k$,}
        \end{aligned}
      \right.
    \end{equation*}
    where $k>0$ and $M>2$.
    
    (ii) Conditions (NC) and (R) are both satisfied if $u_0$ has
    compact support $[c,d]$ and only finitely many zeros in $(c,d)$.
    More generally, they are satisfied if $u_0\equiv 0$ on
    $\R\setminus(c,d)$ and there is $\ep>0$ such that $u_0$ is
    monotone and nonconstant on each of the intervals $(c,c+\ep]$,
    $[d-\ep,d)$. Indeed, the validity of (NC) is verified in the
    remark following (NC).  To show the validity of (R), take any
    $\la>d$. Then, the assumption on $u_0$ implies that
    $V_\la u_0(2\la-c-\ep)\ne 0$ and in the whole interval
    $J:=[2\la-c-\ep,\infty)$ one has either $V_\la u_0\ge 0$ or
    $V_\la u_0\le 0$. The comparison principle applied to the function
    $V_\la u(x,t)$ (cp. Section \ref{sec:24}) shows that
    $V_\la u(x,t)\ne 0$ for all $x\in J$ and $t>0$.  This also implies
    that 
    $V_\la u(x,t)\ne 0$ for all $x\approx -\infty$, as the function
    $V_\la u(\codt,t)$ is odd about $x=\la.$ Consequently, as in the
    previous remark (i),  $V_\la u(\codt,t)$ has only finitely
    many zeros for all $t>0$.  Similarly one shows that
    $V_\la u(\codt,t)$ has only finitely many zeros if $\la<d$ (in
    fact, with a little more effort one can show this for any
    $\la\ne (c+d)/2)$). Variations of these arguments show that (NC)
    and (R) are satisfied if $u_0\equiv 0$ on an interval
    $(-\infty, c)$ and on an interval $(d,\infty)$ one has $u_0\ge 0$,
    $u_0\not\equiv 0$ (or $u_0\le 0$, $u_0\not\equiv 0$).  }
\end{remark}

\subsection{Entire solutions and chains}\label{outline}
Our strategy for proving the quasiconvergence theorems consists in
careful analysis of a certain type of entire solutions of
\eqref{eq:1}. By an entire solution we mean a solution $U(x,t)$ of
\eqref{eq:1} defined for all $t\in\R$ (and $x\in\R$). If is well known
(see Section \ref{sec:inv} for more details) that for any $\vp\in\omega(u)$
there exists an {entire solution} $U(x,t)$ of \eqref{eq:1} such that
$ U(\cdot,0)=\vp$ and $U(\cdot,t)\in\omega(u)$ for all $t\in\R$.

In analysis of such entire solutions we employ, as in several earlier
papers starting with \cite{P:ctw}, a geometric technique  involving
spatial trajectories of solutions of \eqref{eq:1}. This technique,
powered by properties of the zero number functional, often allows one
to gain insights into the behavior of solutions of equation
\eqref{eq:1} (whose trajectories are in an infinite dimensional space)
by examining their spatial trajectories, which are curves in
$\R^2$. Specifically for any $\varphi \in C^1(\R)$, we define
\begin{equation}
  \label{eq:straj}
  \tau(\vp):=\left\{ \left( \vp(x),\vp_x(x)\right):x\in\R\right\} 
\end{equation}
and refer to this set as the \textit{spatial trajectory (or orbit)} of
$\vp.$ If $Y\subset C^1(\R)$, $\tau(Y)\subset \R^2$ is the union of the spatial
trajectories of the functions in $Y$:
\begin{equation}
  \label{eq:2}
  \tau(Y):=\left\{ \left( \vp(x),\vp_x(x)\right):x\in\R,\,\varphi\in Y\right\}. 
\end{equation}
Note that if $\vp$ is a steady state of \eqref{eq:1}, then $\tau(\vp)$
is the usual trajectory of the solution $(\varphi,\varphi_x)$ of the
planar system
\begin{equation}\label{sysI}
  u_x=v,\qquad v_x=-f(u),
\end{equation}
associated with equation \eqref{eq:steady}.

When considering an entire solution $U$ with $U(\cdot,0)\in \om(u)$,
we want to constrain the locations that the spatial trajectories
$\tau(U(\cdot,t))$, $t\in\R$, can occupy in $\R^2$ relative to the
locations of the spatial trajectories of steady states of
\eqref{eq:1}. In this, the concept of a chain is crucial. By
definition, a \emph{chain} is any connected component of the set
$\R^2\setminus \cP_0$, where $\cP_0$ is the union of trajectories of
all nonstationary periodic solutions of \eqref{sysI}.  Any chain
consists of equilibria, homoclinic orbits, and, possibly, heteroclinic orbits of
\eqref{sysI} (see Section \ref{sec:22}). Our ultimate goal is to prove
that the spatial trajectories $\tau(U(\cdot,t))$, $t\in\R$, are all
contained in one chain. From this it follows, via a
unique-continuation type result (cp. Lemma \ref{le:2.7} below), that
$U$ is a steady state of \eqref{eq:1}, which proves that $\om(u)$
consists of steady states, as desired.

To achieve our goal, we first show that if the spatial trajectories
$\tau(U(\cdot,t))$, $t\in\R$, are not contained in one chain, then
none of them can intersect any chain; and there exist two distinct
chains $\Sigma_1$, $\Sigma_2$ such that
\begin{equation}
  \label{eq:chains}
  \tau\left(\alpha(U)\right)\subset\Sigma_1,\qquad
  \tau\left(\omega(U)\right)\subset\Sigma_2. 
\end{equation}
Here $\om(U)$, $\al(U)$ stand for the $\om$ and $\al$-limit sets of
$U$; $\om(U)$ is defined as in \eqref{defomega} and the definition of
$\al(U)$ is analogous, with $t_n\to\infty$ replaced by
$t_n\to-\infty$. We will also show that the set of all relevant
chains, namely, the chains  that can
possibly intersect $\tau(\om(u))$, is finite and ordered by a suitable
order relation, and that the chains in \eqref{eq:chains} always satisfy
$\Sigma_1<\Sigma_2$ in that relation. As a consequence, we obtain that
the sets
\begin{equation*}
  K:=\{\varphi\in\om(u):\tau(\varphi)\subset \Sigma\}, \quad
\end{equation*}
corresponding to the chains $\Sigma$ with
$\Sigma\cap \tau(\om(u))\ne \emptyset$ constitute a Morse decomposition
for the flow of \eqref{eq:1} in $\omega(u)$
(see \cite{Conley_78}). However, the existence of such a
Morse decomposition (with at least two Morse sets) contradicts
well-known recurrence properties of the flow in $\om(u)$ (cp.
\cite{Conley_78,Chen_Polacik1995,p-Fo:conv-asympt}).  This
contradiction shows that the spatial trajectories $\tau(U(\cdot,t))$,
$t\in\R$, must in fact be contained in one chain, as desired.

The detailed proof following the above scenario, which we give below,
is rather involved mainly because there are several different
possibilities as to how the chains $\Sigma_1$, $\Sigma_2$ in
\eqref{eq:chains} can look like and how the spatial trajectories
$\tau(U(\cdot,t))$, $t\in\R$, can fit into the structure of
$\Sigma_1$, $\Sigma_2$. Even
though the number of these possibilities is reduced considerably by
the nondegeneracy condition (ND), the  possibilities that still remain
require special considerations and arguments.

We wish to emphasize that we prove \eqref{eq:chains} for a large class of
entire solutions of \eqref{eq:1}, regardless of their containment in
the $\omega$-limit set of the solution $u$ of \eqref{eq:1},
\eqref{ic1}, for any $u_0$.  Accordingly, we have striven to make
Section \ref{sec:entire}, where the entire solutions are studied in
detail, completely independent from the other parts of the paper. In
particular, no reference is made in that section to the solution $u$
or its limit set $\om(u)$. Thus the results there can be viewed as a
contribution to the general understanding of entire solutions of
\eqref{eq:1}. Relations \eqref{eq:chains} can be interpreted as a
classification result, characterizing nonstationary entire solutions
as connections between two different sets of steady states. We refer
the reader to Section \ref{sec:entire} for more details.

The rest of the paper is organized as follows.  In Section
\ref{prelims}, we collect preliminary results on the zero number,
steady states of \eqref{eq:1}, entire solutions of \eqref{eq:1}
and their $\al$ and
$\om$-limit sets. We also recall there some results from
earlier paper that are repeatedly used in the proofs of our main theorems.
The proofs themselves comprise Sections \ref{sec:3} and \ref
{morse-dec}.  Section \ref{sec:entire} is devoted to the classification
of entire solutions, as mentioned above. 

\section{Preliminaries}\label{prelims}
In this section, we collect preliminary results and basic tools of our
analysis. We first recall some well known properties of the
zero-number functional and then examine trajectories of steady states
of \eqref{eq:1} in the phase plane, taking our standing hypotheses
(ND), (MF) into account.  Next we recall invariance properties of
various limit sets of bounded solutions of \eqref{eq:1}. Finally, in
Subsection \ref{sec:24} we state several important technical results
concerning bounded solutions of \eqref{eq:1}. 

\subsection{Zero number for linear parabolic equations}\label{sub:zero}
In this subsection, we consider solutions of a linear parabolic
equation
\begin{equation}\label{eqlin}
  v_t=v_{xx}+c(x,t)v,\qquad x\in\R,\ t\in\left( s,T\right),
\end{equation}
where $-\infty\leq s<T\leq \infty$ and $c$ is a bounded measurable
function.  Note that if $u$, $\bar u$ are bounded solutions of
\eqref{eq:1}, then their difference $v=u-\bar u$ satisfies
\eqref{eqlin} with a suitable function $c$.  Similarly, $v=u_x$ and
$v=u_t$ are solutions of such a linear equation. These facts are
frequently used below, often without notice.

For an interval $I=(a,b),$ with $-\infty\leq a < b\leq \infty,$ we
denote by $z_I(v(\cdot,t))$ the number, possibly infinite, of zeros
$x\in I$ of the function $x\mapsto v(x,t).$ If $I=\R$ we usually omit
the subscript $\R$:
$$
z(v(\cdot,t)):=z_\R(v(\cdot,t)).
$$
The following intersection-comparison principle holds (see
\cite{Angenent_Crelle88,Chen_MathAnn98}).
\begin{lemma}\label{lemzero}
  Let $v$ be a nontrivial solution of \eqref{eqlin} and
  $I=(a,b),$ with $-\infty\leq a < b\leq \infty.$ Assume that the
  following conditions are satisfied:
  \begin{itemize}
  \item if $b<\infty,$ then $v(b,t)\neq0$ for all
    $t\in\left( s,T\right),$
  \item if $a>-\infty,$ then $v(a,t)\neq0$ for all
    $t\in\left( s,T\right).$
  \end{itemize}
  Then the following statements hold true.
  \begin{enumerate}
  \item[(i)] For each $t\in\left( s,T\right),$ all zeros of
    $v(\cdot,t)$ are isolated. In particular, if $I$ is bounded, then
    $z_I(v(\cdot,t))<\infty$ for all $t\in\left( s,T\right).$
  \item[(ii)] The function $t\mapsto z_I(v(\cdot,t))$ is monotone
    non-increasing on $(s,T)$ with values in
    $\N\cup\{0\}\cup\{\infty\}.$
  \item[(iii)] If for some $t_0\in(s,T)$ the function $v(\cdot,t_0)$
    has a multiple zero in $I$ and $z_I(v(\cdot,t_0))<\infty,$ then
    for any $t_1,t_2\in(s,T)$ with $t_1<t_0<t_2,$ one has
    \begin{equation}\label{zerodrop}
      z_I(v(\cdot,t_1))>z_I(v(\cdot,t_0))\ge z_I(v(\cdot,t_2)).
    \end{equation}
  \end{enumerate}
\end{lemma}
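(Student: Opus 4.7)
The plan is to establish (i)--(iii) simultaneously via a precise local analysis near an arbitrary point $(x_0,t_0)$ with $v(x_0,t_0)=0$, following the now-standard approach of Angenent \cite{Angenent_Crelle88} and its extension by Chen \cite{Chen_MathAnn98}. The central claim is that $v$ has a finite order of vanishing at any such point and admits a parabolic-polynomial leading term, from which all three assertions follow by routine global assembly.

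The core step is to show that there exists a positive integer $k$ and a nonzero ``caloric polynomial'' $P_k$ (a solution of $w_s=w_{yy}$ that is homogeneous of parabolic degree $k$, hence a linear combination of Hermite-type polynomials $\sum_{2j+i=k}c_{ij}\,y^{i}s^{j}$) such that
\[
v(x_0+y,\,t_0+s)\;=\;P_k(y,s)\;+\;o\bigl((|y|+|s|^{1/2})^{k}\bigr)
\]
in a parabolic neighborhood of $(x_0,t_0)$. The finiteness of the order of vanishing follows from backward uniqueness for \eqref{eqlin}: if $v$ vanished to infinite order at $(x_0,t_0)$ then standard Carleman-type estimates would force $v\equiv 0$ on a backward parabolic cone and, by unique continuation in the $x$-variable, on the whole strip, contradicting nontriviality. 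The extraction of the leading homogeneous term is obtained by rescaling $w_\lambda(y,s):=\lambda^{-k}v(x_0+\lambda y,\,t_0+\lambda^2 s)$; after freezing the coefficient $c(x,t)$ at $(x_0,t_0)$ the equation for $w_\lambda$ converges as $\lambda\downarrow 0$ to the heat equation, and a compactness/Schauder argument identifies the limit as $P_k$.

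From this expansion, each of the three statements drops out. A caloric polynomial of parabolic degree $k$ has exactly $k$ simple spatial zeros near the origin for every small $s<0$ and at most $k-1$ for small $s>0$ (since the number of real roots of $y\mapsto P_k(y,s)$ has the same parity as $k$ and strictly decreases across $s=0$ whenever $k\ge 2$). Transferring this to $v$ via the expansion yields: (a) $x_0$ is isolated among the zeros of $v(\cdot,t_0)$, so together with the boundary hypotheses and compactness, $z_I(v(\cdot,t_0))<\infty$ when $I$ is bounded, proving (i); (b) crossing a multiple zero strictly decreases the local count, giving the strict inequality in \eqref{zerodrop} and hence (iii); (c) away from multiple-zero times, simple zeros depend continuously on $t$ by the implicit function theorem, and by the hypotheses $v(a,t),v(b,t)\neq 0$ they cannot enter or leave through finite endpoints, so the count is locally constant. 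Patching these two regimes gives monotone nonincrease on all of $(s,T)$, i.e.\ (ii). For unbounded $I$, zeros may in addition escape to infinity, which only reinforces monotonicity.

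The main obstacle is making the leading-order expansion rigorous for the variable-coefficient equation \eqref{eqlin} under merely bounded measurable $c$, where classical Schauder theory is not directly available; this is precisely the technical content of \cite{Chen_MathAnn98}, which refines \cite{Angenent_Crelle88} to the low-regularity setting adopted here. Since we have nothing to add to those arguments, we would simply invoke the result from these references rather than reproduce the proof.
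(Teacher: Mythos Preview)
Your proposal is correct and aligns with the paper: the paper does not give a proof of this lemma at all but simply cites \cite{Angenent_Crelle88,Chen_MathAnn98}, exactly the references you invoke, and your sketch is a faithful outline of the Angenent--Chen argument behind those citations.
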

If \eqref{zerodrop} holds, we say that $z_I(v(\cdot,t))$ drops in the
interval $(t_1,t_2).$

\begin{remark}\label{convzero}{\rm It is clear that if the assumptions
    of Lemma \ref{lemzero} are satisfied and for some $t_0\in(s,T)$
    one has $z_I(v(\cdot,t_0))<\infty,$ then $z_I(v(\cdot,t))$ can
    drop at most finitely many times in $(t_0,T)$; and if it is
    constant on $(t_0,T),$ then $v(\cdot,t)$ has only simple zeros in
    $I$ for all $t\in (t_0,T).$ In particular, if $T=\infty,$ there
    exists $t_1<\infty$ such that $t\mapsto z_I(v(\cdot,t))$ is
    constant on $(t_1,\infty)$ and all zeros of $v(\codt,t)$ are
    simple.  }
\end{remark}

Using the previous remark and the implicit function theorem, we obtain
the following corollary.
\begin{cor}\label{zeroIFT}
  Assume that the assumptions of Lemma \ref{lemzero} are satisfied and
  that the function $t\mapsto z_I(v(\cdot,t))$ is constant on $(s,T).$
  If for some $(x_0,t_0)\in I\times(s,T)$ one has $v(x_0,t_0)=0,$ then
  there exists a $C^1$- function $t\mapsto\eta(t)$ defined for
  $t\in(s,T)$ such that $\eta(t_0)=x_0$ and $v(\eta(t),t)=0$ for all
  $t\in(s,T).$
\end{cor}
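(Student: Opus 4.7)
The plan is to apply the implicit function theorem along the zero set $\{v=0\}$, after first establishing that all zeros of $v(\cdot,t)$ in $I$ are simple. I would first show this simplicity for every $t\in(s,T)$. Suppose not: some $v(\cdot,t_1)$, $t_1\in(s,T)$, has a multiple zero $y_1\in I$. Since zeros of $v(\cdot,t_1)$ in $I$ are isolated by Lemma~\ref{lemzero}(i), I pick $a',b'\in I$ with $a'<y_1<b'$ and $v(a',t_1)v(b',t_1)\ne 0$; by continuity $v(a',\cdot)$ and $v(b',\cdot)$ do not vanish on some time window $(t_1-\delta,t_1+\delta)$. On this window,
\[
z_I(v(\cdot,t)) = z_{(a,a')}(v(\cdot,t)) + z_{(a',b')}(v(\cdot,t)) + z_{(b',b)}(v(\cdot,t)),
\]
and each summand is non-increasing by Lemma~\ref{lemzero}(ii) applied to the respective sub-interval (whose finite endpoints, if any, carry a nonvanishing boundary value of $v$). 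Because $z_{(a',b')}(v(\cdot,t_1))<\infty$ and $y_1$ is a multiple zero, Lemma~\ref{lemzero}(iii) forces the middle term to strictly decrease at $t_1$, hence so does $z_I$, contradicting the constancy hypothesis.

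With simplicity at $(x_0,t_0)$ I have $v_x(x_0,t_0)\ne 0$, and $v$ is $C^1$ near $(x_0,t_0)$ by interior parabolic regularity for \eqref{eqlin}. The implicit function theorem then gives a $C^1$ function $\eta$ on a short interval around $t_0$ with $\eta(t_0)=x_0$ and $v(\eta(t),t)=0$. I extend $\eta$ to the maximal sub-interval $(\alpha,\beta)\subset(s,T)$ on which such a $C^1$-continuation exists and claim $(\alpha,\beta)=(s,T)$. If $\beta<T$, note that, by simplicity, the zeros of $v(\cdot,t_0)$ immediately to the left and right of $x_0$ inside $I$ (or the finite endpoints of $I$, where $v\ne 0$ by hypothesis) also extend as $C^1$ curves by the same IFT argument; these neighbouring curves, together with the boundary data, confine $\eta(t)$ to a compact subset of $I$ for $t\in[t_0,\beta)$. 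Passing to a subsequence $\eta(t_n)\to x^*\in I$ as $t_n\uparrow\beta$, continuity gives $v(x^*,\beta)=0$, and the already-established simplicity makes this zero simple. Applying the IFT at $(x^*,\beta)$ extends $\eta$ past $\beta$, contradicting maximality. The case $\alpha>s$ is handled symmetrically.

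The delicate step is the global extension: ensuring that $\eta(t)$ does not escape through a finite endpoint of $I$ (ruled out by the nonvanishing boundary hypothesis of Lemma~\ref{lemzero}) and does not wander off to infinity when $I$ is unbounded. The latter is where the full force of Step~1 is needed: constancy of $z_I$ and simplicity of all zeros organize the zero set into an ordered family of $C^1$ curves, so that $\eta$ is sandwiched between neighbouring zero-curves, or, if $x_0$ happens to be an extremal zero of $v(\cdot,t_0)$, one first localizes to a bounded sub-interval inside which a non-zero witness of $v(\cdot,t_0)$ traps $\eta$ for later times.
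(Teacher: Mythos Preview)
Your argument is correct and follows precisely the route the paper indicates in its one-line justification (``Using the previous remark and the implicit function theorem''): you first establish that constancy of $z_I$ forces all zeros to be simple---which is the content of Remark~\ref{convzero}, though you supply a localized proof valid even when $z_I=\infty$---and then apply the implicit function theorem together with a maximal-continuation argument. The paper gives no further details, so your write-up is in fact more complete than the original.
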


The following result, which is a version of Lemma \ref{lemzero} for
time-dependent intervals, is derived easily from Lemma \ref{lemzero}
(cp.  \cite[Section~2]{p-B-Q}).

\begin{lemma}
  \label{le:zerot}
  Let $v$ be a nontrivial solution of \eqref{eqlin} and
  $I(t)=(a(t),b(t))$, where $-\infty \le a(t)<b(t)\le \infty$ for
  $t\in(s,T)$.  Assume that the following conditions are satisfied:
  \begin{itemize}
  \item[ {\rm (c1)}] Either $b\equiv \infty$ or $b$ is a (finite)
    continuous function on (s,T).  In the latter case,
    $v(b(t),t)\ne 0$ for all $t\in(s,T)$.
  \item[\rm (c2)] Either $a\equiv -\infty$ or $a$ is a continuous
    function on (s,T).  In the latter case, $v(a(t),t)\ne 0$ for all
    $t\in(s,T)$.
  \end{itemize}
  Then statements (i), (ii) of Lemma \ref{lemzero} are valid with $I$,
  $a$, $b$ replaced by $I(t)$, $a(t)$, $b(t)$, respectively; and
  statement (iii) of Lemma \ref{lemzero} is valid with all occurrences
  of $z_I(v(\codt,t_j))$, $j=0,1,2$, replaced by
  $z_{I(t_j)}(v(\codt,t_j))$, $j=0,1,2$, respectively.
\end{lemma}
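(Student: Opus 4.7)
The plan is to reduce Lemma \ref{le:zerot} locally in $t$ to the fixed-interval Lemma \ref{lemzero}, by trapping the moving endpoints inside a fixed interval on which $v$ has only finitely many zeros and does not vanish at the boundary.

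For part (i), the statement that zeros of $v(\cdot,t_0)$ in $I(t_0)$ are isolated is a property of the single slice $v(\cdot,t_0)$, so it follows directly from Lemma \ref{lemzero}(i) applied to any bounded subinterval of $I(t_0)$ whose endpoints avoid zeros of $v(\cdot,t_0)$ (such endpoints exist since $v(\cdot,t_0)$ is not identically zero near $a(t_0)$ and $b(t_0)$, or trivially if $a(t_0)=-\infty$ or $b(t_0)=\infty$).

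The core step is a local reduction. Fix $t_0\in(s,T)$ and suppose for concreteness that both $a,b$ are finite-valued on a neighborhood of $t_0$ (the other cases are similar or easier). Since $v$ is continuous and $v(a(t_0),t_0),\,v(b(t_0),t_0)\neq0$, there exist $\eta>0$ and $\delta>0$ such that $v\neq0$ on the closed rectangles $[a(t_0)-\eta,a(t_0)+\eta]\times[t_0-\delta,t_0+\delta]$ and $[b(t_0)-\eta,b(t_0)+\eta]\times[t_0-\delta,t_0+\delta]$. By continuity of $a,b$ at $t_0$, after shrinking $\delta$, we have $a(t)\in(a(t_0)-\eta,a(t_0)+\eta)$ and $b(t)\in(b(t_0)-\eta,b(t_0)+\eta)$ for every $t\in(t_0-\delta,t_0+\delta)$. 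Set $\tilde a:=a(t_0)-\eta$ and $\tilde b:=b(t_0)+\eta$. Then the fixed interval $J:=(\tilde a,\tilde b)$ contains $I(t)$ for all such $t$, the function $v(\cdot,t)$ does not vanish at $\tilde a,\tilde b$, and, crucially, $v(x,t)\neq0$ whenever $\tilde a\le x\le a(t)$ or $b(t)\le x\le\tilde b$. Consequently
\[
z_{I(t)}(v(\cdot,t))=z_J(v(\cdot,t))\qquad\bigl(t\in(t_0-\delta,t_0+\delta)\bigr),
\]
so Lemma \ref{lemzero}(ii),(iii) applied to $J$ yields the monotonicity and drop statements on $(t_0-\delta,t_0+\delta)$.

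Finally, I would patch the local statements into global ones on $(s,T)$. Local non-increase of a $\N\cup\{0,\infty\}$-valued function on a cover of $(s,T)$ gives global non-increase, settling (ii). For (iii), given $t_1<t_0<t_2$ in $(s,T)$ with $z_{I(t_0)}(v(\cdot,t_0))<\infty$ and a multiple zero of $v(\cdot,t_0)$ in $I(t_0)$, I apply the local reduction at $t_0$ on a small interval $(t_0-\delta,t_0+\delta)\subset(t_1,t_2)$ to obtain a strict drop there via Lemma \ref{lemzero}(iii); the global drop \eqref{zerodrop} then follows from the global monotonicity established in (ii). The only genuine technical point, and the place where the assumption $v(a(t),t),v(b(t),t)\neq0$ is essential, is verifying that no zeros hide in the ``annular strips'' $[\tilde a,a(t)]$ and $[b(t),\tilde b]$—this is what the continuity-plus-nonvanishing argument at $t_0$ provides, and is the main obstacle to a shorter proof.
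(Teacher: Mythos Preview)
Your proposal is correct and follows essentially the same approach the paper indicates: the paper does not give a detailed proof but simply says the result ``is derived easily from Lemma~\ref{lemzero} (cp.~\cite[Section~2]{p-B-Q}),'' and your local reduction---trapping the moving endpoints inside a fixed interval on which $v$ does not vanish near the boundary, so that $z_{I(t)}(v(\cdot,t))=z_J(v(\cdot,t))$ locally in $t$---is exactly how that derivation goes.
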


We will also need the following robustness lemma (see \cite[Lemma
2.6]{Du_Matano}).
\begin{lemma}\label{robustnesszero}
  Let $w_n(x,t)$ be a sequence of functions converging to $w(x,t)$ in
  $\displaystyle C^1\left( I\times(s,T)\right)$ where $I$ is an open
  interval. Assume that $w(x,t)$ solves a linear equation
  \eqref{eqlin}, $w\not\equiv0$, and $w(\cdot,t)$ has a multiple zero
  $x_0\in I$ for some $t_0\in(s,T)$.  Then there exist sequences
  $x_n\to x_0$, $t_n\to t_0$ such that for all sufficiently large $n$
  the function $w_n(\cdot,t_n)$ has a multiple zero at $x_n$.
\end{lemma}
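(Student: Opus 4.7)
The plan is to combine zero-number properties of $w$ with a topological argument on the zero set of $w_n$, arriving at a contradiction if $w_n$ were to avoid multiple zeros near $(x_0,t_0)$.

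First, I would use Lemma~\ref{lemzero} and Remark~\ref{convzero} to select $\delta>0$ small and times $t_1<t_0<t_2$ close to $t_0$ such that $w(x_0\pm\delta,t)\neq 0$ for all $t\in[t_1,t_2]$; both $w(\cdot,t_1)$ and $w(\cdot,t_2)$ have only simple zeros in $I:=(x_0-\delta,x_0+\delta)$; and
\[
k_1:=z_I(w(\cdot,t_1))\;>\;z_I(w(\cdot,t_2))=:k_2.
\]
The strict drop comes from Lemma~\ref{lemzero}(iii) applied at the multiple zero $(x_0,t_0)$ (with $z_I(w(\cdot,t_0))<\infty$ by boundedness of $I$), while Remark~\ref{convzero} ensures that non-simple zeros of $w$ are isolated in time, so simplicity of the zeros on the slices $t=t_j$ is achievable.

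Second, I would argue by contradiction: passing to a subsequence (relabeled), suppose $w_n$ has no multiple zero in the closed rectangle $R:=[x_0-\delta,x_0+\delta]\times[t_1,t_2]$. The $C^1$ convergence $w_n\to w$ then gives two exact counts. On the vertical sides $\{x_0\pm\delta\}\times[t_1,t_2]$, $|w|$ is bounded below by a positive constant, so $w_n\neq 0$ there for large $n$. On the horizontal slice $t=t_j$, each simple zero $a$ of $w(\cdot,t_j)$ carries $w_x(a,t_j)\neq 0$; by $C^1$ convergence $w_n(\cdot,t_j)$ is strictly monotone in a fixed small neighborhood of $a$ and therefore has exactly one zero close to $a$. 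Away from the zeros of $w(\cdot,t_j)$, $|w(\cdot,t_j)|$ is uniformly positive on the compact complement, so $w_n(\cdot,t_j)$ has no zero there either. Hence $w_n(\cdot,t_j)$ has exactly $k_j$ zeros in $I$ for all large $n$, each simple.

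Third, under the contradiction assumption $(w_n)_x\neq 0$ at every zero of $w_n$ in $R$, so by the implicit function theorem the zero set $Z_n:=\{w_n=0\}\cap R$ is locally a $C^1$ graph $x=x(t)$. Such a graph can only terminate at a multiple zero of $w_n$ or at the boundary of $R$; the first is excluded by assumption, and the vertical sides have been excluded above. Consequently every connected component of $Z_n$ runs as a graph over the entire interval $[t_1,t_2]$, and the number of components equals both $k_1$ and $k_2$, contradicting $k_1>k_2$. This shows that for every fixed $\delta,t_1,t_2$ as above, $w_n$ has a multiple zero in $R$ for all sufficiently large $n$; a diagonal argument with $\delta_m\downarrow 0$ and $|t_{j,m}-t_0|\downarrow 0$ yields the required sequences $x_n\to x_0$, $t_n\to t_0$. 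The main obstacle is counting the zeros of $w_n(\cdot,t_j)$ \emph{exactly}, not merely from below: since $w_n$ need not solve any parabolic equation, the zero-number formalism is unavailable for it, and one must exploit the simplicity of the zeros of $w$ together with $C^1$ convergence to control $w_n$ precisely enough for the implicit-function-theorem description of $Z_n$ to close the argument.
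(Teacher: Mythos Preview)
Your proof is correct and follows precisely the approach of the reference the paper cites for this lemma (Du--Matano, Lemma~2.6); the paper itself gives no proof beyond that citation. The key steps---localizing to a box where $z_I(w(\cdot,t))$ strictly drops across $t_0$ with simple zeros on the horizontal sides, transferring exact zero counts to $w_n$ via $C^1$ convergence, and using the implicit function theorem to rule out a change in count without a multiple zero---are exactly the intended ones.
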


\subsection{Phase plane of the stationary problem}\label{sec:22}
In this subsection, we examine the trajectories of the solutions of
equation \eqref{eq:steady}. The first-order system
\begin{equation}\label{eq:sys}
  u_x=v,\qquad v_x=-f(u),
\end{equation}
associated with \eqref{eq:steady} is Hamiltonian with respect to the
energy
\begin{equation}\label{energy}
  H(u,v)=\frac{v^2}{2}+F(u)
\end{equation}
(with $F$ as in \eqref{eq:3}).  Thus, each orbit of \eqref{eq:sys} is
contained in a level set of $H.$ The level sets are symmetric with
respect to the $v-$axis, and our extra hypothesis (MF) implies that
they are all bounded. Therefore, all orbits of \eqref{eq:sys} are bounded
and there are only four types of them: equilibria (all of which are on
the $u-$axis), non-stationary periodic orbits (by which we mean orbits
of nonstationary periodic solutions), homoclinic orbits, and
heteroclinic orbits. Following a common terminology, we say that a
solution $\vp$ of \eqref{eq:steady} is a \emph{ground state at level
  $\ga$} if corresponding solution $(\vp,\vp_x)$ of \eqref{eq:sys} is
homoclinic to the equilibrium $(\ga,0)$; we say that $\vp$ is \emph{a
  standing wave of \eqref{eq:1} connecting $\ga_-$ and $\ga_+$} if
$(\vp,\vp_x)$ is a heteroclinic solution of \eqref{eq:sys} with limit
equilibria $(\ga_-,0)$ and $(\ga_+,0)$.

Each non-stationary periodic orbit $\mathcal{O}$ is symmetric about
the $u-$axis and for some $p<q$ one has
\begin{align}
  \mathcal{O}\cap\{ (u,0):u\in\R\} & = \left\{(p,0),(q,0)\right\} \nonumber \\
  \mathcal{O}\cap \left\{(u,v):v>0\right\} & = \left\{\left( u,\sqrt{2(F(p)-F(u))}\right):u\in(p,q)\right\}. \label{periodicorbits}
\end{align}

Let
\begin{align}
  \mathcal{E} & := \{ (a,0):f(a)=0\}\  \textrm{ (the set of all equilibria of \eqref{eq:sys})}, \nonumber \\
  \mathcal{P}_0 & :=\{(a,b)\in\R^2: (a,b)\textrm{ lies on a non-stationary periodic orbit of \eqref{eq:sys}}\}, \nonumber \\
  \mathcal{P} & := \mathcal{P}_0\cup\mathcal{E}\  \textrm{ (the set of
                all periodic orbits of \eqref{eq:sys}, including the equilibria)}. \nonumber
\end{align}
The next lemma gives a description of the phase plane portrait of
\eqref{eq:sys} with all the non-stationary periodic orbits removed.  The
following observations will be useful in its proof and at other places
below. Let $(p,0)$ be an equilibrium of \eqref{eq:sys}. Then $f(p)=0$
and, by (ND), $f'(p)\ne 0$. Elementary considerations using the
Hamiltonian $H$ show that if $f'(p)>0$, then $(p,0)$ is not contained
in the closure of any homoclinic or heteroclinic orbit of
\eqref{eq:sys}. On the other hand, if $f'(p)<0$, then (MF) implies that
$(p,0)$ is contained in the closure of a homoclinic or heteroclinic
orbit contained in the halfplane $\{(u,v):u>p\}$ as well as of another
one contained in the halfplane $\{(u,v):u<p\}$.

\begin{lemma}\label{MatPolLemma}
  The following two statements are valid.
  \begin{enumerate}
  \item[(i)] Let $\Sigma$ be a connected component of
    $\R^2\setminus\mathcal{P}_0.$ Then $\Sigma$ is a compact set
    contained in a level set of the Hamiltonian $H$ and one has
    \begin{equation*}
      \Sigma = \left\{(u,v)\in\R^2:u\in J,\ v=\pm\sqrt{2(c-F(u))}\right\}
    \end{equation*}
    where $c$ is the value of $H$ on $\Sigma$ and $J=[p,q]$ for some
    $p,q\in\R$ with $p\leq q.$ Moreover, if $(u,0)\in\Sigma$ and
    $p<u<q,$ then $(u,0)$ is an equilibrium.  If $p<q,$ the points
    $(p,0)$ and $(q,0)$ lie on homoclinic
    orbits. 
    If $p=q,$ then $\Sigma=\{(p,0)\},$ and $p$ is an unstable
    equilibrium of \eqref{eq:ODE}.
  \item[(ii)] Each connected component of the set
    $\R^2\setminus\mathcal{P}$ consists of a single orbit of
    \emph{\eqref{eq:sys}}, either a homoclinic orbit or a heteroclinic
    orbit.
  \end{enumerate}
\end{lemma}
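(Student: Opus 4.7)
The plan is to first show that the Hamiltonian $H$ is constant on each connected component $\Sigma$ of $\R^2 \setminus \cP_0$, and then to read off the phase portrait of $\Sigma$ explicitly from the corresponding level set of $H$. First I would verify that $\cP_0$ is open in $\R^2$: under (ND) every non-stationary periodic orbit lies at a regular value of $H$ and is preserved in a tubular neighborhood foliated by nearby periodic orbits. Taking $\Sigma$ to be a connected component of the closed set $\R^2 \setminus \cP_0$, I would show $H$ is constant on $\Sigma$ by contradiction: the critical values of $H$ are exactly the values $F(\gamma)$ at zeros $\gamma$ of $f$, and these are isolated by (ND); if $H(\Sigma) \subset \R$ contained two distinct values, it would contain some regular value $c'$ of $H$, but $H^{-1}(c')$ is bounded by (MF) and contains no equilibria, so its connected components are all non-stationary periodic orbits. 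This gives $H^{-1}(c') \subseteq \cP_0$, contradicting the existence of a point of $\Sigma$ at level $c'$. Compactness of $\Sigma$ then follows from its closedness and the boundedness of the level set on which it lies.

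Writing $c := H|_\Sigma$, the projection of $\Sigma$ onto the $u$-axis is compact and connected, hence of the form $J = [p, q]$, and $\Sigma \subseteq \{(u, v) : v = \pm\sqrt{2(c - F(u))},\ u \in J\}$. To obtain the reverse inclusion, I would decompose $H^{-1}(c)$ as a disjoint union of the closed connected curves $C_i = \{(u, \pm\sqrt{2(c - F(u))}) : u \in [p_i, q_i]\}$, one for each connected component $[p_i, q_i]$ of $\{F \le c\}$; each $C_i$ is connected and they are pairwise disjoint, so they are the connected components of $H^{-1}(c)$. The connected set $\Sigma$ lies in a single $C_{i_0}$, which is either a lone periodic orbit (then $C_{i_0} \subseteq \cP_0$, impossible) or contains at least one equilibrium, in which case the observation preceding the lemma puts $C_{i_0} \subseteq \R^2 \setminus \cP_0$ and therefore forces $\Sigma = C_{i_0}$. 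Any $(u, 0) \in \Sigma$ with $p < u < q$ then satisfies $F(u) = c$ with $F \le c$ nearby, making $u$ a local maximum of $F$ and hence an equilibrium. If $p = q$, then $\{p\}$ is isolated in $\{F \le c\}$, so $p$ is a local minimum of $F$ with $f'(p) > 0$ by (ND), i.e., $p$ is unstable for \eqref{eq:ODE}.

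For $p < q$ I would first rule out $(p, 0)$ being an equilibrium: $f'(p) > 0$ would isolate $p$ in $\{F \le c\}$, while $f'(p) < 0$ would put $p$ in the interior of a multi-point component of $\{F \le c\}$, both incompatible with $p$ being the left endpoint of $[p, q]$. Hence $f(p) \ne 0$, so the orbit through $(p, 0)$ crosses the $u$-axis transversally, and tracing it in the upper and lower halves I find that both its $\alpha$- and $\omega$-limits coincide with the leftmost interior equilibrium at level $c$ (which exists because $\Sigma$ is not a single periodic orbit), yielding a homoclinic orbit containing $(p, 0)$; a symmetric argument handles $(q, 0)$. For part (ii), using $\R^2 \setminus \cP = \bigsqcup_\Sigma (\Sigma \setminus \cE)$, I would observe that removing the interior equilibria from each multi-point $\Sigma$ cuts the closed curve into its constituent open orbit-arcs, each of which is either one of the two homoclinic orbits attached to $(p, 0)$ and $(q, 0)$, or a heteroclinic orbit between consecutive interior saddles. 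The main obstacle is the global identification $\Sigma = C_{i_0}$ in the second paragraph, where (ND) and (MF) must be carefully combined to exclude pathological decompositions of level sets; the local phase-plane analysis at the regular endpoints and interior saddles is routine once the global picture is in place.
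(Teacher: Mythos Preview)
Your argument is correct and gives a self-contained proof. The paper takes a different route: it cites \cite[Lemma~3.1]{Matano_Polacik_CPDE16} for everything in (i) except the last two assertions (that $(p,0)$, $(q,0)$ lie on homoclinic orbits when $p<q$, and that $p$ is unstable when $p=q$), and for all of (ii). Only those two assertions are argued in the paper itself, via the phase-plane observations stated just before the lemma: if $(p,0)$ were an equilibrium with $p<q$, then necessarily $f'(p)<0$, whence another homoclinic or heteroclinic orbit sitting in $\{u<p\}$ would be attached to $(p,0)$, contradicting that $\Sigma$ is a full connected component of $\R^2\setminus\cP_0$. Your treatment of the same point---via the position of $p$ in the sublevel set $\{F\le c\}$---is an equivalent, slightly more elementary formulation.

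Two small imprecisions in your sketch, neither fatal: (a) a non-stationary periodic orbit need not lie at a \emph{regular value} of $H$ (there may be an equilibrium elsewhere at the same level); what you actually use, and what suffices, is that $\nabla H\neq 0$ along the orbit, so nearby level curves are closed and free of equilibria, giving openness of $\cP_0$; (b) the critical values of $H$ are not ``isolated'' in the strong sense (two zeros of $f$ may share the same $F$-value), but under (ND) and (MF) there are only finitely many of them, which is all your connectedness argument needs. What your approach buys is independence from the cited reference; what the paper's approach buys is brevity.
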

\begin{proof}
  These results, except for the last two statements in (i) are proved
  in \cite[Lemma 3.1]{Matano_Polacik_CPDE16} (and they are valid
  without the nondegeneracy condition (ND)). It is also proved there
  that the point $(p,0)$ is an equilibrium or it lies on a homoclinic
  orbit. We show that $(p,0)$ is not an equilibrium if $p<q$.  Indeed,
  assume it is. Then, in view of (ND) and the relation $p<q$, there is
  a homoclinic or heteroclinic orbit of \eqref{eq:sys} (contained in
  $\Sigma$) having $(p,0)$ in its closure. Hence, necessarily,
  $f'(p)<0$, and then it follows that $(p,0)$ is in the closure of
  another homoclinic or heteroclinic orbit contained in
  $\{(u,v):u<p\}$ (see the remarks preceding the lemma).  This
  contradicts the fact $\Sigma$ is a connected component of
  $\R^2\setminus\mathcal{P}_0$.  Analogous arguments show that $(q,0)$
  lies on a homoclinic orbit. For similar reasons, if
  $\Sigma=\{(p,0)\}$, so $(p,0)$ is clearly an equilibrium, the
  relation $f'(p)<0$ would imply that $\Sigma$ is not a connected
  component of $\R^2\setminus\mathcal{P}_0$. Thus $f'(p)>0$.
\end{proof}
 
The above Lemma motivates the following definitions.
  A \textit{chain} is any connected component $\Sigma$ of
    $\R^2\setminus\mathcal{P}_0.$ We say that a chain is \emph{trivial} if
    it consists of a single point.

  If $\mathcal{H}$ is a connected component of
    $\R^2\setminus\mathcal{P}$, let $\Lambda(\mathcal{H})$
    the set consisting of the closure of $\mathcal H$ and the
    reflection of $\mathcal H$ with 
    respect to the $u-$axis. So $\Lambda(\mathcal{H})$ is either the
    union of a homoclinic orbit and its limit equilibrium, or the
    union of two heteroclinic orbits and their common limit
    equilibria. We refer to $\Lambda(\mathcal{H})$ as the
    \textit{loop} associated with $\mathcal{H}.$

Hypotheses (ND) and (MF) imply that $f$ has only finitely many
zeros. Since any chain or loop contains an equilibrium, there is only
a finite number of chains and loops.  In particular, any chain is the
union of finitely many loops. Also, any chain is a compact subset of
$\R^2$ and so their (finite) union, that is, the set
$\R^2\setminus\mathcal{P}_0$, is compact. This  implies that
$\mathcal{P}_0$ admits a  unique unbounded connected component
and all connected components of
  $\mathcal{P}_0$ (as well $\mathcal{P}_0$ itself) are open sets.

If $\Sigma$ is a chain, we denote by $\mathcal{I}(\Sigma)$ the union
of all bounded connected components of $\R^2\setminus\Sigma.$ Thus,
$\mathcal{I}(\Sigma)$ is the union of the interiors of the loops,
viewed as Jordan curves, contained in $\Sigma$; if $\Sigma$ consists
of a single equilibrium (necessarily a center for \eqref{eq:sys}), 
$\mathcal{I}(\Sigma)=\emptyset$. 
Since $\Sigma$ is
clearly compact in $\R^2,$ the set $\mathcal{I}(\Sigma)$ is open.  We
also define
${\overline{\mathcal{I}}(\Sigma)}=\mathcal{I}(\Sigma)\cup\Sigma.$ The
set ${\overline{\mathcal{I}}(\Sigma)}$ is closed and equal to the
closure of $\mathcal{I}(\Sigma)$, except when $\Sigma$ consists of
a single point, in which case  $\mathcal{I}(\Sigma)=\Sigma$.
In a similar way we define the sets
$\mathcal{I}(\La)$, ${\overline{\mathcal{I}}(\La)}$,
$\mathcal{I}(\cO)$, ${\overline{\mathcal{I}}(\cO)}$, when $\Lambda$ is
a loop and $\mathcal{O}$ is a non-stationary periodic orbit.
  
The following lemma introduces two key concepts: the inner chain and
the outer loop associated with a connected component of
$\mathcal{P}_0$ (see also Figure \ref{inandout}).
 
 \begin{lemma}\label{le:p0}
   Let $\Pi$ be any connected component of $\mathcal{P}_0.$ The
   following statements hold true.
   \begin{enumerate}
   \item[(i)] The set $\Pi$ is open.
   \item[(ii)] There exists a unique chain $\Sigma_{in}$ such that for
     all periodic orbits $\mathcal{O}\subset\Pi$ one has
     \begin{equation*}
       {\overline{\mathcal{I}}\left(\Sigma_{in}\right)}\subset\mathcal{I}(\mathcal{O})\textrm{ and }\mathcal{I}(\mathcal{O})\setminus {\overline{\mathcal{I}}(\Sigma_{in})}\subset\Pi.
     \end{equation*}
   \item[(iii)] If $\Pi$ is bounded, there exists a unique loop
     $\Lambda_{out}$ such that for all periodic orbits
     $\mathcal{O}\subset\Pi$ one has
     \begin{equation*}
       \ol{\mathcal{I}}(\mathcal{O})\subset
       \mathcal{I}(\Lambda_{out}),\textrm{ and }
       \mathcal{I}(\Lambda_{out})\setminus
       {\overline{\mathcal{I}}(\mathcal{O})}\subset\Pi.    
     \end{equation*}
   \item[(iv)] There is a zero $\be$ of $f$ such that $f'(\be)>0$ and
     $(\beta,0)\in\mathcal{I}(\mathcal{O}),$ for all periodic orbits
     $\mathcal{O}\subset\Pi.$
   \item[(v)] If $\mathcal{O}_1,\mathcal{O}_2$ are two distinct
     periodic orbits contained in $\Pi,$ then either
     $\mathcal{O}_1\subset\mathcal{I}\left(\mathcal{O}_2\right)$ or
     $\mathcal{O}_2\subset\mathcal{I}\left(\mathcal{O}_1\right)$
     (thus, $\Pi$ is totally ordered by this relation).
   \end{enumerate}
 \end{lemma}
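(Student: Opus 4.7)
My plan is to exhibit $\Pi$ as a \emph{period annulus} for the Hamiltonian system \eqref{eq:sys} and then read off (i)--(v) from this structure. Throughout I will use the observation, recorded immediately before the lemma, that $\R^2\setminus\mathcal P_0$ is a finite union of pairwise disjoint compact chains, together with the nondegeneracy hypothesis (ND), which ensures that every equilibrium of \eqref{eq:sys} is either a center ($f'>0$, Poincar\'e index $+1$) or a saddle ($f'<0$, index $-1$). Part (i) is then immediate: $\R^2\setminus\mathcal P_0$ is closed, so $\mathcal P_0$ and each of its connected components are open.

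\textbf{Foliation and (v).} Since the critical points of $H$ are exactly the equilibria of \eqref{eq:sys} and hence disjoint from $\mathcal P_0$, $H$ has no critical points on $\Pi$; the level sets $\{H=c\}\cap\Pi$ foliate $\Pi$ locally by smooth $1$-manifolds. I would next upgrade this to a \emph{global} foliation by the periodic orbits in $\Pi$. The crux is to show that each slice $\{H=c\}\cap\Pi$ is connected: if it contained two disjoint Jordan curves, the bounded planar region between them would lie in $\Pi$ and contain no equilibrium, yet its boundary (a simple closed curve) would force the Poincar\'e index sum of the equilibria inside to be $+1$, a contradiction. Consequently $\Pi$ is homeomorphic to $S^1\times(a,b)$, with the periodic orbits corresponding to the circles $S^1\times\{t\}$. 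Any two such circles, viewed as Jordan curves in $\R^2$, are nested, giving (v).

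\textbf{Inner chain and outer loop, (ii) and (iii).} Using (v), the families $\{\overline{\mathcal I}(\mathcal O):\mathcal O\subset\Pi\}$ and $\{\mathcal I(\mathcal O):\mathcal O\subset\Pi\}$ are linearly ordered by inclusion, and I would set
\[
K_{in}:=\bigcap_{\mathcal O\subset\Pi}\overline{\mathcal I}(\mathcal O),\qquad K_{out}:=\overline{\bigcup_{\mathcal O\subset\Pi}\overline{\mathcal I}(\mathcal O)},
\]
both compact and connected. The complement $\R^2\setminus K_{in}$ is a nested union of topological annuli $\R^2\setminus\overline{\mathcal I}(\mathcal O)$ and is therefore connected, so $\partial K_{in}$ is connected too; since $\partial K_{in}\subset\R^2\setminus\mathcal P_0$ and the chains are pairwise disjoint closed components of this set, $\partial K_{in}$ must be contained in a single chain, which I would take as $\Sigma_{in}$. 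A standard planar topology argument, based on the Jordan curve theorem and the structure of chains as finite unions of loops, then yields $K_{in}=\overline{\mathcal I}(\Sigma_{in})$. When $\Pi$ is bounded, the same reasoning applied to $K_{out}$ produces an outer boundary that is the boundary of the unbounded component of $\R^2\setminus K_{out}$; this boundary is a single Jordan-type curve, hence a single loop $\Lambda_{out}$ lying in some chain, with $K_{out}=\overline{\mathcal I}(\Lambda_{out})$. The containments $\mathcal I(\mathcal O)\setminus\overline{\mathcal I}(\Sigma_{in})\subset\Pi$ and $\mathcal I(\Lambda_{out})\setminus\overline{\mathcal I}(\mathcal O)\subset\Pi$ then simply express that the annular region between two leaves of the foliation is swept out by intermediate leaves, all of which belong to $\Pi$. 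Uniqueness in both (ii) and (iii) follows from the disjointness of distinct chains/loops.

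\textbf{Enclosed center (iv), and main obstacle.} For (iv), the Poincar\'e index formula applied to any periodic orbit $\mathcal O\subset\Pi$ gives that the equilibria in $\mathcal I(\mathcal O)$ have indices summing to $+1$. If $\Sigma_{in}=\{(\beta,0)\}$ is a trivial chain, then by Lemma~\ref{MatPolLemma}(i) $\beta$ is unstable for \eqref{eq:ODE}, so $f'(\beta)>0$, and $(\beta,0)\in\overline{\mathcal I}(\Sigma_{in})\subset\mathcal I(\mathcal O)$ for every $\mathcal O\subset\Pi$. If $\Sigma_{in}$ is non-trivial, every equilibrium on $\Sigma_{in}$ lies on a homoclinic or heteroclinic orbit (by the remarks preceding Lemma~\ref{MatPolLemma}) and is therefore a saddle; then the index balance forces a center $(\beta,0)$ to sit in $\mathcal I(\Sigma_{in})$, and again $(\beta,0)\in\mathcal I(\mathcal O)$ for every $\mathcal O\subset\Pi$. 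I expect the subtlest step of the whole argument to be the rigorous verification that $\Pi$ is annular---in particular, the exclusion of two disjoint periodic orbits at a common energy level in $\Pi$ and the proof that the inner and outer boundaries of $K_{in}$ and $K_{out}$ are connected---since it rests on a careful interplay of the Jordan curve theorem, the Poincar\'e index formula, and the finiteness of chains provided by (ND) and (MF).
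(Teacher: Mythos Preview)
Your approach is genuinely different from the paper's, and more topological in spirit: you aim to prove the period-annulus structure of $\Pi$ abstractly (via level sets, Poincar\'e index, nested intersections), whereas the paper works entirely with the explicit $u$-axis parametrization of orbits afforded by \eqref{periodicorbits}. Concretely, the paper fixes one orbit $\mathcal O_0\subset\Pi$ with $u$-axis crossings $(p_0,0),(q_0,0)$, takes the maximal open intervals $(\hat p,p)\ni p_0$ and $(q,\hat q)\ni q_0$ whose product with $\{0\}$ lies in $\Pi$, checks that $F'<0$ on the first and $F'>0$ on the second (no equilibria in $\Pi$), and defines $\tilde\Pi$ as the union of all orbits meeting these segments. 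The nesting (v) is then immediate for $\tilde\Pi$, the endpoints $(p,0),(q,0)$ determine $\Sigma_{in}$ and $(\hat p,0),(\hat q,0)$ determine $\Lambda_{out}$ by explicit formulas, and one verifies $\tilde\Pi=\mathcal I(\Lambda_{out})\setminus\overline{\mathcal I}(\Sigma_{in})$, which forces $\tilde\Pi=\Pi$. Statement (iv) is obtained by taking $\beta$ to be the minimum point of $F$ on $[p,q]$.

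The gap in your proposal is exactly where you suspect it, in the argument for (v), and it is not merely a matter of adding detail. For the Hamiltonian $H(u,v)=v^2/2+F(u)$, two distinct periodic orbits at the \emph{same} energy level $c$ are \emph{never} nested: their $u$-projections are disjoint intervals $[p_1,q_1]$, $[p_2,q_2]$ with $F<c$ on each open interval and $F=c$ at the endpoints, so one cannot sit inside the other. Hence there is no ``bounded planar region between them'' with a simple closed boundary to which you could apply the index formula; your sketched contradiction does not get off the ground. What actually excludes two such orbits from the same $\Pi$ is the monotonicity of $F$ along the rightmost-crossing coordinate: if $q_1<q_2$ are rightmost crossings of two orbits in $\Pi$, then (by connectedness of $\Pi$ and continuity of the rightmost crossing) every point of $[q_1,q_2]\times\{0\}$ lies on an orbit in $\Pi$, hence $f\ne 0$ there, hence $F$ is strictly monotone on $[q_1,q_2]$ and $F(q_1)\ne F(q_2)$. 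This is precisely the paper's mechanism. Once you import this observation your topological program can be completed, but note that your $K_{in},K_{out}$ construction and the identification $K_{in}=\overline{\mathcal I}(\Sigma_{in})$ still require nontrivial planar-topology facts, whereas the paper's route reads off $\Sigma_{in}$ and $\Lambda_{out}$ directly from the four numbers $\hat p<p\le q<\hat q$.
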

 We refer to $\Sigma_{in}$ and $\La_{out}$ as the \emph{inner chain
   and outer loop} associated with $\Pi$; we denote them by
 $\Sigma_{in}(\Pi)$ and $\La_{out}(\Pi)$ if the correspondence to
 $\Pi$ is to be explicitly indicated.

\begin{figure}[h]
  \vspace{-3.5cm}
 
  \addtolength{\belowcaptionskip}{10pt}
  \addtolength{\abovecaptionskip}{-3cm}
  \hspace{.1in} \includegraphics[scale=.6]{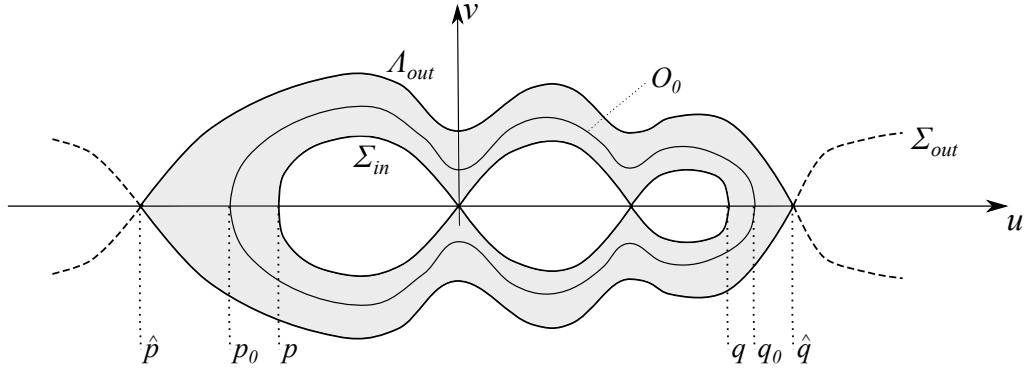}
 
  \caption[Inner chain and outer loop]{The inner chain and outer loop
    associated with a connected component $\Pi$ of $\cP_0$: $\Pi$ is
    indicated by the shaded region, $\La_{out}$ and $\Sigma_{in}$ form
    the boundary of $\Pi$.  The outer loop can be a heteroclinic loop
    (as in this figure) or a homoclinic loop. \label{inandout}}
\end{figure}

\begin{proof}[Proof of Lemma \ref{le:p0}] The openness of $\Pi$
  follows from the compactness of $\R^2\setminus\mathcal{P}_0$, as
  already mentioned above. This takes care of statement (i).

  In the rest of the proof, we assume for definiteness
  that  $\Pi$ is bounded and prove statements (ii)-(v).
  The proof of statements (ii), (iv),
  (v) in the case that $\Pi$ is the unique
  unbounded connected component of $\mathcal{P}_0$
  is similar and is omitted.
 
  Fix any periodic orbit $\mathcal{O}_0\subset\Pi.$ By
  \eqref{periodicorbits}, there are $p_0<q_0$ such that
  $\mathcal{O}_0\cap\{(u,0):u\in\R\}=\{(p_0,0),(q_0,0)\},$ with
  $f(p_0)=F'(p_0)<0$ and $f(q_0)=F'(q_0)>0.$ Define
  \begin{equation*}
    q:=\sup\{q<q_0:(q,0)\not\in\Pi\},\ \textrm{ and }\
    \hat q:=\inf\{q>q_0:(q,0)\not\in\Pi\}. 
  \end{equation*}
  In other words, $(q,\hat q)$ is the maximal open interval containing
  $q_0$ such that $(q,\hat q)\times \{0\}\subset \Pi$.  The existence
  of such an interval is guaranteed by the openness of $\Pi$. Note
  also that none of the points $(q,0)$, $(\hat q,0)$ is contained in
  $ \cP_0$.  Indeed, if, say, $(q,0)\in \cP_0$, then a neighborhood of
  $(q,0)$ is contained in $\cP_0$. By the definition of $q$, this whole
  neighborhood would necessarily be contained in the connected
  component $\Pi$, from which we immediately get a contraction to the
  definition of $q$. So, indeed, $(q,0), (\hat q,0)\not\in\cP_0$, in
  particular they are not equilibria of \eqref{eq:sys}.
  Since there is no element of $\mathcal{E}$ in
  $(q,\hat q)\times\{0\}\subset\Pi$, we have $F'=f\ne 0$ on $(q,\hat q)$ and
  $F'(q_0)>0$ implies that $F'> 0$ on $(q,\hat q)$.  In an analogous
  way, one finds a maximal interval $(\hat p,p)$ containing $p_0$ such
  that $(\hat p,p)\times \{0\}\subset \Pi$, and proves that
  $(p,0), (\hp,0)\not \in \cP_0$ and $F'< 0$ on $(\hat p,p)$.  A
  continuity argument shows that the union of all (periodic) orbits of
  \eqref{eq:sys} intersecting the segment $(q,\hat q)\times \{0\}$ is
  equal to the union of all orbits of \eqref{eq:sys} intersecting
  $(\hat p,p)\times \{0\}$. We denote this union by $\tilde \Pi$.  As
  distinct orbits of \eqref{eq:sys} do not intersect, it is clear that
  the periodic orbits contained in $\tilde \Pi$ are nested in the
  sense that (v) holds with $\Pi$ replaced by $\tilde \Pi$. (We will
  prove below that in fact $\Pi=\tilde \Pi$, thereby proving statement
  (v).)  Observe also that the points $(p,0)$, $(q,0)$ can be
  approximated arbitrarily closely by one orbit contained in
  $\tilde\Pi$. This implies that they are in the same level set of the
  Hamiltonian, that is, $F(p)=F(q)$, and also that $F\le F(q)$ in
  $(p,q)$. One easily proves from this that the points $(p,0)$,
  $(q,0)$ lie on the same chain which we denote by $\Sigma_{in}$.
  Using Lemma \ref{MatPolLemma} (and the fact that $(p,0)$,
  $(q,0)$ are not equilibria), we can write:
  \begin{equation}
    \label{eq:8}
    \Sigma_{in} = \left\{(u,v)\in\R^2:u\in [p,q],\
      v=\pm\sqrt{2(F(q)-F(u))}\right\}. 
  \end{equation}
  
  Similarly one shows that $F(\hat p)=F(\hat q)$, $F\le F(\hat q)$ in
  $(\hat p,\hat q)$, and the points $(\hat p,0)$, $(\hat q,0)$ lie on
  the same chain, which we denote by $\Sigma_{out}$.  By Lemma
  \ref{MatPolLemma},
  \begin{equation*}
    \Sigma_{out} = \left\{(u,v)\in\R^2:u\in [\bar p,\bar q],\ v=\pm\sqrt{2(F(\hat q)-F(u))}\right\}
  \end{equation*}
  for some $\bar p\le \hat p$, $\bar q\ge\hat q.$ The inequality for
  $F$ actually holds in the strict sense: $F< F(\hat q)$ in
  $(\hat p,\hat q)$, due to the previously established relation
  $F\le F(q)$ in $(p,q)$ and the strict monotonicity properties of $F$
  in the intervals $(\hat p,p)$, $(q,\hat q)$. It follows that the
  set
  \begin{equation}
    \label{eq:11}
    \Lambda_{out}:=\left\{(u,v)\in\R^2:u\in [\hat p,\hat q],\
      v=\pm\sqrt{2(F(\hat q)-F(u))}\right\} 
  \end{equation}
  is a loop contained in $\Sigma_{out}.$ Clearly, $\Sigma_{in}$,
  $\Sigma_{out}$ are distinct (hence disjoint) chains; in fact, they
  lie on two different level sets of the Hamiltonian $H$.

  It is obvious from the above constructions that for any periodic
  orbit $\cO\subset \tilde\Pi$ we have
  \begin{equation}
    \label{eq:10}
    {\overline{\mathcal{I}}\left(\Sigma_{in}\right)}\subset\mathcal{I}(\mathcal{O})\subset\overline{\mathcal{I}}(\mathcal{O})\subset  \mathcal{I}(\Lambda_{out}).
  \end{equation}
  We next claim that
  \begin{equation}
    \label{eq:9}
    \mathcal{I}(\Lambda_{out})\setminus
    \overline{\mathcal{I}}\left(\Sigma_{in}\right)=\tilde \Pi.
  \end{equation}
  That $\tilde \Pi$ is included in the set on the left has already
  been proved (cp.  \eqref{eq:10}); we prove the opposite
  inclusion. Take any
  $(\xi,\eta)\in \mathcal{I}(\Lambda_{out})\setminus
  \overline{\mathcal{I}}\left(\Sigma_{in}\right)$. If $(\xi,\eta)$
  lies on a periodic orbit, then that orbit intersects the $u$-axis in
  the set $((q,\hat q)\cup (\hat p,p))\times \{0\}$ (otherwise, in
  view of \eqref{eq:8}, \eqref{eq:11} it would have to intersect one
  of the chain $\Sigma_{in}$, $\Sigma_{out}$, which is impossible),
  and hence $(\xi,\eta)\in \tilde \Pi$. If $(\xi,\eta)$ does not lie
  on a periodic orbit, then it is contained in a chain disjoint from
  $\Sigma_{in}\cup\Sigma_{out}$, and such a chain would also have to
  intersect the set $((q,\hat q)\cup (\hat p,p))\times \{0\}$. This is
  impossible, as this set is included in $\tilde \Pi\subset
  \cP_0$. Thus \eqref{eq:9} is true.

  From \eqref{eq:9} it follows that $\tilde \Pi$ is a connected
  component of $\cP_0$, hence $\tilde \Pi=\Pi$. As already noted above,
  this proves statement (v). Statements (iii) and (iv) follow from
  \eqref{eq:10}, \eqref{eq:9}. To prove statement (iv), take the
  minimum point $\be$ of $F$ in $[p,q]$.  Recalling that $F'(p)<0$,
  $F'(q)>0$, we see that $\be\in (p,q)$ and it is a local minimum point
  of $F$, hence $f(\be)=0$ and $f'(\be)>0$, due to (ND).  Statement
  (v) clearly holds for this $\be$.
\end{proof}

The following lemma shows a relation between any two distinct chains.
\begin{lemma}
  \label{le:insert}
  \begin{itemize}
  \item[(i)] If $\Sigma$ is any chain, then there is a connected
    component $\Pi$ of $\cP_0$ such that $\Sigma$ is the inner chain
    associated with $\Pi${\rm :}
    $\Sigma=\Sigma_{in}(\Pi)$.
  \item[(ii)] If $\Sigma_1$,
    $\Sigma_2$ are any  two  distinct
chains, then  either
$\Sigma_1\subset \mathcal{I}(\Sigma_2)$, or
$\Sigma_2\subset \mathcal{I}( \Sigma_1)$, or else
there are periodic orbits $\cO_1$, $\cO_2$ such that 
$\ol{\cI}(\cO_1)\cap\ol{\cI}( \cO_2)=\emptyset$ and 
\begin{equation}
  \label{eq:53}
  \Sigma_1\subset \mathcal{I}(\cO_1),\qquad \Sigma_2\subset \mathcal{I}(\cO_2). 
\end{equation}
  \end{itemize}
\end{lemma}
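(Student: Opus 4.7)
My plan is to build (ii) on top of (i): part (i) produces, for each chain $\Sigma$, a connected component of $\cP_0$ lying just outside $\Sigma$ and having $\Sigma$ as its inner chain; then (ii) reduces to a planar-topological statement about disjoint compact connected subsets of $\R^2$.

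For part (i), I use Lemma \ref{MatPolLemma}(i) to write $\Sigma=\{(u,v):u\in[p,q],\,v=\pm\sqrt{2(c-F(u))}\}$ on the level $\{H=c\}$. If $p=q$, then $\Sigma=\{(\be,0)\}$ with $\be$ an unstable equilibrium of \eqref{eq:ODE}; (ND) gives $f'(\be)>0$, so $(\be,0)$ is a center of \eqref{eq:sys}, and every small deleted neighborhood of it is filled with non-stationary periodic orbits lying in one connected component $\Pi$ of $\cP_0$. Lemma \ref{le:p0}(ii) then forces $\Sigma_{in}(\Pi)=\{(\be,0)\}$. If $p<q$, then $(p,0)$ and $(q,0)$ are not equilibria, so $F'(p)<0<F'(q)$; hence for every sufficiently small $\ep>0$ there exist turning points $p_\ep<p$, $q_\ep>q$ with $F(p_\ep)=F(q_\ep)=c+\ep$, and the connected component of $\{H=c+\ep\}$ through $(q_\ep,0)$ is a single non-stationary periodic orbit $\cO_\ep$ with $\Sigma\subset\cI(\cO_\ep)$. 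The family $\{\cO_\ep\}$ is connected in $\cP_0$, so all $\cO_\ep$ lie in one connected component $\Pi$. Applying Lemma \ref{le:p0}(ii) and using $\Sigma\cap\Pi=\emptyset$ yields $\Sigma\subset\ol{\cI}(\Sigma_{in}(\Pi))$; sending $\ep\to 0$ in $\Sigma_{in}(\Pi)\subset\cI(\cO_\ep)$ gives $\Sigma_{in}(\Pi)\subset\ol{\cI}(\Sigma)$. If $\Sigma$ and $\Sigma_{in}(\Pi)$ were distinct (hence disjoint) chains, then $\Sigma\subset\cI(\Sigma_{in}(\Pi))$ and $\Sigma_{in}(\Pi)\subset\cI(\Sigma)$; this is ruled out by a standard planar-topology argument (e.g.\ take a point of $\Sigma$ of maximal distance from the origin and extend radially to get a contradiction).

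For part (ii), let $\Sigma_1\ne\Sigma_2$ be chains, hence disjoint. Each $\Sigma_i$ is connected, so it lies in one connected component of $\R^2\setminus\Sigma_j$, which is either bounded (giving $\Sigma_i\subset\cI(\Sigma_j)$, i.e.\ one of the first two alternatives) or the unbounded one (giving $\Sigma_i\cap\ol{\cI}(\Sigma_j)=\emptyset$). In the remaining case both $\Sigma_i\cap\ol{\cI}(\Sigma_j)=\emptyset$ hold, and I upgrade this to $\ol{\cI}(\Sigma_1)\cap\ol{\cI}(\Sigma_2)=\emptyset$ as follows: a point $p$ in that intersection would necessarily lie in $\cI(\Sigma_1)\cap\cI(\Sigma_2)$; letting $D_i$ be the bounded component of $\R^2\setminus\Sigma_i$ containing $p$, the connectedness of $\Sigma_j$, the inclusion $\partial D_i\subset\Sigma_i$, and the two disjointness relations force $D_1\subset D_2$ and $D_2\subset D_1$, hence $D_1=D_2$ whose boundary must lie in $\Sigma_1\cap\Sigma_2=\emptyset$, producing a bounded nonempty clopen subset of $\R^2$, a contradiction. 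Finally I invoke (i) to pick $\Pi_i$ with $\Sigma_i=\Sigma_{in}(\Pi_i)$; the orbits $\cO_\ep\subset\Pi_i$ constructed in (i) satisfy $\ol{\cI}(\cO_\ep)\to\ol{\cI}(\Sigma_i)$ in Hausdorff distance as $\ep\to 0$. Since $\ol{\cI}(\Sigma_1)$ and $\ol{\cI}(\Sigma_2)$ are disjoint compact sets in $\R^2$, choosing $\cO_i$ close enough to $\Sigma_i$ yields $\ol{\cI}(\cO_1)\cap\ol{\cI}(\cO_2)=\emptyset$ while $\Sigma_i\subset\cI(\cO_i)$, as required.

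The hardest step, I expect, will be the planar-topology arguments in (i) and (ii); they are elementary in spirit but must be formulated in terms of the bounded components $\cI(\Sigma_i)$ of the complement of a chain, rather than interiors of single Jordan curves, because in our setting a chain may be a figure-eight-type union of several loops meeting at shared saddle equilibria, so the Jordan curve theorem cannot be invoked directly.
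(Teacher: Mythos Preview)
Your proof is correct, but for part (ii) you work harder than necessary, and the paper's route is worth knowing. Your argument and the paper's agree in spirit on (i): both locate the component $\Pi$ of $\cP_0$ lying immediately outside $\Sigma$ and invoke Lemma~\ref{le:p0}(ii); you carry out the construction of the family $\cO_\ep$ explicitly, while the paper simply observes that, since there are only finitely many chains, some component $\Pi$ of $\cP_0$ lies in $\R^2\setminus\ol{\cI}(\Sigma)$ with $\partial\Pi\cap\Sigma\ne\emptyset$, and then Lemma~\ref{le:p0} forces $\Sigma_{in}(\Pi)=\Sigma$.

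The genuine difference is in (ii). You first establish the trichotomy at the level of chains (which, as you note, are not Jordan curves, so you need the $D_1=D_2$ clopen argument), and only afterwards approximate by periodic orbits via a Hausdorff-limit argument. The paper reverses the order: it invokes (i) to pick $\Pi_1,\Pi_2$ with $\Sigma_j=\Sigma_{in}(\Pi_j)$, then chooses \emph{any} periodic orbits $\cO_j\subset\Pi_j$. Since $\cO_1,\cO_2$ are genuine Jordan curves, exactly one of $\cO_1\subset\cI(\cO_2)$, $\cO_2\subset\cI(\cO_1)$, $\ol{\cI}(\cO_1)\cap\ol{\cI}(\cO_2)=\emptyset$ holds automatically. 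In the third case one is done; in the first (say), the paper observes that $\Sigma_1\subset\cI(\cO_2)$, that $\Sigma_1\cap(\Pi_2\cup\Sigma_2)=\emptyset$, and that Lemma~\ref{le:p0}(ii) gives $\cI(\cO_2)\setminus\ol{\cI}(\Sigma_2)\subset\Pi_2$, which together force $\Sigma_1\subset\cI(\Sigma_2)$. This bypasses all the delicate planar topology for non-Jordan chains and the Hausdorff convergence step; the structural content of Lemma~\ref{le:p0}(ii) does the work instead. Your approach has the modest advantage of being self-contained planar topology, independent of the specific structure of $\Pi$, but the paper's is shorter and exploits the available lemmas more fully.
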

\begin{proof}
  Since there are only finitely many chains, for any given chain
  $\Sigma$ there is a connected
  component $\Pi$ of $\cP_0$ such that
  $\Pi\subset \R^2\setminus\ol\cI(\Sigma)$ and the boundary of $\Pi$ contains points
  of $\Sigma$. It then follows from
  Lemma \ref{le:p0} that $\Sigma=\Sigma_{in}(\Pi)$. This proves
  statement (i).

  Let now $\Sigma_1$, $\Sigma_2$ be any  two  distinct
chains, and let $\Pi_1$, $\Pi_2$ be the connected
    components  of $\cP_0$ such that
    $\Sigma_j=\Sigma_{in}(\Pi_i)$, $j=1,2$. Pick periodic orbits
    $\cO_1\subset \Pi_1$, $\cO_2\subset \Pi_2$. By  Lemma \ref{le:p0},
    inclusions \eqref{eq:53} hold. Clearly, exactly one of the
    following possibilities occurs:
    \begin{equation*}
      {\rm (a)} \quad \cO_1\subset \mathcal{I}(\cO_2),\qquad
      {\rm (b)} \quad \cO_2\subset \mathcal{I}(\cO_1)\qquad
      {\rm (c)} \quad \ol{\cI}(\cO_1)\cap\ol{\cI}( \cO_2)=\emptyset.
    \end{equation*}
    For the proof of statement (ii), it is now sufficient to prove
    that (a) implies that $\Sigma_1\subset \mathcal{I}(\Sigma_2)$, and
    (b) implies $\Sigma_2\subset \mathcal{I}( \Sigma_1)$. These being
    symmetrical cases, we only prove the former. Trivially,
    $\Sigma_1\cap(\Pi_2\cup\Sigma_2)=\emptyset$; and  Lemma \ref{le:p0}(ii)
    gives $\mathcal{I}(\mathcal{O}_2)\setminus
    {\overline{\mathcal{I}}(\Sigma_{2})}\subset\Pi_2$.
    Thus, if (a) holds, which entails $\Sigma_1\subset
    \mathcal{I}(\cO_2)$, then necessarily $\Sigma_1\subset
    \mathcal{I}(\Sigma_2)$. 
\end{proof}


\subsection{Limit sets and entire solutions}\label{sec:inv}
Recall that the $\omega-$limit set of a bounded solution $u$ of
\eqref{eq:1}, denoted by $\omega(u)$, or $\omega(u_0)$ if the initial
value of $u$ is given, is defined as in \eqref{defomega}, with the
convergence in $L^\infty_{loc}(\R)$.  By standard parabolic estimates
the trajectory $\{ u(\cdot,t),\ t\geq1\}$ of $u$ is relatively compact
in $L^\infty_{loc}(\R).$ This implies that $\omega(u)$ is nonempty,
compact, and connected in (the metric space) $L^\infty_{loc}(\R)$ and
it attracts the solution in the following sense:
\begin{equation}
  \label{eq:2.6}
  \textrm{dist}_{L^\infty_{loc}(\R)}\left( u(\cdot,t),\omega(u)\right)\underset{t\to\infty}{\longrightarrow}0.
\end{equation}
It is also a standard observation that if $\vp\in\omega(u),$ there
exists an \emph{entire solution} $U(x,t)$ of \eqref{eq:1}, that is, a
solution defined for all $t\in\R$, such that
\begin{equation}\label{entiresol}
  U(\cdot,0)=\vp,\qquad U(\cdot,t)\in\omega(u)\quad (t\in\R).
\end{equation}
We recall briefly how such an entire solution $U$ is found.  By
parabolic regularity estimates, $u_t,u_x,u_{xx}$ are bounded on
$\R\times[1,\infty)$ and are globally $\alpha-$H\"older for any
$\alpha\in(0,1).$ If
$u(\cdot,t_n)\underset{n\to\infty}{\longrightarrow}\vp$ in
$L^\infty_{loc}(\R)$ for some $t_n\to\infty,$ we consider the sequence
$u_n(x,t):=u(x,t+t_n)$, $n=1,2\dots$.  Passing to a subsequence if
necessary, we have $u_n\to U$ in $C^1_{loc}(\R^2)$ for some function
$U;$ this function $U$ is then easily shown to be an entire solution
of \eqref{eq:1}.  By definition, $U$ satisfies \eqref{entiresol}. Note
that the entire solution $U$ is determined uniquely by $\varphi$; this
follows from the uniqueness and backward uniqueness for the Cauchy
problem \eqref{eq:1}, \eqref{ic1}.

Using similar compactness arguments, one shows easily that $\omega(u)$
is connected in $C_{loc}^1(\R).$ Hence, the set
\[
\tau\left(\omega(u)\right)=\left\{
  (\vp(x),\vp_x(x)):\vp\in\omega(u),x\in\R\right\} =
\underset{\vp\in\omega(u)}{\textstyle \bigcup}\tau(\vp)
\]
is connected in $\R^2$. (Here, $\tau(\varphi)$ is as in
\eqref{eq:straj}.)  Also, obviously, $\tau(\vp)$ is connected in
$\R^2$ for all $\vp\in\omega(u).$

If $U$ is a bounded entire solution of \eqref{eq:1}, we define its
$\alpha-$limit set by
\begin{equation}\label{defalpha}
  \alpha(U):=\left\{ \vp\in C_b(\R):U(\cdot,t_n)\to\vp\textrm{ for some sequence }t_n\to-\infty\right\}.
\end{equation}
Here, again, the convergence is in $L_{loc}^\infty(\R).$ The
$\alpha$-limit set has similar properties as the $\omega-$limit set:
it is nonempty, compact and connected in $L^\infty_{loc}(\R)$ as well
as in $C^1_{loc}(\R)$, and for any $\varphi\in \al(U)$ there is an
entire solution $\tilde U$ such that $\tilde U(\cdot,0)=\vp$ and
$\tilde  U(\cdot,t)\in\al(U)$ for all $t\in\R$.  The connectivity property of
$\al(U)$ implies that the set
$$
\tau\left(\alpha(U)\right)=\left\{
  (\vp(x),\vp_x(x)):\vp\in\alpha(U),x\in\R\right\} =
\underset{\vp\in\alpha(U)}{\textstyle\bigcup}\tau(\vp)
$$
is connected in $\R^2.$

We will also employ a generalized notion of $\al$ and $\om$-limit
sets. Namely, if $U$ is a bounded entire solution, we define
\begin{align}
  \Omega(U) & := \left\{ \vp\in C_b(\R):U(\cdot+x_n,t_n)\to\vp\textrm{
              for some sequences }x_n\in\R, \ t_n\to\infty\right\} \label{defOmega}, \\
  A(U) & := \left\{ \vp\in C_b(\R):U(\cdot+x_n,t_n)\to\vp\textrm{ for some sequences }x_n\in\R, \ t_n\to-\infty\right\}. \label{defAlpha}
\end{align}
The convergence is in $L_{loc}^\infty(\R)$, but again one can take the
convergence in $C^1_{loc}(\R)$ without altering the sets $\Om(U)$,
$A(U)$.  These sets are nonempty, compact and connected in
$C^1_{loc}(\R)$, and they have a similar invariance property as
$\om(u)$ (cp. \eqref{entiresol}).  Also, by their definitions, the
sets $\Om(U)$, $A(U)$ are translation invariant as well. Further, the
definitions and parabolic regularity imply that the sets
\begin{equation*}
  \tau\left( A(U)\right)= \underset{\vp\in A(U)}{\textstyle\bigcup}\tau(\vp),
  \quad\tau\left(\Omega(U)\right) =\underset{\vp\in\Omega(U)}{\textstyle\bigcup}\tau(\vp)
\end{equation*}
are connected and compact in $\R^2.$ We remark that the sets
$\tau(\om(u))$, $\tau(\al(u))$ are both connected (as noted above),
but they are not necessarily compact in $\R^2$. 

\subsection{Some  results from earlier papers}\label{sec:24}
Several earlier results are used repeatedly in the forthcoming
sections.  We state them here for reference.

Throughout this subsection, we assume that $u_0\in C_b(\R)$ (not
necessarily in $\cV$), $u$ is the solution of \eqref{eq:1}, \eqref{ic1}
and it is bounded.

In view of the invariance property of $\om(u)$ (see
\eqref{entiresol}), the following lemma gives a criterion for an
element $\varphi\in \om(u)$ to be a steady state. This
unique-continuation
type result is proved in a more general form in
\cite[Lemma 6.10]{Polacik_terrasse}.
\begin{lemma}\label{le:2.7}
  Let $\varphi:=U(\codt,0)$, where $U$ is a solution of \eqref{eq:1}
  defined on a time interval $(-\de,\de)$ with $\de>0$ (this holds in
  particular if $\varphi\in \om(u)$).  If $\tau(\varphi)\subset\Sigma$
  for some chain $\Sigma,$ then $\varphi$ is a steady state of
  \eqref{eq:1}.
\end{lemma}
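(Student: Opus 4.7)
The plan is to combine Hamiltonian conservation along $\Sigma$ with the intersection-comparison principle applied to $W:=U_t$, which satisfies a linear parabolic equation. First I would invoke Lemma~\ref{MatPolLemma} to place $\Sigma$ in a single level set $\{H=c\}$ of the Hamiltonian $H(u,v)=v^2/2+F(u)$; together with the hypothesis $\tau(\varphi)\subset\Sigma$ this gives the pointwise identity
\[
\tfrac12\varphi_x(x)^2+F(\varphi(x))=c\qquad(x\in\R),
\]
and differentiation in $x$ yields $\varphi_x(x)\bigl(\varphi_{xx}(x)+f(\varphi(x))\bigr)=0$ on all of $\R$. In particular the continuous function $\varphi_{xx}+f(\varphi)$ vanishes on the open set $\{x\in\R:\varphi_x(x)\ne 0\}$. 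The goal is to upgrade this to $\varphi_{xx}+f(\varphi)\equiv 0$ everywhere, which is the steady-state equation.

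Next I would pass to the time derivative $W(x,t):=U_t(x,t)$. This function is bounded on $\R\times(-\delta,\delta)$ by parabolic regularity, it satisfies $W(\cdot,0)=\varphi_{xx}+f(\varphi)$, and it solves a linear parabolic equation of the form \eqref{eqlin} with coefficient in $L^\infty$. To justify this under merely Lipschitz regularity of $f$ (hypothesis (MF)), one writes $W$ as the $L^\infty_{\mathrm{loc}}$-limit of the difference quotients $v_h(x,t):=\bigl(U(x,t+h)-U(x,t)\bigr)/h$, each of which solves a linear parabolic equation whose zeroth-order coefficient is uniformly bounded by the Lipschitz constant of $f$, and then passes to $h\to 0$. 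With the linear equation for $W$ in hand, the main argument is quick: assuming $\varphi$ is not identically constant, the set $\{\varphi_x\ne 0\}$ contains an open interval $I$, on which $W(\cdot,0)\equiv 0$ by the first paragraph; if $W$ were not identically zero on $\R\times(-\delta,\delta)$, Lemma~\ref{lemzero}(i) would force every zero of $W(\cdot,0)$ in $\R$ to be isolated, contradicting $W(\cdot,0)\equiv 0$ on $I$. Hence $W\equiv 0$, so $U_t\equiv 0$, $U$ is a steady state, and in particular $\varphi=U(\cdot,0)$ is a steady state of \eqref{eq:1}.

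It remains to handle the case in which $\varphi\equiv\gamma$ is constant, so that $\tau(\varphi)=\{(\gamma,0)\}\subset\Sigma$. The structure theorem for chains (Lemma~\ref{MatPolLemma}) then reduces the matter to showing that $(\gamma,0)$ is an equilibrium of \eqref{eq:sys}, which gives $f(\gamma)=0$ and hence that $\varphi$ is a (constant) steady state. The principal technical hurdle I foresee is the rigorous derivation of the linear parabolic equation for $W$ under the Lipschitz (rather than $C^1$) regularity of $f$; once this is in place, Hamiltonian conservation together with Lemma~\ref{lemzero} yields the conclusion in the non-constant case, while the constant case is read off directly from the description of $\Sigma$ provided by Lemma~\ref{MatPolLemma}.
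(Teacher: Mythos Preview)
The paper does not supply its own proof of this lemma; it cites \cite[Lemma~6.10]{Polacik_terrasse}. Your argument for the non-constant case is correct and is exactly the natural unique-continuation route: the Hamiltonian identity on $\Sigma$ gives $\varphi_x\bigl(\varphi_{xx}+f(\varphi)\bigr)=0$, hence $W(\cdot,0)=U_t(\cdot,0)$ vanishes on the nonempty open set $\{\varphi_x\ne0\}$, and Lemma~\ref{lemzero}(i) then forces $W\equiv0$. Your concern about whether $U_t$ legitimately solves an equation of the form~\eqref{eqlin} when $f$ is merely Lipschitz is a fair one, but the paper itself adopts this convention immediately after~\eqref{eqlin}, so you are on the same footing.

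Your handling of the constant case has a genuine gap. You write that Lemma~\ref{MatPolLemma} lets you ``read off'' that $(\gamma,0)\in\Sigma$ is an equilibrium of~\eqref{eq:sys}, but that lemma asserts this only for interior points $p<\gamma<q$; when $p<q$ the endpoints $(p,0)$ and $(q,0)$ lie on homoclinic orbits and by~\eqref{eq:3.6} satisfy $f(p)<0$, $f(q)>0$. Thus the constant function $\varphi\equiv q$ obeys $\tau(\varphi)=\{(q,0)\}\subset\Sigma$ while $\varphi_{xx}+f(\varphi)=f(q)\ne0$, and the spatially homogeneous solution $U(x,t)=\theta(t)$ with $\dot\theta=f(\theta)$, $\theta(0)=q$, supplies the required $U$ on any $(-\delta,\delta)$. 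So as literally stated the lemma admits this exceptional case, and your proposed proof cannot close it. In the paper's actual applications one always has $\varphi\in\omega(u)$, and there these endpoint constants are excluded by other means (the argument of Lemma~\ref{le:3.5} forces the corresponding unstable equilibrium $\beta_\pm$ into $\omega(u)$, which that lemma forbids); you should either note this restriction explicitly or consult the precise hypotheses in the cited reference.
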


As already noted above, it is proved in \cite{Gallay-S} (see also
\cite{Gallay-S2}) that the $\om$-limit set of any bounded solution of
\eqref{eq:1} contains a steady state. For bounded entire solutions $U$, the
same is true for the $\al$-limit set due to its compactness and
invariance properties (just apply the previous result to any entire
solution $\tilde U$ with $\tilde U(\cdot,t)\in\al(U)$).
We state this in the following theorem.

\begin{theorem}
  \label{thm:GS} If $U$ is a bounded entire solution of \eqref{eq:1},
  then each of the sets $\om(U)$ and $\al(U)$ contains a steady state
  of \eqref{eq:1}.
\end{theorem}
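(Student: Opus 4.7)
The plan is to reduce both claims to the known result of Gallay--Slijep\v{c}evi\'c that the $\omega$-limit set of any bounded solution of \eqref{eq:1} defined on $[0,\infty)$ contains a steady state of \eqref{eq:1}.

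First I would dispose of the $\omega$-limit set case. Since $U$ is by hypothesis a bounded entire solution, its restriction $U|_{\R\times[0,\infty)}$ is a bounded solution of the Cauchy problem \eqref{eq:1}, \eqref{ic1} with initial datum $U(\codt,0)\in C_b(\R)$. Applying the Gallay--Slijep\v{c}evi\'c theorem to this restriction yields a steady state $\psi\in\omega(U)$.

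For the $\alpha$-limit set, the idea is to pick an entire solution lying entirely in $\alpha(U)$ and apply the previous step to it. Concretely, choose any $\varphi\in\alpha(U)$. As recalled in Subsection \ref{sec:inv}, the invariance property of $\alpha(U)$ (which parallels \eqref{entiresol}) provides an entire solution $\tilde U$ of \eqref{eq:1} with $\tilde U(\codt,0)=\varphi$ and $\tilde U(\codt,t)\in\alpha(U)$ for every $t\in\R$. This $\tilde U$ is bounded: every element of $\alpha(U)$ is a locally uniform limit of translates in time of $U$, and so is uniformly bounded by $\|U\|_{L^\infty(\R\times\R)}<\infty$; consequently $\|\tilde U(\codt,t)\|_\infty\le\|U\|_\infty$ for all $t\in\R$. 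Now apply the first step to $\tilde U$: the set $\omega(\tilde U)$ contains a steady state $\psi$. Since $\tilde U(\codt,t)\in\alpha(U)$ for all $t\in\R$ and $\alpha(U)$ is closed in $L^\infty_{loc}(\R)$ (being compact in this topology, as noted in Subsection \ref{sec:inv}), we have $\omega(\tilde U)\subset\alpha(U)$. Thus $\psi\in\alpha(U)$, as desired.

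There is no real obstacle here: the entire argument is a standard bootstrap from $\omega$-limit sets to $\alpha$-limit sets via the invariance/compactness of $\alpha(U)$. The only points requiring minimal care are the uniform boundedness of $\tilde U$ in sup-norm on $\R\times\R$ (immediate from locally uniform convergence of uniformly bounded translates) and the closedness of $\alpha(U)$ in $L^\infty_{loc}(\R)$, both of which are part of the standard preliminaries recalled in Subsection \ref{sec:inv}.
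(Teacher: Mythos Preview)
Your proof is correct and follows essentially the same approach as the paper: cite Gallay--Slijep\v{c}evi\'c for $\omega(U)$, then use the invariance and compactness of $\alpha(U)$ to pass to an entire solution $\tilde U$ lying in $\alpha(U)$ and apply the $\omega$-result to it. The paper gives this argument in a single sentence preceding the theorem statement; you have merely spelled out the details it leaves implicit.
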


In the next two results, we make use of the invariance of equation
\eqref{eq:1} under spatial reflections.  For any $\lambda\in\R$
consider the function $V_\la u$ defined by
\begin{equation}\label{reflexion}
  V_\la u(x,t)=u(2\la-x,t)-u(x,t),\quad x\in\R,\,t\ge 0.
\end{equation}
Being the difference of two solutions of \eqref{eq:1}, $V_\la u$ is a
solution of the linear equation \eqref{eqlin} for some bounded
function $c.$

The following lemma is an adaptation of an argument from \cite[Proof
of Proposition 2.1]{p-B-Q}.

\begin{lemma}\label{lemmaBPQ}
  Let $U$ be a solution of \eqref{eq:1} on $\R\times J$, where
  $J\subset \R$ is an open time interval, and let 
  $\theta\in \R$. Assume that for each $t\in J$
  the function $U(\cdot,t)-\theta$ has at least
  one zero and
 $$
 \xi(t):=\sup\{ x:U(x,t)=\theta\}
 $$
 is finite and depends continuously on $t\in J$.  Then, for any
 $t_0, t_1\in J$ satisfying the relations $t_1>t_0$ and
 $\xi(t_1)<\xi(t_0)$, the function $U_x(\cdot,t_1)$ is of constant
 sign on the interval $(\xi(t_1),\xi(t_0)]$.  If $J=(-\infty,b)$ for
 some $-\infty<b\le \infty$ and $\limsup_{t\to-\infty}\xi(t)=\infty$,
 then $U_x$ is of constant sign on $(\xi(t),\infty),$ for all
 $t\in J.$

 Analogous statements hold for $\xi(t)=\inf\{x:U(x,t)=\theta\}.$
\end{lemma}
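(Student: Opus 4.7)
The plan is to argue by contradiction using the zero number principle applied to a carefully chosen auxiliary function. First, set $V := U - \theta$. By the continuity of $\xi$, the set $\Omega := \{(x,t) : t \in J,\ x > \xi(t)\}$ is connected and open, and $V$ has no zero on $\Omega$ by the very definition of $\xi(t)$. Hence $V$ has constant sign on $\Omega$; after replacing $U$ by $2\theta - U$ if necessary (which preserves the setup with $f$ replaced by $\tilde f(s) := -f(2\theta - s)$), I may assume $V > 0$ on $\Omega$.

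Now suppose, toward a contradiction, that $U_x(\cdot, t_1)$ changes sign at some $y^* \in (\xi(t_1), \xi(t_0)]$. Then $U(\cdot, t_1)$ has a local extremum at $y^*$, with $U(y^*, t_1) = \theta + m$ for some $m > 0$ and $U_x(y^*, t_1) = 0$. Using the continuity of $\xi$ and the relation $\xi(t_1) < y^* \le \xi(t_0)$, I define $s^* := \sup\{t \in [t_0, t_1] : \xi(t) \ge y^*\}$ and verify that $s^* \in [t_0, t_1)$, $\xi(s^*) = y^*$, and $\xi(t) < y^*$ for $t \in (s^*, t_1]$. In particular $V(y^*, s^*) = 0$ while $V(y^*, t) > 0$ for $t \in (s^*, t_1]$. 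A Hopf-type boundary point argument applied to $V$ on the half-strip $[y^*, \infty) \times [s^*, s^* + \delta]$ (which is contained in $\overline{\Omega}$ because $\xi(t) < y^*$ for $t \in (s^*, s^* + \delta]$) then yields $U_x(y^*, s^*) > 0$.

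The key device is the spatially constant auxiliary solution $\psi(t)$ of the ODE $\psi' = f(\psi)$ with terminal value $\psi(t_1) = \theta + m$, which is globally defined on $J$ by hypothesis (MF). Since $\psi(t)$ is a (constant in $x$) solution of \eqref{eq:1}, the difference $w(x,t) := U(x,t) - \psi(t)$ satisfies a linear parabolic equation of the form $w_t = w_{xx} + c(x,t) w$ with bounded coefficient $c$. By construction, $w(y^*, t_1) = 0$ and $w_x(y^*, t_1) = U_x(y^*, t_1) = 0$, so $y^*$ is a multiple zero of $w(\cdot, t_1)$. A second-order Taylor expansion near $(y^*, t_1)$ (using $w_{xx}(y^*, t_1) \ne 0$, which holds because $y^*$ is a genuine extremum of $U(\cdot, t_1)$) shows that for every sufficiently small $\epsilon > 0$, the function $w(\cdot, t)$ has at least two zeros in $I := (y^* - \epsilon, y^* + \epsilon)$ for each $t$ slightly less than $t_1$; alternatively, this follows from Lemma \ref{lemzero}(iii) combined with Lemma \ref{robustnesszero}.

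To reach a contradiction, I show that $z_I(w(\cdot, s^*)) \le 1$. At $t = s^*$, using $U(y^*, s^*) = \theta$ together with $U_x(y^*, s^*) > 0$ (so that $U(\cdot, s^*)$ is locally monotone near $y^*$), a direct computation based on the sign of $\psi(s^*) - \theta$ shows that, for $\epsilon$ small enough, $w(\cdot, s^*)$ has either zero zeros in $I$ (when $\psi(s^*) \ne \theta$, because $U(\cdot, s^*)$ stays on one side of $\psi(s^*)$ on $I$) or a single simple zero at $y^*$ (when $\psi(s^*) = \theta$). Combined with the non-increase of the zero number on $I$ provided by Lemma \ref{lemzero}(ii), this contradicts the lower bound $z_I(w(\cdot, t)) \ge 2$ for $t$ just below $t_1$. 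The second statement of the lemma follows from the first by choosing a sequence $t_0 = t_n \to -\infty$ with $\xi(t_n) \to \infty$ and taking the limit. The main technical obstacle is ensuring the boundary conditions in the zero number lemma hold throughout $[s^*, t_1]$; this is handled either by shrinking $\epsilon$ so that $|w|$ remains bounded away from $0$ on the vertical sides of $I \times [s^*, t_1]$, or by invoking the time-dependent version in Lemma \ref{le:zerot} with suitably chosen moving endpoints.
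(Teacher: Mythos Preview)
Your approach is genuinely different from the paper's, but it has a real gap at the Hopf step. You invoke a ``Hopf-type boundary point argument'' for $V=U-\theta$ on the half-strip $[y^*,\infty)\times[s^*,s^*+\delta]$ to conclude $U_x(y^*,s^*)>0$. The point $(y^*,s^*)$ lies on the \emph{initial-time} portion of the parabolic boundary of that half-strip, and the parabolic Hopf lemma gives no information about the spatial derivative at such points---it applies only at lateral (or terminal) boundary points. All you get from $V(\cdot,s^*)>0$ on $(y^*,\infty)$ with $V(y^*,s^*)=0$ is $V_x(y^*,s^*)\ge 0$, and nothing in your setup rules out equality. (There is a secondary issue: $V=U-\theta$ does not satisfy a homogeneous linear equation unless $f(\theta)=0$, so even the maximum principle for $V$ requires care.) Without strict monotonicity of $U(\cdot,s^*)$ near $y^*$, your bound $z_I(w(\cdot,s^*))\le 1$ collapses, and with it the contradiction. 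A related soft spot is the boundary condition for Lemma~\ref{lemzero} on $I\times[s^*,t_1]$: you acknowledge this as a ``technical obstacle,'' but neither shrinking $\epsilon$ nor an unspecified choice of moving endpoints prevents a zero of $w$ from crossing $\partial I$ at an intermediate time; this is not a detail one can wave away.

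For comparison, the paper's argument sidesteps both difficulties by a different device: for each $\lambda\in(\xi(t_1),\xi(t_0)]$ it uses the reflection $V_\lambda U(x,t)=U(2\lambda-x,t)-U(x,t)$, which is a genuine solution of a linear homogeneous equation and vanishes identically along $x=\lambda$. One then works on the domain $\{(x,t):\xi(t)<x<\lambda,\ \bar t<t<t_1\}$ with $\bar t=\max\{t\in[t_0,t_1):\xi(t)=\lambda\}$; on the left lateral boundary $V_\lambda U(\xi(t),t)=U(2\lambda-\xi(t),t)-\theta$ has a fixed sign (since $2\lambda-\xi(t)>\xi(t)$), so the maximum principle forces a fixed sign throughout, and the Hopf lemma at the \emph{lateral} boundary point $(\lambda,t_1)$ yields $\partial_x V_\lambda U(\lambda,t_1)=-2U_x(\lambda,t_1)\ne 0$. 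The reflection trick is what makes Hopf applicable at the right place and avoids any zero-number bookkeeping.
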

\begin{proof}
  Pick any $\la\in (\xi(t_1),\xi(t_0)]$ and set
  $\displaystyle \bar t:=\max\left\{ t\in
    [t_0,t_1):\xi(t)=\la\right\}$.  Consider the function $V_\la U$ on
  the domain
$$
Q_\la:=\left\{ (x,t):\ x\in(\xi(t),\la),\ t\in(\bar t,t_1)\right\}.
$$
Clearly, $V_\la U(\la,t)=0$ for all $t$ and, as $\xi(t)$ is the last
zero of $U(\codt,t)$, $V_\la U(\xi(t),t)$ is of constant sign on
$(t_0,t_1)$. Since $V_\la U$ solves a linear parabolic equation
\eqref{eqlin}, the maximum principle implies that $V_\la U$ is of
constant sign on the whole domain $Q_\la$, and the Hopf lemma yields
$-2\partial_xU(\la,t_1)=\partial_xV_\la u(\la,t_1)\neq0$.  Since
$\la\in (\xi(t_1),\xi(t_0)]$ was arbitrary, $U_x(\cdot,t_1)$ is of
constant sign on $(\xi(t_1),\xi(t_0)]$.

To prove the second statement, fix any $t'\in J$ and let
$\la>\xi(t')$.  By the unboundedness assumption on $\xi(t)$,
$t_0:=\sup\{t<t':\xi(t)=\la\}$ is a number in $(-\infty,t')$. Applying
the result just proved, we obtain $U_x(\la,t')\ne0$. Since
$\la>\xi(t')$ was arbitrary, we obtain the desired conclusion.
\end{proof}

We next state a quasiconvergence result from our previous paper
\cite{Pauthier_Polacik1}.
\begin{theorem}\label{thmPP1}
  Assume that $u_0\in\cV$ and one of the following conditions holds:
  \begin{itemize}
  \item[(i)] $u_0(-\infty)\ne u_0(\infty)$,
  \item[(ii)] there is $t>0$ such that for all $\la\in\R,$ one has
    $z(V_\la u(\cdot,t))<\infty.$
  \end{itemize}
  Then, $u$ is quasiconvergent.
\end{theorem}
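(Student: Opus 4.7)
The plan is to first observe that condition (i) is a special case of (ii), and then to carry out the strategy of Subsection \ref{outline} using the reflection functionals $V_\la u$. If $u_0(-\infty)\ne u_0(+\infty)$, then as $x\to\pm\infty$ one has $V_\la u_0(x)\to u_0(-\infty)-u_0(+\infty)$ and $u_0(+\infty)-u_0(-\infty)$ respectively; these two limits are nonzero and of opposite sign. Since the space $\cV$ is invariant for \eqref{eq:1} and its limits at $\pm\infty$ are preserved by the ODE $\dot\xi=f(\xi)$, the limits of $V_\la u(\cdot,t)$ at $\pm\infty$ remain nonzero for all $t>0$. Consequently the zeros of $V_\la u(\cdot,t)$ are confined to a bounded interval, and Lemma \ref{lemzero}(i) applied on such an interval yields $z(V_\la u(\cdot,t))<\infty$ for every $\la\in\R$ and $t>0$. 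Hence we may assume (ii) throughout.

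Fix $t^*>0$ with $z(V_\la u(\cdot,t^*))<\infty$ for all $\la$. By Lemma \ref{lemzero}(ii) the function $t\mapsto z(V_\la u(\cdot,t))$ is monotone non-increasing, so for each $\la$ there is an eventual value $N_\la\in\N\cup\{0\}$. Pick any $\varphi\in\om(u)$ and the associated entire solution $U$ with $U(\cdot,0)=\varphi$ and $U(\cdot,t)\in\om(u)$ for all $t\in\R$ (cp.\ \eqref{entiresol}). If $U(\cdot+x_n,t_n)$ converges to $U$ locally uniformly for some $t_n\to\infty$, then $V_\la U$ is a $C^1_{loc}$-limit of reflections of $u$, and by lower semicontinuity of the zero count together with Lemma \ref{robustnesszero}, $z(V_\la U(\cdot,t))\le N_\la$ for all $t\in\R$. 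Since this finite-valued function is monotone non-increasing on all of $\R$, it is a constant. By Lemma \ref{lemzero}(iii), all zeros of $V_\la U(\cdot,t)$ are simple, and Corollary \ref{zeroIFT} shows that they move continuously as $C^1$ functions of $t$.

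The next (and central) step is to convert this simple-zero rigidity for every $\la\in\R$ into a phase-plane statement: $\tau(\varphi)\subset\Sigma$ for some chain $\Sigma$. I would argue by contradiction. If $\tau(\varphi)$ is not contained in a single chain, then by Lemma \ref{MatPolLemma} and Lemma \ref{le:p0} the curve $\tau(\varphi)$ meets the interior of a non-stationary periodic orbit $\cO\subset\cP_0$, crossing some portion of $\cO$. Using the Hamiltonian structure of \eqref{eq:sys}, one can then exhibit a value $\la_0$ and a time $t_0$ (the critical time at which a local extremum of $U(\cdot,t_0)$ sits on the axis of reflection) such that $V_{\la_0}U(\cdot,t_0)$ has a multiple zero; this contradicts the constancy of $z(V_{\la_0}U(\cdot,t))$ established above. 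Once $\tau(\varphi)\subset\Sigma$ is known, Lemma \ref{le:2.7} immediately gives that $\varphi$ is a steady state of \eqref{eq:1}. Since $\varphi\in\om(u)$ was arbitrary, $u$ is quasiconvergent.

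The main obstacle is the second step: translating the family of constraints $z(V_\la U(\cdot,t))\equiv N_\la<\infty$ (for every $\la$) into the geometric conclusion that the spatial trajectory of $\varphi$ lies in a single chain. Conceptually this is natural, because crossing a non-stationary periodic orbit forces two points of $\tau(U(\cdot,t))$ with the same $u$-coordinate and opposite $u_x$-coordinates, which is exactly the configuration that creates multiple zeros of the reflection; however, turning this heuristic into a rigorous argument requires a careful case analysis of how $U$ approaches the periodic orbit (from inside or outside, tangentially or transversally) and a careful choice of the reflection point $\la_0$, which is precisely the kind of work carried out in \cite{Pauthier_Polacik1}.
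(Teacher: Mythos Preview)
The paper does not prove Theorem~\ref{thmPP1}; the paragraph following the statement merely records that it is the main theorem of~\cite{Pauthier_Polacik1}, together with the observation that condition~(i) implies condition~(ii) via Lemma~\ref{lemzero} (this being the only place where~(i) is used in that paper's argument). Your first paragraph reproduces exactly this reduction, and it is correct.

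What you attempt beyond that is an outline of the argument under~(ii). The architecture you describe---pass to an entire solution $U$ with $U(\cdot,t)\in\omega(u)$, show that $z(V_\la U(\cdot,t))$ is finite and constant in~$t$ with only simple zeros for every~$\la$, and conclude that $\tau(\varphi)$ lies in a single chain so that Lemma~\ref{le:2.7} applies---is indeed the strategy of~\cite{Pauthier_Polacik1}. However, you explicitly concede that the decisive step, converting the family of reflection constraints into the chain inclusion, ``requires a careful case analysis\dots which is precisely the kind of work carried out in~\cite{Pauthier_Polacik1}.'' So your proposal is not a self-contained proof; it is a recapitulation of the strategy together with a pointer to the reference for the hard part. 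Since the present paper does exactly the same (it simply cites the reference), there is no substantive difference to report between your treatment and the paper's.

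One minor slip: in your second paragraph you write ``If $U(\cdot+x_n,t_n)$ converges to $U$,'' which confuses $u$ with $U$ and inserts an unwarranted spatial shift; the intended statement is $u(\cdot,\cdot+t_n)\to U$ in $C^1_{loc}(\R^2)$ (cp.~\eqref{eq:4}). Also, your heuristic that ``crossing a non-stationary periodic orbit forces two points of $\tau(U(\cdot,t))$ with the same $u$-coordinate and opposite $u_x$-coordinates'' does not by itself produce a multiple zero of some $V_{\la_0}U$: such a pair yields a multiple zero only when the two preimages in~$x$ are symmetric about~$\la_0$, and arranging this is where the genuine work lies.
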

If condition (i) is assumed, this is the content of the main theorem
in \cite{Pauthier_Polacik1}. In the proof of the theorem, we first
proved that condition (i) and Lemma \ref{lemzero} imply that condition
(ii) holds (this is actually the only place where condition (i) is
used in the proof).  As noted in \cite[Remark 3.3]{Pauthier_Polacik1},
the quasiconvergence result holds if condition (i) is replaced by (ii)
from the start.

The following result concerning various invariant sets for \eqref{eq:1}
is a variant of the squeezing lemma from
\cite{P:entire}. This is an indispensable tool in our proofs.
\begin{lemma}\label{squeezinglemma}
  Let $U$ be a bounded entire solution of \eqref{eq:1} such that if
  $\beta\in f^{-1}\{0\}$ is an unstable equilibrium of
  \eqref{eq:ODE}, then
  \begin{equation}\label{eq:2.15}
    z\left( U(\cdot,t)-\beta\right)\leq N\quad (t\in\R)
  \end{equation}
  for some $N<\infty.$ Let $K$ be any one of the following subsets of
  $\R^2:$
 $$
 \underset{t\in\R}{{\textstyle \bigcup}}\tau\left( U(\cdot,t)\right),\quad
 \tau\left(\omega(U)\right), \quad \tau\left(\Omega(U)\right),\quad
 \tau\left(\alpha(U)\right), \quad \tau\left( A(U)\right).
 $$
 Assume that $\mathcal{O}$ is a  non-stationary periodic orbit of
 \eqref{eq:sys} such that one of the following inclusions holds:
 \begin{equation*}
   \text{(i) \quad $K\subset   \mathcal{I}(\mathcal{O})$,\qquad\qquad
   (ii)\quad $K\subset\R^2\setminus{\overline{\mathcal{I}}(\mathcal{O})}$.}
 \end{equation*}
 Let
 $\Pi$ be the connected component of $\cP_0$ containing $\cO$.
 If (i) holds, then 
 $K\subset{\overline{\mathcal{I}}\left(\Sigma_{in}(\Pi)\right)}$;
 and if (ii) holds, then
 $K\subset\R^2\setminus\mathcal{I}\left(\Lambda_{out}(\Pi)\right)$
 (in particular, $\Pi$ is necessarily bounded in this case).
\end{lemma}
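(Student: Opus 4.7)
We argue case (i) in detail; case (ii) is analogous, with the preliminary observation that $K \subset \R^2 \setminus \ol{\cI}(\cO)$ combined with the boundedness of $K$ (since $U$ is bounded) forces $\Pi$ to be bounded---otherwise the nested outward periodic orbits in $\Pi$ would eventually enclose the bounded set $K$. Suppose, for contradiction, that $K \not\subset \ol{\cI}(\Sigma_{in}(\Pi))$. Using the total order of periodic orbits in $\Pi$ (Lemma \ref{le:p0}(v)), the compactness of $K$, and the description of $\Sigma_{in}(\Pi)$ in Lemma \ref{le:p0}(ii), one finds a ``minimal'' nonstationary periodic orbit $\cO^* \subset \Pi$ with $K \subset \ol{\cI}(\cO^*)$ and $K \cap \cO^* \ne \emptyset$; otherwise, the intersection of $\ol{\cI}(\cO')$ over all periodic orbits $\cO' \subset \Pi$ with $K \subset \ol{\cI}(\cO')$ would reduce to $\ol{\cI}(\Sigma_{in}(\Pi))$, contradicting our assumption.

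Pick $(\xi_0, \eta_0) \in K \cap \cO^*$. By the definition of $K$, there exist $\varphi$ in the relevant invariant set and $x_0 \in \R$ with $\varphi(x_0) = \xi_0$ and $\varphi_x(x_0) = \eta_0$. Let $\psi$ be the nonconstant spatially periodic steady state of \eqref{eq:1} with $\tau(\psi) = \cO^*$, translated so that $\psi(x_0) = \xi_0$ and $\psi_x(x_0) = \eta_0$. Then $\varphi - \psi$ has a multiple zero at $x_0$, and because $\tau(\varphi) \subset \ol{\cI}(\cO^*)$ touches $\tau(\psi) = \cO^*$ tangentially from within, the difference $\varphi - \psi$ does not change sign in a neighborhood of $x_0$.

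We derive a contradiction from Lemma \ref{lemzero}(iii) applied to $w(x, t) := U(x, t) - \psi(x)$, which satisfies a linear equation of the form \eqref{eqlin}. The key preliminary step is to show that $z(w(\cdot, t)) < \infty$ for every $t \in \R$. By Lemma \ref{le:p0}(iv), there is $\be \in \R$ with $f'(\be) > 0$ and $(\be, 0) \in \cI(\cO^*)$; hence $\be$ is an unstable equilibrium of \eqref{eq:ODE}, and the hypothesis \eqref{eq:2.15} gives $z(U(\cdot, t) - \be) \le N$. Since $\psi - \be$ strictly changes sign on each period of $\psi$, an intermediate-value argument on intervals between consecutive zeros of $U - \psi$ spanning several periods of $\psi$ produces zeros of $U - \be$, yielding a bound of the form $z(w(\cdot, t)) \le c_1 N + c_2$, with constants depending only on $\psi$. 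Having established finiteness, Lemma \ref{robustnesszero} applied to the approximating sequence defining $\varphi$ (for instance $U(\cdot, t + t_n) \to \varphi$ when $K = \tau(\om(U))$, and with $\psi$ replaced by shifted states $\psi(\cdot + s_n)$ when $K \in \{\tau(\Om(U)), \tau(A(U))\}$) converts the multiple zero of $\varphi - \psi$ at $x_0$ into multiple zeros of $w(\cdot, t_n)$ near $x_0$ for a sequence of times. Each such multiple zero triggers a strict drop of $z(w(\cdot, t))$ by Lemma \ref{lemzero}(iii), producing infinitely many drops of a finite, monotone integer-valued quantity---a contradiction.

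The principal obstacle is the finiteness claim $z(U - \psi) < \infty$: one must quantify how zeros of $U - \psi$ generate zeros of $U - \be$ using only the period and amplitude of $\psi$. A secondary difficulty is handling the translation-invariant variants $\Om(U)$ and $A(U)$, where the tangency must be realized robustly via a sequence of spatial translations $x_n \to \pm \infty$, necessitating a moving reference frame and an appeal to Lemma \ref{le:zerot}.
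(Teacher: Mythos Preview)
Your reduction to a minimal periodic orbit $\cO^*$ is fine and matches the paper. The gap is in the finiteness step: the bound $z(U(\cdot,t)-\psi)\le c_1 N+c_2$ that you claim does not follow from $z(U(\cdot,t)-\beta)\le N$, and in fact is false in general. Consider a profile with $\tau(U(\cdot,t))\subset\ol{\cI}(\cO^*)$ that approaches $\beta$ from one side as $x\to\infty$ (this is perfectly compatible with $(\beta,0)\in\cI(\cO^*)$ and with $z(U(\cdot,t)-\beta)\le N$). For large $x$ one has $U(x,t)-\psi(x)\approx \beta-\psi(x)$, which changes sign twice per period of $\psi$, so $z(U(\cdot,t)-\psi)=\infty$. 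Your intermediate-value argument breaks down because on an interval where, say, $U>\psi$, you only learn $U>\max\psi>\beta$ at maxima of $\psi$; at minima of $\psi$ the inequality $U>\min\psi$ gives no information on the sign of $U-\beta$. Thus you cannot manufacture zeros of $U-\beta$ from long nodal intervals of $U-\psi$, and without finiteness Lemma~\ref{lemzero}(iii) gives no drop.

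The paper avoids this issue entirely by not working with $z(U-\psi)$. Instead, it picks a sequence of periodic orbits $\cO_n$ strictly \emph{outside} $\cO_{\min}$ (so $\cO_{\min}\subset\cI(\cO_n)$) with corresponding periodic solutions $\psi_n\to\psi_{\min}$ in $C^1_{loc}$. Passing to a limit along the tangency sequence $(x_n,t_n)$ yields an entire solution $U_\infty$ with $U_\infty(\cdot,0)-\psi_{\min}$ having a multiple zero at $0$. The bound $z(U-\beta)\le N$ enters only here, and only to rule out $U_\infty\equiv\psi_{\min}$: since $\psi_{\min}-\beta$ has infinitely many simple zeros, the approximants $U(\cdot+x_n,t_n)$ would inherit more than $N$ of them, a contradiction. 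With $U_\infty\not\equiv\psi_{\min}$, the robustness lemma (Lemma~\ref{robustnesszero}) applied to $w_n:=U(\cdot+x_n,\cdot+t_n)-\psi_n\to U_\infty-\psi_{\min}$ produces a single multiple zero of $U(\cdot,t_{n_0}+\delta_0)-\psi_{n_0}$, hence $\tau(U(\cdot,t_{n_0}+\delta_0))\cap\cO_{n_0}\ne\emptyset$. Since $\cO_{\min}\subset\cI(\cO_{n_0})$, this contradicts the minimality of $\cO_{\min}$ directly---no zero-number drops are counted at all. This also handles $K=\bigcup_{t}\tau(U(\cdot,t))$ cleanly (a case where your ``infinitely many drops'' scheme has no approximating time-sequence to feed into Lemma~\ref{robustnesszero}); the other choices of $K$ then follow from invariance.
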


\begin{proof}
  We prove the result in the case (i) only, the proof in the case (ii)
  is analogous.  To simplify the notation, let
  $\Sigma_{in}:=\Sigma_{in}(\Pi)$.

  Take first
  $\displaystyle K={\textstyle\bigcup}_{t\in\R}\tau\left(
    U(\cdot,t)\right)$.  We go by contradiction: assume that
  $K\not\subset{\overline{\mathcal{I}}\left(\Sigma_{in}\right)}.$ Then
  there exists a periodic orbit $\mathcal{O}_1\subset \Pi$ such that
  $\displaystyle \tau\left( U(\cdot,t_1)\right)\cap
  \mathcal{O}_1\neq\emptyset,$ for some $t_1\in\R.$ By the hypotheses,
  $\overline{K}\subset \overline{\mathcal{I}}(\mathcal{O})$ and
  $\overline{K}$ is a compact set.  Using the compactness and the
  ordering of periodic orbits contained in $\Pi,$ as given in Lemma
  \ref{le:p0}(v), we find the minimal periodic orbit
  $\mathcal{O}_{min}\subset\mathcal{P}_0$ with
  $\overline{K}\subset{\overline{\mathcal{I}}\left(\mathcal{O}_{min}\right)}$. Clearly,
  \begin{equation}\label{eq:2.16}
    \overline{K}\subset{\overline{\mathcal{I}}\left(\mathcal{O}_{min}\right)}, \qquad
    \overline{K}\cap\mathcal{O}_{min}\neq\emptyset.
  \end{equation}
  Hence, there exist sequences $x_n,t_n$ such that
$$
\left(
  U(x_n,t_n),U_x(x_n,t_n)\right)\underset{n\to\infty}{\longrightarrow}(a,b)\in\mathcal{O}_{min}.
$$
Let $\psi_{min}$ be a periodic solution of \eqref{eq:steady} with
$\psi_{min}(0)=a,$ $\psi_{min}'(0)=b,$ so that
$\tau(\psi_{min})=\mathcal{O}_{min}.$ Consider the sequence of
functions $U_n:=U(\cdot+x_n,\cdot+t_n).$ By parabolic estimates, upon
extracting a subsequence, $U_n$ converges in $C^1_{loc}(\R^2)$ to an
entire solution $U_\infty$ of \eqref{eq:1}. Obviously,
$U_\infty(\cdot,0)-\psi_{min}$ has a multiple zero at $x=0.$
 
We claim that $U_\infty\not\equiv\psi_{min}.$ Indeed, by Lemma
\ref{le:p0}(iv), there exists an unstable equilibrium $\be$ of
\eqref{eq:ODE} such that $\displaystyle z(\psi_{min}-\beta)=\infty.$
Hence, there exists $M>0$ such that
$\displaystyle z_{(-M,M)}(\psi_{min}-\beta)>N+1,$ where $N$ is as in
\eqref{eq:2.15}. Obviously, all zeros of $\psi_{min}-\beta$ are
simple. 
Considering that $U(\cdot+x_n,t_n)$ converges
uniformly on $(-M,M)$ to $U_\infty(\cdot,0)$ and
$\displaystyle z_{(-M,M)}\left( U(\cdot+x_n,t_n)-\beta\right)\leq N,$
we see that $U_\infty$ cannot be identical to $\psi_{min}.$
 
Now, using Lemma \ref{le:p0}(v), we find a sequence $\mathcal{O}_n$ of
periodic orbits such that
$\displaystyle\mathcal{O}_{n+1}\subset\mathcal{I}\left(\mathcal{O}_n\right)$,
$\displaystyle\mathcal{O}\subset\mathcal{I}\left(\mathcal{O}_n\right)$,
for $n=1,2,\dots$, 
and
$\displaystyle \dist\left(\mathcal{O}_n,\mathcal{O}_{min}\right)\to0.$\footnote{
Here and below, for $A,B\subset\R^2$,
$\dist(A,B)=\inf_{a\in A,b\in B}|a-b|$.}  There is a sequence $\psi_n$
of periodic solutions of \eqref{eq:steady} such that
$\tau(\psi_n)=\cO_n$ and $\displaystyle \psi_n\to\psi_{min}$ in
$C^1_{loc}(\R).$ Then, the sequence of functions $w_n:=U_n-\psi_n$
converges in $C^1_{loc}(\R^2)$ to
$w(x,t):=U_\infty(x,t)-\psi_{min}(x)$, which is an entire solution of a linear
parabolic equation \eqref{eqlin}. Since $w(\cdot,0)$ has a multiple
zero at $x=0$ and $w(\cdot,0)\not\equiv0,$ Lemma
\ref{robustnesszero} implies that there exist $n_0,x_{0},\delta_{0}$ such that
the function $w_{n_0}(\cdot,\delta_{0})$ has a multiple zero at
$x=x_0.$ Consequently,
\begin{equation}\label{eq:2.17}
  \tau\left( U(\cdot,t_{n_0}+\delta_0)\right)\cap\mathcal{O}_{n_0}\neq\emptyset.
\end{equation}
However, since $\mathcal{O}_{min}\subset\mathcal{I}({O}_{n_0}),$
\eqref{eq:2.17} contradicts \eqref{eq:2.16}.
This contradiction concludes the proof of
Lemma \ref{squeezinglemma} in the case
$\displaystyle K=\underset{t\in\R}{\cup}\tau\left( U(\cdot,t)\right)$.
 
If $K$ is any of the sets $\tau\left(\omega(U)\right)$,
$ \tau\left(\Omega(U)\right)$, $\tau\left(\alpha(U)\right)$,
$\tau\left( A(U)\right)$, then the conclusion follows from the
previous case and the invariance properties of these sets. Indeed,
consider $K=\tau\left(\omega(U)\right)$ for instance, the other cases
being similar.  For any $\vp\in\omega(U)$ there is an entire solution
$\tilde{U}$ with $\tilde{U}(\cdot,t)\in\omega(U)$ for all $t$ and
$\tilde{U}(\cdot,0)=\vp.$ Then
$\displaystyle
\tilde{K}:=\underset{t\in\R}{\cup}\tau\left(\tilde{U}(\cdot,t)\right)$
satisfies the hypotheses of the first case, and so $\tilde{K}$ is
included in ${\overline{\mathcal{I}}\left(\Sigma_{in}\right)}.$ This
is true for all $\vp\in\omega(U),$ hence
$\displaystyle
\tau\left(\omega(U)\right)\subset{\overline{\mathcal{I}}\left(\Sigma_{in}\right)}.$
\end{proof}

Finally, we recall the following well known result concerning the
solutions in $\cV$ (the proof can be found in \cite[Theorem
5.5.2]{Volpert3}, for example).
\begin{lemma}\label{le:3.1}
  Assume that $u_0\in \cV$. Then the limits
  \begin{equation}\label{limits-t}
    \theta_-(t):=\lim_{x\to-\infty}u(x,t),\qquad \theta_+(t):=\lim_{x\to\infty}u(x,t)
  \end{equation}
  exist for all $t> 0$ and are solutions of the following
  initial-value problems:
  \begin{equation}\label{eq:3.2}
    \dot{\theta}_\pm=f(\theta_\pm),\qquad \theta_\pm(0)=u_0(\pm\infty).
  \end{equation}
\end{lemma}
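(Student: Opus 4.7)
The plan is to exploit the translation invariance of \eqref{eq:1} together with parabolic regularity and continuous dependence on initial data in the topology of $L^\infty_{loc}(\R)$. Fix $t>0$ and any sequence $x_n\to+\infty$, and set $u_n(x,s):=u(x+x_n,s)$. Each $u_n$ is a bounded classical solution of \eqref{eq:1} with initial datum $u_0(\cdot+x_n)$; since $u_0\in\cV$, these shifted data are uniformly bounded in $L^\infty(\R)$ and converge locally uniformly to the constant $u_0(+\infty)$ as $n\to\infty$. Let $\eta_+$ denote the unique solution of the ODE $\dot\eta=f(\eta)$, $\eta(0)=u_0(+\infty)$, which is defined globally by (MF) and which, viewed as spatially constant, is also a solution of \eqref{eq:1}.

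The main step is to show that $u_n\to\eta_+$ in $L^\infty_{loc}(\R\times[0,\infty))$ as $n\to\infty$. Setting $w_n:=u_n-\eta_+$, the function $w_n$ satisfies a linear parabolic equation of the form $(w_n)_s=(w_n)_{xx}+c_n(x,s)w_n$, where $c_n$ is uniformly bounded thanks to the global Lipschitz bound on $f$ coming from (MF), and the initial datum $w_n(\cdot,0)=u_0(\cdot+x_n)-u_0(+\infty)$ is uniformly bounded on $\R$ and tends to zero locally uniformly. Combining the Duhamel representation for $w_n$ with the Gaussian decay estimate for the heat kernel, one shows that $w_n(x,s)\to 0$ locally uniformly in $(x,s)\in\R\times[0,\infty)$. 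In particular, $u(x_n,t)=u_n(0,t)\to\eta_+(t)$.

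Since $x_n\to+\infty$ was arbitrary, this proves that $\theta_+(t):=\lim_{x\to+\infty}u(x,t)$ exists and equals $\eta_+(t)$, so by construction it solves $\dot{\theta}_+=f(\theta_+)$ with $\theta_+(0)=u_0(+\infty)$; the argument for $\theta_-$ is identical. The main obstacle is the decay estimate for $w_n$: because $w_n(\cdot,0)$ decays only locally rather than uniformly on $\R$, a naïve sup-norm Gronwall estimate gives no smallness, and one has to exploit the smoothing of the heat semigroup, splitting the space integral in the Duhamel formula into a bounded region where the initial perturbation is uniformly small and a far-field region whose contribution is controlled by the Gaussian tail of the kernel.
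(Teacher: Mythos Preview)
Your approach is correct and self-contained, whereas the paper does not actually give its own proof of this lemma: it merely records it as a well-known fact and cites \cite[Theorem~5.5.2]{Volpert3}. So there is no paper argument to compare against beyond that reference.

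One remark on your technical core. The Duhamel-plus-splitting scheme you describe does work, but closing it into a Gronwall inequality for a \emph{localized} sup-norm is slightly awkward: the nonlinear (zeroth-order) term in the Duhamel formula involves $w_n(y,\tau)$ for all $y\in\R$, not just for $y$ in your bounded region, so the splitting does not immediately yield a closed inequality for $\sup_{|x|\le R}|w_n(x,s)|$. A cleaner shortcut, once you have noted that $w_n$ solves $(w_n)_s=(w_n)_{xx}+c_n w_n$ with $\|c_n\|_\infty\le L$, is the maximum-principle bound
\[
|w_n(x,s)|\;\le\; e^{Ls}\bigl(e^{s\Delta}|w_n(\cdot,0)|\bigr)(x)
\;=\; e^{Ls}\int_\R G(x-y,s)\,|w_n(y,0)|\,dy,
\]
obtained by checking that the right-hand side is a supersolution of the linear equation with initial value $|w_n(\cdot,0)|$. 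Then your near/far splitting applies directly to this single heat integral and gives $w_n\to 0$ in $L^\infty_{loc}(\R\times[0,T])$ with no bootstrap. Equivalently, what you are reproving is continuous dependence of the solution map $\bar u_0\mapsto u(\cdot,t,\bar u_0)$ in the $L^\infty_{loc}$ topology on a bounded set of data, which is a standard fact for semilinear parabolic equations with globally Lipschitz nonlinearity (guaranteed here by (MF)).
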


\section{Spatial trajectories of entire solutions in
  $\omega(u)$}\label{sec:3}

Throughout this section, we assume, in addition to the standing
hypotheses (ND), (MF) on $f$, that $u_0\in \cV$, $u_0(\pm\infty)=0$,
and the solution of \eqref{eq:1}, \eqref{ic1} is bounded. We
reserve the symbol $u(x,t)$ for this fixed solution.

Due to \eqref{eq:3.2}, the limits \eqref{limits-t} are equal:
$\theta^+\equiv\theta^-=:\hat \theta$, where $\hat\theta$ is the
solution of \eqref{eq:ODE} with $\hat\theta(0)\equiv 0$. This gives
the first two statements of the following corollary; the last
statement follows from Lemma \ref{lemzero}.
\begin{cor}\label{corollary:3.2}
  If $f(0)=0$, then $\hat \theta\equiv 0$; we set $\theta:=0$ in this
  case. If $f(0)\ne 0$, then
  $\hat\theta(t)\to\theta\in\R$ as ${t\to\infty}$, where
  $\theta\in f^{-1}\{0\}$ is a stable equilibrium of
  \eqref{eq:ODE}. In either case, if $\psi$ is any periodic steady
  state of \eqref{eq:steady} such that $\theta$ is not in the range of
  $\psi,$ then there exists $T>0$ such that
  $\displaystyle z(u(\cdot,t)-\psi)<\infty$ for all $t>T.$
\end{cor}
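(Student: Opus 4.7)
The plan is to dispatch the first two statements from standard ODE considerations and then focus on the third, which is the substantive content. For the first statement, $\hat\theta(0)=0$ together with $f(0)=0$ forces $\hat\theta\equiv 0$ by uniqueness for \eqref{eq:ODE}. For the second statement, the boundedness of the solution $u$ forces $\hat\theta$ to stay bounded, so by (ND) the orbit of $\hat\theta$ in the phase line lies between two consecutive zeros of $f$ and is strictly monotone (since $f(\hat\theta(0))=f(0)\ne 0$); being bounded and monotone it converges to a zero $\theta$ of $f$, which must satisfy $f'(\theta)\le 0$ and hence, by (ND), $f'(\theta)<0$, i.e., $\theta$ is a stable equilibrium.

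For the third statement, I would exploit that $\psi$ is periodic and $\theta\notin\psi(\R)$. First I would observe that $\psi(\R)$ is a compact interval $[\alpha,\beta]$ (the range of a nonconstant periodic function), so there is $\delta>0$ with $\dist(\theta,\psi(\R))\ge\delta$. Next, using Lemma \ref{le:3.1} together with the first two statements proved above, choose $T>0$ so that $|\hat\theta(t)-\theta|<\delta/3$ for all $t>T$ (trivial in the case $\hat\theta\equiv 0$).

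Now fix any $t>T$. Since $u(\cdot,t)\in\cV$ with both limits equal to $\hat\theta(t)$, there exists $M=M(t)>0$ with $|u(x,t)-\hat\theta(t)|<\delta/3$ whenever $|x|>M$. For such $x$ the triangle inequality yields
\begin{equation*}
|u(x,t)-\psi(x)|\ge |\psi(x)-\theta|-|\hat\theta(t)-\theta|-|u(x,t)-\hat\theta(t)|>\delta-\tfrac{\delta}{3}-\tfrac{\delta}{3}=\tfrac{\delta}{3}>0.
\end{equation*}
Thus every zero of the function $x\mapsto u(x,t)-\psi(x)$ lies in the bounded interval $[-M,M]$.

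Finally, since $\psi$ solves \eqref{eq:steady} and $u$ solves \eqref{eq:1}, the difference $v(x,s):=u(x,s)-\psi(x)$ satisfies a linear parabolic equation of the form \eqref{eqlin} on $\R\times(0,\infty)$. As $v\not\equiv 0$ (e.g.\ because $v(x,t)$ is bounded away from zero for $|x|>M$), Lemma \ref{lemzero}(i), applied on the bounded interval $I=(-M-1,M+1)$ (whose endpoints have $v\ne 0$ at time $t$, hence on a short time interval around $t$ by continuity), gives $z_I(v(\cdot,t))<\infty$. Combined with the confinement of zeros to $[-M,M]\subset I$, this yields $z(u(\cdot,t)-\psi)<\infty$, as required. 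The only mildly delicate point is ensuring the endpoint condition needed by Lemma \ref{lemzero} on a bounded interval; this is easily arranged by enlarging $M$ slightly and using the continuity of $v$, so I do not expect any real obstacle.
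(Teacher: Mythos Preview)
Your proof is correct and follows essentially the same route as the paper, which simply says the first two statements come from \eqref{eq:3.2} and the last ``follows from Lemma~\ref{lemzero}''; you have supplied the details the paper omits. One small simplification: your worry about the endpoint condition on a bounded interval is unnecessary, since Lemma~\ref{lemzero}(i) applied directly with $I=\R$ (where the boundary hypotheses are vacuous) already gives that all zeros of $v(\cdot,t)$ are isolated, and combined with your confinement to $[-M,M]$ this yields finiteness immediately.
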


Following the outline given in Section \ref{outline}, we examine the
elements $\varphi$ of $\om(u)$ whose spatial trajectories are not
contained in any chain. At the end, we want to show that no such
elements of $\om(u)$ exist (an application of Lemma \ref{le:2.7} then
yields the desired quasiconvergence results). For that aim, we first
examine the entire solutions through such elements
$\varphi\in \om(u)$. In Proposition \ref{prop:3.3} below, we
expose a certain structure these entire solutions would necessarily
have to have. Then, in Section \ref{morse-dec}, we show that that
structure is incompatible with other properties of the $\om$-limit
set.

Up to a point, we treat the cases (S) and (U) simultaneously.  When
(U) holds, we sometimes have to assume one or both of the extra
conditions (NC), (R); we indicate when this is needed.  The following
notation will be used in the case (U): $\Pi_0$ is the connected
component of $\mathcal{P}_0$ whose closure contains $(0,0)$. Note that
$\Pi_0$ is well defined, for $f'(0)>0$ implies that $(0,0)$ is a
center for \eqref{eq:sys}.

\begin{prop}\label{prop:3.3}
  Under the above hypotheses, assume that $\vp\in\omega(u)$ and let
  $U$ be the entire solution of \eqref{eq:1} with $U(\cdot,0)=\vp.$
  Assume that $\tau(\vp)\cap\mathcal{P}_0\neq\emptyset$, so that there
  exists a connected component $\Pi$ of $\mathcal{P}_0$ with
  \begin{equation}\label{eq:3.3}
    \tau(\vp)\cap\Pi\neq\emptyset.
  \end{equation}
  If {\rm (S)} holds, or if {\rm (U)} holds and $\Pi\ne\Pi_0$, then the following
  statements are true.
  \begin{itemize}
  \item[(i)] The connected component $\Pi$ satisfying \eqref{eq:3.3}
    is unique, it is bounded, and
    \begin{equation}
      \label{eq:5}
      \underset{t\in\R}{{\textstyle \bigcup}}\tau\left( U(\cdot,t)\right)\subset\Pi.
    \end{equation}
  \item[(ii)] Let $\Sigma_{in}=\Sigma_{in}(\Pi)$ be the inner chain
    and $\Lambda_{out}=\Lambda_{out}(\Pi)$ the outer loop associated
    with $\Pi,$ as in Lemma \ref{le:p0}.  Then
    \begin{equation}
      \label{eq:6}
      \tau\left(\alpha(U)\right)\subset\Sigma_{in},\qquad \tau\left(\omega(U)\right)\subset\Lambda_{out}.
    \end{equation}
  \end{itemize}
  If {\rm (U)} holds and $\Pi=\Pi_0$, then statement (i) is true if
  condition {\rm (NC)} is satisfied, and statement (ii) is true if
  conditions {\rm (NC)} and {\rm (R)} are both satisfied.
\end{prop}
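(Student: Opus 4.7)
The plan is to combine the squeezing lemma (Lemma \ref{squeezinglemma}) with the chain ordering from Lemmas \ref{le:p0} and \ref{le:insert}. First I secure the zero-number hypothesis needed by the squeezing lemma; then I trap $K:=\bigcup_{t\in\mathbb{R}}\tau(U(\cdot,t))$ inside $\overline{\Pi}$; finally I pin $\tau(\alpha(U))\subset\Sigma_{in}(\Pi)$ and $\tau(\omega(U))\subset\Lambda_{out}(\Pi)$, so that Lemma \ref{le:2.7} identifies every element of $\alpha(U)\cup\omega(U)$ as a steady state. For each unstable equilibrium $\beta\in f^{-1}\{0\}$ that arises as the ``center'' inside a chain approached by $K$, I need $z(U(\cdot,t)-\beta)\leq N_\beta$ uniformly in $t$. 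When $\beta\neq 0$, this follows from Corollary \ref{corollary:3.2} applied to the constant periodic steady state $\equiv\beta$: the count $z(u(\cdot,t)-\beta)$ is eventually finite and nonincreasing, hence stabilizes, and the bound passes to $U$ via extraction. This handles case (S) (where $0$ is never unstable), and also case (U) with $\Pi\neq\Pi_0$: since $(0,0)\notin\overline{\Pi}$ (distinct components of $\mathcal{P}_0$ cannot share the center $(0,0)$), the unstable equilibria associated to $\Pi$'s chains are nonzero. Only when $\Pi=\Pi_0$ and (U) holds does $\beta=0$ enter; (NC) then supplies the bound, because $z(u_x(\cdot,t))$ is nonincreasing and eventually finite, $u(\pm\infty,t)=0$ (Lemma \ref{le:3.1} with $\hat\theta\equiv 0$), and between consecutive critical points $u$ is monotone, so $z(u(\cdot,t))$ is finite and uniformly bounded, the bound passing to $U$ in the limit.

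For part (i), pick a periodic orbit $\mathcal{O}_\ast\subset\Pi$ with $\tau(\varphi)\cap\mathcal{O}_\ast\neq\emptyset$. The set $K$ is bounded by parabolic estimates and connected as the continuous image of $\mathbb{R}^2$ under $(x,t)\mapsto(U(x,t),U_x(x,t))$. Arguing by contradiction, if $K\not\subset\overline{\Pi}$ then $K$ meets a periodic orbit $\mathcal{O}'$ in some other component $\Pi'\neq\Pi$ of $\mathcal{P}_0$. Lemma \ref{le:insert}(ii) applied to the chains $\Sigma_{in}(\Pi)$ and $\Sigma_{in}(\Pi')$ supplies a periodic orbit $\widetilde{\mathcal{O}}$ that separates $\mathcal{O}_\ast$ from $\mathcal{O}'$ in the nested/externally-disjoint dichotomy, and Lemma \ref{squeezinglemma} then collapses $K$ to the side of $\widetilde{\mathcal{O}}$ not containing $\mathcal{O}_\ast$, a contradiction. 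Hence $K\subset\overline{\Pi}$; boundedness of $\Pi$ follows from $K$ bounded and $K\cap\Pi\neq\emptyset$, and uniqueness of $\Pi$ is immediate since any other component of $\mathcal{P}_0$ met by $\tau(\varphi)\subset K$ would violate $K\subset\overline{\Pi}$.

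For part (ii), the sets $K_\alpha:=\tau(\alpha(U))$ and $K_\omega:=\tau(\omega(U))$ are connected, compact, and contained in $\overline{\Pi}$ by applying part (i) to entire solutions whose orbits lie in $\alpha(U)$ or $\omega(U)$. If $K_\omega$ intersected a periodic orbit $\mathcal{O}\subset\Pi$, choosing a larger periodic $\mathcal{O}'\subset\Pi$ with $\mathcal{O}\subset\mathcal{I}(\mathcal{O}')$ and applying Lemma \ref{squeezinglemma} to $K_\omega\subset\mathcal{I}(\mathcal{O}')$ would squeeze $K_\omega\subset\overline{\mathcal{I}}(\Sigma_{in})$, contradicting $\mathcal{O}\subset K_\omega$. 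Hence $K_\omega\subset\Sigma_{in}\cup\Lambda_{out}$, and connectedness with Theorem \ref{thm:GS} (which supplies a steady state in $\omega(U)$) together with forward-time monotonicity of zero numbers pin $K_\omega\subset\Lambda_{out}$; symmetrically $K_\alpha\subset\Sigma_{in}$. Lemma \ref{le:2.7} then certifies every element of $\alpha(U)\cup\omega(U)$ as a steady state. When $\Pi=\Pi_0$ under (U), (R) enters through Lemma \ref{lemmaBPQ} applied to the reflection $V_\lambda u$ at $\lambda\in\{a_n,b_n\}$: this forces tail monotonicity of $U(\cdot,t)$ at large $|x|$, preventing $K_\omega$ from spuriously approaching the trivial inner chain $\{(0,0)\}$ through non-stationary oscillations.

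The main obstacle will be in part (i): $K$ could in principle escape $\overline{\Pi}$ through the outer loop $\Lambda_{out}$, whose constituent homoclinic and heteroclinic orbits are not directly amenable to Lemma \ref{squeezinglemma}. I expect to overcome this by approximating the escape point by nearby periodic orbits in an adjacent component of $\mathcal{P}_0$ and deriving a contradiction from the finiteness of zero-number drops on $U-\psi_n$ for steady states $\psi_n$ accumulating on the loop. The structural dichotomy of $\Lambda_{out}$ (homoclinic versus heteroclinic) and of $\Sigma_{in}$ (trivial singleton versus non-trivial chain) produces several sub-cases, though (ND) rules out degenerate configurations and keeps the case analysis finite.
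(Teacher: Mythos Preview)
Your approach to part (i) is in the right spirit, and the obstacle you identify at the end---excluding intersection of $\tau(U(\cdot,t))$ with the boundary chains $\Sigma_{in}$ and $\Lambda_{out}$---is real and is essentially what the paper handles in Lemmas~\ref{le:3.4}, \ref{le:anbut}, \ref{le:trivchain}. Note, though, that the paper's mechanism for these lemmas is not the squeezing lemma applied to $U$: it is the robustness lemma (Lemma~\ref{robustnesszero}) used to transfer a multiple zero of $U(\cdot,t_0)-\phi$ back to $u(\cdot,t_n+\delta_n)-\phi$ along the sequence $u(\cdot,\cdot+t_n)\to U$, and then a contradiction with the spatial limits $u(\pm\infty,t)=\hat\theta(t)$. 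Your proposed approximation argument may be made to work, but it will need this return to $u$; working with $U$ alone is not enough.

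The serious gap is in part (ii). Two problems. First, your squeezing argument for $K_\omega\cap\Pi=\emptyset$ assumes $K_\omega\subset\mathcal{I}(\mathcal{O}')$ for some periodic orbit $\mathcal{O}'\subset\Pi$, but this is unjustified: $K_\omega\subset\overline{\Pi}$ does not prevent $K_\omega$ from containing points near $\Lambda_{out}$, which lie outside $\mathcal{I}(\mathcal{O}')$ for every $\mathcal{O}'\subset\Pi$. Second, and more fundamentally, the sentence ``forward-time monotonicity of zero numbers pin $K_\omega\subset\Lambda_{out}$; symmetrically $K_\alpha\subset\Sigma_{in}$'' hides the entire difficulty. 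There is nothing symmetric here: both $\Sigma_{in}$ and $\Lambda_{out}$ consist of steady states, and zero-number monotonicity alone does not determine which side $\omega(U)$ or $\alpha(U)$ selects. Establishing the \emph{direction} of the connection---that $U$ flows from $\Sigma_{in}$ to $\Lambda_{out}$ and not the reverse---is the substance of the entire Section~\ref{sec:entire}. It requires first proving that the spatial limits $\Theta_\pm(t)=U(\pm\infty,t)$ exist (Lemma~\ref{le:limits}), then a case split (C1)--(C3) according to whether $(\Theta_\pm,0)$ lie on $\Sigma_{in}$ or $\Lambda_{out}$, and within each case comparison arguments with front-like or pulse-like solutions (via \cite{FMcL} and \cite{Matano_Polacik_CPDE16}), together with the key Lemma~\ref{le:ep}. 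None of this is captured by your outline, and there is no shortcut: the asymmetry between $\alpha$ and $\omega$ comes from dynamical information (stability of the equilibria on $\Lambda_{out}$, instability of the center inside $\Sigma_{in}$, and the resulting comparison estimates), not from abstract zero-number bookkeeping.
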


Statement (i) is proved in the next subsection. Subsection
\ref{existlim} is devoted to the   
behavior at $x\approx\pm\infty$ of $U(\cdot,t),$ and subsection
\ref{sub33} to additional properties  
when {\rm (U)} holds. Statement (ii) is then proved in sections
\ref{sec:entire} and \ref{completionprop}.

For the remainder of this section, we fix $\vp\in\omega(u)$ and denote
by $U$ be the entire solution of \eqref{eq:1} such that
$U(\cdot,0)=\vp$ and $U(\cdot,t)\in\omega(u)$ for all $t.$ Recall from
Section \ref{sec:inv} that there exists a sequence
$t_n\to\infty$ such that
\begin{equation}
  \label{eq:4}
  u(\cdot,\cdot+t_n)\underset{n\to\infty}{\longrightarrow}U \textrm{
    in } C_{loc}^1(\R^2). 
\end{equation}

\subsection{No intersection with chains}
\label{part1}
In this subsection, we prove statement (i) of Proposition
\ref{prop:3.3}.  The following result is a first step toward that
goal. 
\begin{lemma}\label{le:3.4}
  Let $\Sigma\subset \R^2$ be a nontrivial chain. If
  $\tau(\vp)\cap \mathcal{I}(\Sigma) \neq\emptyset$, then
  \begin{equation}
    \label{eq:44}
    \tau(U(\cdot,t))\subset \mathcal{I}(\Sigma)\quad(t\in\R)
  \end{equation}
  (in particular, $\tau(U(\cdot,t))\cap\Sigma=\emptyset$ for all
  $t\in\R$).  The result remains valid if one considers a loop
  $\Lambda$ in place of the chain $\Sigma.$
\end{lemma}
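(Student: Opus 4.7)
The approach is to first localize $K := \bigcup_{t\in\R} \tau(U(\cdot,t))$ inside a periodic orbit surrounding $\Sigma$, and then invoke the squeezing lemma. By Lemma \ref{le:insert}(i), $\Sigma = \Sigma_{in}(\Pi)$ for some connected component $\Pi$ of $\mathcal{P}_0$. For any periodic orbit $\mathcal{O} \subset \Pi$, Lemma \ref{le:p0}(ii) gives $\overline{\mathcal{I}}(\Sigma) \subset \mathcal{I}(\mathcal{O})$, so the hypothesis $\tau(\varphi) \cap \mathcal{I}(\Sigma) \neq \emptyset$ upgrades to $\tau(\varphi) \cap \mathcal{I}(\mathcal{O}) \neq \emptyset$. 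Showing $K \subset \mathcal{I}(\mathcal{O})$ for one such $\mathcal{O}$ triggers Lemma \ref{squeezinglemma} (case (i)) and yields $K \subset \overline{\mathcal{I}}(\Sigma_{in}(\Pi)) = \overline{\mathcal{I}}(\Sigma)$. The loop case is analogous, with $\mathcal{O}$ approaching $\Lambda$ from inside its loop region.

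For the key inclusion $K \subset \mathcal{I}(\mathcal{O})$, the plan is a contradiction argument. Let $\psi$ be the periodic solution of \eqref{eq:steady} with $\tau(\psi) = \mathcal{O}$, and suppose $\tau(U(\cdot, t_*))$ meets $\mathcal{O}$ at some point $(U(x_*, t_*), U_x(x_*, t_*))$. Since $\psi$ is periodic, the shift $s_*$ can be chosen so that $\psi(x_* + s_*) = U(x_*, t_*)$ and $\psi'(x_* + s_*) = U_x(x_*, t_*)$ hold simultaneously; this says that $w := U(\cdot, t_*) - \psi(\cdot + s_*)$ has a multiple zero at $x_*$. Provided $\theta \notin \mathrm{range}(\psi)$, Corollary \ref{corollary:3.2} ensures $z(u(\cdot, t) - \psi(\cdot + s_*)) < \infty$ for all large $t$, and Lemma \ref{lemzero}(ii) bounds this count by a fixed $N < \infty$. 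The convergence $u(\cdot, \cdot + t_n) \to U$ in $C^1_{\mathrm{loc}}(\R^2)$ combined with Lemma \ref{robustnesszero} produces multiple zeros of $u(\cdot, t_* + t_n) - \psi(\cdot + s_*)$ for all large $n$, and then the strict drop principle (Lemma \ref{lemzero}(iii)) applied along a sequence of such times forces an unbounded number of drops, contradicting the bound $N$.

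Once $K \subset \overline{\mathcal{I}}(\Sigma)$ is in hand, the remaining task is to exclude $\tau(U(\cdot, t_*)) \cap \Sigma \neq \emptyset$. Any intersection would lie on some loop $\Lambda \subset \Sigma$, which corresponds to a homoclinic or heteroclinic steady state $\psi_\Lambda$ (a ground state or standing wave); then $U(\cdot, t_*) - \psi_\Lambda(\cdot + s_*)$ has a multiple zero for a suitable $s_*$, and the same Sturmian drop argument applies. Here $\psi_\Lambda$ has a compactly concentrated profile whose range lies strictly between the endpoints of the loop, so the condition of Corollary \ref{corollary:3.2} is easier to secure, and the argument closes.

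The main obstacle is securing the finiteness hypothesis of Corollary \ref{corollary:3.2}, namely that $\theta$ avoids the range of the auxiliary steady state. When $\Sigma$ encircles $\theta$ in the phase plane, some periodic orbits $\mathcal{O} \subset \Pi$ may have ranges containing $\theta$, so a direct application fails. One must either perturb the choice of $\mathcal{O}$ (or of the shift $s_*$) using the restricted combinatorial structure of chains forced by hypothesis \textbf{(ND)}, or localize the zero count to bounded spatial intervals via Lemma \ref{le:zerot}, with boundary control provided by the known asymptotic behavior of $u$ at $\pm\infty$. This is precisely the delicate point that later motivates the extra hypotheses \textbf{(NC)} and \textbf{(R)} in the case $\Pi = \Pi_0$ of Proposition \ref{prop:3.3}.
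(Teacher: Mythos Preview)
Your approach has a genuine gap that you yourself flag in the final paragraph but do not close. You take the periodic orbit $\mathcal{O}$ in the component $\Pi$ with $\Sigma=\Sigma_{in}(\Pi)$, so $\mathcal{O}$ lies \emph{outside} $\Sigma$ and its range $[\min\psi,\max\psi]$ contains all of $\overline{\mathcal{I}}(\Sigma)\cap(\R\times\{0\})$. There is no reason for $\theta$ to avoid this range, so Corollary~\ref{corollary:3.2} need not apply and your contradiction in Step~1 does not go through. The suggested fixes (perturbing $\mathcal{O}$, localizing via Lemma~\ref{le:zerot}) are not developed, and it is unclear how either would work: any periodic orbit in $\Pi$ has the same difficulty, and there is no evident boundary control for a localized zero count. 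A second issue is that you invoke the squeezing lemma (Lemma~\ref{squeezinglemma}) without checking its hypothesis~\eqref{eq:2.15}; in case~(U) this would require finiteness of $z(U(\cdot,t))$, which is exactly what~(NC) provides---but Lemma~\ref{le:3.4} is stated and proved without~(NC).

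The paper's proof avoids both problems by working on the \emph{inside} of $\Sigma$ rather than the outside, and the key step you are missing is the one that makes this possible. Assuming both $\tau(\vp)\cap\mathcal{I}(\Sigma)\ne\emptyset$ and $\tau(\vp)\cap\Sigma\ne\emptyset$, the intersection with $\Sigma$ yields a ground state (or standing wave, or constant) $\phi$ on $\Sigma$ such that $\vp-\phi$ has a multiple zero. The robustness argument then forces $z(u(\cdot,t)-\phi)=\infty$, and since $\phi(\pm\infty)=a$ with $f'(a)<0$, this is only possible if $\hat\theta\equiv a$, i.e.\ $a=0$. Hence $(0,0)\in\Sigma$. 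Now any periodic orbit $\mathcal{O}\subset\mathcal{I}(\Sigma)$ lies entirely in $\{u>0\}$ or $\{u<0\}$, so $\theta=0$ is automatically excluded from the range of the corresponding $\psi$, Corollary~\ref{corollary:3.2} applies cleanly, and the contradiction closes. No squeezing lemma, no~(NC), no perturbation.
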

\begin{proof} Note that the second statement is a consequence of the
  first one; just consider the chain $\Sigma$ containing the loop
  $\La$ and use the connectedness of the set
  $\cup_{t\in\R}\tau(U(\cdot,t))$.

  Let $\Sigma$ be a nontrivial chain. We first show that the
  assumption $\tau(\vp)\cap \mathcal{I}(\Sigma) \neq\emptyset$ implies
  \begin{equation}
    \label{eq:46}
    \tau(\vp)\cap\Sigma=\emptyset.
  \end{equation}
  Assume for a contradiction that the intersections are both
  nonempty. The relation
  $\tau(\vp)\cap \Sigma\neq\emptyset$ means that there is
  a steady state $\phi$ of \eqref{eq:1} such that
  $\tau(\phi)\subset\Sigma$ and $U(\cdot,0)-\phi=\vp-\phi$ has a
  multiple zero at some point $x_0$. Using both assumptions
  $\tau(\vp)\cap\Sigma\neq\emptyset$ and
  $\tau(\vp)\cap \mathcal{I}(\Sigma) \neq\emptyset$, together with the
  connectedness of $\tau(\vp)$ and the fact that distinct chains are
  disjoint with positive distance,  we find a periodic orbit
  $\mathcal{O}\subset\mathcal{P}_0\cap\mathcal{I}(\Sigma)$ such that
  $\tau(U(\cdot,0))\cap\mathcal{O}\neq\emptyset.$ Hence there is a
  steady state $\psi$ of \eqref{eq:1} such that
  $\tau(\psi)=\mathcal{O}$ (so $\psi$ is nonconstant and periodic) and
  $U(\cdot,0)-\psi=\vp-\psi$ has a multiple zero at some $x_1.$
  Obviously, $U(\cdot,0)-\phi\not\equiv 0\not\equiv U(\cdot,0)-\psi$.

  From $\tau(\phi)\subset\Sigma$, we infer that $\phi$ is either a
  ground state at some level $a\in f^{-1}\{0\}$, or a standing wave
  with some limits $a, b\in f^{-1}\{0\}$, or a constant steady state
  $a$. We just consider the first possibility, the other two being
  similar.  Assuming that $\phi$ is a ground state at level $a,$ we
  first show that necessarily $a=0$ (and consequently $0$ is a stable
  equilibrium of \eqref{eq:ODE}, cp. Sect. \ref{sec:22}).  Indeed, the
  function $w(x,t)=U(x,t)-\phi(x)$ is a nontrivial solution of a
  linear equation \eqref{eqlin} and $w(\cdot,0)$ has a multiple zero
  at $x=x_0.$ By \eqref{eq:4}, the sequence
  $w_n:=u(\cdot,\cdot+t_n)-\phi$ converges in $C^1_{loc}(\R^2)$ to
  $w.$ Therefore, by Lemma \ref{robustnesszero}, there exist sequences
  $x_n\to x_0,$ $\delta_n\to 0$ such that for all sufficiently large
  $n$ the function $w_n(\cdot,\delta_n)$ has a multiple zero at
  $x=x_n.$ In other words, $u(\cdot,t_n+\delta_n)-\phi$ has a multiple
  zero at $x=x_n.$ Since $ t_n+\delta_n\to\infty$ and
  $u(\cdot,t)\not\equiv\phi$ (due to the assumption
  $\tau(\vp)\cap \mathcal{I}(\Sigma) \neq\emptyset$), Lemma
  \ref{lemzero} implies that $z(u(\cdot,t)-\phi)=\infty$ for all
  $ t>0.$ Now, $\phi(\pm\infty)=a$ and
  $u(\pm\infty,t)=\hat \theta(t)$, where $f(a)=0$ (and $a$ is a stable
  equilibrium of \eqref{eq:ODE}) and $\hat\theta$ is a solution of
  \eqref{eq:ODE}.  If $\hat\theta(t)\ne a$ for some (hence any) $t$,
  then, by Lemma \ref{lemzero}, $z(u(\cdot,t)-\phi)<\infty$. Thus,
  necessarily, $\hat\theta \equiv a$, which shows that $a=0$, as
  desired.

  To conclude, we use the fact that $(0,0)=(a,0)$ belongs to
  $\Sigma$, as does $\tau(\phi)$.  Since $\Sigma$ is connected and
  $\tau(\psi)\subset\mathcal{I}(\Sigma),$ we have either $\psi>0$ or
  $\psi<0.$ Therefore, by Corollary \ref{corollary:3.2},
  $z(u(\cdot,t)-\psi)<\infty$ for all large enough $t$.  On the other
  hand, using Lemma \ref{robustnesszero} in a similar way as above,
  since $U(\cdot,t_1)-\psi$ has a multiple zero and
  $U(\cdot,t_1)\not\equiv\psi,$ we obtain that
  $z(u(\cdot,t)-\psi)=\infty$ for all $t>0$. This contradiction
  completes the proof of \eqref{eq:46}.

  Using \eqref{eq:46}, the connectedness of $\tau(\varphi)$, and the
  assumption $\tau(\vp)\cap \mathcal{I}(\Sigma) \neq\emptyset$ we
  obtain that $\tau(\vp)\subset \mathcal{I}(\Sigma)$.  The stronger
  statement \eqref{eq:44} follows from this. Indeed, if \eqref{eq:44}
  is not valid, then for some $t_1$, we have
  $\tau(U(\cdot,t_1))\subset \ol{\mathcal{I}}(\Sigma)$ and
  $\tau(U(\cdot,t_1))\cap\Sigma\ne\emptyset$. At the same time,
  $\tau(U(\cdot,t_1))\not\subset \Sigma$ (otherwise, $U\equiv\varphi$
  is a steady state, by Lemma \ref{le:2.7}, and then
  $\tau(\varphi)\subset \Sigma$ would contradict the
  assumption). Thus,
  $\tau(U(\cdot,t_1))\cap {\mathcal{I}}(\Sigma)\ne
  \emptyset$. Applying what we have already proved to
  $U(\cdot,t_1)\in \om(u)$ in place of $\varphi$, we obtain
  $\tau(U(\cdot,t_1))\cap\Sigma=\emptyset$, a contradiction.  The
  proof is now complete.
\end{proof}

The next result is analogous to the previous one, but the proof
requires different arguments. 
\begin{lemma}
  \label{le:anbut}
  If $\Sigma$ is a nontrivial chain and
  $\tau(\vp)\cap (\R^2\setminus
  \overline{\mathcal{I}}\left(\Sigma\right)) \neq\emptyset$, then
  \begin{equation}
    \label{eq:44b}
    \tau(U(\cdot,t))\subset\R^2\setminus
    \overline{\mathcal{I}}\left(\Sigma\right) 
    \quad(t\in\R)
  \end{equation}
  (in particular, $\tau(U(\cdot,t))\cap\Sigma=\emptyset$ for all
  $t\in\R$).
\end{lemma}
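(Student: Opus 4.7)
The plan is to mirror Lemma \ref{le:3.4}, with the roles of $\cI(\Sigma)$ and $\R^2\sm\ol{\cI}(\Sigma)$ swapped; as the authors hint, the final contradiction step requires different inputs. The key new ingredient is Lemma \ref{le:insert}(i), which provides a unique connected component $\Pi$ of $\cP_0$ with $\Sigma=\Sii(\Pi)$, whose periodic orbits form an annular region surrounding $\Sigma$ from the outside and play the role that the orbits $\cO\subset\cI(\Sigma)$ played in Lemma \ref{le:3.4}.

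I first show $\tau(\vp)\cap\Sigma=\emptyset$ by contradiction. If both $\tau(\vp)\cap\Sigma$ and $\tau(\vp)\cap(\R^2\sm\ol{\cI}(\Sigma))$ are nonempty, then since $\Pi$ separates $\ol{\cI}(\Sigma)$ from the exterior of $\ol{\cI}(\Lao(\Pi))$ and $\tau(\vp)$ is connected, there must exist a periodic orbit $\cO\subset\Pi$ with $\tau(\vp)\cap\cO\ne\emptyset$. From the two intersections I extract a steady state $\phi$ with $\tau(\phi)\subset\Sigma$ (a constant, a ground state, or a standing wave) and a nonconstant periodic steady state $\psi$ with $\tau(\psi)=\cO$ such that both $\vp-\phi$ and $\vp-\psi$ possess multiple zeros. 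Exactly as in the proof of Lemma \ref{le:3.4}, combining \eqref{eq:4}, Lemma \ref{robustnesszero}, and the drop property in Lemma \ref{lemzero}(iii) then forces $z(u(\cdot,t)-\phi)=z(u(\cdot,t)-\psi)=\infty$ for all $t>0$.

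Next I analyze the $\phi$-comparison. Since $u(\pm\infty,t)=\hat\theta(t)$ by Lemma \ref{le:3.1} and $\phi(\pm\infty)$ takes values in $f^{-1}\{0\}$, an infinite zero count at every $t>0$ forces $\hat\theta(t)\in\{\phi(+\infty),\phi(-\infty)\}$ for all $t$; continuity and the uniqueness of solutions to \eqref{eq:ODE}, together with $\hat\theta(0)=0$, then give $\hat\theta\equiv 0$, hence $f(0)=0$ and $(0,0)\in\Sigma$. If $f(0)\ne 0$, this is absurd; in case {\rm(U)}, $f'(0)>0$ makes $(0,0)$ a center of \eqref{eq:sys}, so by Lemma \ref{MatPolLemma}(i) $\{(0,0)\}$ is by itself a trivial chain, contradicting the nontriviality of $\Sigma$.

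The main obstacle is case {\rm(S)} with $f(0)=0$: then $(0,0)\in\Sigma$ is a saddle and, by Lemma \ref{le:p0}(ii), every $\cO\subset\Pi$ encircles the origin, so $\psi$ attains the value $0$ and Corollary \ref{corollary:3.2} no longer yields $z(u-\psi)<\infty$. To close this case I plan to invoke Lemma \ref{squeezinglemma} applied to $K=\bigcup_{t\in\R}\tau(U(\cdot,t))$: under {\rm(S)} every unstable zero $\beta$ of $f$ satisfies $\beta\ne 0$, so $u(\cdot,t)-\beta$ is bounded away from zero at $\pm\infty$ for large $t$, yielding a uniform bound $z(U(\cdot,t)-\beta)\le N$ (via the convergence $u(\cdot,\cdot+t_n)\to U$); the squeezing conclusion, combined with the contradictory inclusions $K\cap\cO\ne\emptyset$ and $K\not\subset\ol{\cI}(\Sigma)$, should then produce the desired contradiction. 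Once $\tau(\vp)\cap\Sigma=\emptyset$ is proved, connectedness gives $\tau(\vp)\subset\R^2\sm\ol{\cI}(\Sigma)$, and the extension to all $t\in\R$ is obtained as follows: if $\tau(U(\cdot,t_1))\cap\ol{\cI}(\Sigma)\ne\emptyset$ for some $t_1$, Lemma \ref{le:2.7} rules out $\tau(U(\cdot,t_1))\subset\Sigma$ (otherwise $U$ would be stationary with $\tau(\vp)\subset\Sigma$), and connectedness of $\tau(U(\cdot,t_1))$ then forces a point in $\cI(\Sigma)$ (excluded by applying Lemma \ref{le:3.4} to $U(\cdot,t_1)\in\om(u)$) or a point in $\R^2\sm\ol{\cI}(\Sigma)$ (excluded by applying the just-proven statement to $U(\cdot,t_1)$).
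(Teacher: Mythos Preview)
Your overall architecture is sound, and in the cases $f(0)\ne 0$ and {\rm (U)} your argument goes through.  The gap is precisely where you flag it: case {\rm (S)} with $f(0)=0$.  There the periodic orbit $\cO\subset\Pi$ you pick necessarily encircles $(0,0)\in\Sigma$, so $0$ lies in the range of $\psi$ and Corollary~\ref{corollary:3.2} gives no contradiction.  Your proposed rescue via Lemma~\ref{squeezinglemma} does not close the gap as stated: that lemma requires one of the inclusions $K\subset\cI(\cO')$ or $K\subset\R^2\setminus\ol{\cI}(\cO')$ as a \emph{hypothesis}, but in your situation $K$ meets both $\Sigma$ and $\cO$, so neither inclusion holds for any $\cO'\subset\Pi$ close to $\Sigma$.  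One could try to iterate (start from a huge orbit containing the bounded set $K$, squeeze, and repeat), but this is not what you wrote and would need a nontrivial induction on chains; the phrase ``should then produce the desired contradiction'' is doing too much work.

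The paper's proof avoids this entire case distinction by approximating $\phi$ from the \emph{inside} rather than the outside.  Since $\tau(\phi)$ lies in some loop $\Lambda\subset\Sigma$, one takes periodic solutions $\psi_n$ with $\tau(\psi_n)\subset\cI(\Lambda)$ and $\psi_n\to\phi$ in $C^1_{loc}(\R)$.  Then $U-\psi_n\to U-\phi$ in $C^1_{loc}(\R^2)$, and since $U-\phi$ is a nontrivial solution of a linear equation with a multiple zero at $(x_0,0)$, Lemma~\ref{robustnesszero} produces $n_1,t_1$ with $\tau(U(\cdot,t_1))\cap\cO_{n_1}\ne\emptyset$, hence $\tau(U(\cdot,t_1))\cap\cI(\Sigma)\ne\emptyset$.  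Now Lemma~\ref{le:3.4}, already proved, forces $\tau(U(\cdot,t))\subset\cI(\Sigma)$ for all $t$, contradicting $\tau(\vp)\cap(\R^2\setminus\ol{\cI}(\Sigma))\ne\emptyset$.  This works uniformly in all cases, with no appeal to Corollary~\ref{corollary:3.2} or the squeezing lemma, and no need to locate $(0,0)$ relative to $\Sigma$.  Your extension from $t=0$ to all $t\in\R$ is fine and matches the paper's.
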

\begin{proof}
  It is sufficient to prove that $\tau(\varphi)\cap\Sigma=\emptyset$.
  The stronger conclusion \eqref{eq:44b} follows from this by a
  similar argument as in the last paragraph of the previous proof.

  We go by contradiction. Assume that
  $\tau(\vp)\cap (\R^2\setminus
  \overline{\mathcal{I}}\left(\Sigma\right)) \neq\emptyset$ and at the
  same time 
  $\tau(\varphi)\cap\Sigma\ne\emptyset$.  Then, there is a solution
  $\phi$ of \eqref{eq:steady} with $\tau(\phi)\subset\Sigma$ such that
  $U(\cdot,0)-\phi=\vp-\phi$ has a multiple zero at some $x=x_0.$
  Clearly, $\phi$ is either a ground state, or a standing wave, or a
  zero of $f$ which is the limit of some ground state or standing
  wave. In either case, $\tau(\phi)$ is contained in a loop
  $\Lambda\subset\Sigma$.  To derive a contradiction, we choose a
  sequence $\psi_n$ of periodic solutions of \eqref{eq:steady} such
  that $\cO_n:=\tau(\psi_n)\subset \cI(\La)$ and $\psi_n\to\phi$ in
  $C^1_{loc}(\R)$ (the existence of such a sequence of periodic orbits
  $\cO_n$ is guaranteed by Lemma \ref{le:p0} and the fact that
  distinct chains are disjoint with positive distance).  Then the
  sequence $w_n(x,t):=U(x,t)-\psi_n(x)$ converges in $C^1_{loc}(\R^2)$
  to $w(x,t):=U(x,t)-\phi,$ a solution of a linear equation
  \eqref{eqlin}.  Since $U(\cdot,0)=\varphi\not\equiv \phi$, we have that
  $w\not\equiv 0$. Moreover, $w(\cdot,0)$ has a multiple zero at
  $x=x_0.$ Hence, by Lemma \ref{robustnesszero}, there exist $n_1$,
  $t_1$ such that $w_{n_1}(\cdot,t_1)$ has a multiple zero.  This
  means that $\tau(U(\cdot,t_1))\cap\mathcal{O}_{n_1}\neq\emptyset$,
  hence $\tau(U(\cdot,t_1))\cap \cI(\La)\ne\emptyset$. Applying Lemma
  \ref{le:3.4} to $U(\cdot,t_1)$ in place of $\varphi$, and taking
  $t=0$ in \eqref{eq:44}, we obtain a contradiction to the assumption
  that
  $\tau(\vp)\cap (\R^2\setminus
  \overline{\mathcal{I}}\left(\Sigma_{in}\right)) \neq\emptyset$.
\end{proof}

In the next lemma, we deal with a trivial chain $\Sigma$, that is,
$\Sigma=\{(\be,0)\}$, where $\beta$ is an unstable equilibrium of
\eqref{eq:ODE}.

\begin{Lemma}
  \label{le:trivchain}
  Assume that $\Sigma=\{(\be,0)\}$ is a trivial chain.  If $\be=0$ (so
  {\rm (U)} holds), assume also that condition {\rm (NC)} is satisfied.  Then
  $(\beta,0)\not\in\tau(\varphi)$, which is the same as
  $\tau(\vp)\cap\Sigma=\emptyset$, unless $U\equiv \varphi\equiv \be=0$.
\end{Lemma}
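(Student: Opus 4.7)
Suppose $(\beta,0)\in\tau(\varphi)$, i.e.\ there is $x_0\in\R$ with $\varphi(x_0)=\beta$ and $\varphi_x(x_0)=0$; equivalently, $\varphi-\beta$ has a multiple zero at $x_0$, or vanishes identically. I split into two cases.

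\textbf{Case A: $\varphi\equiv\beta$.} Forward uniqueness for \eqref{eq:1} gives $U(\cdot,t)\equiv\beta$ for $t\ge 0$, and backward uniqueness for the linear parabolic equation satisfied by $U-\beta$ extends this to all $t\in\R$, so $U\equiv\varphi\equiv\beta$. If $\beta=0$ this is the allowed exception; if $\beta\ne 0$ I derive a contradiction as follows. By (ND), the instability of $\beta$ for \eqref{eq:ODE} forces $f'(\beta)>0$, so $(\beta,0)$ is a center of \eqref{eq:sys} encircled by arbitrarily small non-stationary periodic orbits $\cO_\varepsilon$. Let $\psi_\varepsilon$ be a corresponding periodic steady state, with range shrinking to $\{\beta\}$ as $\varepsilon\to 0$. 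For $\varepsilon$ small, $\theta\ne\beta$ from Corollary \ref{corollary:3.2} lies outside the range of $\psi_\varepsilon$, whence Corollary \ref{corollary:3.2} yields $z(u(\cdot,t)-\psi_\varepsilon)<\infty$ for large $t$; Lemma \ref{lemzero} (applicable because $u(\pm\infty,t)-\psi_\varepsilon$ stays bounded away from $0$ since $\hat\theta(t)\to\theta$ while $\psi_\varepsilon$ stays near $\beta$) upgrades this to a uniform bound $z(u(\cdot,t)-\psi_\varepsilon)\le N_\varepsilon$. On the other hand, $u(\cdot,t_n)\to\beta$ in $L^\infty_{loc}$ gives $u(\cdot,t_n)-\psi_\varepsilon\to\beta-\psi_\varepsilon$ locally, and the limit, being a nonconstant periodic function with simple zeros (simple since $(\beta,0)\notin\cO_\varepsilon$), forces $z(u(\cdot,t_n)-\psi_\varepsilon)\to\infty$ by the implicit function theorem---a contradiction.

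\textbf{Case B: $\varphi\not\equiv\beta$.} Then $\varphi-\beta$ has an isolated multiple zero at $x_0$. Since $u(\cdot,\cdot+t_n)-\beta\to U-\beta\not\equiv 0$ in $C^1_{loc}(\R^2)$, Lemma \ref{robustnesszero} furnishes $x_n\to x_0$, $\delta_n\to 0$ such that $u(\cdot,t_n+\delta_n)-\beta$ has a multiple zero at $x_n$ for all large $n$. I claim $z(u(\cdot,t)-\beta)$ is finite and non-increasing for large $t$. If $\beta\ne 0$, Corollary \ref{corollary:3.2} yields $\hat\theta(t)\to\theta\ne\beta$, so the limits of $u(\cdot,t)-\beta$ at $\pm\infty$ are nonzero for $t$ large and Lemma \ref{lemzero} applies directly. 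If $\beta=0$ (case (U)), hypothesis (NC) combined with Lemma \ref{lemzero} applied to the linear equation satisfied by $u_x$ gives $z(u_x(\cdot,t))<\infty$ for $t\ge t^*$; coupled with $u(\pm\infty,t)=0$ and strict monotonicity of $u(\cdot,t)$ on the two unbounded tails past the extreme critical points, this forces finitely many zeros of $u(\cdot,t)$ as well, and Lemma \ref{le:zerot} applied on a continuous time-dependent interval $(a(t),b(t))$ enclosing all zeros (with $u(a(t),t),u(b(t),t)\ne 0$) yields the desired monotonicity. In either subcase, each multiple-zero event at $t_n+\delta_n$ forces a strict drop of $z(u(\cdot,\cdot)-\beta)$, but only finitely many such drops fit below the finite starting value---a contradiction.

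The principal obstacle is to control the zero number globally on $\R$. When $\beta\ne 0$ the nontrivial limit $\theta-\beta$ at $\pm\infty$ does the job, but when $\beta=0$ one must rely on hypothesis (NC) to prevent zeros from accumulating near $\pm\infty$, via the finiteness of critical points and the eventual vanishing of $u$. A second subtle point is excluding the constant $\beta$ from $\omega(u)$ in Case A with $\beta\ne 0$: the unstable character of $\beta$ becomes visible only by comparison with the small periodic steady states $\psi_\varepsilon$ encircling $(\beta,0)$, precisely the mechanism provided by Corollary \ref{corollary:3.2}.
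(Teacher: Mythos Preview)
Your proof is correct. Case~B follows essentially the same approach as the paper: both use Lemma~\ref{robustnesszero} to transfer the multiple zero of $\varphi-\beta$ to $u(\cdot,t_n+\delta_n)-\beta$, and then invoke the zero-number monotonicity to reach a contradiction. The paper phrases this as ``infinitely many drops force $z(u(\cdot,t)-\beta)=\infty$, which contradicts the spatial limits (if $\beta\ne 0$) or (NC) (if $\beta=0$)'', while you take the contrapositive (first show $z$ is finite, then only finitely many drops fit); these are the same argument. Your detour through Lemma~\ref{le:zerot} on a moving interval is unnecessary---Lemma~\ref{lemzero} on $I=\R$ already gives the monotonicity and the drop property once finiteness is established---but it is not wrong.

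Where you go beyond the paper is Case~A with $\beta\ne 0$. The paper's proof of this lemma only treats $\varphi\not\equiv\beta$ and then asserts the full conclusion ``$U\equiv\varphi\equiv\beta=0$'' without explicitly ruling out $\varphi\equiv\beta$ for $\beta\ne 0$; that case is in fact handled in the subsequent Lemma~\ref{le:3.5}(i) via exactly the mechanism you use (comparison with small periodic orbits encircling $(\beta,0)$ and Corollary~\ref{corollary:3.2}). So your proof is self-contained where the paper's is not, at the cost of anticipating an argument that the paper deploys a few lines later.
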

\begin{proof}
  Assume that $(\beta,0)\in\tau(\varphi)$.  If
  $\varphi\not \equiv \beta$, then $U-\be$ is a nontrivial solution
  of a linear equation. Hence, by Lemma \ref{robustnesszero},
  there is a sequence $t_n\to\infty$ such that $u(\cdot,t_n)-\beta$
  has a multiple zero for $n=1,2,\dots$. Then by Lemma \ref{lemzero},
  \begin{equation}
    \label{eq:45}
    z(u(\cdot,t)-\beta)=\infty\quad(t>0),
  \end{equation}
  which is possible only if $u(\pm\infty,t)=\be$ for all $t>0$. Since
  $\beta$ is an unstable equilibrium of \eqref{eq:ODE}, this relation
  means that (U) holds and $\be=0$.  However, in this situation
  assumption (NC) is in effect, which clearly contradicts
  \eqref{eq:45}. This contradiction shows that for
  $(\beta,0)\in\tau(\varphi)$ it is necessary that
  $U\equiv \varphi\equiv \be=0$.
\end{proof}

We are ready to complete the proof of Proposition
\ref{prop:3.3}(i).

\begin{proof}[Proof of Proposition \ref{prop:3.3}(i)]
  Assume that \eqref{eq:3.3} holds for a connected component $\Pi$ of
  $\cP_0$. We first claim that $\Pi$ is bounded. Suppose not.  Since
  $u(\cdot,t)$ and $u_x(\cdot,t)$ are uniformly bounded as
  $t\to\infty$, using (MF) we find a periodic orbit
  $\mathcal{O}\subset\Pi$ such that
  $\displaystyle \tau(U(\cdot,t))\subset\mathcal{I}(\mathcal{O}),$ for
  all $t\in\R.$ Then Lemma \ref{squeezinglemma} implies that
  $\displaystyle
  \tau(\vp)=\tau(U(\cdot,0))\subset{\overline{\mathcal{I}}\left(\Sigma_{in}(\Pi)\right)},$
  in contradiction to \eqref{eq:3.3}.

  Thus $\Pi$ is indeed bounded. Let $\Sii$ and $\Lao$ be the inner
  chain and outer loop associated with $\Pi$ (as in Proposition
  \ref{prop:3.3}(ii)). If $\Pi\ne\Pi_0$, then Lemmas
  \ref{le:3.4}--\ref{le:trivchain} show that
  $\tau(U(\cdot,t))\subset\Pi$ for all $t\in\R$.  The same applies if
  $\Pi=\Pi_0$---in which case $\Sii=\{(0,0)\}$---under the extra
  assumption (NC).  This in particular shows the uniqueness of $\Pi$
  satisfying \eqref{eq:3.3}.
\end{proof}

We finish the subsection with a result ruling out some functions,
including all nonconstant periodic steady states, from
$\om(u)$. 

\begin{lemma}\label{le:3.5}
  Let $\psi$ be a  nonconstant periodic solution of
  \eqref{eq:steady} and $\cO:=\tau(\psi)$. The
   following statements are valid.
  \begin{itemize}
  \item[(i)] If $(0,0)\not \in \cI(\cO)$, then $\om(u)$  contains
    no function $\phi$ satisfying $\tau(\phi) \cap
    \ol\cI(\cO)\ne\emptyset$. In particular, $\om(u)$ does not contain
    $\psi$ itself and neither it contains any
    nonzero  $\be\in f^{-1}\{0\}$ which is an unstable equilibrium of
  \eqref{eq:ODE}. 
\item[(ii)] If $(0,0) \in \cI(\cO)$ and either {\rm (S)} holds, or
  {\rm (U)} holds together with {\rm (NC)}, 
    then $\psi\not \in\om(u)$.
  \end{itemize}
\end{lemma}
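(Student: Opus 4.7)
My plan is to prove both parts by contradiction, reducing each to a single zero-number argument.

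\textbf{Key claim.} If $\psi'$ is a nonconstant periodic steady state of \eqref{eq:steady} with $\tau(\psi')=\cO'$ having range $[p',q']$, and if there exists a constant $c\in(p',q')$ such that $\hat\theta(t)\ne c$ for all $t$ sufficiently large, then $\psi'\notin\omega(u)$. To prove this, suppose $\psi'\in\omega(u)$; by parabolic regularity we have $t_n\to\infty$ with $u(\codt,t_n)\to\psi'$ in $C^1_{loc}(\R)$. By the choice of $c$ and Lemma~\ref{le:3.1}, $u(\pm\infty,t)=\hat\theta(t)\ne c$ for large $t$; hence $u(\codt,t)-c$ is bounded away from zero outside a compact set, its zeros are isolated by Lemma~\ref{lemzero}(i), and therefore $z(u(\codt,t)-c)<\infty$. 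Lemma~\ref{lemzero}(ii) then gives a uniform bound $z(u(\codt,t)-c)\le N$ for all $t\ge T_1$. On the other hand $\psi'-c$ has infinitely many simple zeros on $\R$ (as $c$ is in the open range of the nonconstant periodic $\psi'$), and by $C^1_{loc}$-convergence each of them persists as a simple zero of $u(\codt,t_n)-c$ for $n$ large; choosing $M$ so that $\psi'-c$ has more than $N$ zeros in $(-M,M)$ yields the contradiction $z(u(\codt,t_n)-c)>N$.

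\textbf{Application to part (ii).} The hypothesis $(0,0)\in\cI(\cO)$ forces $0\in(p,q)$. In case (S) with $f(0)\ne 0$, $\hat\theta$ is strictly monotone with limit $\theta\ne 0$, so $\hat\theta(t)\ne c$ eventually for any $c\in(p,q)$. In the remaining cases $\hat\theta\equiv 0$, and any $c\in(p,q)\setminus\{0\}$ satisfies the hypothesis. The key claim with $\psi'=\psi$ then yields $\psi\notin\omega(u)$. Hypothesis (NC) in case (U) is invoked at the step concluding $z(u(\codt,t)-c)<\infty$: with $u(\pm\infty,t)\equiv 0$, (NC) prevents pathological accumulations of critical points of $u$ that might otherwise obstruct the finiteness and monotonicity of the zero count (cf.\ the parallel role of (NC) in Lemma~\ref{le:trivchain}).

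\textbf{Part (i), ``in particular''.} For $\phi=\psi$ we have $\tau(\psi)=\cO\subset\ol{\cI}(\cO)$, so the main statement rules out $\psi\in\omega(u)$. For a nonzero unstable $\beta$, the center $(\beta,0)$ is surrounded by arbitrarily small non-stationary periodic orbits $\cO_1$; choose one with $(0,0)\notin\cI(\cO_1)$. Then $\tau(\beta)=\{(\beta,0)\}\subset\cI(\cO_1)\subset\ol{\cI}(\cO_1)$, and applying the main statement of (i) with $\cO_1$ in place of $\cO$ rules out $\phi\equiv\beta\in\omega(u)$.

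\textbf{Part (i), main statement.} This is the hardest step. Let $U$ be the entire solution through $\phi$ and $\Pi$ the component of $\cP_0$ containing $\cO$; the hypothesis $(0,0)\notin\cI(\cO)$ forces $\Pi\ne\Pi_0$ in case (U). I case-split on where $\tau(\phi)$ meets $\ol{\cI}(\cO)$---inside $\Pi$, on $\cO$, or inside $\ol{\cI}(\Sigma_{in}(\Pi))$---and combine Lemmas~\ref{le:3.4}, \ref{le:anbut}, \ref{le:trivchain} with Proposition~\ref{prop:3.3}(i) (which is available, being proved in the immediately preceding Subsection~\ref{part1}) to either derive a direct contradiction or confine $\bigcup_t\tau(U(\codt,t))$ to $\ol{\cI}(\cO)$. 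In the latter case the squeezing lemma (Lemma~\ref{squeezinglemma}), together with the invariance and connectivity of $\omega(U)\subset\omega(u)$, produces a nonconstant periodic steady state $\psi'\in\omega(u)$ with $\tau(\psi')\subset\ol{\cI}(\cO)$; since then $(0,0)\notin\cI(\tau(\psi'))$, the key claim applied to $\psi'$ closes the argument. The main obstacle will be to exhaust this case analysis cleanly, especially in the degenerate configuration where $\tau(\phi)$ lies entirely on a chain inside $\ol{\cI}(\cO)$ without meeting any non-stationary periodic orbit, where the rigidity results of Subsection~\ref{part1} must be applied with care.
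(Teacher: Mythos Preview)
Your key claim has a genuine gap: you invoke Lemma~\ref{lemzero} for the function $v=u(\cdot,t)-c$, but that lemma applies only when $v$ solves a linear equation \eqref{eqlin}. Since $u$ and the constant $c$ are both needed as solutions of \eqref{eq:1} for their difference to satisfy \eqref{eqlin}, this requires $f(c)=0$. With an arbitrary $c\in(p',q')$ you cannot conclude that the zeros of $u(\cdot,t)-c$ are isolated, nor that their number is monotone in $t$. The natural repair is to take $c=\beta$, the unstable equilibrium supplied by Lemma~\ref{le:p0}(iv); this is exactly what the paper does. But then in case (U) with $(0,0)\in\cI(\cO)$ it can happen that the only zero of $f$ in $(p,q)$ is $\beta=0=\hat\theta(t)$, so no admissible $c$ exists. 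Your remark that (NC) ``prevents pathological accumulations'' is not a proof; the paper handles this case by counting critical points, i.e.\ by bounding $z(u_x(\cdot,t))$ via (NC) and noting that $\psi$ has infinitely many critical points.

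For the main statement in (i), your outline is too sketchy and one step is unsupported: the squeezing lemma confines spatial trajectories, it does not ``produce a nonconstant periodic steady state $\psi'\in\omega(u)$''. The paper's argument is both simpler and avoids Proposition~\ref{prop:3.3}(i) entirely: it compares $u$ directly with the \emph{given} periodic $\psi$ (so $u-\psi$ does solve \eqref{eqlin}). Since $(0,0)\notin\cI(\cO)$ forces $\theta$ out of the range of $\psi$, Corollary~\ref{corollary:3.2} bounds $z(u(\cdot,t)-\psi)$. On the other hand, any $\phi\in\omega(u)$ with $\tau(\phi)\cap\ol\cI(\cO)\neq\emptyset$ forces $z(u(\cdot,t_n)-\psi)\to\infty$: if $\phi$ is a non-periodic steady state with $\tau(\phi)\subset\ol\cI(\cO)$ then $\phi-\psi$ has infinitely many simple zeros; if $\tau(\phi)$ meets a periodic orbit (possibly $\cO$ itself or one inside) then a robustness argument (Lemma~\ref{robustnesszero}) gives multiple zeros of $u(\cdot,t)-\psi$ along a sequence. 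This direct comparison with $\psi$ is the missing idea in your treatment of (i).
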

\begin{proof} First we prove that $\psi\not \in\om(u)$. 
  By Lemma \ref{le:p0}(iv), there is
  $\beta\in f^{-1}\{0\}$ such that $\be$ is an unstable equilibrium of
  \eqref{eq:ODE} and $z(\psi-\beta)=+\infty$.  Obviously, all zeros of
  $\psi-\beta$ are simple. Hence, if $\psi\in \omega(u)$,
  then $z(u(\cdot,t_n)-\beta)\to \infty$ for some sequence
  $t_n\to\infty$.  This is not possible, by Lemma \ref{lemzero}, if
  $\be\ne 0$.  Neither is it possible if $\be= 0$---which, due to the
  instability of $\be$, would mean that (U) holds---if (NC)
  holds, for  Lemma \ref{lemzero} implies that
  $z(u_x(x,t))$ is finite and bounded uniformly in $t>0$.
  We have thus proved that statement (ii) holds, and also that
  $\psi\not\in \omega(u)$ if $(0,0)\not \in \cI(\cO)$.

  To complete the proof of statement (i), assume that
  $(0,0)\not \in \cI(\cO)$. Suppose for a contradiction that
  $\om(u)$  contains a function $\phi$ satisfying $\tau(\phi) \cap
  \ol\cI(\cO)\ne\emptyset$.

  First we find a contradiction if $\phi$
  is a steady state of \eqref{eq:1}.
  Note that in this case, the assumption $\tau(\phi) \cap
  \ol\cI(\cO)\ne\emptyset$ implies that either $\tau(\phi) \subset 
  \cI(\cO)$ or $\phi$ is a shift of $\psi$. 
  If $\phi$ is a
    nonconstant periodic solution, we just use the result proved above
    with $\psi$ replaced by $\phi$ to obtain that $\phi\not\in
    \omega(u)$. If  $\phi$ is not a nonconstant periodic solution,
    then it is a constant, or a ground state, or a standing wave. In
    any case, one has $(\phi(x),\phi'(x))\to (\vartheta,0)$ as
    $x\to\infty$, where $(\vartheta,0)$ is an equilibrium of
    \eqref{eq:sys}. Obviously, $(\vartheta,0)\in \cI(\cO)$. This
    implies that $z(\psi- \phi)=+\infty$. Clearly, all zeros of
    $\psi-\phi$ are simple. Since
    $\phi\in\om(u)$, there is a sequence
    $t_n\to\infty$ such that
    $z(u(\cdot,t_n)-\psi)\to \infty$.  However, due to
    the assumption that $(0,0)\not \in \cI(\cO)$,
    $0$ is not in the range of $\psi$. So, by Lemma \ref{le:zerob},
    $z(u(\cdot,t)-\psi)$ is finite and uniformly bounded as
    $t\to\infty$, and we have a contradiction.

    Next we derive a contradiction if $\phi$ is not a steady state and
    $\tau(\phi)\cap\tau(\psi)\ne \emptyset$. Take the entire solution
    $\tilde U$ of \eqref{eq:1} with $\tilde U(\cdot,0)=\phi$.
    The assumption on $\phi$ implies that
    replacing $\psi$ by a suitable shift if necessary,
    $\phi-\psi=U(\codt,0)-\psi$ has a multiple
    zero. We have $\phi\not\equiv \psi$, as $\psi\not\in \om(u)\ni \phi$.
    Also we know, cp. \eqref{eq:4}, that there is a  sequence $\tilde
    t_n\to\infty$ such that   
    $u(\cdot,\cdot+\tilde t_n)\to \tilde U$ in $ C_{loc}^1(\R^2)$.
    Therefore, by Lemma \ref{robustnesszero},
    there is  a sequence $\tau_n\to0$ such that
    $u(\cdot,\cdot+\tilde t_n+\tau_n)-\psi$ has a multiple
    zero. Consequently, since $\tilde t_n+\tau_n\to\infty$,
    $z(u(\cdot,t)-\psi)=\infty$ for all $t>0$ and this is a
    contradiction as in the previous case.

    It remains to find a
    contradiction when $\tau(\phi)\subset \cI(\cO)$ and $\phi$ is not a
    steady state. In this case,  there is a periodic orbit $\tilde
    \cO\subset \cI(\cO)$ such that $\tilde \cO\cap
    \tau(\phi)\ne\emptyset$. Using the previous argument, with $\cO$
    replaced by $\tilde \cO$, we obtain a contradiction in this case as
    well.

    Finally, if $\be\ne 0$ is an unstable equilibrium of
    \eqref{eq:ODE}, then,  by (ND), $f'(\be)>0$.
    This implies that $(\be,0)$ is a center
  for \eqref{eq:sys}, hence any neighborhood of $(\be,0)$ contains a
  periodic orbit $\tilde\cO=\tau(\tilde\psi)$ of \eqref{eq:sys} satisfying
  $(\be,0)\in \cI(\tilde\cO)$. We can choose such a periodic orbit
  so that $(0,0)\not \in \cI(\tilde\cO)$. Then, taking
  $\phi\equiv \be$ and using what we have already proved
  of statement (i) (with $\psi$
  replaced by $\tilde \psi$), we conclude that $\be\not\in\om(u)$. 
\end{proof}

\subsection{Existence of the limits at spatial infinity}
\label{existlim}
In this subsection, we assume that $\Pi$ is as in \eqref{eq:3.3}; and
$\Sigma_{in}=\Sigma_{in}(\Pi)$, $\Lambda_{out}=\Lambda_{out}(\Pi)$, as
in Proposition \ref{prop:3.3}(ii).  If (U) holds and
$\Pi=\Pi_0$, also assume that (NC) holds.

Recall that we have fixed $\vp\in\omega(u)$ and denoted
by $U$ be the entire solution of \eqref{eq:1} with
$U(\cdot,0)=\vp$.
By Proposition \ref{prop:3.3}(i),  $U$
satisfies \eqref{eq:5}.

As a first step toward the proof of statement (ii) of Proposition
\ref{prop:3.3}, we show that the limits $U(\pm \infty,t)$ exist
and, at least when $\Pi\ne\Pi_0$ are independent of $t$.

Recall from Section \ref{sec:22} that $\Sigma_{in}$, as any other
chain, has the following structure:
\begin{equation}\label{Sigmain}
  \Sigma_{in} = \left\{(u,v)\in\R^2:u\in J,\ v=\pm\sqrt{2\left( F(p)-F(u)\right)}\right\},\ J= [p,q],
\end{equation}
for some $p\le q$.  If $p=q$, then $\Sigma_{in}$ is trivial: it
reduces to a single equilibrium $(p,0)$.  Necessarily, $p$ is an
unstable equilibrium of \eqref{eq:ODE} in this case.  If $p<q,$ then
$(p,0)$ and $(q,0)$ lie on (distinct) homoclinic orbits and
\begin{equation}\label{eq:3.6}
  f(p)=F'(p)<0,\quad f(q)= F'(q)>0; \qquad F(u)\le F(p)\quad(u\in [p,q]).
\end{equation}
We define
\begin{equation}\label{betaplusmoins}
  \beta_- := \min\{\beta\in[p,q],\ f(\beta)=0\}, \qquad
  \beta_+ := \max\{\beta\in[p,q],\ f(\beta)=0\}.
\end{equation}
These are well-defined finite quantities, as every chain contains
equilibria (finitely many of them, by (ND)).  Of course, if $p=q$,
then $\be_-=\be_+=p$. Otherwise, $(\beta_-,0)$, $(\beta_+,0)$ are
contained in the interiors of distinct homoclinic loops. This implies
that $p<\beta_-<\beta_+<q$ and
\begin{equation}\label{A1beta}(\be_\pm,0)\in \cI(\Sigma_{in}); \text{
    in particular, }
  (\beta_-,0)\not\in\Sigma_{in},\ (\beta_+,0)\not\in\Sigma_{in}.
\end{equation}
Also, the definition of $\beta_-$, $\beta_+$ and \eqref{eq:3.6} imply
that $\beta_-$, $\beta_+$ are unstable equilibria of \eqref{eq:ODE}.

In the following lemma, we relate $\be_\pm$ and the limit
$\theta=\lim_{t\to\infty}u(\pm\infty,t)$ (cp. Corollary
\ref{corollary:3.2}).
\begin{lemma}\label{le:3.7} The following statements hold:
  \begin{itemize}
  \item[(i)] If $p=q$ (that is, $\Sigma_{in}=\{(p,0)\}$), then
    necessarily $\be_\pm =p=0$ ($=u_0(\pm\infty)$) and so {\rm (U)} holds
    and $\Pi=\Pi_0$.
  \item[(ii)] If $0$ is a stable equilibrium of \eqref{eq:ODE} or if
    $f(0)\ne 0$ (so that $\theta\ne 0$), then
    \begin{equation}
      \label{eq:7a}
      \beta_-<\theta <\beta_+.
    \end{equation}
  \end{itemize}
\end{lemma}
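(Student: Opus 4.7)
My plan is to prove both statements by contradiction, leveraging $\bigcup_{t \in \R} \tau(U(\cdot, t)) \subset \Pi$ from Proposition~\ref{prop:3.3}(i), together with Lemma~\ref{le:3.5}, Corollary~\ref{corollary:3.2}, and the squeezing lemma (Lemma~\ref{squeezinglemma}).

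For (i), suppose $p = q$ and $p \ne 0$. Then $\Sigma_{in} = \{(p,0)\}$ and $p$ is an unstable equilibrium of \eqref{eq:ODE}; since $p \ne 0$, one can pick a periodic orbit $\cO \subset \Pi$ so close to $(p, 0)$ that $(0, 0) \notin \overline{\cI}(\cO)$. The set $K := \bigcup_{t \in \R} \tau(U(\cdot, t)) \subset \Pi$ is connected as the continuous image of $\R^2$ under $(x,t) \mapsto (U(x, t), U_x(x, t))$. Neither of the inclusions treated by Lemma~\ref{squeezinglemma} is possible: $K \subset \mathcal{I}(\cO)$ would force $K \subset \overline{\cI}(\Sigma_{in}) = \{(p,0)\}$, contradicting $\tau(\varphi) \subset \Pi$; while $K \subset \R^2 \setminus \overline{\mathcal{I}}(\cO)$ would force $K \cap \mathcal{I}(\Lambda_{out}) = \emptyset$, contradicting $K \subset \Pi \subset \mathcal{I}(\Lambda_{out})$. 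Connectedness then gives $K \cap \cO \ne \emptyset$, hence some $t_0 \in \R$ with $\tau(U(\cdot, t_0)) \cap \overline{\cI}(\cO) \ne \emptyset$; combined with $U(\cdot, t_0) \in \omega(u)$ this contradicts Lemma~\ref{le:3.5}(i). Thus $p = 0$, which, since $p$ is unstable, forces (U), and $(0,0) \in \overline{\Pi}$ identifies $\Pi = \Pi_0$. The zero-number bound \eqref{eq:2.15} required by the squeezing lemma follows from Lemma~\ref{lemzero} for each unstable $\beta \ne 0$ (since $\hat\theta \not\equiv \beta$ gives $u(\pm\infty,t) \ne \beta$) and from (NC) for $\beta = 0$ whenever (U) holds.

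For (ii), note first that the hypothesis is case (S), so the contrapositive of (i) gives $p < q$; thus $\beta_\pm$ are genuine unstable equilibria in $(p, q)$, and $\theta \ne \beta_\pm$ since $\theta$ is stable. The key fact is that $\hat\theta$ is a monotone solution of \eqref{eq:ODE} on $[0,\infty)$ which, by uniqueness, cannot cross any equilibrium; hence no zero of $f$ lies in the open interval between $0$ and $\theta$. This forces each of $\beta_\pm$ either to lie strictly beyond $\theta$ (the desired configuration) or to lie strictly on the opposite side of $0$ from $\theta$. The remaining ``opposite side'' configuration would force $\theta \notin [p, q]$: e.g., if $\beta_\pm \le 0 < \theta$, then all zeros of $f$ in $[p, q]$ are $\le \beta_+ < 0$, and since $\theta$ is a zero, $\theta > q$. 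To exclude this, I would choose a periodic orbit $\cO \subset \Pi$ close to $\Sigma_{in}$ whose associated periodic steady state $\psi$ has range bounded away from $\theta$; Corollary~\ref{corollary:3.2} then gives $z(u(\cdot, t) - \psi) < \infty$ for all large $t$, which via Lemma~\ref{robustnesszero} and the convergence $u(\cdot, \cdot + t_n) \to U$ in $C^1_{\mathrm{loc}}$ transfers to a uniform bound on $z(U(\cdot, t) - \psi)$. On the other hand, $\tau(\varphi) \subset \Pi$ together with $\tau(\psi) = \cO \subset \Pi$ and the geometry of $\Pi$ (with $\psi$ oscillating about the unstable $\beta \in \cI(\Sigma_{in})$ of Lemma~\ref{le:p0}(iv)) produces arbitrarily many sign changes of $\varphi - \psi$ on suitable $x$-intervals, contradicting the bound. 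The symmetric argument handles $\beta_-$.

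The main obstacle I anticipate is (ii): ruling out the ``opposite side'' configuration cleanly, since at this stage one has only $L^\infty_{\mathrm{loc}}$-convergence $u(\cdot, t_n) \to \varphi$ and no information yet about $\varphi(\pm\infty)$, which is precisely what the subsequent subsection aims to establish. The case analysis depends delicately on the relative positions of $0$, $\theta$, the chain endpoints $p, q, \hat p, \hat q$, and on whether $\Lambda_{out}$ is homoclinic or heteroclinic; arranging for the chosen $\cO$ to produce the required zero-number mismatch, uniformly as one varies the approximation of $\Sigma_{in}$ by $\cO$, is the crux.
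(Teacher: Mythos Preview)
Your argument for (i) is correct but considerably more involved than necessary, and your plan for (ii) has a genuine gap at the step where you claim that ``$\tau(\varphi)\subset\Pi$ together with $\tau(\psi)=\cO\subset\Pi$ \dots\ produces arbitrarily many sign changes of $\varphi-\psi$.'' There is no reason this should hold: the spatial trajectory $\tau(\varphi)$ could sit entirely in the outer part of $\Pi$, far from the small periodic orbit $\cO$ you chose near $\Sigma_{in}$, and then $z(\varphi-\psi)$ can perfectly well be finite (e.g.\ equal to $2$ if $\varphi$ is close to a ground state on $\Lambda_{out}$). You yourself flag this as the main obstacle, and indeed it is a real one; the argument as written does not close.

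The paper's proof avoids this difficulty entirely by reversing the logic. Rather than choosing $\cO$ near $\Sigma_{in}$ and trying to force many intersections, one takes \emph{any} periodic orbit $\cO\subset\Pi$ that $\tau(\varphi)$ actually intersects---such an orbit exists automatically because $\tau(\varphi)\subset\Pi$ and every point of $\Pi$ lies on a periodic orbit. For the corresponding $\psi$ (shifted appropriately), $\varphi-\psi$ has a \emph{multiple} zero; robustness (Lemma~\ref{robustnesszero}) then gives multiple zeros of $u(\cdot,t)-\psi$ along a sequence $t_n\to\infty$, whence $z(u(\cdot,t)-\psi)=\infty$ for all $t>0$. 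Corollary~\ref{corollary:3.2} now forces $\theta$ to lie in the range of $\psi$, i.e.\ $(\theta,0)\in\overline{\cI}(\cO)$. Since $(\theta,0)$ is an equilibrium of \eqref{eq:sys} it cannot lie in $\Pi\supset\cO$, so $(\theta,0)\in\overline{\cI}(\Sigma_{in})$, which immediately gives $\beta_-\le\theta\le\beta_+$. Both (i) and (ii) then follow in one line each: if $p=q$ then $\theta=p$ is unstable, forcing $\theta=0$ by Corollary~\ref{corollary:3.2}; if instead $\theta$ is stable, the inequalities are strict. This single argument handles both parts uniformly and bypasses the squeezing lemma, the connectedness of $K$, and the delicate case analysis you anticipated.
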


\begin{proof}
  Pick a periodic orbit $\mathcal{O}\subset\Pi$ such that
  $\tau(\vp)\cap\mathcal{O}\neq\emptyset$, and let $\psi$ be a
  periodic solution of \eqref{eq:steady} with $\cO=\tau(\psi)$. Then,
  possibly after replacing $\psi$ by a suitable shift,
  $U(\cdot,0)-\psi=\vp-\psi$ has
  a multiple zero. By Lemma \ref{le:3.5}, $\vp\not\equiv
  \psi$. Applying Lemma \ref{robustnesszero}, we find a sequence
  $t_n\to\infty$ such that $u(\cdot,t_n)-\psi$ has a multiple zero,
  and then it follows from Lemma \ref{lemzero} that
  $z(u(\cdot,t)-\psi)=\infty $ for all $t>0$.  Corollary
  \ref{corollary:3.2} now tells us that $\theta$ must be in the range
  of $\psi$. Hence, by the definition of $\Sigma_{in},$
  \begin{equation*}
    (\theta,0)\in{\overline{\mathcal{I}}(\Sigma_{in})}.
  \end{equation*}
  This and the definition of $\be_\pm$ give
  \begin{equation}
    \label{eq:7}
    \beta_-\le\theta\le \beta_+.
  \end{equation}
  If $p=q$, then $\Sigma_{in}=\{(p,0)\}$, so $\theta =p$, and $\theta$
  is an unstable equilibrium of \eqref{eq:ODE}. By Corollary
  \ref{corollary:3.2}, $\theta=0$ and statement (i) is proved.

  Assume now that $0$ is a stable equilibrium of \eqref{eq:ODE} or
  $f(0)\ne 0$. In both cases, $\theta$ is a stable equilibrium of
  \eqref{eq:ODE}. Also, the case $p=q$ is ruled out. The stability of
  $\theta$ and the instability of $\be_\pm$ imply that \eqref{eq:7}
  holds with the strict inequalities, completing the proof of statement
  (ii).
\end{proof}

\begin{remark}\label{rm:emp}{\rm
    Note that $\hat \theta(t)=u(\pm \infty,t)$, being a solution of
    \eqref{eq:ODE}, 
    cannot go across an equilibrium
    of \eqref{eq:ODE}. Thus  \eqref{eq:7a} implies
    that
    \begin{equation}
      \label{eq:15}
      \beta_-< u(\pm \infty,t)<\beta_+\quad (t\ge 0), \quad\text{in
        particular,}\ \beta_-<0<\beta_+.
    \end{equation}
  }\end{remark}
We can now prove the existence of the limits 
\begin{equation}\label{eq:29}
  \Theta_-(t):=\underset{x\to-\infty}{\lim}U(x,t),\qquad \Theta_+(t):=\underset{x\to\infty}{\lim}U(x,t).
\end{equation}
\begin{Lemma}\label{le:limits} The limits \eqref{eq:29}
  exist for all $t\in\R$. Moreover, the following statements hold:
  \begin{itemize}
  \item[(i)] If $p<q$ (that is, $\Sigma_{in}$ is a nontrivial chain),
    then $\Theta_-(t)$, $\Theta_+(t)$ are independent of $t$, and
    their constant values, denoted by $\Theta_-$, $\Theta_+$, satisfy
    $(\Theta_\pm,0)\in \Sigma_{in}\cup\La_{out}$. Also, $\Theta_\pm$
    are stable equilibria of $\eqref{eq:ODE}$.
  \item[(ii)] If $p=q$ (that is, $\Sigma_{in}=\{(0,0)\}$, condition
    {\rm (U)} holds, and $\Pi=\Pi_0$), then $\Theta_-(t)$ is either independent of
    $t$ and its constant value $\Theta_-$ satisfies
    $(\Theta_-,0)\in \{(0,0)\}\cup\La_{out}$, or it is a strictly
    monotone solution of \eqref{eq:ODE} with $\Theta_-(-\infty)=0$ and
    $(\Theta_-(\infty),0)\in\La_{out}$.  The same is true for
    $\Theta_+(t)$.
  \end{itemize}
\end{Lemma}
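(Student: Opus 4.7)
The main technical content to be established is the existence of the limits $\Theta_\pm(t)$ for every $t\in\R$; the structural claims in (i) and (ii) then follow by ODE analysis confined to $\overline{\Pi}\cap(\R\times\{0\})$. I fix $t_0\in \R$, set $\varphi = U(\cdot, t_0)$, and treat the limit at $+\infty$ (the case $-\infty$ being symmetric). By Proposition~\ref{prop:3.3}(i), $\tau(\varphi)\subset \Pi$ with $\Pi$ bounded, so $\varphi$ and $\varphi_x$ are uniformly bounded, and the spatial accumulation set
\[
\mathcal{A} := \bigcap_{M > 0}\overline{\bigl\{(\varphi(x), \varphi_x(x)) : x \geq M\bigr\}}
\]
is a nonempty, compact, connected subset of $\overline{\Pi} = \Pi \cup \Sigma_{in} \cup \La_{out}$. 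Existence of $\Theta_+(t_0) = \lim_{x\to+\infty}\varphi(x)$ is equivalent to the first-coordinate projection of $\mathcal{A}$ being a single point; parabolic estimates (giving $\varphi_{xx}$ bounded) then also force $\varphi_x(x)\to 0$, so that $\mathcal{A}$ itself reduces to $\{(\Theta_+(t_0), 0)\}$.

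To show the first-coordinate of $\mathcal{A}$ is a singleton, I argue by contradiction. If it were a nondegenerate interval, connectedness of $\mathcal{A}$ in $\R^2$ combined with the finiteness (by (ND)) of equilibria of \eqref{eq:sys} in $\overline{\Pi}$ yields a non-equilibrium point $(a,b)\in \mathcal{A}$. Such $(a,b)$ lies on a nonstationary orbit of \eqref{eq:sys}, and I pick the corresponding steady state $\psi$ of \eqref{eq:1}---periodic if $(a,b)\in \Pi$, a ground state or standing wave if $(a,b)\in \Sigma_{in}\cup\La_{out}$---with $(\psi(0), \psi'(0)) = (a,b)$. Extracting $x_n\to +\infty$ with $(U(x_n,t_0), U_x(x_n,t_0)) \to (a,b)$, parabolic estimates give a subsequence along which $V_n(x,s) := U(x + x_n, t_0 + s)$ converges in $C^1_{loc}(\R^2)$ to an entire solution $V$ with $V(0,0)=a$, $V_x(0,0)=b$, and $\bigcup_s \tau(V(\cdot,s)) \subset \overline{\Pi}$. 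The function $V(\cdot,0) - \psi$ then has a multiple zero at $x=0$. If $V\not\equiv \psi$, Lemma~\ref{robustnesszero} applied to $V_n - \psi \to V - \psi$ yields $y_n, \delta_n \to 0$ such that $U(\cdot, t_0 + \delta_n) - \psi(\cdot - x_n)$ has a multiple zero at $x_n + y_n$; if $V\equiv \psi$, the shifts $\varphi(\cdot+x_n)$ converge in $C^1_{loc}$ to the nonstationary steady state $\psi$, and a diagonal-sequence extraction using $\varphi\in\omega(u)$ places $\psi$ as a $C^1_{loc}$-limit of translates $u(\cdot + x_{n_k}, t_{m_k})$ with $t_{m_k}\to\infty$.

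In both subcases, the contradiction arises by comparing against the unstable equilibrium $\beta$ supplied by Lemma~\ref{le:p0}(iv), which satisfies $f'(\beta)>0$ and $(\beta, 0)\in \mathcal{I}(\mathcal{O})$ for every periodic orbit $\mathcal{O}\subset \Pi$. Since $\beta$ lies in the interior of every such $\mathcal{O}$, the steady state $\psi$ realizes the value $\beta$ many times on any sufficiently long interval (infinitely often in the periodic case, and cumulatively via the translates in the homoclinic/heteroclinic cases), and these zeros persist under $C^1_{loc}$ convergence. Combined with the uniform bound $z(u(\cdot,t) - \beta) \leq N$ valid for large $t$ (from Corollary~\ref{corollary:3.2} and Lemma~\ref{lemzero}, applicable because $\beta\ne\theta$ while $\hat\theta(t)\to \theta$), a zero count of $u(\cdot, t_{m_k}) - \beta$ in a window around $x_{n_k}$ eventually exceeds $N$, giving the desired contradiction. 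The principal technical obstacle lies precisely here, in propagating the multiple-zero information through the shifting comparison target $\psi(\cdot - x_n)$ into a zero-number bound against the fixed target $\beta$; a careful localization of the relevant zeros in bounded windows is needed so that the fixed-target argument applies.

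Granting existence of $\Theta_\pm(t)$ for all $t\in\R$, the reasoning of Lemma~\ref{le:3.1} applied slice-by-slice shows $\Theta_\pm$ is a $C^1$ solution of $\dot\xi = f(\xi)$ on $\R$ with trajectory in $\overline{\Pi}\cap(\R\times\{0\})$. Under (ND), every equilibrium of \eqref{eq:ODE} in this set lies on $\Sigma_{in}\cup\La_{out}$ and is stable for \eqref{eq:ODE} (being a local maximum of $F$); moreover, by the Hamiltonian constraint, $\overline{\Pi}\cap(\R\times\{0\})$ admits such equilibria only as its endpoints, with no other equilibria in between apart from the unstable equilibrium $0$ in the trivial case $\Sigma_{in}=\{(0,0)\}$. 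In case (i), where $\Sigma_{in}$ is nontrivial, this set consists of intervals bounded by stable equilibria on $\Sigma_{in}\cup\La_{out}$ with no interior equilibria, so a bounded entire ODE solution must be identically equal to one of these endpoints, giving $(\Theta_\pm,0)\in \Sigma_{in}\cup\La_{out}$ with $\Theta_\pm$ a stable equilibrium of \eqref{eq:ODE}. In case (ii), $\Sigma_{in}=\{(0,0)\}$ with $0$ unstable for \eqref{eq:ODE}, and standard one-dimensional ODE analysis yields the claimed dichotomy: $\Theta_\pm$ is either constant at an equilibrium on $\{(0,0)\}\cup\La_{out}$, or strictly monotone with $\Theta_\pm(-\infty)=0$ (since $0$ is the only unstable equilibrium it can emanate from) and $(\Theta_\pm(\infty),0)\in\La_{out}$.
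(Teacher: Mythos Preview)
Your approach has two genuine gaps.

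First, and most seriously, the subcase $V\not\equiv\psi$ with $(a,b)\in\Pi$ (so $\psi$ is a nonconstant periodic steady state) is not resolved. The robustness lemma hands you a single point where $\tau(U(\cdot,t_0+\delta_n))$ meets the periodic orbit $\tau(\psi)\subset\Pi$, but this is entirely consistent with Proposition~\ref{prop:3.3}(i), which only asserts $\tau(U(\cdot,t))\subset\Pi$. You do not explain how this isolated multiple-zero datum propagates to a blow-up of $z(u(\cdot,t)-\beta)$, and indeed you flag exactly this as ``the principal technical obstacle'' without offering a resolution. In the subcase $V\equiv\psi$ the cumulative-translates idea can be made to work (after extracting well-separated $x_{n_k}$), but for $V\not\equiv\psi$ with periodic $\psi$ your argument simply stops.

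Second, in the case $\Sigma_{in}=\{(0,0)\}$ (so (U) holds and $\Pi=\Pi_0$), the unstable equilibrium $\beta$ furnished by Lemma~\ref{le:p0}(iv) is forced to equal $0$, since the periodic orbits of $\Pi_0$ shrink to the center $(0,0)$. Hence $\beta=\theta=0$, and your claimed uniform bound $z(u(\cdot,t)-\beta)\le N$ is not available via Corollary~\ref{corollary:3.2}. You never invoke (NC) to repair this.

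The paper's proof avoids both issues by a more direct route. From $\tau(U(\cdot,t))\subset\Pi$ it extracts the pointwise constraint $U_x(x,t)\ne 0$ whenever $U(x,t)\in[p,q]$, which forces every local extremum of $U(\cdot,t_0)$ to take a value outside $[p,q]$. If the limit at $+\infty$ fails, one then finds infinitely many crossings of a \emph{constant} level $\xi_0$ lying in $(\hat p,\beta_-)$ (or symmetrically in $(\beta_+,\hat q)$). In the nontrivial case $p<q$, the contradiction comes by running the ODE solution $\xi(t)$ with $\xi(t_0)=\xi_0$ backward: $\xi(s-t_n)\to\beta_-$, and monotonicity of the zero number for $u(\cdot,s)-\xi(s-t_n)$ collides with the finiteness of $z(u(\cdot,s)-\beta_-)$ (simple zeros at a well-chosen $s$). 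In the trivial case $p=q=0$, the paper instead uses that $U_x(\cdot,t_0)$ itself has infinitely many sign changes, which via (NC) and monotonicity of $z(u_x(\cdot,t))$ is already a contradiction. No steady-state target $\psi$, no limit entire solution $V$, and no case split on $V\equiv\psi$ versus $V\not\equiv\psi$ are needed.
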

\begin{proof}
  From \eqref{eq:5} we in particular obtain that $\tau(U(\cdot,t))$ cannot
  intersect the $u-$axis between $p$ and $q$, or, in other words,
  \begin{equation}\label{eq:27}
    U_x(x,t)\neq0\text{ whenever }U(x,t)\in [p,q].
  \end{equation}

  Assume now that for some $t=t_0$ one of the limits in \eqref{eq:29},
  say the one at $\infty$, does not exist:
  \begin{equation*}
    \ol \ell := \limsup_{x\to\infty} U(x,t_0)>\ul \ell :=
    \liminf_{x\to\infty}  U(x,t_0). 
  \end{equation*}
  Then there is a sequence $\ol x_n$ of local-maximum points of
  $U(\cdot,t_0)$ and a sequence $\ul x_n$ of local-minimum points of
  $U(\cdot,t_0)$, such that $\ol x_n\to\infty$, $\ul x_n\to\infty$,
  and
  \begin{equation}
    \label{eq:13}
    U (\ol x_n,t_0)\to \ol\ell,\qquad
    U (\ul x_n,t_0)\to \ul\ell.
  \end{equation}
  In view of \eqref{eq:27}, we may also assume, passing to a
  subsequence if necessary, that either $p> U (\ul x_n,t_0)$ for all
  $n$ or $U (\ol x_n,t_0)>q$ for all $n$. We assume the former, the
  latter can be treated similarly. Obviously, we also have
  \begin{equation}
    \label{eq:14}
    p\ge  \ul \ell \ge \hat p:=\inf \{(u: (u,0)\in \Pi\}.
  \end{equation}
  Observe that there is no zero of $f$ in $(\hat p,\be_-)$, and the
  instability of $\be_-$ implies $f<0$ in $(\hat p,\be_-)$.

  Pick $\xi_0>\ul\ell$ so close to $\ul\ell$ that also
  $\xi_0<\min\{\ol \ell,\be_-\}$.  Clearly, each of the functions
  $U (\cdot,t_0)-\xi_0$ and $U_x (\cdot,t_0)$ has infinitely many sign
  changes, which implies, by \eqref{eq:4}, that
  $z(u(\cdot,t_0+t_n)-\xi_0)\to\infty$ and
  $z(u_x(\cdot,t_0+t_n))\to\infty$ as $n\to\infty$. The latter
  immediately gives contradiction if $p=q=0$. Indeed, in this case
  $\Pi=\Pi_0$, so condition (NC) is in effect, which implies, by Lemma
  \ref{lemzero}, that $z(u_x(\cdot,t))$ is finite and bounded as
  $t\to\infty$.  If $p<q$, we employ the former. Take the solution
  $\xi(t)$ of \eqref{eq:ODE} with $\xi(t_0)=\xi_0$. Since $f<0$ in
  $(\hat p,\be_-)$, we have $\xi(t)\upto\be_-$ as $t\to-\infty$.  The
  monotonicity of the zero number gives
  $z(u(\cdot,s)-\xi(s-t_n))\to\infty$ as $n\to\infty$ for any
  $s>0$. On the other hand,  by \eqref{eq:15}, the function
  $u(\cdot,s)-\be_-$ has only 
  finitely many zeros, and by Lemma \ref{lemzero} we may fix $s>0$ such
  that all these zeros are simple. Then, since $\xi(s-t_n)\to \be_-$
  as $n\to\infty$ and \eqref{eq:15} holds, for all sufficiently large
  $n$ we have $z(u(\cdot,s)-\xi(s-t_n))=z(u(\cdot,s)-\be_-)$, which yields
  a contradiction.

  Thus, \eqref{eq:29} is proved, and parabolic estimates imply that
  also
  \begin{equation}\label{eq:29b}
    \underset{x\to-\infty}{\lim}U_x(x,t)=0,\qquad
    \underset{x\to\infty}{\lim}U_x(x,t)=0.
  \end{equation}
  It follows that for any $t$ the points $(\Theta_\pm(t),0))$ are
  contained in $\bar \Pi$. If for some $t$ the point
  $(\Theta_-(t),0))$ is equal to an equilibrium $(\eta,0)$ of
  \eqref{eq:sys} in $\Sigma_{in}\cup\La_{out}$, then $\Theta_-(t)$ is
  independent of $t$ (as it is a solution of \eqref{eq:ODE} and
  $f(\eta)=0$). Otherwise, $\Theta_-(t)\le p$ or $\Theta_-(t)\ge q$
  and one shows easily (as for the solution $\xi(t)$ above) that
  $\Theta_-(t)$ converges as $t\to-\infty$ to $\be_-$ or $\be_+$,
  respectively. In this case, we also obtain that either
  $(\be_-,0)\in \bar\Pi$ or $(\be_+,0)\in \bar\Pi$, which can hold
  only if $p=q$.

  We conclude that if $p<q$, then $\Theta_-(t)$ takes a constant value
  $\Theta_-$ for all $t$, and $(\Theta_-,0)$ is an equilibrium of
  \eqref{eq:sys} in $\Sigma_{in}\cup\La_{out}$. The fact that
  $(\Theta_-,0)$ is contained in a nontrivial chain implies that
  $\Theta_-$ is a stable equilibrium of \eqref{eq:ODE}
  (cp. Sect. \ref{sec:22}). This proves statement (i) for
  $\Theta_-(t)$; the proof for $\Theta_+(t)$ is analogous.

  If $p=q$ (and $\Sigma_{in}=(0,0)$), we have proved that
  $\Theta_-(t)$ is either independent of $t$ and $(\Theta_-,0)$ is an
  equilibrium of \eqref{eq:sys} contained in $\{(0,0)\}\cup\La_{out}$, or
  it is a strictly monotone solution of \eqref{eq:ODE} with
  $\Theta_-(-\infty)=0$ ($=\be_\pm$). In the latter case,
  $\Theta_-(t)$ converges as $t\to\infty$ to a zero $\eta$ of $f$ such
  that $(\eta,0)\in \bar \Pi\setminus\{(0,0)\}$. Thus, necessarily,
  $(\eta,0)\in\La_{out}$. The arguments for $\Theta_+(t)$ are
  similar. The proof is now complete.
\end{proof}

\begin{remark}
  \label{rm:limst}
  {\rm Note that we have used the inclusion $U(\cdot,t)\in\om(u)$ to
    prove the existence of the limits \eqref{eq:29} only. Once the
    existence of the limits has been proved, the inclusion was no
    longer used and statements (i), (ii) were derived from
    \eqref{eq:29}, \eqref{eq:5} alone.  }
\end{remark}

\subsection{Additional properties when (U) and (NC) hold}\label{sub33}
As in the previous subsection, we assume that $\Pi$
is as in \eqref{eq:3.3} and
$\Sigma_{in}=\Sigma_{in}(\Pi)$,
$\Lambda_{out}=\Lambda_{out}(\Pi)$, but here
we specifically  assume that  (U) holds and
$\Pi=\Pi_0$. So $\Sigma_{in}$ is the trivial chain $\{(0,0)\}$.
We also assume that (NC) holds.

By Proposition \ref{prop:3.3}(i), the entire solution $U$
satisfies
\begin{equation}
      \label{eq:5c}
      \underset{t\in\R}{{\textstyle \bigcup}}\tau\left( U(\cdot,t)\right)\subset\Pi_0.
\end{equation}

\begin{lemma}
  \label{le:pmnm}
  The following statements are valid.
  \begin{itemize}
  \item[(i)] There is a positive integer $m$ such that for all $t\in
    \R$ one has
    $z(U_x(\cdot,t))\le m$  and all zeros of
    $U_x(\cdot,t)$ are simple. 
  \item[(ii)] For any $t\in \R$, the function $U(\cdot,t)$ has no
    positive local minima and no negative local maxima. 
  \end{itemize}
\end{lemma}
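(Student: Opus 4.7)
\medskip
\noindent\textbf{Proof plan.}
The common thread for both parts is to combine the $C^1_{loc}$-convergence \eqref{eq:4} with (NC) and the zero-number machinery of Lemmas~\ref{lemzero}--\ref{robustnesszero} to transfer information between $u$ and its limit $U$. For (i), the plan is to exhibit stabilization of $z(u_x(\cdot,t))$ at some finite integer $m$ and then pass to the limit. Since $u_x$ satisfies a linear parabolic equation of type \eqref{eqlin}, (NC) together with Lemma~\ref{lemzero}(ii),(iii) and Remark~\ref{convzero} yield $T>0$ and $m\ge 0$ with $z(u_x(\cdot,t))\equiv m$ and only simple zeros for all $t\ge T$. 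For any $t_0\in\R$ and $n$ sufficiently large, $t_0+t_n>T$, so $u_x(\cdot,t_0+t_n)$ has exactly $m$ simple zeros. A hypothetical multiple zero of $U_x(\cdot,t_0)$ at some $y_0$ would, by Lemma~\ref{robustnesszero}, yield $y_n\to y_0$ and $\sigma_n\to 0$ with $u_x(\cdot,t_0+t_n+\sigma_n)$ having a multiple zero at $y_n$, violating the eventual simplicity. On the other hand, persistence of simple zeros under $C^1_{loc}$-convergence forces any $k$ simple zeros of $U_x(\cdot,t_0)$ to satisfy $k\le m$, giving $z(U_x(\cdot,t_0))\le m$.

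For (ii), I argue by contradiction. Suppose $U(\cdot,t_0)$ has a positive local minimum at $x_0$ with $U(x_0,t_0)=a>0$, so $U_x(x_0,t_0)=0$. Setting $w:=U-a$, which solves a linear parabolic equation of type \eqref{eqlin} on $\R\times\R$, one has $w(x_0,t_0)=w_x(x_0,t_0)=0$, i.e., $w(\cdot,t_0)$ has a multiple zero at $x_0$. The delicate step is the verification $w\not\equiv 0$: the identity $U\equiv a$ would force $f(a)=0$, but $(a,0)\in\Pi_0$ by Proposition~\ref{prop:3.3}(i), and $\Pi_0\subset\mathcal{P}_0$ consists of points on non-stationary periodic orbits of \eqref{eq:sys} and so contains no equilibrium.

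The contradiction is then produced by transferring the multiple zero back to $u$: using the convergence $u(\cdot,\cdot+t_n)-a\to w$ in $C^1_{loc}(\R^2)$, Lemma~\ref{robustnesszero} supplies $x_n\to x_0$ and $\sigma_n\to 0$ such that $u(\cdot,s_n)-a$ has a multiple zero at $x_n$, with $s_n:=t_n+\sigma_n\to\infty$. By Lemma~\ref{le:3.1} and hypothesis (U) one has $u(\pm\infty,s)\equiv 0$, so $u(x,s)-a\to -a<0$ as $x\to\pm\infty$; hence all zeros of $u(\cdot,s)-a$ are confined to a bounded set, and $z(u(\cdot,s)-a)<\infty$ for every $s>0$. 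By Lemma~\ref{lemzero}(iii), each $s_n$ forces a strict drop of this finite nonnegative integer, and infinitely many $s_n\to\infty$ would produce infinitely many drops, which is impossible. The case of a negative local maximum is symmetric, with $a<0$ and $u(\cdot,s)-a\to -a>0$ at $\pm\infty$. I foresee the main obstacle to be the verification $w\not\equiv 0$; the rest is standard zero-number bookkeeping once (NC) is in hand.
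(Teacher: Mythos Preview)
For (i) your argument is essentially the paper's, with one regularity detail glossed over: invoking Lemma~\ref{robustnesszero} for $U_x$ requires $u_x(\cdot,\cdot+t_n)\to U_x$ in $C^1_{loc}(\R^2)$, whereas \eqref{eq:4} only gives $C^1_{loc}$-convergence of $u$, hence merely $C^0_{loc}$-convergence of $u_x$. The paper fills this by observing that, since $f$ is Lipschitz, $u_x$ is bounded in $C^{1+\alpha}$, from which the needed $C^1_{loc}$-convergence of $u_x$ follows.

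For (ii) there is a genuine gap. The function $w=U-a$ does \emph{not} solve an equation of the form \eqref{eqlin} unless $f(a)=0$: one has $w_t=w_{xx}+c(x,t)w+f(a)$ with $c$ bounded, and your own verification that $w\not\equiv 0$ (namely, that $(a,0)\in\Pi_0$ cannot be an equilibrium) shows precisely that $f(a)\ne 0$. Hence neither Lemma~\ref{robustnesszero} (which needs the limit $w$ to solve \eqref{eqlin}) nor Lemma~\ref{lemzero}(iii) applied to $u(\cdot,t)-a$ is available, and the ``infinitely many drops'' contradiction does not go through. Replacing the constant $a$ by the ODE solution $\xi(t)$ with $\xi(t_0)=a$ repairs the equation, but after the robustness transfer the comparison function depends on $n$, so there is no single nonincreasing zero number to drop infinitely often. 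The paper takes a different route: once $z(u_x(\cdot,t))$ has stabilized with only simple zeros, it tracks the $C^1$ curves $\xi_i(t)$ of critical points of $u(\cdot,t)$ and uses, at a positive local minimum, the differential inequality
\[
\frac{d}{dt}\,u(\xi(t),t)=u_{xx}(\xi(t),t)+f(u(\xi(t),t))\ge f(u(\xi(t),t))
\]
to force $\liminf_{t\to\infty}u(\xi(t),t)\ge\gamma^+$, the smallest positive zero of $f$. It follows that every $\tilde\varphi\in\omega(u)$ has all positive local minima $\ge\gamma^+$; applied to $\tilde\varphi=U(\cdot,t)$ this is incompatible with $\tau(U(\cdot,t))\subset\Pi_0$, since a point $(m,0)\in\Pi_0$ must satisfy $m<\gamma^+$.
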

\begin{proof}
 First we prove that all zeros of
    $U_x(\cdot,t)$ are simple. Suppose for a contradiction that $x_0$
    is multiple zero of  $U_x(\cdot,t_0)$  for some $t_0$. By
    parabolic regularity, since $f$ is Lipschitz, the function
    $u_x$ is bounded in $C^{1+\al}(\R^2)$ for some
    $\al\in(0,1)$.
    Therefore, by \eqref{eq:4},
    \begin{equation}
      \label{eq:54}
      u_x(\cdot,\cdot+t_n)\underset{n\to\infty}{\longrightarrow}U_x
    \end{equation}
    in $C_{loc}^1(\R^2)$. It now follows from Lemma
    \ref{robustnesszero}, that there is a sequence $\tau_n\to0$ such that
    $u_x(\cdot,\cdot+ t_n+\tau_n)$ has a multiple
    zero. Consequently, since $ t_n+\tau_n\to\infty$,
    $z(u_x(\cdot,t))=\infty$ for all $t>0$, in contradiction to (NC).

    The simplicity of the zeros of  $U_x(\cdot,t)$ for any $t\in\R$
    is thus proved, and from \eqref{eq:54} and (NC) it follows that the
    other statement in  (i) is valid as well.

    Take now $t_0$ such that the (finite) zero number
    $k:=z(u_x(\cdot,t))$
    is independent of $t$ for $t\ge t_0$ and
    all zeros of  $u_x(\cdot,t)$ simple. Such $t_0$ exists due to (NC)
    and Lemma \ref{lemzero}. Then, for $t>t_0$,
    the zeros of $u_x(\cdot,t)$ are
    given by a $k$-tuple $\xi_1(t)<\dots<\xi_k(t)$, where
    $\xi_1,\dots,\xi_k$ are $C^1$ functions of $t$.

    Observe also that $z(u(\cdot,t))$ is finite for all
    $t>t_0$. Since $f(0)=0$ due to (U),
    $u$ itself is a solution of a linear equation
    \eqref{eqlin}. Therefore,   
    making $t_0$ larger if necessary,
    we may assume that all zeros of $u(\cdot,t)$ are simple for
    $t>t_0$. In particular, $u(\xi_i(t),t)\ne0$ for $t>t_0$,
    $i=1,\dots,k$.

    Let $\xi(t)$ be
    any of the functions  $\xi_1(t),\dots,\xi_k(t)$.
    Since $\xi(t)$ is a simple zero of
    $u_x(\cdot,t)$, it is a local minimum point of $u(\cdot,t)$ for all
    $t>t_0$ or a local maximum  point of $u(\cdot,t)$ for all
    $t>t_0$. Moreover, $u(\xi(t),t)$ does not change sign on
    $(t_0,\infty)$. 

    Assume now that $u(\xi(t),t)$ is a positive local minimum of
    $u(\cdot,t)$ for some---hence any---$t>t_0$. Since
    \begin{equation*}
      \frac{d}{dt}u(\xi(t),t)=u_t(\xi(t),t)=u_{xx}(\xi(t),t)+f(u(\xi(t),t))\ge
      f(u(\xi(t),t)),
    \end{equation*}
    the positivity and boundedness of $u(\xi(t),t)$ imply that
    $\liminf_{t\to\infty}u(\xi(t),t)\ge \ga^+$, where $\ga^+$ is the 
    smallest positive zero of $f$. For any  function
    $\tilde \varphi\in\om(u)$
    this clearly means that if $\tilde \varphi$
    has a positive local minimum
    $m$, then $m\ge \ga^+$. Applying this to
    $\tilde\varphi:=U(\cdot,t)$, for any $t\in\R$, we obtain, since  
    $\tau(U(\codt,t))\subset\Pi_0$, that
    $U(\codt,t)$ can have no positive local
    minimum. Similarly one shows that
    $U(\codt,t)$ does not have any negative local
    maximum. 
\end{proof}

Under condition (R), the critical points of $U(\codt,t)$ stay in a
bounded interval:
\begin{lemma}
  \label{le:bddcp}
  Assume that, in addition to {\rm (U)} and {\rm (NC)}, condition {\rm
    (R)} holds.  
  Then
  there is a constant $d>0$ such that for every $t\in\R$ the critical points of
  $U(\codt,t)$ are all contained in $(-d,d)$.  Moreover,
  the number of the critical points of $U(\codt,t)$ and the number of
  its zeros are both (finite and)   independent of $t$.
\end{lemma}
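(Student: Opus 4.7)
\emph{Plan.} I would organize the proof into four steps.

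\textbf{Step 1 (constancy of $z(U_x(\cdot,t))$).} By Lemma~\ref{le:pmnm}(i), every zero of $U_x(\cdot,t)$ is simple, with count at most $m$, for every $t\in\R$. Since a drop in the zero number requires the momentary presence of a multiple zero (Lemma~\ref{lemzero}(iii)), no drops can occur, so $z(U_x(\cdot,t))\equiv k$ for some fixed integer $k\ge 0$. Corollary~\ref{zeroIFT} then parametrizes the critical points by $C^1$ curves $\xi_1(t)<\cdots<\xi_k(t)$.

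\textbf{Step 2 (transfer of (R) to $U$).} For each $\lambda\in\{a_n\}\cup\{b_n\}$, (R) combined with Lemma~\ref{lemzero} shows that $z(V_\lambda u(\cdot,t))$ stabilizes at some $M_\lambda<\infty$ for $t$ large. Since $u(\cdot,\cdot+t_n)\to U$ in $C^1_{loc}(\R^2)$, hence $V_\lambda u(\cdot,\cdot+t_n)\to V_\lambda U$ as well, lower semicontinuity of the zero number (a consequence of Lemma~\ref{robustnesszero}) yields
\[
z\bigl(V_\lambda U(\cdot,t)\bigr)\le M_\lambda \qquad \bigl(t\in\R,\ \lambda\in\{a_n\}\cup\{b_n\}\bigr).
\]

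\textbf{Step 3 (boundedness of the $\xi_i$).} The pivotal observation is that whenever $\xi_i(t_0)=\lambda$, we have $U_x(\lambda,t_0)=0$ and hence $\partial_x V_\lambda U(\lambda,t_0)=-2U_x(\lambda,t_0)=0$; so $V_\lambda U(\cdot,t_0)$ has a multiple zero at $\lambda$, forcing a drop of $z(V_\lambda U(\cdot,t))$ by Lemma~\ref{lemzero}(iii). Combined with Step~2, the curves $\xi_1,\dots,\xi_k$ can collectively cross any fixed $\lambda\in\{a_n\}\cup\{b_n\}$ at most $M_\lambda$ times. Suppose for contradiction that some $\xi_i$ is unbounded; after reflecting in $x$ if necessary, assume $\xi_k$ is unbounded above. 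In the \emph{oscillatory subcase}, where $\xi_k$ takes arbitrarily large values while returning below some fixed level at other times, $\xi_k$ crosses some $b_N$ infinitely often, an immediate contradiction. In the \emph{monotone drift subcase} $\xi_k(t)\to+\infty$ as $t\to\tau^\ast\in\{\pm\infty\}$, I extract a sequence $t_n\to\tau^\ast$ and pass to a limit $\tilde U_n(x,t):=U(x+\xi_k(t_n),t+t_n)\to\tilde U$ in $C^1_{loc}(\R^2)$, obtaining an entire solution with $\tilde U_x(0,0)=0$ and $\tilde U(0,0)\in(\hat p,0)\cup(0,\hat q)$ in view of \eqref{eq:27}. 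Setting $\mu_n:=b_N-\xi_k(t_n)\to-\infty$, translation invariance of the zero number gives $z(V_{\mu_n}\tilde U_n(\cdot,0))=z(V_{b_N}U(\cdot,t_n))\le M_{b_N}$. Using $U(y,t)\to\Theta_-(t)$ as $y\to-\infty$ with the uniform control afforded by parabolic estimates, and $\Theta_-(t)\to L_-\in\{0,\hat p,\hat q\}$ as $t\to\tau^\ast$ given by Lemma~\ref{le:limits}, I identify the $C^1_{loc}$ limit of $V_{\mu_n}\tilde U_n(\cdot,0)$ as $L_--\tilde U(\cdot,0)$, whence $z(\tilde U(\cdot,0)-L_-)\le M_{b_N}$. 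Since $\tilde U(0,0)\ne L_-$ and $\tau(\tilde U(\cdot,t))\subset\overline{\Pi_0}$ by the squeezing lemma (Lemma~\ref{squeezinglemma}), combining these constraints with the geometric structure of $\Pi_0$ forces the profile $\tilde U(\cdot,0)$ to produce more zeros of $\tilde U(\cdot,0)-L_-$ than the bound allows, a contradiction.

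\textbf{Step 4 (zeros of $U$).} With the $\xi_i$ confined to some interval $(-d,d)$, $U(\cdot,t)$ is monotone on each complementary ray and between consecutive critical points, so $z(U(\cdot,t))<\infty$. Since $f(0)=0$ under (U), $U$ solves a linear parabolic equation; any drop of $z(U(\cdot,t))$ at some $t_0$ would, via Lemma~\ref{robustnesszero} applied to $u(\cdot,\cdot+t_n)\to U$, produce multiple zeros of $u(\cdot,\tau)$ at arbitrarily large $\tau$, contradicting the eventual stabilization of $z(u(\cdot,t))$. Hence $z(U(\cdot,t))$ is constant. \textbf{The main obstacle} is the monotone-drift subcase in Step~3: the clean crossing-count argument alone does not suffice, and one must delicately combine the identification of translation limits, the description of $U$ at spatial infinity from Lemma~\ref{le:limits}, and the structural constraints from the squeezing lemma to extract the contradiction.
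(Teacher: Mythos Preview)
There is a genuine gap in Steps~1 and~4: you invoke Lemma~\ref{lemzero}(iii) as if it said that a drop in the zero number \emph{requires} a multiple zero. On the whole line $\R$ this is false---a drop can also occur when a simple zero escapes to $\pm\infty$. So simplicity of the zeros of $U_x(\cdot,t)$ does not by itself give constancy of $z(U_x(\cdot,t))$; you need boundedness first. The paper in fact establishes boundedness \emph{before} concluding constancy, precisely to close this loophole. The same defect recurs in Step~4: your argument that a drop of $z(U(\cdot,t))$ forces a multiple zero of $U$ (and hence of $u$ via robustness) ignores the escape mechanism. The paper instead rules out escape of zeros of $U(\cdot,t)$ directly, by observing that when $\Theta_\pm(t)\equiv 0$ the zeros of $U(\cdot,t)$ are trapped on one side of the outermost critical points, which already lie in $(-d,d)$.

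More importantly, your Step~3 is far more elaborate than what the paper does, and the ``monotone drift subcase'' is not convincingly closed (the final contradiction is asserted rather than derived). The paper bypasses all of this by working with $u$ rather than $U$. The key observation is elementary but decisive: $x=\lambda$ is \emph{always} a zero of $V_\lambda u(\cdot,t)$, so once $z(V_\lambda u(\cdot,t))<\infty$ (guaranteed by (R)), eventually all its zeros are simple, and in particular $-2u_x(\lambda,t)=\partial_x V_\lambda u(\lambda,t)\ne 0$ for all large $t$. Hence each critical-point curve $\xi(t)$ of $u(\cdot,t)$ eventually avoids every $\lambda\in\{a_n\}\cup\{b_n\}$; continuity then forces the trichotomy $\xi(t)\to+\infty$, $\xi(t)\to-\infty$, or $\xi(t)$ bounded as $t\to\infty$. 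Passing to the limit along $t_n$ via \eqref{eq:54}, and using that the zeros of $U_x(\cdot,t)$ are simple (hence robust), each critical point of $U(\cdot,t)$ is approximated by one of the bounded $\xi_i$'s and is therefore confined to a fixed interval $(-d,d)$. No translation limits, no squeezing lemma, and no case analysis on $\tau^\ast$ are needed.
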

\begin{proof}
  As in the previous proof, there is $t_0>0$ such that for all $t>t_0$
  the zeros of $u_x(\cdot,t)$ are
    given by a $k$-tuple $\xi_1(t)<\dots<\xi_k(t)$, where
    $\xi_1,\dots,\xi_k$ are $C^1$ functions of $t$.
    Let $\xi(t)$ be
    any of the functions  $\xi_1(t),\dots,\xi_k(t)$.

    Take sequences $a_n\to-\infty$,
    $b_n\to\infty$ as in (R) and let 
    $\la\in \{a_1,a_2,\dots\}\cup\{b_1,b_2,\dots\}$, so 
  $V_\la u(\cdot,t):=u(2\la-\codt,t)-u(\cdot,t)$ has only finitely
  many zeros if $t$ is sufficiently large. Since $x=\la$ is one of
  these zeros, Lemma \ref{lemzero} implies that for all sufficiently
  large $t$ one has  
  $-2\partial_xu(\la,t)=\partial_xV_\la u(\la,t)\neq0$.
  In particular, $\xi(t)\ne \la$ if $t$ is large enough.
  Since this holds for arbitrary $\la\in
  \{a_1,a_2,\dots\}\cup\{b_1,b_2,\dots\}$, 
  it follows 
  that as $t\to\infty$ one has
  either $\xi(t)\to\infty$,  or $\xi(t)\to-\infty$, or else
  $\xi(t)$ stays in a bounded interval.
  Using this, \eqref{eq:54}, and the fact that the zeros of
  $U_x(\codt,t)$ are all simple, we obtain that these zeros are
  contained in a bounded interval $(-d,d)$ independent of $t$.
  It follows from
  the simplicity and boundedness  of the zeros of $U_x(\codt,t)$ that
  their number is independent of $t$.

  As noted in the proof of Lemma \ref{le:pmnm}, the function
  $u(\cdot,t)$ has only simple zeros, a finite number of them, for all
  sufficiently large $t$. Using Lemma \ref{robustnesszero}, similarly
  as in that proof, one shows that for any $t$
  the zeros of $U(\cdot,t)$ are all
  simple. Also, their number is finite,
  as  $z(U_x(\cdot,t))<\infty$, and nonincreasing in $t$.
  The only way the zero number  $z(U(\cdot,t))$ 
  can drop at some $t_0$ is 
  that some of the zeros escape to $-\infty$ or $\infty$ as
  $t\to t_0-$.   This clearly does not happen if $U(-\infty,t_0)\ne
  0$ or $U(\infty,t_0)\ne
  0$, respectively. On the other hand if  $U(-\infty,t_0)= 0$, then
  $U(-\infty,t)= 0$ for all $t$, and in this case the  zeros
  of $U(\codt,t)$ are all greater than the minimal  critical
  point of $U(\codt,t)$. A analogous remark applies in the case
  $U(\infty,t_0)= 0$. Since the set of the critical points is always
  contained in $(-d,d)$, we obtain that $z(U(\codt,t))$ is independent
  of $t$.  
  \end{proof}

\section{A classification of entire solutions with spatial
  trajectories between two chains}
\label{sec:entire}
In the previous section, we considered entire solutions $U$ satisfying
$U(\cdot,t)\in \om(u)$ for all $t\in\R$. We derived certain conditions
any such solution $U$ would have to satisfy, see Proposition
\ref{prop:3.3}(i) and Lemma \ref{le:limits}.  In this section,
we examine the entire solutions with the indicated
properties and classify them in a certain way. Our classification in
particular proves Proposition \ref{prop:3.3}(ii) under the
extra assumption that $\Sigma_{in}$ is a nontrivial chain. We
stress, however, that no reference is made in this section to the
solution $u$ or its limit set $\om(u)$. Thus the results here are
completely independent from the previous and forthcoming sections and
can be viewed as contributions to the general understanding of entire
solutions of \eqref{eq:1}.

Our assumptions throughout this section are as follows.  We assume
that the standing hypotheses (ND), (MF) on $f$ hold, $\Pi$ is a
bounded connected component of $\cP_0$, and
$\Sigma_{in}:=\Sigma_{in}(\Pi)$, $\Lambda_{out}:=\Lambda_{out}(\Pi)$.
The next standing hypotheses delineates the class of entire solutions
we consider:
\begin{description}
\item[\bf (HU)] \ $U$ is a bounded entire solution of \eqref{eq:1} such that
  \begin{equation}
    \label{eq:5b}
    \tau\left( U(\cdot,t)\right)\subset \bar\Pi\quad(t\in\R)
  \end{equation}
  and the limits
  \begin{equation}\label{eq:29a}
    \underset{x\to-\infty}{\lim}U(x,t)=\Theta_-(t),\quad
    \underset{x\to\infty}{\lim}U(x,t)=\Theta_+(t)
  \end{equation}
  exist for all $t\in\R$.
\end{description}

Our main result in this subsection is following proposition concerning
the case when $\Sii$ is a nontrivial chain.
\begin{proposition}
  \label{prop:entire}
  Under the above hypotheses, assuming also that $\Sii$ is a
  nontrivial chain, the following alternative holds.  Either $U$ is
  identical to a steady state $\phi$ with
  $\tau(\phi) \subset\Sigma_{in}\cup\La_{out}$ or else
  \begin{equation}
    \label{eq:5a}
    \underset{t\in\R}{{\textstyle \bigcup}}\tau\left( U(\cdot,t)\right)\subset \Pi
  \end{equation}
  and
  \begin{equation}
    \label{eq:6a}
    \tau\left(\alpha(U)\right)\subset\Sigma_{in},\qquad
    \tau\left(\omega(U)\right)\subset\Lambda_{out}. 
  \end{equation}
\end{proposition}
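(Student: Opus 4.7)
I plan to prove the stated dichotomy in three stages: first constraining the limits $\Theta_\pm$, then showing that any contact of the spatial trajectory with the boundary $\Sigma_{in}\cup\Lambda_{out}$ forces $U$ to coincide with a steady state on that set, and finally using the squeezing lemma to locate $\alpha(U)$ and $\omega(U)$. The starting point is Lemma~\ref{le:limits}(i) which, as Remark~\ref{rm:limst} points out, applies under hypothesis (HU) alone. Since $\Sigma_{in}$ is nontrivial, it yields that $\Theta_\pm(t)\equiv\Theta_\pm$ are constant, with $(\Theta_\pm,0)\in\Sigma_{in}\cup\Lambda_{out}$ and $\Theta_\pm$ stable equilibria of~\eqref{eq:ODE}. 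In particular, $U_x(\pm\infty,t)=0$ and $\Theta_\pm\ne\beta$ for every unstable equilibrium $\beta$ with $(\beta,0)\in\cI(\Sigma_{in})\cup\cI(\Lambda_{out})$, a fact that will license finite-zero-number arguments.

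Suppose next that $(\xi,\eta):=(U(x_0,t_0),U_x(x_0,t_0))\in\Sigma_{in}\cup\Lambda_{out}$ for some $(x_0,t_0)$. This point lies on a single orbit of \eqref{eq:sys} (equilibrium, homoclinic, or heteroclinic), and I select a steady state $\phi$ of~\eqref{eq:1}---constant, ground state, or standing wave---with $\tau(\phi)\subset\Sigma_{in}\cup\Lambda_{out}$ and, after translation, $(\phi(x_0),\phi'(x_0))=(\xi,\eta)$. Because $\tau(U(\cdot,t_0))\subset\bar\Pi$ meets its boundary $\Sigma_{in}\cup\Lambda_{out}$ from one side, $w:=U-\phi$ has a multiple zero at $x_0$ at time $t_0$. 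To force $w\equiv 0$, I approximate $\phi$ by nonstationary periodic steady states $\psi_n$ with $\cO_n:=\tau(\psi_n)\subset\Pi$ and $\psi_n\to\phi$ in $C^1_{loc}$, furnished by Lemma~\ref{le:p0}. If $w\not\equiv 0$, then Lemma~\ref{robustnesszero} produces sequences $x_n\to x_0$ and $\delta_n\to 0$ such that $\tau(U(\cdot,t_0+\delta_n))\cap\cO_n\ne\emptyset$. Lemma~\ref{squeezinglemma}, applied with $K=\bigcup_t\tau(U(\cdot,t))$ and $\cO=\cO_n$, then yields a contradiction provided a uniform bound $z(U(\cdot,t)-\beta)\le N$ holds for the relevant unstable equilibria $\beta$. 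This bound comes from finiteness of the zero number at each $t$ (via $\Theta_\pm\ne\beta$), its monotonicity in $t$, and a parabolic compactness argument ruling out blow-up as $t\to-\infty$. Hence $w\equiv 0$ at $t_0$, and backward uniqueness yields $U\equiv\phi$.

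In the alternative scenario, $\tau(U(\cdot,t))\subset\Pi$ for all $t$, which is~\eqref{eq:5a}. To obtain~\eqref{eq:6a}, I apply Lemma~\ref{squeezinglemma} to $K=\tau(\alpha(U))$ and $K=\tau(\omega(U))$. Theorem~\ref{thm:GS} supplies steady states $\phi_\alpha\in\alpha(U)$ and $\phi_\omega\in\omega(U)$; since these inherit the constant limits $\Theta_\pm$, nonconstant periodic profiles are excluded and $\tau(\phi_{\alpha,\omega})\subset\Sigma_{in}\cup\Lambda_{out}$. Combined with the total ordering of periodic orbits inside $\Pi$ (Lemma~\ref{le:p0}(v)), the squeezing lemma pins each of $\tau(\alpha(U))$ and $\tau(\omega(U))$ to $\Sigma_{in}$ or $\Lambda_{out}$. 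The correct assignment---$\tau(\alpha(U))\subset\Sigma_{in}$ and $\tau(\omega(U))\subset\Lambda_{out}$---I would identify by tracking the direction of zero-number drops in $t$, which encode the monotone outward passage of $\tau(U(\cdot,t))$ through the nested family of periodic orbits as $t$ advances from $-\infty$ to $+\infty$.

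The decisive difficulty is the tangency step. A direct Angenent-drop argument on $z(w(\cdot,t))$ stalls when $\phi$ and $U$ share limits at $\pm\infty$ (for example when $\phi$ is a ground state at level $\Theta_-=\Theta_+$), since $z(w(\cdot,t))$ may be infinite and Lemma~\ref{lemzero} does not apply on all of $\R$. The periodic-approximation scheme is engineered precisely to bypass this infinity by transferring the multiple-zero information to the differences $U-\psi_n$, for which the squeezing lemma provides finite-zero-number leverage; making it fully rigorous will require a case analysis over the location of the tangency (on $\Sigma_{in}$ or $\Lambda_{out}$, at an equilibrium or along a homoclinic/heteroclinic arc) and careful bookkeeping of which side of $\cO_n$ the trajectory $\tau(U(\cdot,t))$ lies on for $t$ near $t_0$.
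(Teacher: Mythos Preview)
Your Stage~2 (the tangency/dichotomy step) is close to the paper's Lemma~\ref{le:dichotomy}, but the execution is off: you approximate $\phi$ by periodic $\psi_n$ with $\cO_n\subset\Pi$ and then try to extract a contradiction from the squeezing lemma. That invocation does not work---the hypothesis of Lemma~\ref{squeezinglemma} is $K\subset\cI(\cO)$ or $K\subset\R^2\setminus\ol{\cI}(\cO)$, and you have established neither; you only know $K\cap\cO_n\ne\emptyset$. The paper instead approximates from the \emph{other} side of the boundary (from $\cI(\Sigma_{in})$ when the contact is on $\Sigma_{in}$, and analogously from outside $\ol{\cI}(\Lambda_{out})$), so that $\tau(U(\cdot,t_n))\cap\tau(\psi_n)\ne\emptyset$ contradicts~\eqref{eq:5b} directly, with no squeezing lemma needed.

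The substantive gap is Stage~3. First, your assertion that steady states $\phi_\alpha\in\alpha(U)$, $\phi_\omega\in\omega(U)$ ``inherit the constant limits $\Theta_\pm$'' is false: these sets are defined via $L^\infty_{loc}$ convergence, which does not preserve limits at $\pm\infty$. (The exclusion of periodic profiles is correct but comes from a uniform bound on $z(U_x(\cdot,t))$, cf.\ Corollary~\ref{co:tilde}.) Second, and more seriously, the ``direction'' argument you propose---zero-number drops encoding monotone outward passage through the nested periodic orbits---does not exist as a general principle. When $(\Theta_\pm,0)\in\Sigma_{in}$, the constant $\Theta_\pm$ lies in the range of every periodic $\psi$ with $\tau(\psi)\subset\Pi$, so $z(U(\cdot,t)-\psi)=\infty$ for all $t$ and no monotonicity information is available. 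The paper's actual proof of~\eqref{eq:6a} occupies all of Subsection~\ref{sec:nontriv}: it splits into three cases (C1)--(C3) according to whether $(\Theta_\pm,0)$ lie on $\Sigma_{in}$ or $\Lambda_{out}$, and each case uses different machinery---uniform convergence results of Fife--McLeod type (Lemma~\ref{le:conv}), the comparison-based Lemma~\ref{le:ep}, the reduction to $z(U(\cdot,t)-\beta_\pm)=2$ in case~(C2) (Lemma~\ref{le:z2}), and careful sliding arguments with periodic solutions. None of this is captured by your outline, and there is no shortcut through the squeezing lemma alone.
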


An interpretation of this result is that any entire solution of
\eqref{eq:1} satisfying \eqref{eq:5b}, \eqref{eq:29a} is either a
steady state or a connection, in $L^\infty_{loc}(\R)$, between the
following two sets of steady states:
\begin{align*}
  E_{in}&:=\{\varphi: \varphi\text{ is solution of \eqref{eq:steady}
          with }\tau(\varphi)\subset \Sigma_{in}\},\\
  E_{out}&:=\{\varphi: \varphi\text{ is solution of \eqref{eq:steady}
           with }\tau(\varphi)\subset \Lambda_{out}\}.
\end{align*}
Moreover, the connection always goes from $E_{in}$ to $E_{out}$ as
time increases from $-\infty$ to $\infty$.  Note that this result, in
conjunction with Proposition \ref{prop:3.3}(i) and Lemma
\ref{le:limits}, implies that statement (ii) of Proposition
\ref{prop:3.3} holds under the extra assumption that
$\Sigma_{in}$ is a nontrivial chain.

In the case when $\Sii$ is a trivial chain, we do not have such a
complete characterization of entire solutions satisfying (HU). We
only prove some partial results in this case, which will be used in Section
\ref{completionprop}.  For that, we will need the following additional
assumption:
\begin{description}
\item[\bf (TC)] (Additional assumption in the case $\Sii=\{(\be,0)\}$
  is a trivial chain). If $U$ is not a steady state, then for all
  $t\in\R$ the
  function $U(\cdot,t)-\be$ has only simple zeros and and the number
  of its critical points is finite and bounded uniformly in $t$.
\end{description}

In the next subsection, we prove several results valid in general,
whether $\Sii$ is trivial or nontrivial, assuming (TC) in the former
case. Then, in Subsection \ref{sec:nontriv}, we examine in more
detail the case when $\Sii$ is nontrivial and prove Proposition
\ref{prop:entire}.

The following notation will be used throughout this section.

Recall that $\Sigma_{in}$ (as any other chain) has the structure as in
\eqref{Sigmain} for some $p\le q$. We define the values $\be_\pm$ as
in \eqref{betaplusmoins}. They are unstable equilibria of
\eqref{eq:ODE}. If $\Sii=\{(\be,0)\}$ is a trivial chain, then
$\be_\pm=p=q=\be$. If $\Sii$ is nontrivial, then $p< q$ and
\eqref{eq:3.6}, \eqref{A1beta} hold.

As for $\Lambda_{out}$, there are two possibilities:
\begin{description}
\item[\bf(A1)] $\Lambda_{out}$ is a \emph{homoclinic loop,} that is,
  it is the union of a homoclinic orbit of \eqref{eq:sys} and its limit
  equilibrium, or, in other words,
  \begin{equation}
    \label{eq:21}
    \Lambda_{out}=\{(\ga,0)\}\  {\textstyle \bigcup }\  \tau(\Phi),
  \end{equation}
  where $f(\ga)=0$ and $\Phi$ is a ground state of \eqref{eq:steady} at
  level $\ga$. We choose $\Phi$ so that $\Phi'(0)=0$, that is, the
  only critical point $\Phi$ is  $x=0$.
\item[\bf(A2)] $\Lambda_{out}$ is a \emph{heteroclinic loop,} that is,
  it is the union of two heteroclinic orbits of \eqref{eq:sys} and their
  limit equilibria $(\gamma_\pm,0)$. In other words,
  \begin{equation}
    \label{eq:22}
    \Lambda_{out}=\{(\ga_-,0), (\ga_+,0)\}\ {\textstyle \bigcup}\ \tau(\Phi^+)\ {\textstyle \bigcup}\ \tau(\Phi^-),
  \end{equation}
  with $\gamma_-<\gamma_+$, $f(\ga_\pm)=0$, and $\Phi^\pm$ are
  standing waves of \eqref{eq:steady} connecting $\gamma_-$ and
  $\gamma_+$, one increasing the other one decreasing. We adopt the
  convention that $\Phi^+_x>0$ and $\Phi^-_x<0$.
\end{description}
To have a unified notation, we set
\begin{equation}
  \label{eq:19}
  \begin{aligned}
    \hat p&:=\inf\{a\in\R: (a,0)\in \Pi\}=\inf\{a\in\R: (a,0)\in \La_{out}\},\\
    \hat q&:=\sup\{a\in\R: (a,0)\in \Pi\}=\sup\{a\in\R: (a,0)\in
    \La_{out}\}.
  \end{aligned}
\end{equation}
Thus, $\{\hat p,\hat q\}=\{\ga, \Phi(0)\}$ if (A1) holds; and
$\hat p=\ga_-$, $\hat q =\ga_+$ if (A2) holds.

Also remember that if $(\bar \ga,0)$ is any equilibrium of \eqref{eq:sys}
contained in $\Lambda_{out}$ or in $\Sigma_{in}$ when $\Sii$ is a
nontrivial chain, then $f'(\bar \ga)<0$ (cp. Section \ref{sec:22}).
This in particular applies to $\ga$, $\ga_\pm$ in (A1), (A2).

\subsection{Some general results}
\label{sec:gen}
We assume the standing hypothesis for this section, as spelled out in
the paragraph containing (HU). In case $\Sii=\{(\be,0)\}$, we also
assume the extra hypothesis (TC).

We start by recalling the following consequence of hypothesis (HU)
(cp. Remark \ref{rm:limst}).
\begin{corollary}
  \label{co:rment} The following statements hold:
  \begin{itemize}
  \item[(i)] If $\Sii$ is a nontrivial chain,
    $\Theta_\pm(t)=:\Theta_\pm$ are independent of $t$ and
    $(\Theta_\pm,0)\in \Sigma_{in}\cup\La_{out}$.
  
  \item[(ii)] If $\Sigma_{in}=\{(\be,0)\}$ is a trivial chain and
    $\Theta(t)$ stands for $\Theta_+(t)$ or $\Theta_+(t)$, then
    either $\Theta(t)=:\Theta$ is independent of $t$ and
    $(\Theta,0)\in \{(\be,0)\}\cup\La_{out}$, or $\Theta(t)$ is a
    strictly monotone solution of \eqref{eq:ODE} with
    $\Theta(-\infty)=\be$ and $(\Theta(\infty),0)\in\La_{out}$.
  \end{itemize}
\end{corollary}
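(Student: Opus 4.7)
The plan is to recognize Corollary \ref{co:rment} as essentially Lemma \ref{le:limits} transferred to the present setting and read off via Remark \ref{rm:limst}. That remark explicitly observes that the proof of Lemma \ref{le:limits} used the containment $U(\cdot,t)\in \omega(u)$ only in order to establish the existence of the spatial limits \eqref{eq:29}; from that point on, the entire argument---that $\Theta_\pm$ solves \eqref{eq:ODE} and the dichotomy in the two cases---relied solely on those limits and on the containment \eqref{eq:5} of the spatial trajectories in $\bar\Pi$. Both ingredients are now taken as assumptions on $U$ through \eqref{eq:29a} and \eqref{eq:5b} of hypothesis (HU), so there is essentially nothing new to prove, only to recompile the existing argument.

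Concretely, I would begin by using parabolic regularity to observe that $U_x$ and $U_{xx}$ are bounded and uniformly continuous on $\R^2$; combined with \eqref{eq:29a} this forces $U_x(x,t)\to 0$ as $x\to\pm\infty$ for each fixed $t$, so that $(\Theta_\pm(t),0)$ lies in $\bar\Pi$ by \eqref{eq:5b}. Passing to the limit $x\to\pm\infty$ in \eqref{eq:1} (alternatively, applying Lemma \ref{le:3.1} with an arbitrary time taken as the initial time) shows that $\Theta_\pm$ is a solution of \eqref{eq:ODE} on $\R$. It then suffices to analyze the structure of $\bar\Pi\cap\{v=0\}$ and exploit the fact that a continuous trajectory $t\mapsto\Theta_\pm(t)$ with values there must stay in one of its connected components.

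For statement (i), the set $\bar\Pi\cap\{v=0\}$ is the disjoint union of the intervals $[\hat p,p]$ and $[q,\hat q]$ with the finitely many isolated equilibria in $(p,q)$ where $F$ attains the value $F(p)$. On each of $[\hat p,p]$ and $[q,\hat q]$ the only possible equilibrium of \eqref{eq:ODE} is the endpoint $\hat p$ or $\hat q$ (because $f\ne 0$ on $(\hat p,p)\cup(q,\hat q)$ by the proof of Lemma \ref{le:p0}, while $p$ and $q$ themselves are turning points of homoclinic orbits, not equilibria); a bounded monotone solution of \eqref{eq:ODE} confined to such an interval must therefore be constant at that equilibrium. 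In the remaining scenario, $\Theta_\pm$ is constant at one of the interior equilibria, which necessarily lies in $\Sigma_{in}$. Either way $(\Theta_\pm,0)\in\Sigma_{in}\cup\La_{out}$.

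For statement (ii), $\bar\Pi\cap\{v=0\}$ reduces to the single interval $[\hat p,\hat q]$; by (ND) and Lemma \ref{le:p0}(iv), $\be$ is unstable for \eqref{eq:ODE} and sits in the interior, while any equilibria of $\La_{out}$ lying at the endpoints are stable. A monotone solution of \eqref{eq:ODE} in this interval is therefore either constant at one of these equilibria, or else it emanates from $\be$ as $t\to-\infty$ (the only equilibrium trajectories can leave) and tends to a stable endpoint equilibrium as $t\to\infty$; the reverse connection is ruled out by stability of $\hat p$ and $\hat q$. This is precisely the claimed dichotomy. I do not foresee any real obstacle here, because (HU) has been tailored so that the phase-line analysis underlying the proof of Lemma \ref{le:limits} transfers unchanged.
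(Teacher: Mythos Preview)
Your proposal is correct and follows exactly the paper's approach: the paper presents Corollary~\ref{co:rment} without proof, introducing it simply as ``the following consequence of hypothesis (HU) (cp.\ Remark~\ref{rm:limst}),'' and your proposal unpacks precisely that remark, reproducing the phase-line analysis from the second half of the proof of Lemma~\ref{le:limits}. The details you supply are faithful to that argument; the only minor refinement worth making is to note explicitly that in case (A1) one of $\hat p,\hat q$ is not an equilibrium, so no ODE trajectory defined for all $t\in\R$ can be confined to the corresponding subinterval---but this only strengthens your conclusion.
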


Next we prove  the following basic dichotomy.
\begin{lemma}
  \label{le:dichotomy}
  Either $U$ is identical to a steady state $\phi$ with
  $\tau(\phi) \subset\Sigma_{in}\cup\La_{out}$, or else $U$ is not a
  steady state and \eqref{eq:5a} holds.
\end{lemma}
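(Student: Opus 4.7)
The lemma is a two-way split on whether $U$ is a steady state, and I treat each case in turn.

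\emph{Steady case.} If $U$ is a steady state, then $\tau(U)$ is a single orbit of \eqref{eq:sys} contained in $\bar\Pi$. Since $U$ has finite limits at $\pm\infty$ by (HU), it cannot be a nonconstant periodic solution of \eqref{eq:steady}; hence $\tau(U)$ is not a nonstationary periodic orbit. By Lemma \ref{le:p0}, nonstationary periodic orbits (together with the single center equilibrium when $\Sii=\{(\be,0)\}$ is trivial) fill the open set $\Pi$, while all other orbits of \eqref{eq:sys} in $\bar\Pi$ lie in its boundary $\Sigma_{in}\cup\Lambda_{out}$. Consequently $\tau(U)\subset\Sigma_{in}\cup\Lambda_{out}$, as required.

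\emph{Non-steady case.} Assume $U$ is not a steady state. I must verify $\bigcup_{t\in\R}\tau(U(\cdot,t))\subset\Pi$, and I do so by contradiction. Suppose there exist $t_0\in\R$ and $x_0\in\R$ with $P_0:=(U(x_0,t_0),U_x(x_0,t_0))\in\Sigma_{in}\cup\Lambda_{out}$. Since $\Sigma_{in}\cup\Lambda_{out}$ is a finite union of orbits of \eqref{eq:sys}, I pick a solution $\phi$ of \eqref{eq:steady} with $\tau(\phi)\subset\Sigma_{in}\cup\Lambda_{out}$ whose orbit contains $P_0$, and translate it so that $\phi(x_0)=U(x_0,t_0)$ and $\phi_x(x_0)=U_x(x_0,t_0)$. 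Then $w:=U-\phi$ is a nontrivial solution of a linear parabolic equation \eqref{eqlin}, and $w(\cdot,t_0)$ has a multiple zero at $x_0$. In the special case $\Sii=\{(\be,0)\}$ with $\phi\equiv\be$, hypothesis (TC) directly forbids multiple zeros of $U(\cdot,t)-\be$, giving an immediate contradiction.

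Otherwise, $\tau(\phi)$ sits in a loop $\Lambda\subset\Sigma_{in}\cup\Lambda_{out}$ that is approached from inside $\Pi$ by a nested sequence of nonstationary periodic orbits $\mathcal O_n\subset\Pi$ (by Lemma \ref{le:p0}(v)). I pick periodic steady states $\psi_n$ with $\tau(\psi_n)=\mathcal O_n$, translated so that $\psi_n\to\phi$ in $C^1_{\rm loc}(\R)$. The differences $w_n:=U-\psi_n$ converge to $w$ in $C^1_{\rm loc}(\R^2)$, and Lemma \ref{robustnesszero} yields multiple zeros of $w_n(\cdot,t_n)$ at $x_n$ with $(x_n,t_n)\to(x_0,t_0)$ for every sufficiently large $n$; in particular $\tau(U(\cdot,t_n))\cap\mathcal O_n\ne\emptyset$ for these $n$. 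Combining this with the zero-number monotonicity of $w_n$ (Lemma \ref{lemzero}) and $\tau(U(\cdot,t))\subset\bar\Pi$, the plan is to show that for some $n$ the whole family $\bigcup_t\tau(U(\cdot,t))$ lies on a single side of $\mathcal O_n$. The squeezing lemma (Lemma \ref{squeezinglemma}) then forces $\bigcup_t\tau(U(\cdot,t))\subset\overline{\mathcal I}(\Sigma_{in})$ or $\bigcup_t\tau(U(\cdot,t))\subset\R^2\setminus\mathcal I(\Lambda_{out})$; intersecting with $\bar\Pi$ gives $\tau(U(\cdot,t))\subset\Sigma_{in}$ or $\tau(U(\cdot,t))\subset\Lambda_{out}$ for every $t$, whence $U(\cdot,t)$ is a steady state by Lemma \ref{le:2.7}, so $U$ itself is a steady state---contradicting our assumption.

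\emph{Main obstacle.} The delicate step is promoting the local tangent contact at $(x_n,t_n)$ to the required one-sided containment of $\bigcup_t\tau(U(\cdot,t))$ relative to $\mathcal O_n$ that feeds into the squeezing lemma. This demands careful bookkeeping of how the finitely many zero-number drops of $w_n$ across all $t\in\R$ (forced by the multiple zero at $t_n$) preclude $\tau(U(\cdot,t))$ from transversally re-crossing $\mathcal O_n$ at any other time, exploiting the nesting of the periodic orbits inside $\Pi$ guaranteed by Lemma \ref{le:p0}(v).
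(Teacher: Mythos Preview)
Your treatment of the steady case is correct and matches the paper's reasoning.

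In the non-steady case, however, you approximate $\phi$ by periodic orbits $\mathcal O_n\subset\Pi$ and then try to feed a one-sided containment into the squeezing lemma. You yourself flag this as the ``main obstacle'', and indeed this step is not carried out: obtaining a multiple zero of $U(\cdot,t_n)-\psi_n$ only tells you $\tau(U(\cdot,t_n))$ touches $\mathcal O_n$, not that $\bigcup_t\tau(U(\cdot,t))$ lies on one side of $\mathcal O_n$. There is a further problem: Lemma~\ref{squeezinglemma} requires the a~priori bound $z(U(\cdot,t)-\beta)\le N$ for the relevant unstable equilibrium $\beta$, and at this point in Section~\ref{sec:entire} that bound has not been established---it is Lemma~\ref{le:zerob}, whose proof \emph{uses} Lemma~\ref{le:dichotomy}. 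So invoking the squeezing lemma here would be circular.

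The paper avoids all of this by approximating $\phi$ from the \emph{other} side, i.e.\ by periodic orbits lying \emph{outside} $\bar\Pi$. Concretely, when $\tau(\phi)\subset\Sigma_{in}$ (the nontrivial case), Lemma~\ref{le:p0} guarantees periodic orbits $\tau(\psi_n)\subset\mathcal I(\Sigma_{in})$ with $\psi_n\to\phi$ in $C^1_{\rm loc}$; the set $\mathcal I(\Sigma_{in})$ is disjoint from $\bar\Pi$. Lemma~\ref{robustnesszero} then produces $t_n$ with $\tau(U(\cdot,t_n))\cap\tau(\psi_n)\ne\emptyset$, which \emph{directly} contradicts the standing hypothesis \eqref{eq:5b} that $\tau(U(\cdot,t))\subset\bar\Pi$ for all $t$. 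No squeezing lemma, no one-sided containment argument---the contradiction is immediate. The case $\tau(\phi)\subset\Lambda_{out}$ is analogous, approximating by periodic orbits just outside $\Lambda_{out}$. The moral: since (HU) already confines $\tau(U(\cdot,t))$ to $\bar\Pi$, you should perturb $\phi$ \emph{away} from $\bar\Pi$, not into it.
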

\begin{proof}
  The existence of the limits \eqref{eq:29a} implies that $U$ cannot
  be a nonconstant periodic steady state. Thus if \eqref{eq:5a} holds,
  $U$ cannot be any steady state.
  
  Assume now that \eqref{eq:5a} does not hold. Then there exist
  $x_0,t_0\in \R$ and a steady state $\phi$ with
  $\tau(\phi) \subset\Sigma_{in}\cup\La_{out}$ such that
  $U(\cdot,t_0)-\phi$ has a multiple zero at $x_0$. By connectedness
  of $\tau(\phi)$, $\tau(\phi)\subset \Sigma_{in}$ or
  $\tau(\phi)\subset \La_{out}$.  For definiteness, we assume the
  former; the arguments in the latter case are analogous (and one
  does not need to deal with trivial chain in that case).

  We want to show that $U\equiv \phi$.  If $\Sii=\{(\be,0)\}$ is a
  trivial chain (hence $\phi\equiv\be$), this follows immediately from
  (TC), specifically from the assumption that $U(\cdot,t)-\be$ has
  only simple zeros. Assume now that $\Sii$ is a nontrivial chain.  If
  $U\not\equiv \phi$, then $U-\phi$ is a nontrivial solution of a
  linear equation \eqref{eqlin}. Using Lemma \ref{le:p0} (and the fact
  that there are only finitely many chains), we find a sequence
  $\psi_n$ of periodic solutions of \eqref{eq:steady} such that
  $\tau(\psi_n)\subset \cI(\Sigma_{in})$ and $\psi_n\to\phi$ in
  $C^1_{loc}(\R)$. Applying Lemma \ref{robustnesszero}, we find a
  sequence $t_n\to t_0$ such that $U(\cdot,t_n)-\psi_n$ has a multiple
  zero. Consequently,
  $\tau(U(\cdot,t_n))\cap\tau(\psi_n)\ne \emptyset$, in contradiction
  to \eqref{eq:5b}. This contradiction shows that, indeed,
  $U\equiv \phi$.
\end{proof}

Clearly, the inclusion \eqref{eq:5a} implies that
\begin{equation}\label{eq:27a}
  U_x(x,t)\neq0\text{ whenever }U(x,t)\in [p,q].
\end{equation}

The following lemma shows in particular that if $U$ is not a steady
state and $\Sii$ is a nontrivial chain, then the number of critical
points of $U(\cdot,t)$ is bounded uniformly in $t$. If $\Sii$ is a
trivial chain, we have this by assumption, see (TC).
\begin{lemma}
  \label{le:zerob} Assume that $\Sii$ is a nontrivial chain.  If $U$
  is not a steady state, then the following statements are valid:
  \begin{itemize}
  \item[(i)] There are $N^+,N^-<\infty$ such that
    \begin{equation}\label{eq:28}
      z\left( U(\cdot,t)-\beta_\pm\right) = N^\pm\quad(t\in\R).
    \end{equation} 
  \item[(ii)] Let $\be=\be_-$ or $\be=\be_+$, and $t_0\in \R$.  Let
    $I:=(\zeta_1,\zeta_2)$, with $-\infty\le \zeta_1<\zeta_2\le\infty$,
    be any nodal interval of $U(\cdot,t_0)-\be$ (that is,
    $U(\cdot,t_0)-\be\ne 0$ in $I$ and $U(\cdot,t_0)-\be= 0$ on
    $\partial I$). Then $U(\cdot,t_0)$ has at most one critical point
    in $I$ and if such a critical point exists, it is nondegenerate.
  \end{itemize}
\end{lemma}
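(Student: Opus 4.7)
We will treat $v := U(\codt, t) - \beta_\pm$, which solves a linear parabolic equation on $\R\times\R$. The plan has three steps. First, the zeros of $v(\codt, t)$ are all simple: whenever $U(x, t) = \beta_\pm$, the constraint $(\beta_\pm, U_x(x, t)) \in \bar\Pi$ together with $(\beta_\pm, 0) \in \cI(\Sigma_{in})$ (which is disjoint from $\bar\Pi$) forces $U_x(x,t) \ne 0$; this is also what \eqref{eq:27a} gives, applied at $\beta_\pm \in (p, q)$. Second, $z(v(\codt, t)) < \infty$ for each fixed $t$: by Corollary \ref{co:rment}(i) the limits $\Theta_\pm$ are constant in $t$ and lie in $\Sigma_{in}\cup\Lambda_{out}$, so they differ from $\beta_\pm$, and zeros cannot accumulate at $\pm\infty$. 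Third, Lemma \ref{lemzero}(ii) applied with $I = \R$ yields that $z(v(\codt, t))$ is non-increasing in $t$; since a drop can only occur through a multiple zero (excluded by simplicity) or through a simple zero escaping to $\pm\infty$, it remains to rule out the latter. We do so by a compactness argument: from a putative sequence $x_n \to +\infty$, $t_n \to t_*$ with $U(x_n, t_n) = \beta_\pm$, a $C^1_{\rm loc}$ subsequential limit of the translates $U(\codt + x_n, \codt)$ gives an entire solution $\tilde U$ with $\tau(\tilde U(\codt, t)) \subset \bar\Pi$; pointwise convergence along $x$-sections at time $t_*$ forces $\tilde U(\codt, t_*) \equiv \Theta_+$, yet $\tilde U(0, t_*) = \beta_\pm \ne \Theta_+$, a contradiction.

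\textbf{Plan for statement (ii).} Fix $\beta \in \{\beta_-, \beta_+\}$ and a nodal interval $I = (\zeta_1,\zeta_2)$ of $U(\codt, t_0) - \beta$; assume $U(\codt, t_0) > \beta$ on $I$, the other case being symmetric. A preliminary remark is that every critical point $y \in I$ of $U(\codt, t_0)$ must satisfy $(U(y, t_0), 0) \in \bar\Pi\cap\{v=0\}$, and the constraint $U(y, t_0) > \beta > p$ forces $U(y, t_0) \in [q, \hat q]$ or $U(y, t_0)$ to equal a saddle-equilibrium value on $\Sigma_{in}$ lying in $(\beta, q)$. To prove there is at most one critical point in $I$, we will argue by contradiction: given two critical points $y_1 < y_2$ in $I$, we choose a periodic solution $\psi$ of \eqref{eq:steady} whose spatial orbit passes through $(U(y_i, t_0), 0)$ for a suitable $i$, with $\psi(0) = U(y_i, t_0)$ and $\psi'(0) = 0$. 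Then $U(\codt, t_0) - \psi(\codt - y_i)$ has a double zero at $y_i$ and is not identically zero (since the limits $\Theta_\pm$ exist while $\psi$ is periodic and nonconstant). Lemma \ref{robustnesszero} combined with Lemma \ref{lemzero} will propagate the multiple zero to nearby times and force $z(U(\codt, t) - \psi) = +\infty$ for $t > t_0$. A nodal-interval analysis comparing $U - \psi$ with $U - \beta$ (whose zero count is finite and constant in $t$ by statement (i)) will then produce the contradiction. For the nondegeneracy claim, if some critical point $y \in I$ were degenerate, then $U_x(\codt, t_0)$ would have a multiple zero at $y$; since $U_x$ solves a linear parabolic equation whose zero number is controlled via statement (i) and the geometry of $\bar\Pi$, Lemma \ref{lemzero}(iii) will again produce an inadmissible drop.

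\textbf{Main obstacle.} The harder part is statement (ii). Because $\bar\Pi\cap\{u>\beta\}$ is topologically simply connected, no purely topological winding argument rules out two critical points in the same nodal interval of $U(\codt, t_0) - \beta$; the proof must genuinely exploit that $U$ is an entire solution of \eqref{eq:1} and propagate information in time through zero-number techniques. A particularly delicate case occurs when a critical value of $U(\codt, t_0)$ coincides with a saddle equilibrium of \eqref{eq:sys} on $\Sigma_{in}$ lying in $(\beta, q)$: the natural comparison periodic orbit then degenerates to the saddle itself, and a separate argument based on the stable and unstable separatrices of that saddle is required.
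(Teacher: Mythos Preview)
Your argument for statement (i) is correct, though it takes a different route from the paper. The paper shows directly that each simple zero $\xi(t)$ of $U(\cdot,t)-\beta_\pm$ has bounded derivative: since $(\beta_\pm,0)\in\cI(\Sigma_{in})$ lies at positive distance from $\Pi$, the quantity $|U_x(x,t)|$ is uniformly bounded below whenever $U(x,t)=\beta_\pm$, and combined with the parabolic bound on $|U_t|$ this bounds $|\xi'(t)|$ upon differentiating $U(\xi(t),t)=\beta_\pm$. Your compactness argument via spatial translates reaches the same conclusion; the paper's version is a little more elementary but both are fine.

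Your plan for statement (ii), however, has a genuine gap. The step ``force $z(U(\cdot,t)-\psi)=+\infty$ for $t>t_0$'' does not follow from a single multiple zero of $U(\cdot,t_0)-\psi(\cdot-y_i)$ at $y_i$: a multiple zero causes the zero number to \emph{drop} at $t_0$ if it is finite, not to become infinite afterward. In fact, since $\psi$ is nonconstant periodic while $U(\cdot,t)$ has limits $\Theta_\pm$ which typically lie in the range of $\psi$, the global count $z(U(\cdot,t)-\psi)$ is generally already infinite for every $t$, so nothing can be extracted; restricting to the nodal interval $I$ does not help either, because $U-\psi$ need not be of one sign on $\partial I$. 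Your nondegeneracy argument is also circular: you invoke ``the zero number of $U_x$ is controlled via statement (i),'' but (i) controls $z(U-\beta_\pm)$, not $z(U_x)$, and relating the two is precisely what (ii) is about. Finally, the ``delicate case'' you flag does not arise: since $U$ is not a steady state, Lemma~\ref{le:dichotomy} gives $\tau(U(\cdot,t))\subset\Pi$ (the \emph{open} set), so at any critical point $y$ one has $(U(y,t_0),0)\in\Pi\cap\{v=0\}$, forcing $U(y,t_0)\in(\hat p,p)\cup(q,\hat q)$; combined with $U(y,t_0)>\beta\ge\beta_->p$ this gives $U(y,t_0)>q$ outright.

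The idea you are missing is the paper's comparison with a \emph{spatially constant} solution of the ODE. From (i), the endpoints of the nodal interval extend to $C^1$ curves $\bar\zeta_1(t)<\bar\zeta_2(t)$ for all $t$. If the conclusion of (ii) fails at $t_0$, Lemma~\ref{le:zerot} applied to $U_x$ on this moving interval yields at least two nondegenerate critical points of $U(\cdot,t_1)$ in $(\bar\zeta_1(t_1),\bar\zeta_2(t_1))$ for $t_1<t_0$ close to $t_0$, all with values $>q$ by the previous paragraph. One can then choose $\xi_1>q$ so that $U(\cdot,t_1)-\xi_1$ has at least three zeros in the interval, and compare $U$ with the solution $\xi(t)$ of $\dot\xi=f(\xi)$, $\xi(t_1)=\xi_1$. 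The difference $V=U-\xi(t)$ is negative at $\bar\zeta_i(t)$ (since $\xi(t)>\beta_+\ge\beta$) and, by Lemma~\ref{le:zerot}, retains at least three zeros in the moving interval for all $t<t_1$. Since $\xi(t)\searrow\beta_+$ as $t\to-\infty$, for large negative $t$ the sign pattern of $V$ forces a critical point of $U(\cdot,t)$ with value at most $\xi(t)<q$, contradicting the constraint $U(y,t)>q$ established above. The point is that comparing with a time-dependent constant rather than a periodic steady state keeps the relevant zero counts finite and lets the ODE flow drag the comparison level down into the forbidden range $[p,q]$.
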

\begin{remark}\label{rm:critp}
  {\rm With $\beta$ and $I=(\zeta_1,\zeta_2)$ as in statement (ii),
    the number of critical points of $U(\cdot,t_0)$ in $I$ can be
    specified by elementary considerations.  For example, $U(\cdot,t_0)$ has
    exactly one critical point in $I$ if $\zeta_1$, $\zeta_2$ are
    both finite (and hence are two successive zeros of
    $U(\cdot,t_0)-\beta $). If $\zeta_1\in \R$, $\zeta_2=\infty$,
    $U_x(\zeta_1,t_0)>0$, then either $U(\cdot,t_0)$ has exactly one
    critical point in $I$ and in this case
    $\Theta_+=U(\infty,t_0)< \beta_+$ or else  $U_x(\cdot,t_0)>0$ in
    $I$. The discussion in the
    other cases is similar.
  }
\end{remark}

\begin{proof}[Proof of Lemma \ref{le:zerob}]
  By Lemma \ref{le:dichotomy}, \eqref{eq:5a} holds, and by Corollary
  \ref{co:rment},
  $(\Theta_\pm,0)$ are independent of $t$ and contained in
  $ \Sigma_{in}\cup\La_{out}$. These inclusions and \eqref{A1beta}
  imply that
  $\{\Theta_-,\Theta_+\}\cap
  \{\beta_-,\beta_+\}=\emptyset$. Therefore, the zero numbers
  $z\left( U(\cdot,t)-\beta_\pm\right)$ are finite for all $t$, and
  are nonincreasing in $t$. We show that
  $z\left( U(\cdot,t)-\beta_+\right)$ does not drop at any $t_0\in \R$
  (the proof for $\be_-$ is similar). By \eqref{eq:27a}, all zeros of
  $U(\cdot,t)-\beta_\pm$ are simple, hence locally they are given by
  $C^1$ functions of $t$.  The only way
  $z\left( U(\cdot,t)-\beta_+\right)$ can drop at $t_0$ is that one of
  these $C^1$ functions, say $\xi(t)$, is unbounded as $t\to 0$. To
  rule this possibility out, we show that $|\xi'(t)|$ is uniformly
  bounded. Indeed, from \eqref{A1beta} and the fact that
  $\tau(U(\cdot,t))\subset\Pi$ we infer that $|U_x(x,t)|$ is bounded
  from below by a fixed positive constant (independent of $x$ and $t$)
  whenever $U(x,t)=\be_+$. Since, by parabolic estimates, $|U_t|$ is
  uniformly bounded, a bound on $\xi'(t)$ is found immediately upon
  differentiating the identity $U(\xi(t),t)=\be_+$. This completes the
  proof of statement (i).

  In the proof of statement (ii), we only consider the case of a
  bounded nodal interval $I=(\zeta_1,\zeta_2)$, the other cases can be
  treated similarly.  Also, we  assume for definiteness that
  $U(\cdot,t_0)-\beta> 0$ in $I$, the case $U(\cdot,t_0)-\be < 0$ in
  $I$ being analogous. Suppose for a contradiction that $U(\cdot,t_0)$
  has more than one critical point in $(\zeta_1,\zeta_2)$ or has a
  degenerate critical point there.  From statement (i) and Remark
  \ref{convzero} we infer that the function $U(\cdot,t)-\be$ has a
  finite number (independent of $t$) of zeros, all of them
  simple. Using this and the implicit function theorem, we obtain the
  following.  There are $C^1$ functions $\bar \zeta_i(t)$ defined for
  all $t\in\R$ such that $\bar \zeta_i(t_0)=\zeta_i$, $i=1,2$, and,
  for any $t$, $(\bar \zeta_1(t),\bar \zeta_2(t))$ is a nodal interval
  of $U(\cdot,t)-\be$: $U(\cdot,t)>\be$ in
  $(\bar \zeta_1(t),\bar \zeta_2(t))$, $U(\bar\zeta_i(t),t)=\be$,
  $i=1,2$.  Considering the zero number of $U_x(\cdot,t)$ in
  $(\bar \zeta_1(t),\bar \zeta_2(t))$ (remembering that
  $U_x(\bar\zeta_i(t),t)\ne 0$, due to the simplicity of the zeros),
  we infer from Lemma \ref{le:zerot} that for all $t<t_0$ the function
  $U(\cdot,t)$ has at least two critical points in
  $(\bar \zeta_1(t),\bar \zeta_2(t))$. Moreover, for $t<t_0$,
  $t\approx t_0$ the critical points are all nondegenerate. Pick one
  of such $t$, say $t_1$.  Due to \eqref{eq:27a}, the value of
  $U(\cdot,t_1)$ at the critical points is greater than $q$, which is
  greater than $\be_+$. Therefore, there is $\xi_1>q$ such that the
  function $U(\cdot,t_1)-\xi_1$ has at least three zeros.  Let
  $\xi(t)$ denote the solution of $ \dot{\xi}(t)=f(\xi(t))$ with
  $\xi(t_1)=\xi_1$.  Then $\xi(t)>\beta_+$ for all $t$ and
  $\xi(-\infty)=\beta_+.$ Consider the function
  $V(x,t)=U(x,t)-\xi(t).$ It solves a linear equation \eqref{eqlin}
  and satisfies $V(\bar \zeta_i(t),t)<0$ for all $t<t_0$. Therefore, by
  Lemma \ref{le:zerot}, $V(\cdot,t)$ admits at least 3 zeros in
  $(\bar \zeta_1(t),\bar \zeta_2(t))$. Take now a large enough
  negative $t$ so that $\xi(t)\in (\be_-,q)$. Using the fact that
  $U(\cdot,t)-\xi(t)$ has at least 3 zeros in
  $(\bar \zeta_1(t),\bar \zeta_2(t))$ while $U(\cdot,t)>\be_+$ in
  $(\bar \zeta_1(t),\bar \zeta_2(t))$, we find a critical point at
  which $U(\cdot,t)$ takes a value in $(\be_-,q)$, which clearly
  contradict \eqref{eq:27a}. This contradiction proves the conclusion
  of statement (ii).
\end{proof}
\begin{corollary}\label{co:tilde}
  If $\varphi\in A(U)\cup \Om(U)$ and $\tilde U$ is the entire
  solution of \eqref{eq:1} with $\tilde U(\cdot,0)=\varphi$ (and
  $\tilde U(\cdot,t)\in A(u)\cup \Om(U)$ for all $t$), then condition
  {\rm (HU)} holds with $U$ replaced by $\tilde U$. In particular, $\varphi$
  is not identical to any nonconstant periodic steady state.
\end{corollary}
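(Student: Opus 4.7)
The plan is to verify the three components of (HU) for $\tilde U$: boundedness and entireness, the trajectory inclusion $\tau(\tilde U(\cdot,t))\subset\bar\Pi$, and the existence of the limits $\tilde U(\pm\infty,t)$. The first two components follow quickly from the definitions, since $\tilde U$ is the entire solution provided by the flow-invariance of $A(U)\cup\Omega(U)$; its values are $C^1_{loc}$-limits of translates $U(\cdot+x_n,t_n)$, hence uniformly bounded by $\|U\|_\infty$, and the corresponding limit spatial trajectories lie in the closed set $\bar\Pi$ because each $\tau(U(\cdot,s))\subset\bar\Pi$ by (HU) for $U$. The substantive task, and where I expect the main obstacle, is the existence of the spatial limits.

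For the limits, I would first dispose of the trivial case: if $U$ itself is a steady state, then every element of $A(U)\cup\Omega(U)$ is a translate of $U$, so $\tilde U$ is a steady state and the limits exist. Assume therefore that $U$ is not a steady state; I would treat only the nontrivial-chain case, the trivial-chain case being analogous with (TC) replacing the forthcoming use of Lemma \ref{le:zerob}(i). Lemma \ref{le:zerob}(i) gives $z(U(\cdot,t)-\beta_\pm)=N^\pm<\infty$, and standard lower semicontinuity of the zero number under $C^1_{loc}$-convergence (built on Lemma \ref{robustnesszero}) transfers this bound to $z(\tilde U(\cdot,t)-\beta_\pm)\le N^\pm$; moreover, since $(\beta_\pm,0)\in\mathcal I(\Sigma_{in})$ is disjoint from $\bar\Pi$, the zeros of $\tilde U(\cdot,t)-\beta_\pm$ are automatically simple. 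This bound alone rules out $\tilde U$ being a nonconstant periodic steady state (which would satisfy $z(\psi-\beta)=\infty$ for some unstable equilibrium $\beta$ by Lemma \ref{le:p0}(iv)), so the argument of Lemma \ref{le:dichotomy} applies to $\tilde U$, yielding either that $\tilde U$ is identical to a steady state with trajectory in $\Sigma_{in}\cup\Lambda_{out}$ (in which case limits exist) or that $\tau(\tilde U(\cdot,t))\subset\Pi$ for all $t$. In the latter case, the argument of Lemma \ref{le:zerob}(ii) produces a uniform bound $M$ on the number of critical points of $\tilde U(\cdot,t)$.

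Finally, suppose for contradiction that $\lim_{x\to\infty}\tilde U(x,t_0)$ fails to exist. Following the proof of Lemma \ref{le:limits}, the trajectory constraint together with $\bar\Pi\cap\{v=0\}=[\hat p,p]\cup[q,\hat q]$ forces, after passing to a suitable subsequence of local-minimum points tending to $+\infty$, the relevant $\liminf\le p$; I would then choose $\xi_0\in(\hat p,\beta_-)$ strictly between this liminf and $\beta_-$, and let $\xi(t)$ be the solution of \eqref{eq:ODE} with $\xi(t_0)=\xi_0$, so that $\xi(t)\upto\beta_-$ as $t\to-\infty$ (because $f<0$ on $(\hat p,\beta_-)$). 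The difference $v(x,t):=\tilde U(x,t)-\xi(t)$ solves a linear parabolic equation on $\R\times\R$ with $z(v(\cdot,t_0))=\infty$, hence $z(v(\cdot,t))=\infty$ for every $t\le t_0$ by Lemma \ref{lemzero}. Rolle's theorem then supplies infinitely many critical points of $\tilde U(\cdot,t)$, contradicting the uniform bound $M$. A symmetric argument handles $x\to-\infty$, so (HU) is established for $\tilde U$, and the final assertion of the corollary follows immediately, since no nonconstant periodic function admits limits at $\pm\infty$.
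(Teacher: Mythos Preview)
Your argument is correct, but it takes a more circuitous route than the paper and contains a redundant step. The paper's proof observes that (HU) for $U$, together with Lemma~\ref{le:zerob} in the nontrivial case or (TC) in the trivial case, already gives a uniform bound $z(U_x(\cdot,t))\le k$; it then transfers this bound directly to $\tilde U_x$ by approximation (if $\tilde U_x(\cdot,t)$ had $k+1$ sign changes, so would some $U_x(\cdot+x_n,t_n)$), and eventual monotonicity of $\tilde U(\cdot,t)$ immediately yields the limits. You instead transfer the bound on $z(U(\cdot,t)-\beta_\pm)$, re-run the dichotomy of Lemma~\ref{le:dichotomy} and the nodal-interval argument of Lemma~\ref{le:zerob}(ii) for $\tilde U$ to bound its critical points, and then append a contradiction argument modeled on Lemma~\ref{le:limits}. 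This works, but note that once you have the uniform bound $M$ on critical points of $\tilde U(\cdot,t)$, the limits at $\pm\infty$ exist automatically (a bounded function with finitely many critical points is eventually monotone), so your final paragraph is superfluous; and within that paragraph the case where both $\liminf$ and $\limsup$ lie in $[q,\hat q]$ is glossed over (the original Lemma~\ref{le:limits} proof handles this by a symmetric argument with $\beta_+$). The paper's approach buys brevity by recognizing that the critical-point bound is already available for $U$ and transfers in one step.
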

\begin{proof}
  The inclusion $\tau(\tilde U(\cdot,t))\subset \bar\Pi$ for all $t$
  follows from \eqref{eq:5b} and the fact that in the definition of
  $A(U)$, $\Om(U)$ one can take the convergence in $C^1_{loc}(\R)$.  We
  next show that the limits $\tilde U(\pm\infty,t)$ exist for every
  $t\in \R$. A sufficient condition for this is that the zero number
  of $\tilde U_x(\cdot,t)$ is finite for all $t$. This is verified
  easily using the fact---assumed in (TC) or proved in Lemma
  \ref{le:zerob}, depending on whether $\Sii$ is trivial or not---that
  $z(U_x(\cdot,t))$ is finite and bounded from above by some constant
  $k$ independent of $t$. Indeed, if $z(\tilde U_x(\cdot,t_0))=\infty$
  for some $t_0$, then we can find $t<t_0$ such that
  $\tilde U_x(\cdot,t)$ has at least $k+1$ \emph{simple} zeros. Since
  $\tilde U(\cdot,t)\in A(u)\cup \Om(U)$, we obtain by approximation
  that $U_x(\cdot,t_1))$ has $k+1$ zeros for some $t_1$, which is
  impossible.
\end{proof}

In the following lemma we establish a basic relation of $U$ to
$\Sigma_{in}$, $\Lambda_{out}$.

\begin{lemma}
  \label{le:basicrel}
  Assume $U$ is not a steady state and let $K$ be any one of the sets
  $\Sigma_{in}$, $\Lambda_{out}$.  Then the following statements are
  valid.
  \begin{itemize}
  \item[(i)] If $(x_n,t_n)$, $n=1,2,\dots,$ is a sequence in $\R^2$
    such that
    \begin{equation}
      \label{eq:58}
      \dist((U(x_n,t_n),U_x(x_n,t_n)),K)\to 0,
    \end{equation}
    then,
    possibly after passing to a subsequence, one has
    $U(\codt+x_n,\cdot+t_n)\to\varphi$ in $C^1_{loc}(\R^2)$, where
    $\varphi$ is a steady state of \eqref{eq:1} with
    $\tau(\varphi)\subset K$.
  \item[(ii)] There exists a sequence $(x_n,t_n)$, $n=1,2,\dots$ as in
    (i) with the additional property that $|t_n|\to\infty.$
    Consequently, there exists a steady state of \eqref{eq:1} with
    $\tau(\varphi)\subset K$ and
    \begin{equation}
      \label{eq:12}
      \varphi\in A(U)\cup \Om(U).
    \end{equation} 
  \end{itemize}
\end{lemma}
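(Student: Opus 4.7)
The plan is parabolic compactness combined with the dichotomy of Lemma~\ref{le:dichotomy}. By boundedness of $U$ and parabolic regularity, the translated sequence $V_n(x,t):=U(x+x_n,t+t_n)$ is relatively compact in $C^1_{loc}(\R^2)$, so after extraction $V_n\to\tilde U$, an entire solution of \eqref{eq:1}; set $\varphi:=\tilde U(\codt,0)$. Passing to the limit in \eqref{eq:58} gives $(\varphi(0),\varphi_x(0))\in K$. Since $K\subset\Sigma_{in}\cup\Lambda_{out}$ is a union of orbits of \eqref{eq:sys}, the orbit of \eqref{eq:sys} through $(\varphi(0),\varphi_x(0))$ lies inside $K$ and corresponds to a solution $\phi$ of \eqref{eq:steady} with $\tau(\phi)\subset K$ and $(\phi(0),\phi'(0))=(\varphi(0),\varphi_x(0))$; hence $\varphi-\phi$ has a multiple zero at $x=0$. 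The inclusion $\tau(\tilde U(\codt,t))\subset\bar\Pi$ for every $t\in\R$ is inherited from $\tau(U(\codt,t+t_n))\subset\bar\Pi$ via $C^1_{loc}$ convergence, and in the trivial-chain case condition (TC) passes from $U$ to $\tilde U$ by means of Lemma~\ref{robustnesszero}: a multiple zero of $\tilde U(\codt,t_0)-\be$ with $\tilde U\not\equiv \be$ would produce multiple zeros of $U(\codt,s_n)-\be$ along some sequence $s_n$, contradicting (TC). Running the argument of Lemma~\ref{le:dichotomy} on $\tilde U$ then forces $\tilde U\equiv\phi$, so $\varphi=\phi$ is a steady state with $\tau(\varphi)\subset K$.

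\textbf{Part (ii).} The plan is first to produce the sequence and then to invoke part~(i); the resulting steady state automatically lies in $A(U)$ or $\Omega(U)$ depending on whether $t_n\to-\infty$ or $t_n\to+\infty$ along the extracted subsequence. Assume for contradiction that no sequence with $|t_n|\to\infty$ realizes \eqref{eq:58}. Then there are $\delta,T>0$ with $\dist((U(x,t),U_x(x,t)),K)\ge\delta$ for all $|t|\ge T$ and $x\in\R$; letting $x\to\pm\infty$ yields $\dist((\Theta_\pm(t),0),K)\ge\delta$ for $|t|\ge T$. By Corollary~\ref{co:rment}, this forces $\Theta_\pm$ to be constant with $(\Theta_\pm,0)\in(\Sigma_{in}\cup\Lambda_{out})\setminus K$, since the strictly monotone alternative in the trivial-chain case would make $(\Theta_\pm(t),0)$ approach $\{(0,0)\}\subset\Sigma_{in}$ as $t\to-\infty$ and a point of $\Lambda_{out}$ as $t\to+\infty$, violating the separation for whichever of $K=\Sigma_{in}$ or $K=\Lambda_{out}$ is under consideration. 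For $|t|\le T$, the closure of $\bigcup_{|t|\le T}\tau(U(\codt,t))$ is compact in $\bar\Pi\setminus K$, furnishing a further uniform lower bound; combining the two yields $\varepsilon>0$ with $\dist((U(x,t),U_x(x,t)),K)\ge\varepsilon$ for all $(x,t)\in\R^2$.

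Lemma~\ref{squeezinglemma} now delivers the contradiction; its zero-number hypothesis on unstable equilibria is supplied by (TC) in the trivial-chain case and by Lemma~\ref{le:zerob} in the nontrivial case, while any other unstable equilibrium of \eqref{eq:ODE} lies outside the range of $U$. If $K=\Lambda_{out}$, choose a periodic orbit $\cO\subset\Pi$ close enough to $\Lambda_{out}$ that $\bigcup_{t\in\R}\tau(U(\codt,t))\subset\mathcal{I}(\cO)$; case~(i) of the squeezing lemma then forces $\bigcup_t\tau(U(\codt,t))\subset\overline{\mathcal{I}}(\Sigma_{in})$, which is disjoint from $\Pi$ and contradicts \eqref{eq:5a}. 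If $K=\Sigma_{in}$, choosing $\cO$ close to $\Sigma_{in}$ and using case~(ii) yields $\bigcup_t\tau(U(\codt,t))\subset\R^2\setminus\mathcal{I}(\Lambda_{out})$, again contradicting $\Pi\subset\mathcal{I}(\Lambda_{out})$. The main obstacle is the bookkeeping in the uniform-separation step: one must first rule out the scenario in which $(\Theta_\pm(t),0)$ itself approaches $K$ as $|t|\to\infty$---a behavior that would deliver the sought sequence directly through a diagonal choice $x_n\to\pm\infty$---before legitimately invoking uniform separation and the squeezing lemma.
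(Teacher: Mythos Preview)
Your proof is correct. Part~(ii) follows essentially the same strategy as the paper: both establish a uniform lower bound $\dist(\tau(U(\cdot,t)),K)\ge\varepsilon$ and invoke the squeezing lemma for a contradiction. The paper first disposes of the easy cases where $(\Theta_\pm,0)\in K$ or $\Theta_\pm$ is non-constant, then in the remaining case $(\Theta_\pm,0)\in K^*$ it first produces \emph{some} sequence satisfying \eqref{eq:58} via squeezing and afterwards shows $|t_n|\to\infty$ by a separate compactness argument; you fold all of this into a single contradiction, but the substance is the same.

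Part~(i), however, takes a different route from the paper. The paper argues directly on $U$: assuming $\tilde U\not\equiv\varphi$, it applies Lemma~\ref{robustnesszero} to the sequence $U(\cdot+x_n,\cdot+t_n)-\varphi\to\tilde U-\varphi$, producing a multiple zero of $U(\cdot,t')-\varphi$ for some $t'$; since $\tau(\varphi)\subset K\subset\Sigma_{in}\cup\Lambda_{out}$ is disjoint from $\Pi$ and \eqref{eq:5a} holds for $U$ itself (as $U$ is not a steady state, by Lemma~\ref{le:dichotomy}), this is an immediate contradiction. You instead transfer \eqref{eq:5b} and the simple-zeros part of (TC) to the limit $\tilde U$ and rerun the dichotomy argument there. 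The paper's route is more economical---no hypotheses need to be verified for $\tilde U$, and one never has to worry about which side of $K$ the approximating periodic orbits sit on---while your route has the conceptual merit of reducing (i) to an already-established lemma.
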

(Recall that $A(U)$ and $\Om(U)$ are the generalized limit sets of
$U$, as defined in Section \ref{sec:inv}.)

\begin{proof}[Proof of Lemma \ref{le:basicrel}]
  With the sequence $(x_n,t_n)$ as in (i), we may assume, passing to a
  subsequence if necessary, that
  \begin{equation*}
    \left(
      U(x_n,t_n),U_x(x_n,t_n)\right)\underset{n\to\infty}{\longrightarrow}(a,b)\in
    K.
  \end{equation*}
  Let $\varphi$ be the solution of \eqref{eq:steady} with
  $(\varphi(0),\varphi'(0))=(a,b)$, so $\tau(\varphi)\subset K$.
  Consider the sequence of functions $U_n:=U(\cdot+x_n,\cdot+t_n).$ Up
  to a subsequence, it converges in $C^1_{loc}(\R^2)$ to $\tilde{U}$,
  an entire solution of \eqref{eq:1}.  Clearly,
  $\left(\tilde{U}(0,0),\tilde{U}_x(0,0)\right)=(a,b)$, so
  $\tilde U(\cdot,0)-\varphi$ has a multiple zero at $x=0$. Now,
  unless $\tilde U\equiv \varphi$, Lemma \ref{robustnesszero} implies
  that if $n$ is large enough, the function $U(\cdot+x_n,t)-\varphi$
  has a multiple zero for some $t\approx t_n$. This would mean that
  $\tau\left( U(\cdot,t)\right)\cap\tau(\varphi)\ne\emptyset$, which
  is impossible by \eqref{eq:5a}. Thus, necessarily,
  $\tilde U\equiv \varphi$ which yields the conclusion of statement
  (i).

  We now prove the existence of a sequence $(x_n,t_n)$ with the above
  property and with $|t_n|\to \infty$. This is is trivial if
  $(\Theta_-,0)$ is independent of $t$ and contained in $K$, for in
  this case we have $(U(x,t),U_x(x,t))\to (\Theta_-,0)$ as
  $x\to-\infty$ for every $t$. Similarly, the statement is trivial if
  $(\Theta_+,0)\in K$. If $\Theta_-(t)$ is not constant (which may
  happen only if $\Sii$ is a trivial chain, cp. Lemma \ref{co:rment}),
  then again the statement is trivial and follows from the facts
  that $(U(x,t),U_x(x,t))\to (\Theta_-(t),0)$ as $x\to-\infty$ and
  either $(\Theta_-(\infty),0)\in K$ or $(\Theta_-(-\infty),0)\in K$
  (cp.  Lemma \ref{co:rment}).  A similar argument applies
  if $\Theta_+(t)$ is not
  constant.  It remains to consider the case when $(\Theta_\pm,0)$ are
  both independent of $t$ and contained in $ K^*$, where where
  $K^*\in \{\Sigma_{in}, \Lambda_{out}\}$, $K^*\ne K$. First we show
  the existence of a sequence satisfying \eqref{eq:58}.
  Suppose that no such
  sequence  exists.  Then there is $\e>0$
  such that
  \begin{equation}
    \label{eq:16}
    \dist\left( \tau\left( U(\cdot,t)\right),K\right)>\e\quad (t\in\R).
  \end{equation}
  This implies that there is a periodic orbit $\cO$, taken
  sufficiently close to $K$ (cp. Lemma \ref{le:p0}) such that
  \begin{equation*}
    \underset{t\in\R}{{\textstyle \bigcup}}\tau\left( U(\cdot,t)\right)\subset
    \mathcal{I}(\mathcal{O})\ \text{ or }\ 
    \underset{t\in\R}{{\textstyle \bigcup}}\tau\left( U(\cdot,t)\right) \subset\R^2\setminus{\overline{\mathcal{I}}(\mathcal{O})}).
  \end{equation*}
  In either case, Lemma \ref{squeezinglemma} shows that
  \begin{equation}
    \label{eq:18}
    \underset{t\in\R}{{\textstyle \bigcup}}\tau\left( U(\cdot,t)\right)\cap \Pi=\emptyset,
  \end{equation}
  in contradiction to \eqref{eq:5a} (cp. Lemma \ref{le:dichotomy}).
  Thus there is a sequence 
  satisfying \eqref{eq:58}. We claim that $|t_n|\to\infty$. Indeed, if
  not, then for a subsequence we have $t_n\to t_0\in\R$. Since $U_t$
  is bounded, we have
  $U(\cdot,t_n)\to U(\cdot,t_0)$ uniformly on $\R$. Consequently, by
  parabolic regularity, also  $U_x(\cdot,t_n)\to U_x(\cdot,t_0)$
  uniformly on $\R$. Therefore,
  $(U(x,t_n),U_x(x,t_n))\approx (\Theta_\pm,0)\in K^*$ if $n$ and $\pm
  x$ are sufficiently large. This implies, in view of \eqref{eq:58},
  that the sequence
  $(x_n)$ is bounded and so, passing to a subsequence, we have $x_n\to
  x_0$.  Using \eqref{eq:58} and the convergence $(x_n,t_n)\to
  (x_0,t_0)$, we obtain $(U(x_0,t_0),U_x(x_0,t_0))\in K$, which is a
  contradiction to  \eqref{eq:5a}.  This contradiction proves
  our claim and completes the proof of  the first part of statement
  (ii). The last conclusion in (ii)
  follows immediately from statement (i) and the
  definition of the limit sets $A(U)$, $\Om(U)$.
\end{proof}

We next show that \eqref{eq:6a} holds if one of the
zero numbers $z(U(\cdot,t)-\be_\pm)$ vanishes, that is, $U<\be_+$ or
$U>\be_-$.  The following lemma is a stronger result, which partly also
applies when $\Sii=\{(\be,0)\}$ is a trivial chain (in which case
$\be_\pm=\be$). This lemma will be used at several other occasions
below.
\begin{lemma} The following statements are valid (recall that $\hat
  p$, $\hat q$ are defined in \eqref{eq:19}).
  \label{le:ep}
  \begin{itemize}
  \item[(i)] If $U\le \hat q-\vartheta$ for some $\vartheta>0$ and $U$
    is not a steady state, then $\om(U)=\{\hat p\}$ (so, necessarily,
    $f(\hat p)=0$) and $\tau(\al(U))\subset\Sigma_{in}$. Similarly, if
    $U\ge\hat p+\vartheta$ for some $\vartheta>0$ and $U$ is not a
    steady state, then $\om(U)=\{\hat q\}$ (so $f(\hat q)=0$) and
    $\tau(\al(U))\subset\Sigma_{in}$.
  \item[(ii)] If for some $t_0\in \R$ and $\vartheta>0$ one has
    $U(\cdot,t)\le \hat q-\vartheta$ for all $t<t_0$, then either
    $U\equiv \hat p$ or else $\tau(\al(U))\subset\Sigma_{in}$.  If for
    some $t_0\in \R$ and $\vartheta>0$ one has
    $U(\cdot,t)\ge \hat p+\vartheta$ for all $t<t_0$, then either
    $U\equiv \hat q$ or else $\tau(\al(U))\subset\Sigma_{in}$.
  \item[(iii)] Assume that $\Sii$ is a nontrivial chain. If
    $U\le\be_-$, then $U\equiv \hat p$; and if $U\ge \be_+$, then
    $U\equiv \hat q$.
  \end{itemize}
\end{lemma}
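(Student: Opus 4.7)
The strategy is to use the range bounds on $U$ to confine the spatial trajectory $\tau(U(\cdot,t))$, which is contained in $\bar\Pi$ by (HU), to a proper sub-region, and then identify which steady states of \eqref{eq:1} are compatible with this confinement. The main challenge is the case-by-case elimination of non-admissible steady states, which depends on whether (A1) or (A2) holds for $\La_{out}$ and on whether $\Sigma_{in}$ contains extra equilibria at level $F(p)$ allowed by (ND).

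For statement (i), assume $U\le \hat q-\vartheta$ (the other case is symmetric). Take any $\varphi\in\om(U)$ and let $\tilde U$ be the entire solution with $\tilde U(\cdot,0)=\varphi$ and $\tilde U(\cdot,t)\in\om(U)$ for all $t$. By Corollary \ref{co:tilde}, $\tilde U$ satisfies (HU), and by continuity the bound persists: $\tilde U\le \hat q-\vartheta$. Apply Lemma \ref{le:dichotomy} to $\tilde U$: either $\tilde U$ is a steady state with $\tau(\tilde U)\subset \Sigma_{in}\cup\La_{out}$, or $\tau(\tilde U(\cdot,t))\subset\Pi$ for all $t$. In the second subcase, choose a periodic orbit $\cO\subset\Pi$ close enough to $\La_{out}$ that its $u$-maximum exceeds $\hat q-\vartheta$; the bound then places $\tau(\tilde U(\cdot,t))\subset \cI(\cO)$, and Lemma \ref{squeezinglemma}(i) yields $\tau(\tilde U(\cdot,t))\subset \overline{\cI}(\Sigma_{in})$. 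Together with $\tau(\tilde U(\cdot,t))\subset\Pi$ this forces $\tau(\tilde U(\cdot,t))\subset\Sigma_{in}$, so by Lemma \ref{le:2.7}, $\tilde U$ is a steady state after all. A phase-plane inspection then eliminates every candidate except the constant $\hat p$: nonconstant orbits on $\La_{out}$ reach $\hat q$ (the ground state in (A1), standing waves in (A2)), nonconstant orbits on $\Sigma_{in}$ reach $q$, and the remaining equilibria on $\Sigma_{in}\cup\La_{out}$ either exceed $\hat q-\vartheta$ or are incompatible with (HU). Hence $\om(U)=\{\hat p\}$. The claim $\tau(\al(U))\subset\Sigma_{in}$ follows by a parallel argument using $A(U)$ together with Lemma \ref{le:basicrel}(ii), which supplies a steady state on $\Sigma_{in}$ in $A(U)\supset\al(U)$.

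Statement (ii) is obtained from the same analysis, noting that only the backward-time behavior of $U$ enters the classification of $\al(U)$, so the bound restricted to $t<t_0$ is sufficient. The dichotomy $U\equiv\hat p$ versus $\tau(\al(U))\subset\Sigma_{in}$ arises according to whether $U$ itself equals the identified constant steady state or is a genuine nonstationary connection.

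For statement (iii), $U\le \beta_-$ with $\Sii$ nontrivial implies $\beta_-<\beta_+<q\le\hat q$, so $\vartheta:=\hat q-\beta_->0$ and (i) applies. If $U$ is a steady state, its orbit in $\bar\Pi$ has range $\subset[\hat p,\beta_-]$, ruling out periodic orbits in $\Pi$ (whose $u$-range contains $[p,q]$) and orbits on $\Sigma_{in}\cup\La_{out}$ reaching $q$ or $\hat q$, so $U\equiv\hat p$. If $U$ is not a steady state, statement (i) yields $\tau(\al(U))\subset\Sigma_{in}$; meanwhile the strong maximum principle applied to $\beta_--U\ge 0$---which is not identically zero, since $(\beta_-,0)\in\cI(\Sigma_{in})\not\subset\bar\Pi$ precludes $U\equiv\beta_-$---gives $U<\beta_-$ strictly. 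Any $\varphi\in\al(U)$ would then be a steady state with $\tau(\varphi)\subset\Sigma_{in}$ and $\varphi<\beta_-<q$, but any nonconstant orbit on $\Sigma_{in}$ (at level $F(p)=F(q)$) has $u$-range extending to $q$, a contradiction that rules out the non-steady case.
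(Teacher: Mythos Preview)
Your argument for statement (i) has two genuine gaps.

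First, the squeezing step fails. From $\tilde U\le \hat q-\vartheta$ you cannot conclude $\tau(\tilde U(\cdot,t))\subset\cI(\cO)$ for a periodic orbit $\cO\subset\Pi$ whose $u$-maximum exceeds $\hat q-\vartheta$. The bound on $\tilde U$ only constrains the $u$-coordinate of the spatial trajectory; it says nothing about the trajectory near the \emph{left} part of $\cO$ (where $u$ is close to $\hat p$). Concretely, if $\tilde U(\pm\infty,t)=\hat p$ (which is allowed by the bound and by Corollary~\ref{co:rment}), then $(\tilde U(x,t),\tilde U_x(x,t))\to(\hat p,0)\in\La_{out}$ as $|x|\to\infty$, so $\tau(\tilde U(\cdot,t))$ lies in the outer annulus $\Pi\setminus\overline{\cI(\cO)}$ for large $|x|$, not in $\cI(\cO)$. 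Hence Lemma~\ref{squeezinglemma} cannot be invoked.

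Second, and more seriously, even granting that every $\varphi\in\om(U)$ is a steady state with $\tau(\varphi)\subset\Sigma_{in}\cup\La_{out}$, your phase-plane elimination is incorrect. Nonconstant orbits on $\Sigma_{in}$ need not reach $q$: if $\Sigma_{in}$ contains several loops, an inner ground state has range strictly inside $(p,q)$. More directly, any equilibrium $(a,0)\in\Sigma_{in}$ (so $f(a)=0$, $f'(a)<0$, $a\in(p,q)$) satisfies $a<q<\hat q-\vartheta$ for small $\vartheta$ and is fully compatible with (HU). Your argument therefore only yields $\om(U)\subset\{\hat p\}\cup\{\text{steady states on }\Sigma_{in}\}$, not $\om(U)=\{\hat p\}$.

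The paper's proof proceeds very differently. It first uses Lemma~\ref{le:basicrel} to locate the constant $\hat p$ in $A(U)\cup\Om(U)$ (the only steady state on $\La_{out}$ compatible with the bound). Then, via a comparison--sliding argument with periodic solutions $\psi$ whose maximum is close to $\hat q$, it shows that every $\varphi\in\om(U)$ satisfies $\varphi\le\min\psi$; letting $\max\psi\to\hat q$ forces $\varphi\le\hat p$, hence $\varphi\equiv\hat p$. For the $\alpha$-limit, the key step (which you do not supply) is that $\hat p\in\al(U)$ would force $U\equiv\hat p$ by the same comparison, run backward in time; once $\hat p\notin\al(U)$, any nonstationary $\tilde U$ through $\varphi\in\al(U)$ would satisfy $\hat p\in\om(\tilde U)\subset\al(U)$, a contradiction, so every $\varphi\in\al(U)$ is a steady state $\ne\hat p$ with $\varphi\le\hat q-\vartheta$, hence $\tau(\varphi)\subset\Sigma_{in}$.

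Your treatment of (iii) is essentially correct in spirit and matches the paper's short derivation from (i), but of course it inherits the gap in (i).
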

\begin{proof}
  We only prove the first statements in (i) and (ii); the proofs of
  the other statements in (i) and (ii) are analogous and are omitted.

  The statements in (iii) follow from (i) and the fact that if
  $\Sigma_{in}$ is a nontrivial chain, then there are no functions
  $\varphi$ with $\tau(\varphi)\subset \Sigma_{in}$ satisfying
  $\varphi\le \be_-$ or $\varphi\ge \be_+$.
 
  To prove (i), assume that $U$ is not a steady state and
  $U\le\hat q-\vartheta$ for some $\vartheta>0$. By Lemma \ref{le:dichotomy},
  \eqref{eq:5a}
  holds and, in particular, $U>\hat p$.  By Lemma \ref{le:basicrel},
  the set $A(U)\cup\Om(U)$ contains a steady state $\varphi$ with
  $\tau(\varphi)\subset \La_{out}$. Obviously,
  $\varphi\le \hat q-\vartheta$, hence, necessarily, $\varphi=\hat p$
  (and $f(\hat p)=0$).  Let $\psi$ be any periodic solution of
  \eqref{eq:steady} with $\psi'(0)=0$,
  $\psi(0)\in ( \hat q-\vartheta,\hat q)$, and let $\rho>0$ be the
  minimal period of $\psi$. Clearly, $\tau(\psi)\subset \Pi$, in
  particular, $\min\psi=\psi(\rho/2)>\hat p$.  From
  $\hat p\in A(U)\cup\Om(U)$ we infer that there exist $\xi,t_1\in\R$
  such that
  \begin{equation}
    \label{eq:24}
    \text{$U(\cdot,t_1)<\psi$ in
      $[-\rho+\xi,\rho+\xi]$.}
  \end{equation}
  Consequently, $U(\cdot,t_1)<\psi$ in $[k\rho,(k+1)\rho]$, where
  $k:=[\xi/\rho]$ is the integer part of $\xi/\rho$. This and the
  relations
  $$\psi(k\rho)=\psi((k+1)\rho)=\psi(0)>\hat q-\vartheta\ge U$$
  yield, upon an application of the comparison principle, that
  $U(\cdot,t)<\psi$ in $[k\rho,(k+1)\rho]$ for all $t>t_1$. Hence, for
  each $\varphi\in\om(U)$ we have $\varphi\le \psi$ in
  $[k\rho,(k+1)\rho]$. We claim that, in fact,
  \begin{equation}
    \label{eq:23}
    \varphi\le \min \psi \quad (\varphi\in\om(U)).
  \end{equation}

  Indeed, consider the set $M$ of all $\bar\eta\in\R$ such that
  \begin{equation*}
    \varphi\le \psi(\cdot-\eta) \text{ in  $[k\rho+\eta,(k+1)\rho+\eta]$
      for  all $\varphi\in\om(U)$ and all $\eta$ between $0$ and $\bar\eta$.}
  \end{equation*}
  We have shown above that $0\in M$.  Suppose for a contradiction that
  $\eta_-:=\inf M>-\infty$. Then, clearly, $\eta_-\in M$, and using
  the compactness of $\om(U)$ in $L_{loc}^\infty(\R)$ one shows easily
  that for some $\varphi\in \om(U)$ the inequality
  \begin{equation}
    \label{eq:20}
    \text{$\varphi\le \psi(\cdot-\eta_-)$ in
      $[k\rho+\eta_-,(k+1)\rho+\eta_-]$}
  \end{equation}
  is not strict. Let $\tilde U$ be the entire solution of \eqref{eq:1}
  with $\tilde U(\codt,0)=\varphi$ and $\tilde U(\codt,t)\in \om(U)$
  for all $t\in\R$. The inclusion $\eta_-\in M$ implies that
  $\tilde U\le \psi(\cdot-\eta_-)$ in
  $[k\rho+\eta_-,(k+1)\rho+\eta_-]$ for all $t$ and from the
  assumption on $U$ it follows that
  $$\tilde U\le  \hat q-\vartheta<\psi(0)= \psi(k\rho)=\psi((k+1)\rho).$$
  Therefore, by the strong comparison principle, the inequality in
  \eqref{eq:20} is in fact strict and we have a desired contradiction.
  We have thus proved that $\inf M=-\infty$.  Similar arguments show
  that $\sup M=\infty$, hence $M=\R$.  Now, given any
  $\varphi\in \om(U)$ and $x_0\in \R$, take
  $\eta:=x_0-k\rho-\rho/2$. Then $x_0\in [k\rho+\eta,(k+1)\rho+\eta]$
  and the fact that $\eta\in M$ yields
  \begin{equation*}
    \varphi(x_0)\le
    \psi(x_0-\eta)=\psi(k\rho+\rho/2)=\psi(\rho/2)=\min \psi. 
  \end{equation*}
  This proves \eqref{eq:23}.

  Clearly, taking the periodic solution $\psi$ with the maximum
  $\psi(0)$ sufficiently close to $\hat q$, we can make
  $\min \psi-\hat p$ as small as we like. Therefore, \eqref{eq:23}
  implies that $\om(U)=\{\hat p\}$, as stated in Lemma \ref{le:ep}.

  Next we show that $\hat p\not\in \al(U)$. We actually prove that
  $\hat p\in \al(U)$ implies that $U\equiv \hat p$ (which, of course,
  is a contradiction with the fact that $U$ is not a steady state). Note that in this
  argument we only use that the estimate
  $U(\cdot,t)\le \hat q-\vartheta$ holds for all sufficiently large
  negative $t$, say for all $t<t_0$, so the argument can be repeated
  in the proof of statement (ii) below. Assume that
  $\hat p\in \al(U)$.  As in the previous paragraphs, taking any
  periodic solution $\psi$ with $\psi'(0)=0$ and
  $\psi(0)\in ( \hat q-\vartheta,\hat q)$, we again obtain
  \eqref{eq:24}, but this time we can take $\xi=0$ and we can choose
  $t_1<0$ arbitrarily large. The comparison principle then implies in
  particular that $U(\cdot,t_0)<\psi$ in $[-\rho,\rho]$. Taking a
  sequence of periodic solutions $\psi$ with $\psi(0)\upto \hat q$, we
  obtain $U(\cdot,t_0)\equiv \hp$. Consequently, $\hp$ is a steady
  state and $U\equiv \hp$, as claimed.

  Take now an arbitrary $\varphi\in \al(U)$ and let $\tilde U$ be the
  entire solution of \eqref{eq:1} with $\tilde U(\codt,0)=\varphi$ and
  $\tilde U(\codt,t)\in \al(U)$ for all $t\in\R$. Obviously,
  $\tilde U$ inherits the relation $\tilde U\le \hat q-\vartheta$ from $U$.
  If $\tilde U$ is not a steady state, then, by Corollary
  \ref{co:tilde}, what we have already proved above in this proof
  applies equally well to $\tilde U$:
  $\hat p\in \om(\tilde U)\subset \al(U)$ (the latter relation is by
  compactness of $\al(U)$ in $L^\infty_{loc}(\R)$). This is impossible
  as we have just proved, so $\varphi$ has to be a steady state
  different from $\hat p$. Moreover, $\varphi$ cannot be periodic
  (cp. Corollary \ref{co:tilde}), and therefore the relation
  $\varphi \le \hat q-\vartheta$ implies $\tau(\varphi)\subset
  \Sigma_{in}$. This shows that $\tau(\al(U))\subset \Sigma_{in}$,
  completing the proof (i).

  We now prove statement (ii). As already noted above, under the assumption of
  (ii), $\hat p\in \al(U)$ implies that $U\equiv \hat p$. If
  $\hat p\not\in \al(U)$, we can repeat the previous paragraph to show
  that $\tau(\al(U))\subset \Sigma_{in}$.
\end{proof}

\subsection{Nontrivial  inner chain}
\label{sec:nontriv}
In this subsection, we assume that $\Sii$ is a nontrivial chain (and
continue to assume the standing hypotheses formulated in the paragraph
containing (HU)).  By Corollary \ref{co:rment},
the limits
$\Theta_\pm =U(\pm \infty,t)$ are independent of $t$ and contained in
  $\Sigma_{in}\cup\La_{out}$.

We distinguish the following cases of how $(\Theta_\pm,0)$ can be
included in $\Sigma_{in}\cup\Lambda_{out}$:
\begin{itemize}
\item[\bf(C1)] $(\Theta_\pm,0)\in \Lambda_{out}$
\item[\bf(C2)] $(\Theta_\pm,0)\in \Sigma_{in}$
\item[\bf(C3)] $(\Theta_-,0)\in \Sigma_{in}$ and
  $(\Theta_+,0)\in \Lambda_{out}$; or $(\Theta_+,0)\in \Sigma_{in}$
  and $(\Theta_-,0)\in \Lambda_{out}$.
\end{itemize}
We tackle these cases separately in the following subsections; in each
of them, we prove that the conclusion of Proposition
\ref{prop:entire} holds. Some of the forthcoming results actually give a
more specific description the $\al$ and $\om$-limit sets than the
general description given in \eqref{eq:6a}.

\subsubsection{Case (C1): $(\Theta_\pm,0)\in \Lambda_{out}$}\label{subsc1}
We first show that under condition {\rm (C1)}, $U$ converges in
$L^\infty(\R)$---not just in $L_{loc}^\infty(\R)$---to a steady state
$\phi$ with $\tau(\phi)\subset \Lambda_{out}$. In particular,
$\tau(\om(U))\subset \Lambda_{out}$.
\begin{Lemma}
  \label{le:conv} Assume {\rm (C1)}.  Then the limit
  $\phi:=\lim_{t\to\infty}U(\cdot,t)$ in $L^\infty(\R)$ exists and is
  more specifically described as follows.
  \begin{itemize}
  \item[(i)] If $\Theta_-\ne \Theta_+$ (so $\Lambda_{out}$ is a
    heteroclinic loop as in \emph{(A2)}), then $\phi$ is a standing wave -- a
    shift of $\Phi^+$ or  $\Phi^-$.
  \item[(ii)] If $\Theta_-=\Theta_+$ and $\Lambda_{out}$ is a
    heteroclinic loop as in \emph{(A2)}, then $\phi$ is identical to one of
    the constants $\ga_-$, $\ga_+$.
  \item[(iii)] If $\Theta_-=\Theta_+=\ga$ and $\Lambda_{out}$ is a
    homoclinic loop as in \emph{(A1)}, then $\phi$ is identical to the
    constant $\ga$ or to a shift of the ground state $\Phi$.
  \end{itemize}
  In all these cases, $\tau(\phi)\subset \Lambda_{out}$.
\end{Lemma}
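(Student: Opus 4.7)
I begin by handling the easy alternative from Lemma \ref{le:dichotomy}: if $U$ is itself a steady state, then $\tau(U)\subset\Sigma_{in}\cup\Lambda_{out}$, and since these two compact sets are disjoint while the connected set $\tau(U)$ contains the points $(\Theta_\pm,0)\in\Lambda_{out}$, we must have $\tau(U)\subset\Lambda_{out}$, and the lemma holds with $\phi:=U$. Henceforth assume $U$ is not a steady state, so that \eqref{eq:5a} and the $t$-uniform bounds from Lemma \ref{le:zerob} are in force; in particular, the number of critical points of $U(\cdot,t)$ is uniformly bounded.

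The core of the proof is to identify $\om(U)$, taken in $L^\infty_{loc}(\R)$, as a singleton $\{\phi\}$ with $\tau(\phi)\subset\Lambda_{out}$, and then to promote convergence to $L^\infty(\R)$. A preliminary step is the $t$-uniform tail control
\begin{equation*}
  \sup_{t\in\R,\, \pm x>M}|U(x,t)-\Theta_\pm|\longrightarrow 0\quad\text{as}\quad M\to\infty,
\end{equation*}
which I would establish by trapping $U$ at large $|x|$ via comparison with solutions of \eqref{eq:ODE} near the asymptotically stable equilibria $\Theta_\pm$ (their stability comes from $(\Theta_\pm,0)\in\Lambda_{out}$, which forces $f'(\Theta_\pm)<0$); this control immediately gives $\varphi(\pm\infty)=\Theta_\pm$ for every $\varphi\in\om(U)$. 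To show that each such $\varphi$ is a steady state I split on cases. In case (i), $\Theta_-\ne\Theta_+$, so Theorem \ref{thmPP1}(i), applied to $U$ regarded as a Cauchy solution from $t=0$, yields quasiconvergence of $U$ directly. In cases (ii) and (iii), $\Theta_-=\Theta_+$ and $U-\Theta_\pm$ decays exponentially in space (by the linearization at the stable equilibrium $\Theta_\pm$), so $U-\Theta_\pm\in H^1(\R)$ and the standard $L^2$-gradient energy functional $E[u]=\int(u_x^2/2-F(u)+F(\Theta_\pm))\,dx$ is finite and non-increasing along $U$, again yielding quasiconvergence. Matching the established asymptotes $\varphi(\pm\infty)=\Theta_\pm$ with the phase-plane geometry of $\bar\Pi$ then forces $\tau(\varphi)\subset\Lambda_{out}$.

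To collapse $\om(U)$ to a single element I argue case by case. In case (ii), the only steady state on $\Lambda_{out}$ with both asymptotes equal to $\Theta_-=\Theta_+\in\{\ga_-,\ga_+\}$ is the corresponding constant, so $\om(U)$ is a singleton immediately. In cases (i) and (iii), $\om(U)$ is a compact connected subset of a one-parameter family of spatial shifts of $\Phi^\pm$ (in (i)) or of $\Phi$ together with the asymptotic constant $\ga$ (in (iii)); I would pin down a single shift by introducing the level-set position $\xi(t):=\sup\{x:U(x,t)=c\}$ for a value $c$ strictly between the stable asymptotes (or just above $\ga$ in (iii)), applying Lemma \ref{lemmaBPQ} to deduce eventual one-sided monotonicity of $\xi(t)$, and using the boundedness imposed by the shift parameterization of $\om(U)$ to conclude $\xi(t)\to a_\infty\in\R$. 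This identifies $\phi$ uniquely, and the promotion from $L^\infty_{loc}$ to $L^\infty$ convergence is then immediate from the uniform tail control combined with the matching asymptotics $\phi(\pm\infty)=\Theta_\pm$ of $\phi$ on $\Lambda_{out}$. The main obstacle is this last collapse step in cases (i) and (iii)---the classical convergence-versus-quasiconvergence difficulty on $\R$---which requires the delicate interplay between the zero-number machinery and the level-set analysis of Lemma \ref{lemmaBPQ} rather than a naive compactness argument.
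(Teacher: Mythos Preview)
Your plan is a genuinely different route from the paper's. The paper's proof is a two-line citation exercise: for cases (i) and (ii) it observes that $U$ is a front-like solution trapped between the stable equilibria $\ga_-,\ga_+$ with $f'(\ga_\pm)<0$, and invokes the classical Fife--McLeod convergence theorem \cite[Theorem 3.1]{FMcL} (together with a remark in \cite{P:examples} for case (ii)); for case (iii) it invokes the threshold/convergence result \cite[Theorem 2.5]{Matano_Polacik_CPDE16}, which applies directly because $\ga\le U\le\Phi(0)$, $f'(\ga)<0$, and $F<F(\ga)$ on $(\ga,\Phi(0)]$. There is no tail analysis, no energy functional, and no level-set tracking in the paper's proof---all of that machinery is encapsulated in the cited theorems.

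Your self-contained plan is reasonable in outline, and the quasiconvergence part is sound: Theorem~\ref{thmPP1}(i) handles case (i), and the $H^1$ energy approach (essentially Feireisl's argument) handles cases (ii) and (iii) once exponential spatial decay is in hand. However, the collapse step---which you correctly flag as the main obstacle---is not adequately supported by Lemma~\ref{lemmaBPQ}. That lemma tells you that when the level set $\xi(t)$ decreases, $U_x(\cdot,t)$ is of one sign on the swept interval; it does \emph{not} give monotonicity of $t\mapsto\xi(t)$, and in case (i) the limit profiles $\Phi^\pm(\cdot-a)$ are already monotone, so the sign information on $U_x$ yields no contradiction with $\xi(t)$ oscillating over a nondegenerate interval of shifts. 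Moreover, in case (iii) your level set $\xi(t)=\sup\{x:U(x,t)=c\}$ with $c>\ga$ need not exist for all large $t$ if the constant $\ga$ lies in $\om(U)$, so the tracking argument breaks down precisely when it is needed. Pinning down the shift in these cases really does require the more delicate zero-number/Lyapunov arguments carried out in \cite{FMcL} and \cite{Matano_Polacik_CPDE16}, which is why the paper simply cites them rather than reproving them.
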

\begin{proof}
  If $\Theta_-\ne \Theta_+$, then
  $\{\Theta_-, \Theta_+\}= \{\gamma_-, \gamma_+\}$ (cp. (A2)).
  Clearly, $U$ is a front-like solution in the sense that $U$ takes
  values between its limits $\ga_-$, $\ga_+$, at $x=-\infty$,
  $x=\infty$. Since $f'(\ga_\pm)<0$, statement (i) becomes a special
  case of a well-known convergence result \cite[Theorem 3.1]{FMcL}.

  Under the assumptions of statement (ii), $\Theta_-= \Theta_+$ is
  equal to one of the constants $\ga_-$, $\ga_+$ and
  $\ga_-\le U\le \ga_+$. In this situation, the convergence stated in
  (ii) is also well-known and can be easily derived from \cite[Theorem
  3.1]{FMcL}, see for example \cite[Proof of Lemma 3.4]{P:examples}.

  Assume now that $\Theta_-=\Theta_+=\ga$ and $\Lambda_{out}$ is a
  homoclinic loop as in (A1). Clearly, $\ga\le U\le \tilde q=\Phi(0)$
  and, since $(\ga,\tilde q]$ is the range of the ground state $\Phi$,
  $F<F(\ga)$ in $(\ga,\tilde q]$. Since also $f'(\ga)<0$, we are in
  the setup of \cite[Theorem 2.5]{Matano_Polacik_CPDE16} whose
  conclusion, translated to the present notation, is the same as the
  conclusion in (iii).
\end{proof}
The following lemma completes the proof of Proposition
\ref{prop:entire} in the case (C1).
\begin{lemma}
  \label{le:c1alpha}
  Assume {\rm (C1)}. If $U$ is not a steady state, then
  $\tau(\al(U))\subset \Sigma_{in}$.
\end{lemma}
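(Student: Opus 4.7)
The plan is to treat $\alpha(U)$ element by element using the dichotomy of Lemma \ref{le:dichotomy}, then dispose of the unwanted alternatives. Fix an arbitrary $\psi\in\alpha(U)$ and let $\tilde U$ denote the entire solution with $\tilde U(\cdot,0)=\psi$ and $\tilde U(\cdot,t)\in\alpha(U)$ for all $t\in\R$. Since $\alpha(U)\subset A(U)$, Corollary \ref{co:tilde} shows that $\tilde U$ satisfies hypothesis (HU); thus Lemma \ref{le:dichotomy} yields two mutually exclusive possibilities: either (A) $\tilde U\equiv\psi$ is a steady state with $\tau(\psi)\subset\Sigma_{in}\cup\Lambda_{out}$, or (B) $\tilde U$ is not a steady state and $\tau(\tilde U(\cdot,t))\subset\Pi$ for every $t\in\R$. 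To conclude $\tau(\psi)\subset\Sigma_{in}$ I would rule out the sub-case $\tau(\psi)\subset\Lambda_{out}$ of (A) and rule out (B) entirely.

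For case (B), the plan is to exploit the fact that $\tilde U$ is itself a non-steady entire solution satisfying (HU), with well-defined limits $(\tilde\Theta_\pm,0)\in\Sigma_{in}\cup\Lambda_{out}$ by Corollary \ref{co:rment}, so it falls into one of the configurations (C1), (C2), (C3). Applying Lemma \ref{le:conv} to $\tilde U$ when it is in (C1), or the corresponding convergence results for (C2), (C3) established later in Section \ref{sec:nontriv}, furnishes a steady state $\tilde\phi\in\omega(\tilde U)$ with $\tau(\tilde\phi)\subset\Sigma_{in}\cup\Lambda_{out}$. By compactness of $\alpha(U)$ in $L^\infty_{loc}(\R)$ and the inclusion $\tilde U(\cdot,t)\in\alpha(U)$, this $\tilde\phi$ lies in $\alpha(U)$. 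If $\tau(\tilde\phi)\subset\Lambda_{out}$ we are reduced to sub-case (A). If $\tau(\tilde\phi)\subset\Sigma_{in}$ I would further deduce, by invoking Lemma \ref{le:basicrel}(ii) applied to $\tilde U$ to also produce a steady state on $\Lambda_{out}$ in the generalized limit $A(\tilde U)\cup\Omega(\tilde U)$, and using the squeezing Lemma \ref{squeezinglemma} together with nested periodic approximations of $\Sigma_{in}$ and $\Lambda_{out}$ from within $\Pi$ (Lemma \ref{le:p0}), a contradiction with the non-stationarity of $\tilde U$.

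The main obstacle is the sub-case $\tau(\psi)\subset\Lambda_{out}$ of (A): excluding that a steady state on $\Lambda_{out}$ can belong to $\alpha(U)$. My approach would combine the strong $L^\infty$-convergence $U(\cdot,t)\to\phi$ from Lemma \ref{le:conv} (with $\tau(\phi)\subset\Lambda_{out}$) and the $L^\infty_{loc}$-convergence $U(\cdot,t_n)\to\psi$ along some sequence $t_n\to-\infty$, $\psi$ also on $\Lambda_{out}$. I would approximate $\Lambda_{out}$ from inside $\Pi$ by nonstationary periodic solutions $\psi_n$ with $\tau(\psi_n)\to\Lambda_{out}$ (Lemma \ref{le:p0}(v)) and compare $U$ with such $\psi_n$: the linear parabolic equation for $U-\psi_n$ combined with Lemma \ref{robustnesszero} applied near $(x_0,0)$ where $\psi-\psi_n$ and $\phi-\psi_n$ develop multiple zeros (as $\psi_n$ closely approximates either of these steady profiles on $\Lambda_{out}$) forces infinitely many drops of $z(U(\cdot,t)-\psi_n)$ between $t\to-\infty$ and $t\to\infty$. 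Controlling this zero number against the uniform bound $z(U(\cdot,t)-\beta_\pm)=N^\pm$ of Lemma \ref{le:zerob}, together with the constraints $\tau(U(\cdot,t))\subset\bar\Pi$, should yield the desired contradiction.

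The delicate part, which I expect to be the main technical obstacle, is handling the distinct possible profiles of $\psi$ and $\phi$ on $\Lambda_{out}$ consistently: a constant $\gamma$ or a shift of the ground state $\Phi$ in configuration (A1), or a constant $\gamma_\pm$ or a shift of one of the standing waves $\Phi^\pm$ in configuration (A2). Each combination requires separate zero-number bookkeeping, in particular because the limits $\Theta_\pm$ of $U$ at $x=\pm\infty$ (constants on $\Lambda_{out}$ by (C1)) may or may not coincide with the corresponding limits of $\psi$ and $\phi$, and this affects whether the zeros of $U-\psi_n$ concentrate in a bounded set or escape to infinity. Once all cases are handled, combining the exclusion of (A) with the exclusion of (B) established above completes the proof that $\tau(\psi)\subset\Sigma_{in}$, and since $\psi\in\alpha(U)$ was arbitrary, $\tau(\alpha(U))\subset\Sigma_{in}$.
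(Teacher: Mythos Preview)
Your strategy—run Lemma \ref{le:dichotomy} on the entire solution $\tilde U$ through each $\psi\in\alpha(U)$ and eliminate the $\Lambda_{out}$ branch—is a reasonable shape, and your case (B) reduction to case (A) via $\omega(\tilde U)\subset\alpha(U)$ is essentially how Lemma \ref{le:3.18} is argued later (though invoking the (C2)/(C3) $\omega$-results here is a forward reference the paper avoids). The genuine gap is in your proposed mechanism for case (A): you claim that $\psi-\psi_n$ and $\phi-\psi_n$ ``develop multiple zeros'' as the periodic $\psi_n$ approaches $\Lambda_{out}$, but $\psi$, $\phi$, and $\psi_n$ are all solutions of the second-order ODE \eqref{eq:steady}, so by ODE uniqueness their pairwise differences have only simple zeros and Lemma \ref{robustnesszero} yields nothing. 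Likewise, ``infinitely many drops of $z(U(\cdot,t)-\psi_n)$'' cannot occur for a fixed $\psi_n$: once that zero number is finite it is nonincreasing and drops only finitely many times. So the core exclusion step has no engine.

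The paper's argument supplies the missing idea. By Lemma \ref{le:basicrel}(ii) there is a steady state $\varphi$ with $\tau(\varphi)\subset\Sigma_{in}$ in $A(U)\cup\Omega(U)$; the \emph{uniform} convergence of Lemma \ref{le:conv} forces $\varphi\in A(U)$. Because $\tau(\varphi)\subset\Sigma_{in}$, for any periodic $\psi$ with $\tau(\psi)\subset\Pi$ one has $z(U(\cdot,t)-\psi)\to\infty$ as $t\to-\infty$. Now if some shift of $\Phi$ (or $\Phi^+$) were in $A(U)$, the monotonicity structure coming from Lemma \ref{le:zerob}(ii) upgrades the local convergence $U(\cdot+\tilde x_n,\tilde t_n)\to\Phi$ to convergence in $C^1_b(\R)$, whence $z(U(\cdot,\tilde t_n)-\psi)\to z(\Phi-\psi)<\infty$, a contradiction. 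This rules $\Phi$ (resp.\ $\Phi^\pm$) out of $A(U)$, so by Lemma \ref{le:basicrel}(i) the maximum of $U(\cdot,t)$ stays below $\hat q-\vartheta$ as $t\to-\infty$, and Lemma \ref{le:ep}(ii) finishes the homoclinic case. In the heteroclinic case an additional comparison with solutions emanating from step data (cf.\ \eqref{eq:57}) excludes $\gamma_\pm$ from $\alpha(U)$ and traps every $\varphi\in\alpha(U)$ between $\beta_-$ and $\beta_+$, after which Lemma \ref{le:ep}(i) applies. The key interplay—an $A(U)$ element on $\Sigma_{in}$ forcing unbounded intersection counts versus uniform convergence toward $\Lambda_{out}$ profiles forcing bounded counts—is absent from your proposal.
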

\begin{proof}
  Let $\phi$ be as in Lemma \ref{le:conv}: $U(\codt,t)$ converges to
  $\phi$ uniformly as $t\to\infty$, hence, by parabolic estimates,
  \begin{equation}
    \label{eq:26}
    U(\cdot,t)\underset{t\to\infty}{\longrightarrow}\phi
    \quad\text{in $C^1_b(\R)$.} 
  \end{equation}

  First of all we note that if $\phi$ is identical to one of the
  constants $\hat p$ or $\hat q$ (cp. statements (ii), (iii) in Lemma
  \ref{le:conv}), then $U$ itself is identical to that
  constant. Indeed, we have either $U(\cdot,t)<\be_-$ or
  $U(\cdot,t)>\be_+$ for all large enough $t$ and consequently, by
  Lemma \ref{le:zerob}, for all $t\in \R$. Our statement now follows
  directly from Lemma \ref{le:ep}(ii). Thus, assuming that $U$ is not
  a steady state, we only need to consider the cases (i), (iii) in
  Lemma \ref{le:conv}, and in  the case (iii) we may assume that
  $\phi=\Phi(\cdot-\xi)$ for some $\xi\in\R$.

  \vspace{5pt} \emph{Case (iii) of Lemma \ref{le:conv} with
    $\phi=\Phi(\cdot-\xi)$.} For definiteness, we also assume that
  $\hat q=\Phi(0)$ (and this is the maximum of $\Phi$, cp. (A1)),
  the case $\hat p=\Phi(0)$ being analogous.  It follows from
  \eqref{eq:26} and Lemma \ref{le:zerob}(i) that
  $z\left( U(\cdot,t)-\beta_\pm\right) = 2$ for all $t\in \R$.
  Furthermore, by Lemma \ref{le:zerob}(ii) and Remark \ref{rm:critp},
  $U(\cdot,t)$ has a unique critical point, the global maximum point.

  By Lemma \ref{le:basicrel}, the set $A(U)\cup \Om(U)$ contains a
  steady state $\varphi$ with $\tau(\varphi)\subset \Sii$. The
  possibility $\varphi\in \Om(U)$ is ruled out by uniform convergence
  \eqref{eq:26} to $\phi=\Phi(\cdot-\xi)$, hence $\varphi\in A(U)$.
  Thus, there are sequences $x_n$ and $t_n\to-\infty$ such
  that
  \begin{equation}
    \label{eq:31}
    \text{$U(\cdot+x_n,t_n)\to \varphi$ in $C^1_{loc}(\R)$.}
  \end{equation}
  We use this in the following conclusion. Fixing any
  periodic solution $\psi$ of \eqref{eq:steady} with
  $\tau(\psi)\subset\Pi,$ the inclusion $\tau(\varphi)\subset \Sii$
  implies that $\varphi-\psi$ has infinitely many simple
  zeros. Therefore, \eqref{eq:31} and the monotonicity of the zero
  number imply that
  \begin{equation}
    \label{eq:32}
    \text{$z( U(\cdot,t)-\psi)\to\infty$ as $t\to-\infty$}.
  \end{equation}
  
  We now use \eqref{eq:32} to show that $\Phi\not\in A(U)$ (hence no
  shift of $\Phi$ is contained in $A(U)$, by the shift-invariance of
  $A(U)$).  We go by contradiction. Assume $\Phi\in A(U)$: for some
  sequences $\tilde x_n\in\R$, $\tilde t_n\to-\infty$ we have
  \begin{equation}
    \label{eq:25}
    U(\cdot+\tilde x_n,\tilde t_n)\to \Phi
  \end{equation}
  in $L^\infty_{loc}(\R)$.  Observe that the monotonicity of
  $U(\cdot,\tilde t_n)$ in intervals not containing its unique critical point
  and the relations $U(\pm\infty)=\Theta_\pm=\hp=\Phi(\pm\infty)$
  imply that the convergence in \eqref{eq:25} is actually uniform. It
  then follows from parabolic estimates that the convergence takes
  place in $C^1_b(\R)$.  Consequently,
  \begin{equation}
    \label{eq:30}
    z( U(\cdot,\tilde t_n)-\psi)\to z(\Phi-\psi)=:k,
  \end{equation}
  where $k$ is obviously finite (it is actually equal to 2, as one can
  easily verify).  This contradiction to \eqref{eq:32} proves our
  claim that no shift of $\Phi$ is contained in $A(U)$. This means, by
  Lemma \ref{le:basicrel}, that for some $\vartheta>0$ the maximum of
  $U(\cdot,t)$ stays below $\Phi(0)-\vartheta=\hq-\vartheta$ as
  $t\to-\infty$. An application of Lemma \ref{le:ep}(ii) gives
  $\tau(\al(U))\subset\Sigma_{in}$, which is the desired conclusion.

  \vspace{5pt} \emph{Case (i) of Lemma \ref{le:conv}.}  In this case
  $\phi$ is a standing wave. For definiteness, we assume that
  $\phi=\Phi^+(\cdot-\xi)$ for some $\xi\in\R$, where $\Phi^+$ is the
  increasing standing wave connecting $\ga_-=\hp$ and $\ga_+=\hq$
  (cp. (A2)); the case when $\phi$ is a shift of $\Phi^-$ is
  analogous.  From \eqref{eq:26} and Lemma \ref{le:zerob} we infer
  that $z(U(\codt,t)-\be_\pm)=1$ and $U_x(\cdot,t)>0$ for all $t$.

  We first proceed similarly as in the previous case.  By Lemma
  \ref{le:basicrel}, the set $A(U)\cup \Om(U)$ contains a steady state
  $\varphi$ with $\tau(\varphi)\subset \Sii$, and $\varphi\in \Om(U)$
  is ruled out by uniform convergence \eqref{eq:26} to
  $\Phi^+(\cdot-\xi)$.  Hence $\varphi\in A(U)$. Repeating almost
  verbatim the arguments involving \eqref{eq:32} and \eqref{eq:30}
  (just replace $\Phi$ by $\Phi^+$ and the relations
  $\Theta_\pm=\hp=\Phi(\pm\infty)$ by $\Theta_-=\hp=\Phi^+(-\infty)$,
  $\Theta_+=\hq=\Phi^+(\infty)$), one shows that
  no shift of $\Phi^+$ is contained in $A(U)$.  Obviously, by the
  monotonicity, no shift of $\Phi^-$ can be contained in $A(U)$
  either.

We claim that none of the constants $\ga^+$, $\ga^-$ is contained
in $\al(U)$.  (We remark that
both these constants are contained in $A(U)$,
simply because $U(\pm\infty,t)=\ga^\pm$.)  Suppose, for example, that
$\gamma_+\in \al(U)$ (the possibility $\gamma_-\in \al(U)$ is ruled
out similarly). So there is a sequence $t_n\to-\infty$ such that
$U(\cdot,t_n)\to\gamma_+$ locally uniformly. Pick a small $\e>0$ so
that $\ga^+-\e>\be_+$. Define
\begin{equation}
  \label{eq:57}
  \underline{u}_0(x):=
  \left\{ \begin{array}{cc}
\gamma_-, & \ \textrm{ if } x< 0 \\ \gamma_+-\e, & \ \textrm{ if
}x\geq0
          \end{array} \right.
\end{equation}
and let $\underline{u}(x,t)$
be the solution of \eqref{eq:1} emanating from $\underline{u}_0$ at
$t=0.$ By \cite{FMcL}, there exists $K\in\R$ such that
$\underline{u}(\cdot,t)$ converges uniformly to $\Phi^+(\cdot-K)$ as
$t\to\infty.$ On the other hand, by the assumption on $U$ and due to
$U_x>0$, for every $M\in\R$ there exists $n_M$ such that
$U(\cdot,t_n)>\underline{u}_0(x+M)$ whenever $n>n_M$.
For any such $n$, 
the comparison principle gives $U(x,t_n+t)>\underline{u}(x+M,t)$ for all
$t>0$ and $x\in\R.$ Choosing $t=t'-t_n$ and
taking $n\to\infty$ (so
$t_n\to-\infty$), we obtain that $U(x,t')\geq\Phi^+(x-K+M),$ for all
$x,t'\in\R$.  Taking $M\to\infty$, we obtain $U(\cdot,t')\geq\gamma_+,$
which is a contradiction proving our claim. 

                        Note that a similar comparison argument gives
the following. If there exist $x_0\in \R$ and a sequence
$t_n\to-\infty$ such that for some
$\ga^+-\e>\be_+$ one has $U(x_0,t_n)>\ga^+-\e>\be_+$ for all $n$,
then there is $K\in\R$ such that $U(x,t)\geq\Phi^+(x-K)$ for all
$x,t\in\R$.  We show that this is impossible, thereby showing that
\begin{equation}
  \label{eq:48}
  \limsup_{t\to-\infty}U(x_0,t)\le \be_+\quad(x_0\in
\R).
\end{equation}
Indeed, by Theorem
\ref{thm:GS}, $\alpha(U)$ contains a steady state $\vp_0$ of
\eqref{eq:1}.  It cannot be nonconstant and periodic due to the
monotonicity of $U$.  As shown above, $\vp_0$ cannot be identical to
any of the constants $\ga_\pm$ or any shift of $\Phi^\pm$.  Therefore,
$\tau(\vp_0)\subset\Sigma_{in}.$ This is not compatible with the
relation $\varphi_0\ge \Phi^+(\cdot-K)$, which would obviously follow
from $U\geq\Phi^+(\cdot-K)$.  Thus \eqref{eq:48} is proved and it
implies that  $\varphi\le \be_+$ for all $\varphi\in \al(U)$.  Similarly one
shows that $\varphi\ge \be_-$ for all $\varphi\in \al(U)$.

                        We can now conclude. Given any $\varphi\in
\al(U)$, let $\tilde U$ be the entire solution of \eqref{eq:1} with
$\tilde U(\codt,0)=\varphi$ and $\tilde U(\codt,t)\in \al(U)$ for all
$t\in\R$. Then $\be_-\le \tilde U\le\be_+$. A direct application of
Lemma \ref{le:ep}(i) shows that $\varphi\equiv \tilde U$ is a steady
state. The relations $\be_-\le \varphi\le\be_+$ imply that
$\tau(\varphi)\subset \Sii$. This shows that $\tau(\al(U))\subset
\Sii$, as desired.
                          \end{proof}

\subsubsection{Case (C2): $(\Theta_\pm,0)\in \Sigma_{in}$ }
Throughout this subsection, we assume that (C2) holds.  We will also
assume that the zero numbers $N^\pm$ are both positive:
\begin{equation}
  \label{eq:33}
  N^\pm=z(U(\cdot,t)-\be_\pm)>0.
\end{equation}
This is the only case we still need to worry about, for Lemma
\ref{le:ep}(iii) shows that the conclusion of Proposition
\ref{prop:entire} holds if one of these zero numbers vanishes.

By (C2), $N^\pm$ are even numbers, in fact, if they are nonzero, they are
both equal to 2:
\begin{lemma}
  \label{le:z2}
  Under conditions \eqref{eq:33}, we have
  \begin{equation}\label{eq:z2}
    z\left( U(\cdot,t)-\beta_-\right)=2,\  z\left( U(\cdot,t)-\beta_+\right)=2,\ t\in\R.
  \end{equation}
\end{lemma}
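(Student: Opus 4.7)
The two identities are symmetric---the argument for $\beta_-$ interchanges the roles of $[\hat p,p]$ and $[q,\hat q]$---so I focus on $N^+:=z(U(\codt,t)-\beta_+)=2$. First observe that in case (C2), $\Theta_\pm$ are stable equilibria of \eqref{eq:ODE} by Lemma \ref{le:limits}(i), while $\beta_\pm$ are unstable and lie strictly inside $\cI(\Sigma_{in})$ by \eqref{A1beta}; so $\Theta_\pm\in(\beta_-,\beta_+)$, $U(\codt,t)-\beta_+$ is negative at both spatial infinities, $N^+$ is even, and thus $N^+\ge 2$. A short inspection of the possible steady-state orbits in $\bar\Pi$ satisfying (HU) in case (C2) (they must sit on the energy level $F(p)$, hence on $\Sigma_{in}$: constants, ground states with apex $p$ or $q$, or monotone standing waves between two equilibria on $\Sigma_{in}$) shows that none simultaneously has $z(\codt-\beta_+)>0$ and $z(\codt-\beta_-)>0$. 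Hence hypothesis \eqref{eq:33} forces $U$ to be non-stationary, and Lemma \ref{le:dichotomy} then yields the strict inclusion $\bigcup_{t\in\R}\tau(U(\codt,t))\subset\Pi$.

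Suppose for contradiction $N^+\ge 4$. Fix $t_0$ and take four consecutive zeros $x_1<x_2<x_3<x_4$ of $U(\codt,t_0)-\beta_+$ with $U>\beta_+$ on $(x_1,x_2)\cup(x_3,x_4)$. Lemma \ref{le:zerob}(ii) together with Rolle's theorem places exactly one nondegenerate critical point of $U(\codt,t_0)$ in each of the three bounded nodal intervals. At every such critical point $(U,U_x)=(U,0)$ lies in $\Pi\cap\{v=0\}=(\hat p,p)\cup(q,\hat q)$; since the critical values in the positive nodal intervals exceed $\beta_+$ they must lie in $(q,\hat q)$, and the one in $(x_2,x_3)$ lies in $(\hat p,p)$. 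This produces a max-min-max triple $y_1(t_0)<y_2(t_0)<y_3(t_0)$; by Lemma \ref{le:zerob}(i) and the implicit function theorem these extend to $C^1$ functions $y_j(t)$ on $\R$ preserving the pattern for every $t$.

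Taking $t_n\to-\infty$ and a subsequence, $v_j^\infty=\lim U(y_j(t_n),t_n)$ lies in $[q,\hat q]$ for $j=1,3$ and in $[\hat p,p]$ for $j=2$. When $v_j^\infty$ belongs to the boundary set $\{p,q,\hat p,\hat q\}\subset\Sigma_{in}\cup\Lambda_{out}$, Lemma \ref{le:basicrel}(i) identifies the shifted $C^1_{loc}$-limit $\tilde U_j$ as a steady state with trajectory in $\Sigma_{in}\cup\Lambda_{out}$; since every nonconstant such steady state has at most one critical point (ground-state apex or monotone standing wave), the companion spacings $y_2(t_n)-y_1(t_n)$ and $y_3(t_n)-y_2(t_n)$ must tend to $+\infty$. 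The main obstacle has two parts. First, one must exclude the ``interior'' subcase $v_j^\infty\in(\hat p,p)\cup(q,\hat q)$, where $(v_j^\infty,0)\in\Pi$ and Lemma \ref{le:basicrel}(i) does not directly apply; I plan to handle this by a refined squeezing argument in the spirit of Lemma \ref{squeezinglemma}, approximating the periodic orbit through $(v_j^\infty,0)$ by nearby periodic orbits of \eqref{eq:sys} and combining Lemma \ref{robustnesszero} with $\tau(\tilde U_j)\subset\bar\Pi$ and Corollary \ref{co:tilde} to force a contradiction. Second, one must translate the unbounded spreading of $y_1(t_n),y_2(t_n),y_3(t_n)$ into a concrete contradiction: the envisaged route is to track $U(\codt,t_n)$ along its monotone descent on the widening interval $(y_1(t_n),y_2(t_n))$ from a value above $q$ to one below $p$, select an intermediate point where $(U,U_x)$ approaches $\Sigma_{in}$, extract a further shifted entire-solution limit which Lemma \ref{le:basicrel}(i) forces to be a nonconstant steady state $\psi$ on $\Sigma_{in}$, and use the resulting multiple zero of $U-\psi$ together with Lemma \ref{le:dichotomy} to contradict the already established non-stationarity of $U$.
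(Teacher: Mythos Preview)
Your setup through the max--min--max triple $y_1(t)<y_2(t)<y_3(t)$ is reasonable, but the argument does not close. The decisive gap is in your second obstacle: after extracting via Lemma~\ref{le:basicrel}(i) a shifted $C^1_{loc}$-limit steady state $\psi$ with $\tau(\psi)\subset\Sigma_{in}$, you propose to ``use the resulting multiple zero of $U-\psi$'' to contradict Lemma~\ref{le:dichotomy}. But Lemma~\ref{le:basicrel}(i) delivers only the convergence $U(\codt+z_n,\codt+t_n)\to\psi$; it produces \emph{no} multiple zero of $U(\codt,t)-\psi$ at any finite time. Indeed, the proof of that lemma establishes precisely that such a multiple zero is impossible (it would place a point of $\tau(U(\codt,t))$ on $\Sigma_{in}$, violating \eqref{eq:5a}), and this is exactly why the limit is forced to equal $\psi$. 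So the conclusion $\psi\in A(U)$ with $\tau(\psi)\subset\Sigma_{in}$ is entirely consistent with $U$ being non-stationary---it is in fact what Proposition~\ref{prop:entire} ultimately asserts---and no contradiction follows. Your first obstacle (the interior case $v_j^\infty\in(\hat p,p)\cup(q,\hat q)$) is likewise only a plan; ruling out a non-stationary limit $\tilde U_j$ whose spatial trajectory meets a periodic orbit in $\Pi$ is not automatic from the squeezing lemma.

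The paper's route is quite different and avoids both issues. Rather than arguing that critical points spread as $t\to-\infty$, it uses Lemma~\ref{lemmaBPQ} to obtain the opposite kind of control: with $\xi_-(t)$ the first zero of $U(\codt,t)-\Theta_-$ and $\xi_+(t)$ the last zero of $U(\codt,t)-\Theta_+$, the non-monotonicity of $U(\codt,t)$ on $(-\infty,\xi_-(t))$ and on $(\xi_+(t),\infty)$ forces $-K<\xi_-(t)<\xi_+(t)<K$ for all $t<t_0$. At least two zeros of $U(\codt,t)-\beta_+$ (or of $U(\codt,t)-\beta_-$) then persist inside $(-K,K)$, with $U$ exiting $[p,q]$ between them. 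Theorem~\ref{thm:GS} supplies a steady state $\phi\in\alpha(U)$; the shape constraints on $[-K,K]$ force $\phi$ to be a shift of a ground state at level $\hat p$ (so in particular $\Lambda_{out}$ must be a homoclinic loop). A comparison with periodic solutions of \eqref{eq:steady} on $(K,K+\rho)$ then yields $U(x,t_0-1)\to\gamma$ as $x\to\infty$, contradicting $(\Theta_+,0)\in\Sigma_{in}$. The key tool you are missing is Lemma~\ref{lemmaBPQ}, which converts the extra oscillation assumed in \eqref{eq:331-17} into spatial \emph{confinement} rather than spreading.
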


\begin{proof}
  Assume for a contradiction that \eqref{eq:z2} is false. Then, since
  $N^\pm$ are nonzero even numbers, we have (cp. Figure \ref{ruled-out})
  \begin{equation}\label{eq:331-17}
    z(U(\cdot,t)-\be_-)\geq4\ \,(t\in\R)\quad \textrm{ or }\ 
    \,  z(U(\cdot,t)-\beta_+)\geq4\quad (t\in\R).
  \end{equation}

  \begin{figure}[h]
  \vspace{-1.0cm}
 
  \addtolength{\belowcaptionskip}{20pt}
  \addtolength{\abovecaptionskip}{-7.2cm}
  \centering
  \includegraphics[scale=.65]{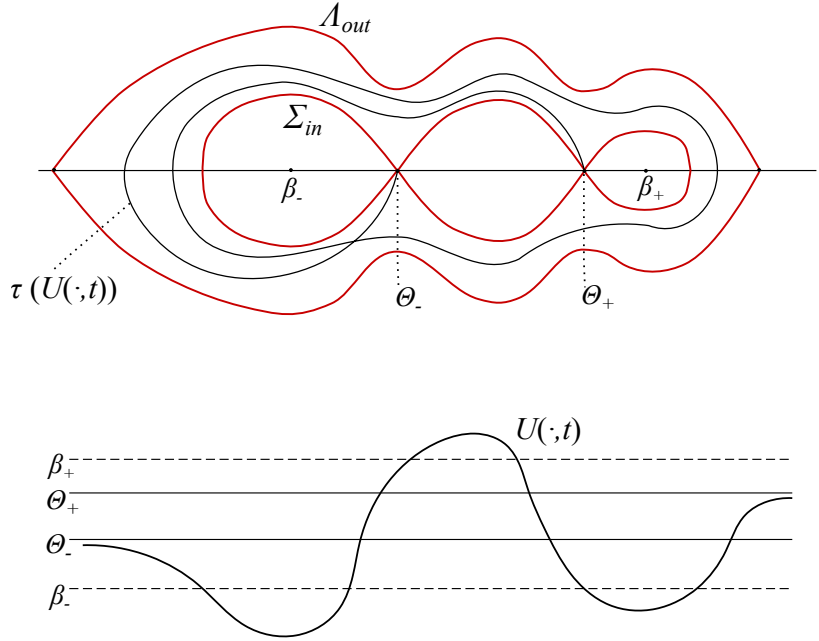}
  \caption[ruled-out case]{An illustration of
    relations \eqref{eq:331-17} that are ruled out
    in the proof of Lemma \ref{le:z2}, in this case
    $z\left( U(\cdot,t)-\beta_-\right)=4$. The top figure depicts the
    spatial trajectory and the bottom figure the graph of
    $U(\cdot,t)$. The relation $\Theta_-<\Theta_+$ chosen for  the
    figure   
    is of no significance; the limits may actually be related the
    other way or may be equal.\label{ruled-out}}  
\end{figure}

  Recall that under condition (C2) the limits $U(\pm\infty,t)=\Theta_\pm$
  are in $(\beta_-,\beta_+)$.  By \eqref{eq:27a}, $U_x(x,t)\ne 0$
  whenever  $U(x,t)\in[\beta_-,\beta_+].$ It follows that,
  assuming \eqref{eq:331-17}, the zero numbers
  $z( U(\cdot,t)-\Theta_\pm)$ are finite and greater than or equal to 2,
  and the zeros being counted are all simple. Moreover, between any two
  successive zeros of $U(\cdot,t)-\Theta_-$ there are (two) zeros of
  either $U(\cdot,t)-\be_-$ or $U(\cdot,t)-\be_+$.  This implies that
  $z( U(\cdot,t)-\Theta_-)$ is bounded uniformly in $t$. The same goes
  for $z( U(\cdot,t)-\Theta_+)$.  Thus, by the monotonicity of the
  zero number, $z( U(\cdot,t)-\Theta_\pm)$
  are constant in $t$ for all sufficiently large negative $t$, say for
  all $t<t_0$. We define
  \begin{align}
    \xi_-(t) & :=\min\left\{ x:U(x,t)=\Theta_-\right\}\text{ \ \ (the first zero of $U-\Theta_-$)}, \nonumber \\
    \xi_+(t) & :=\max\left\{ x:U(x,t)=\Theta_+\right\}\text{ \ (the last zero of $U-\Theta_+$)}. \nonumber 
  \end{align}
  These are well defined and continuous functions of $t$ for $t<t_0$.
  As one checks easily, \eqref{eq:331-17} implies that $\xi_-(t)<\xi_+(t)$.
  Since $U(\pm\infty,t)=\Theta_\pm$, the function $U(\cdot,t)$ is not
  monotone on $(-\infty,\xi_-(t))$, nor it is such on
  $(\xi_+(t),\infty).$ Therefore, by Lemma \ref{lemmaBPQ}, there
  exists $K>0$ such that
  \begin{equation}\label{eq:331-18}
    -K<\xi_-(t)<\xi_+(t)<K\quad (t<t_0).
  \end{equation}
  By \eqref{eq:27a} and \eqref{eq:331-17}, we have
  \begin{equation}\label{eq:331-19}
    z_{(\xi_-(t),\xi_+(t))}\left( U(\cdot,t)-\beta_-\right)\geq2 \
    \, (t<t_0)\quad\textrm{ or }\quad z_{(\xi_-(t),\xi_+(t))}\left(
      U(\cdot,t)-\beta_+\right)\geq2\ \,(t<t_0).
  \end{equation}
  We consider the latter, the former is analogous. By Theorem 
  \ref{thm:GS}, there is a steady state $\phi$ of \eqref{eq:1} with
  $\phi\in\alpha(U)$.  Using \eqref{eq:331-18}, \eqref{eq:331-19} and
  taking into account that between any two successive zeros of
  $U(\cdot,t)-\beta_+$ the function $U(\cdot,t)$ achieves a value
  greater than $q$ or smaller than $p$, we infer that
  $\displaystyle z_{(-K,K)}\left( \phi-\beta_+\right)\geq2.$
  Obviously, $\tau(\phi)\subset \bar\Pi$ and $\phi$ is not a
  nonconstant periodic solution (see Corollary \ref{co:tilde}).
  Moreover, because of \eqref{eq:331-18}, there exist $x_1,x_2$ with
  $-K\leq x_1<x_2\leq K$ such that
  $\phi(x_1),\phi(x_2)\leq\max(\Theta_-,\Theta_+).$ These
  conditions on $\phi$ leave only one possibility for the steady
  state $\phi$: $\phi=\Phi(\cdot-x_0)$ for some $x_0\in(-K,K),$ where
  $\Phi$ is the ground state at level $\ga=\hat p$ as in (A1) (and,
  necessarily, $\La_{out}$ is a homoclinic loop).  We have thus shown
  that for some sequence $t_n\to-\infty,$
  \begin{equation*}
    U(\cdot,t_n)\underset{n\to\infty}{\longrightarrow}\Phi(\cdot-x_0)
  \end{equation*}
  in $L^\infty_{loc}(\R)$.  Notice that from \eqref{eq:331-18} it
  follows that $\Phi(\pm K-x_0)\leq\max(\Theta_+,\Theta_-)<q.$ We show
  that this leads to a contradiction, which will complete the proof.


  Let $\psi$ be any periodic solution of \eqref{eq:steady} with
  $\tau(\psi)\subset\Pi$ and let $\rho>0$ be the minimal period of
  $\psi.$ Shifting $\psi$, we may assume that
  $\psi(K)=\max\psi>q>\Phi(K-x_0).$ Then $\Phi(\cdot-x_0)<\psi$ on
  $(K,K+\rho)$ (otherwise, a shift of the graph of $\psi$ would be
  touching the graph of $\Phi(\cdot-x_0)$, which is impossible for two
  distinct solutions of \eqref{eq:steady}).  Consequently, if $n_0$ is
  large enough, we have $t_{n_0}<t_0-1$ and
  \begin{equation*}
    U(x,t_{n_0})<\psi(x)\quad(x\in(K,K+\rho)).
  \end{equation*}
  Moreover, since $\xi_+(t)<K$ for all $t<t_0,$ we have
  \begin{equation*}
    U(K,t)<\psi(K)\ \textrm{ and } \ U(K+\rho,t)<\psi(K+\rho)\quad
 (t<t_0).  
  \end{equation*}
  Therefore, applying the comparison principle on 
  $(K,K+\rho)\times(t_{n_0},t_0)$, we obtain
  \begin{equation*}
    U(x,t_0-1)<\psi(x)\quad (x\in(K,K+\rho)).
  \end{equation*}
  This is true for all periodic solutions $\psi$ with the indicated
  properties. Taking a sequences of such periodic solution with
  $\psi_j(K)\to\Phi(0)$---which entails $\rho\to\infty$ and
  $\psi_j\to\Phi(\cdot-K)$
  locally uniformly---we obtain that $U(x,0)<\Phi(x-K),$ $x>K.$ So
  $U(x,t_0-1)\to\gamma$ as $x\to\infty,$ in contradiction to (C2).
\end{proof}

\begin{figure}[h]
  \vspace{-1cm}

  \addtolength{\belowcaptionskip}{20pt}
  \addtolength{\abovecaptionskip}{-14cm}
  \centering
  
  \hspace{-1.35cm} 
 \includegraphics[scale=.8]{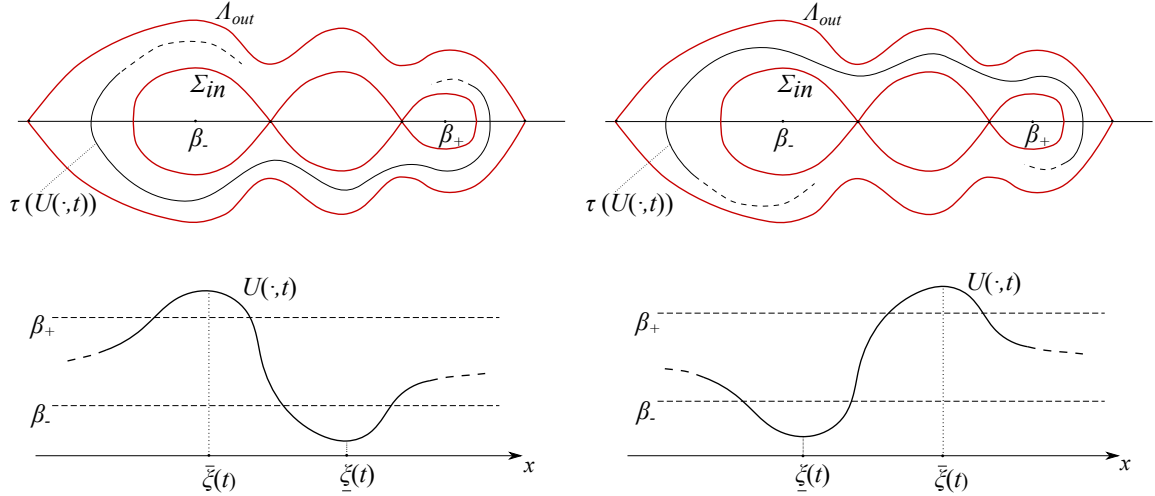}
  %


  
  \caption[Graphs and spatial trajectories]{The spatial trajectories
    and graphs of $U(\cdot,t)$ in the cases $\ol\xi(t)<\ul\xi(t)$
    (the figures on the left) and $\ol\xi(t)>\ul\xi(t)$ (the figures
    on the right). \label{graphs-traj}}  
\end{figure}

Assuming \eqref{eq:33}, the previous lemma shows that \eqref{eq:z2}
holds for all $t\in\R$.  Lemma \ref{le:zerob} now tells us that for
every $t$ the function $U(\cdot,t)$ has exactly two critical points
$\ol\xi(t)$, $\ul\xi(t)$, both nondegenerate, which are the global
maximum and minimum points of $U(\cdot,t)$, respectively. Since
$U(\pm\infty,t)=\Theta_\pm\in (\be_-,\be_+)$, we have
\begin{equation}
  \label{eq:36}
  U(x,t)>\beta_-\ \,(x\in (-\infty,\ol \xi(t)),\ t\in\R) \quad 
  \textrm{ or }\quad \ U(x,t)>\beta_-\ \,(x\in(\ol\xi(t),\infty),\ t\in\R), 
\end{equation}
depending on whether $\ol\xi(t)< \ul\xi(t)$ or $\ol\xi(t)> \ul\xi(t)$
(cp. Figure \ref{graphs-traj}). 
 Similarly, 
\begin{equation}
  \label{eq:36a}
 U(x,t)<\beta_+\ \,(x\in(\ul\xi(t),\infty),\ t\in\R)\quad 
  \textrm{ or }\quad  U(x,t)<\beta_+\ \,(x\in (-\infty,\ul \xi(t)),\ t\in\R). 
\end{equation}
We next prove the conclusion of Proposition \ref{prop:entire} in the
case (C2) when $\Lao$ is a homoclinic loop.
\begin{lemma}
  \label{le:finalc2a1}
  Assume that {\rm (C2)} and \eqref{eq:33} hold, $\Lao$ is a homoclinic loop
  as in {\rm (A1)}, and $U$ is not a steady state. Then
  $\tau(\al(U))\subset\Sigma_{in}$ and $\om(U)=\{\ga\}$, where
  $\ga\in\{\hat p,\hq\}$ is as in {\rm (A1)}.  In particular, \eqref{eq:6a}
  holds.
\end{lemma}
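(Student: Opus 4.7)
The plan is to prove both conclusions by contradiction, using Lemma \ref{le:ep}(ii) to reduce the $\al$-limit statement to an upper bound on $M(t):=\max U(\cdot,t)$ for past times, and adapting the zero-number argument from the proof of Lemma \ref{le:c1alpha} to produce the needed contradiction. By Lemmas \ref{le:dichotomy}, \ref{le:z2}, and \ref{le:zerob}(ii) together with Remark \ref{rm:critp}, one has $\bigcup_t \tau(U(\cdot,t))\subset\Pi$, $z(U(\cdot,t)-\be_\pm)=2$ for every $t\in\R$, and $U(\cdot,t)$ possesses exactly two nondegenerate critical points: a global maximum at $\ol\xi(t)$ with $M(t)\in(q,\hq)$, and a global minimum at $\ul\xi(t)$ with $m(t):=U(\ul\xi(t),t)\in(\hp,p)$. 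Without loss of generality assume $\ga=\hp$ (so $\Phi(0)=\hq$); the case $\ga=\hq$ is symmetric, exchanging the roles of max and min.

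For $\tau(\al(U))\subset\Sii$, by Lemma \ref{le:ep}(ii) it suffices to exhibit $\vartheta>0$ and $t_0$ with $M(t)\le\hq-\vartheta$ for all $t<t_0$. Suppose otherwise: along a sequence $t_n\to-\infty$ with $M(t_n)\to\hq$, parabolic regularity produces a $C^1_{loc}$-limit $\phi$ of $U(\cdot+\ol\xi(t_n),t_n)$ with $\phi(0)=\hq$, $\phi'(0)=0$, and $\phi\in A(U)$. The entire solution through $\phi$ inherits (HU) by Corollary \ref{co:tilde}, and since its spatial trajectory touches $\La_{out}$ at the point $(\hq,0)$, Lemma \ref{le:dichotomy} forces it to be a steady state; the normalization $\phi'(0)=0$ then identifies $\phi$ with $\Phi$. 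To derive a contradiction from $\Phi\in A(U)$, I invoke Lemma \ref{le:basicrel}(ii) with $K=\Sii$ to obtain a steady state $\varphi_0\in A(U)\cup\Om(U)$ with $\tau(\varphi_0)\subset\Sii$. If $\varphi_0\in A(U)$, then (as in the argument from the proof of Lemma \ref{le:c1alpha}) for any periodic steady state $\psi$ with $\tau(\psi)\subset\Pi$, the function $\varphi_0-\psi$ has infinitely many simple zeros; monotonicity of the zero number yields $z(U(\cdot,t)-\psi)\to\infty$ as $t\to-\infty$, contradicting the uniform bound on $z(U(\cdot,t_n)-\psi)$ that follows from the $C^1_{loc}$ convergence to $\Phi$, the tail control $U(\pm\infty,t_n)=\Theta_\pm\in(\be_-,\be_+)$, and the two-critical-point structure of $U(\cdot,t_n)$. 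The case $\varphi_0\in\Om(U)\setminus A(U)$ is handled in parallel with the forward-time analysis below.

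For $\om(U)=\{\ga\}$, I apply the symmetric forward-time bound $M(t)\le\hq-\vartheta'$ for all $t>t_0'$, established via the same contradiction scheme with $t_n\to+\infty$. Lemma \ref{le:basicrel}(ii) with $K=\La_{out}$ produces a steady state $\phi_\om\in A(U)\cup\Om(U)$ with $\tau(\phi_\om)\subset\La_{out}$, and the forward bound rules out $\phi_\om=\hq$ and $\phi_\om$ being a shift of $\Phi$, leaving $\phi_\om=\ga$. To upgrade this to $\om(U)=\{\ga\}$, take any $\vp\in\om(U)$ and form the entire solution $\tilde U$ with $\tilde U(\cdot,0)=\vp$ and $\tilde U(\cdot,t)\in\om(U)$ for all $t$; by Corollary \ref{co:tilde}, $\tilde U$ satisfies (HU), and applying the preceding analysis to $\tilde U$ (which still falls into case (C2) with $\La_{out}$ homoclinic) forces $\tilde U\equiv\ga$, hence $\vp=\ga$. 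The main obstacle is bridging the $C^1_{loc}$ convergence $U(\cdot+\ol\xi(t_n),t_n)\to\Phi$---which discards tail information---to the uniform zero-number bound; this is overcome by exploiting the rigid profile of $U(\cdot,t_n)$, namely exactly two critical points together with the fixed tails $\Theta_\pm\in(\be_-,\be_+)$, to control the oscillations of $U-\psi$ outside a neighborhood of $\ol\xi(t_n)$.
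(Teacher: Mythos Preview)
Your zero-number argument has a genuine gap that makes the whole approach break down in case (C2). You claim that the ``tail control $U(\pm\infty,t_n)=\Theta_\pm\in(\be_-,\be_+)$'' and the two-critical-point structure give a uniform bound on $z(U(\cdot,t_n)-\psi)$. In fact the opposite is true: any periodic steady state $\psi$ with $\tau(\psi)\subset\Pi$ has $\min\psi<p$ and $\max\psi>q$, so $\Theta_\pm\in(\be_-,\be_+)\subset(p,q)$ lies strictly in the range of $\psi$. Hence $U(x,t)-\psi(x)$ oscillates about zero infinitely often as $|x|\to\infty$, and $z(U(\cdot,t)-\psi)=\infty$ for \emph{every} $t$. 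This is exactly what distinguishes (C2) from (C1): in Lemma~\ref{le:c1alpha} the tails satisfy $\Theta_\pm=\ga=\hat p<\min\psi$, which is what makes the zero number finite and the argument work there. Your acknowledged ``main obstacle'' is not overcome; the proposed mechanism produces the wrong sign.

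The paper's proof avoids this entirely and is much shorter. The key observation---already recorded in \eqref{eq:36} just before the lemma---is that since $U(\cdot,t)$ has exactly the two critical points $\ol\xi(t),\ul\xi(t)$ and $\Theta_\pm\in(\be_-,\be_+)$, on one entire side of $\ol\xi(t)$ (the side not containing $\ul\xi(t)$) one has $U(\cdot,t)>\be_-$, uniformly in $t$. If $M(t_n)\to\hq$ along any sequence, then (as you correctly derive) $U(\cdot+\ol\xi(t_n),t_n)\to\Phi$ in $C^1_{loc}$; but $\Phi(\pm\infty)=\ga=\hp<\be_-$, so for some fixed $x_0$ on that side one would get $U(x_0+\ol\xi(t_n),t_n)<\be_-$, contradicting \eqref{eq:36}. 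This single contradiction gives $M(t)\le\hq-\vartheta$ for \emph{all} $t\in\R$, after which Lemma~\ref{le:ep}(i) yields both conclusions $\om(U)=\{\ga\}$ and $\tau(\al(U))\subset\Sii$ at once. There is no need to split into past and future, no Lemma~\ref{le:basicrel}(ii), no zero-number comparison with periodic $\psi$, and no bootstrap through entire solutions in $\om(U)$.
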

\begin{proof}
  For definiteness, we assume that $\ga=\hp$---so $\Phi$ is a ground
  state at level $\hp$ and $\hq=\Phi(0)$ is its maximum---the case
  $\ga=\hq$ is similar. We prove that for some $\vartheta>0$
  \begin{equation}
    \label{eq:34}
    \max_{x\in\R} U(x,t)<\hq-\vartheta\quad(t\in\R).
  \end{equation}
  Once this is done, the desired conclusion follows immediately from
  Lemma \ref{le:ep}(i).

  Assume that \eqref{eq:34} is not true for any
  $\vartheta>0$. Then there is a sequence $t_n\in\R$ such that
  $U(\ol\xi(t_n),t_n)\upto \hq$ (and $U_x(\ol\xi(t_n),t_n)=0$). As in
  the proof of Lemma \ref{le:basicrel}(i), passing to a subsequence if
  necessary, we have $U(\cdot+\ol\xi(t_n),t_n)\to\Phi$ in
  $C^2_{loc}(\R)$. This and the relations
  $\Phi(\pm\infty)=\ga=\hp<\be_-$ clearly contradict
  \eqref{eq:36}. Thus \eqref{eq:34} indeed holds and the proof is
  complete.
\end{proof}

\begin{figure}[h]
  \vspace{-1cm}
 
  \addtolength{\belowcaptionskip}{20pt}
  \addtolength{\abovecaptionskip}{-15cm}
  \centering
  \hspace{1cm}
  \includegraphics[scale=.76]{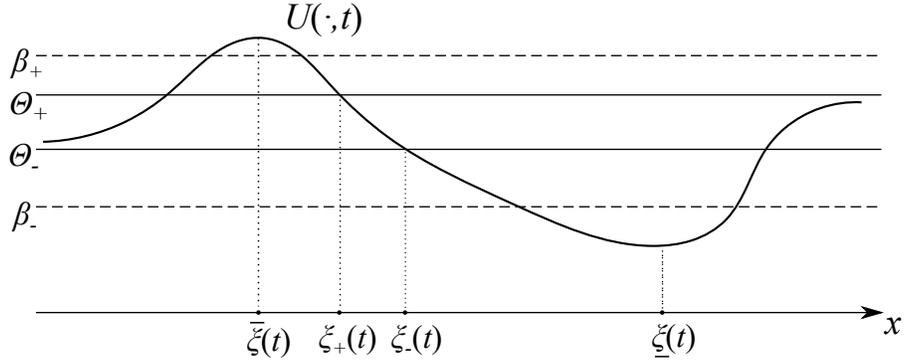}
  \caption[Graph of $U(\cdot,t)$]{The graph of $U(\codt,t)$ when
    $\ol\xi(t)<\ul\xi(t)$. The relation $\xi_+(t)<\xi_-(t)$ holds when
    $\Theta_+>\Theta_-$ (as in the figure), it is reversed when
    $\Theta_+<\Theta_-$, and  $\xi_+(t)=\xi_-(t)$ when
    $\Theta_+=\Theta_-$. 
\label{graph-xi}
  }  
\end{figure}
We now treat the case when $\Lao$ is a heteroclinic loop.
\begin{lemma}
  \label{le:finalc2a2}
  Assume that {\rm (C2)} and \eqref{eq:33} hold, $\Lao$ is a heteroclinic
  loop as in {\rm (A2)}, and $U$ is not a steady state. Then \eqref{eq:6a}
  holds: $\tau(\al(U))\subset\Sigma_{in}$, $\tau(\om(U))\subset\Lao$.
\end{lemma}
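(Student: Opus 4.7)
The proof parallels that of Lemma~\ref{le:finalc2a1} but requires independent arguments for the inclusions $\tau(\omega(U))\subset\Lao$ and $\tau(\alpha(U))\subset\Sigma_{in}$, since the asymptotic profile of $U$ is now a bump--dip rather than a single homoclinic bump. The setup from Lemma~\ref{le:z2} and Lemma~\ref{le:zerob} will be used throughout: $z(U(\cdot,t)-\beta_\pm)=2$ for every $t\in\R$, and $U(\cdot,t)$ has a unique nondegenerate global maximum at $\overline\xi(t)$ with value $M(t)\in(q,\gamma_+)$ and a unique nondegenerate global minimum at $\underline\xi(t)$ with value $m(t)\in(\gamma_-,p)$; the strict inequalities $U<\gamma_+$ and $U>\gamma_-$ follow from the strong maximum principle applied to $\gamma_+-U$ and $U-\gamma_-$, since a touching would force $U\equiv\gamma_\pm$, contradicting that $U$ is not a steady state. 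After a reflection, I assume $\overline\xi(t)<\underline\xi(t)$ for all $t$.

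The first main step is to show $M(t)\to\gamma_+$ and $m(t)\to\gamma_-$ as $t\to+\infty$. If $M(t_n)\to M^*\in(q,\gamma_+)$ for some $t_n\to\infty$, parabolic regularity allows extraction of a $C^1_{loc}$-limit $\tilde U$ of the shifted sequence $U(\cdot+\overline\xi(t_n),\cdot+t_n)$, a bounded entire solution with $\tilde U(0,0)=M^*$, $\tilde U_x(0,0)=0$. The periodic orbit $\cO$ of \eqref{eq:sys} through $(M^*,0)$ lies in $\Pi$, and a squeezing argument as in the proof of Lemma~\ref{squeezinglemma} forces $\tilde U$ to coincide with the periodic steady state $\psi$ with $\tau(\psi)=\cO$. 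Then $\psi\in\Omega(U)$, contradicting Corollary~\ref{co:tilde}. Symmetrically $m(t)\to\gamma_-$.

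For $\tau(\omega(U))\subset\Lao$: given $\varphi\in\omega(U)$ with $U(\cdot,t_n)\to\varphi$ in $L^\infty_{loc}$ and the entire solution $\tilde U$ through $\varphi$, I split according to the behavior of $\overline\xi(t_n)$ and $\underline\xi(t_n)$. If $\overline\xi(t_n)$ stays bounded, a subsequential limit produces a point at which $\varphi$ attains the value $\gamma_+=\lim M(t_n)$; the strong maximum principle applied to $\gamma_+-\tilde U$ then yields $\tilde U\equiv\gamma_+$, so $\tau(\varphi)=\{(\gamma_+,0)\}\subset\Lao$. Analogously, $\underline\xi(t_n)$ bounded gives $\tilde U\equiv\gamma_-$. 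When both critical points escape, $U(\cdot,t_n)$ is monotone on every fixed compact set for large $n$, so $\varphi$ is monotone on $\R$; combined with $\tau(\tilde U(\cdot,t))\subset\bar\Pi$ (Corollary~\ref{co:tilde}) and the fact that Hamiltonian level sets confine spatial trajectories of steady states, $\tilde U$ must be a steady state with monotone profile, so $\tau(\tilde U)\subset\Sigma_{in}\cup\Lao$. The possibility $\tau(\tilde U)\subset\Sigma_{in}$ is excluded by approximating $\Sigma_{in}$ from inside $\Pi$ by periodic orbits and applying the robustness argument of Lemma~\ref{le:anbut}, which produces, for large $n$, a time $t_1$ at which $\tau(U(\cdot,t_1))$ meets a periodic orbit in $\Pi$ with a multiple zero against the corresponding periodic steady state, contradicting \eqref{eq:5b}.

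Finally, for $\tau(\alpha(U))\subset\Sigma_{in}$: by Lemma~\ref{le:ep}(ii) it suffices to find $\vartheta>0$ and $t_0<0$ such that either $M(t)\le\hat q-\vartheta$ for all $t<t_0$ or $m(t)\ge\hat p+\vartheta$ for all $t<t_0$. If both bounds fail, then along sequences $t_n,s_n\to-\infty$ one has $M(t_n)\to\gamma_+$ and $m(s_n)\to\gamma_-$, and since $(\gamma_\pm,0)$ are saddle equilibria of \eqref{eq:sys} whose only steady-state orbit through them is the constant, one obtains $U(\cdot+\overline\xi(t_n),t_n)\to\gamma_+$ and $U(\cdot+\underline\xi(s_n),s_n)\to\gamma_-$ in $C^1_{loc}$. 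A Fife--McLeod comparison as in Lemma~\ref{le:c1alpha}---using the sub-initial datum $\underline u_0$ from \eqref{eq:57}, the monotonicity of $U$ on $(-\infty,\overline\xi(t))$, and the comparison principle---then forces $U(\cdot,t')\ge\Phi^+(\cdot-K)$ for every $t'\in\R$ and some $K\in\R$, which is incompatible with $\Theta_+\in(\beta_-,\beta_+)\subsetneq(\gamma_-,\gamma_+)$. The main obstacle is the monotone-escape subcase in the $\omega$-argument, where the combination of the Hamiltonian structure confining the trajectory of the limit profile with the robustness argument of Lemma~\ref{le:anbut} is essential in ruling out spurious $\Sigma_{in}$-limits.
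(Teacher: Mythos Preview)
Your overall strategy differs substantially from the paper's, and there are genuine gaps in at least two places.

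\textbf{The first step is not justified.} You claim that if $M(t_n)\to M^*\in(q,\gamma_+)$ then the $C^1_{loc}$-limit $\tilde U$ of $U(\cdot+\overline\xi(t_n),\cdot+t_n)$ must coincide with the periodic steady state $\psi$ through $(M^*,0)$, invoking ``a squeezing argument as in the proof of Lemma~\ref{squeezinglemma}.'' But that lemma requires the set $K=\bigcup_t\tau(\tilde U(\cdot,t))$ to lie entirely inside or entirely outside $\overline{\mathcal I}(\mathcal O)$ for the periodic orbit $\mathcal O=\tau(\psi)$, and nothing you have established forces this: for times $t\ne 0$ the maximum of $\tilde U(\cdot,t)$ may well exceed $M^*$, so $K$ can straddle $\mathcal O$. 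By Corollary~\ref{co:tilde} and Lemma~\ref{le:dichotomy}, $\tilde U$ is in fact a \emph{non-steady} entire solution with $\tau(\tilde U(\cdot,t))\subset\Pi$, which is perfectly consistent with everything known; no contradiction arises. Thus the claim $M(t)\to\gamma_+$ is unproved, and the subsequent $\omega$-argument rests on it. (Relatedly, your exclusion of $\tau(\tilde U)\subset\Sigma_{in}$ in the ``monotone escape'' subcase by ``approximating $\Sigma_{in}$ from inside $\Pi$ by periodic orbits'' yields no contradiction either: touching a periodic orbit inside $\Pi$ is compatible with \eqref{eq:5b}; the robustness argument in Lemma~\ref{le:anbut} worked only because the approximating periodic orbits were in $\mathcal I(\Sigma_{in})$, \emph{outside} $\Pi$.)

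\textbf{The Fife--McLeod comparison for the $\alpha$-limit fails in (C2).} In the (C1) proof of Lemma~\ref{le:c1alpha} the solution $U$ was globally monotone ($U_x>0$), so once $U(\cdot+\overline\xi(t_n),t_n)\to\gamma_+$ locally one could dominate the step function $\underline u_0(\cdot+M)$ of \eqref{eq:57} for every $M$. Here, however, $U(\cdot,t_n)$ has a dip at $\underline\xi(t_n)$ with value $m(t_n)<p<\beta_-$, so $U(\cdot,t_n)$ cannot lie above $\gamma_+-\varepsilon$ on any right half-line, and the inequality $U(\cdot,t_n)>\underline u_0(\cdot+M)$ is simply false. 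The comparison you invoke therefore does not go through. The paper circumvents this by a different mechanism: it first uses Lemma~\ref{lemmaBPQ} to show that the zeros $\xi_\pm(t)$ of $U(\cdot,t)-\Theta_\pm$ are one-sidedly bounded for $t<0$, which forces $\overline\xi(t_n)\to-\infty$ along any sequence with $M(t_n)\to\gamma_+$, and then runs a comparison with \emph{periodic} steady states on a moving window $[x_n-\rho,x_n+\rho]$, using that $U>\Theta_-$ to the left of $\xi_-(t)$ to control the lateral boundary values. For the $\omega$-inclusion the paper first applies Theorem~\ref{thmPP1} (which is available because $U(\cdot,t)$ has exactly two nondegenerate critical points, so $V_\lambda U(\cdot,t)$ has finitely many zeros for every $\lambda$) to get quasiconvergence, and then rules out $\tau(\omega(U))\subset\Sigma_{in}$ by another periodic-window comparison. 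You should rebuild the argument along these lines.
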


\begin{proof}
  With $\ol\xi(t)$, $\ul\xi(t)$ as above, we only consider the case
  $\ol\xi(t)< \ul\xi(t)$; the arguments in the case
  $\ol\xi(t)> \ul\xi(t)$ are similar. Thus (cp. Figure \ref{graph-xi})
  \begin{equation}
    \label{eq:37}
    \begin{aligned}
      &U_x(x,t)>0\quad(x\in (-\infty, \ol\xi(t))\cup
      (\ul\xi(t),\infty),\
      t\in\R),\\
      &U_x(x,t)<0\quad(x\in (\ol\xi(t),\ul\xi(t)),\ t\in\R).
    \end{aligned}
  \end{equation}

  As in the proof of Lemma \ref{le:z2}, we define
  \begin{align}
    \xi_-(t) & :=\min\left\{ x:U(x,t)=\Theta_-\right\}\text{ \ \ (the first zero of $U-\Theta_-$)}, \nonumber \\
    \xi_+(t) & :=\max\left\{ x:U(x,t)=\Theta_+\right\}\text{ \ (the last zero of $U-\Theta_+$)}. \nonumber 
  \end{align}
  Clearly, for all $t\in \R$, $\xi_\pm (t)$ are defined and
  \begin{equation}
    \xi_\pm
    (t)\in (\ol\xi(t),\ul\xi(t)) \label{eq:38}
  \end{equation}
  ($\xi_-(t)$, $\xi_+(t)$ may be equal, or ordered either way,
  depending on the relation between $\Theta_-$ and $\Theta_+$).
 Since $\xi_\pm (t)$ is a simple zero of $U(\cdot,t)-\Theta_\pm$,
  it is a $C^1$ function of $t$.

  We split the rest of the proof into several steps.

  \textit{Step 1.} We show that
  \begin{equation}
    \label{eq:35}
    \tau\left( A(U)\right)\subset\Sigma_{in},
  \end{equation}
  which in particular gives the first inclusion in Lemma \ref{le:finalc2a2}:
  $\tau(\al(U)\subset\Sii$.

  It is sufficient to prove that the constants $\ga_\pm$ are not
  contained in $A(U)$. Indeed, if this holds, then $A(U)$ does not
  contain any shifts of the standing waves $\Phi_\pm$ either (by
  compactness and translation invariance of $A(U)$).  Consequently, by
  Lemma \ref{le:basicrel},
  $\displaystyle dist\left( \tau\left( A(U)\right),\Lambda_{out}
  \right)>0,$ and \eqref{eq:35} follows upon an application of Lemma
  \ref{squeezinglemma}.

  Assume, for a contradiction that $\ga_+\in A(U)$ (arguments to rule
  out the possibility $\ga_-\in A(U)$ are similar and are omitted).
  Clearly, since $\Theta_\pm=U(\pm\infty,t)$, the function
  $U(\cdot,t)$ is monotone neither on $(-\infty,\xi_-(t))$ nor on
  $(\xi_+(t),\infty).$ Therefore, by Lemma \ref{lemmaBPQ}, $\xi_-(t)$
  is bounded from below and $\xi_+(t)$ from above as $t\to-\infty:$
  there is a constant $K>0$ such that
  \begin{equation}\label{eq:331-9}
    \xi_-(t)>-K,\ \xi_+(t)<K,\quad( t<0).
  \end{equation}
  Since $\ga^+\in A(U)$, there is a sequence $t_n\to-\infty$ such
  that, denoting $x_n:=\ol\xi(t_n)$, we have $U(x_n,t_n)\to\ga_+$ (and
  $U_x(x_n,t_n)=0$). As in Lemma \ref{le:basicrel}(i), passing
  to a subsequence if necessary, we obtain that the sequence of
  functions $U_n:=U(\cdot+x_n,\cdot+t_n)$ converges in
  $C^1_{loc}(\R^2)$ to  $\gamma^+$.  Moreover,
  because of \eqref{eq:38}, \eqref{eq:331-9}, we have
  $U(K,t)<\Theta_+$ for all $t<0,$ thus
  $x_n\to-\infty$ as $n\to\infty.$

  Let $\psi$ be any periodic solution of \eqref{eq:steady} with
  $\tau(\psi)\subset\Pi$ and $\psi(0)>\be_+$, $\psi'(0)=0$.
  Let $2\rho>0$ be the minimal period of $\psi$, so
  $\psi(0)$ is the maximum of $\psi$, and
  $\psi(-\rho)=\psi(\rho)<\be_-$ is the minimum of $\psi$.  Obviously,
  for all large enough $n$, say for all $n>n_0$, we have
  \begin{equation*}
    U(\cdot+x_n,t_n)>\psi\text{ on $[-\rho,\rho]$}.
  \end{equation*}
  Also, due to \eqref{eq:331-9} and the convergence $x_n\to-\infty$,  we
  have, making $n_0$ larger if necessary,
  \begin{equation*}
    U(\pm\rho +x_n,t)>\Theta_->\psi(-\rho)=\psi(\rho)\quad(n>n_0,\
    t\in (t_n,0]). 
  \end{equation*}
  Therefore, by the comparison principle, for $n>n_0$,
  \begin{equation*}
    U(x+x_n,t)>\psi(x)\quad (x\in [-\rho,\rho], \quad t>t_n).
  \end{equation*}
  In particular, at $t=0$, we obtain
  \begin{equation*}
    \max_{x\in [-\rho,\rho]} U(x+x_n,0)\ge \max\psi>\be_+ \quad
    (n>n_0).
  \end{equation*}
  Since $x_n\to -\infty$, we obtain a contradiction to the fact that
  $U(-\infty,0)=\Theta_-<\be_+$.  This contradiction completes the proof of
  \eqref{eq:35}.

  \textit{Step 2.}  We show that
  \begin{equation}\label{eq:341-3}
    \tau\left(\omega(U)\right)\subset\Sigma_{in} \textrm{ or } \tau\left(\omega(U)\right)\subset\Lambda_{out}.
  \end{equation}
  Note that due to \eqref{eq:37}, in both cases $\Theta_-=\Theta_+$
  and $\Theta_-\neq\Theta_+,$ the solution $U(\cdot,t)$ satisfies the
  hypotheses of Theorem \ref{thmPP1}. So  $\omega(U)$
  consists of steady states, and it does not contain non-constant
  periodic functions (cp. Corollary \ref{co:tilde}).  So
  $\tau(\omega(U))\subset \bar\Pi\setminus\mathcal{P}_0$ and it is
  connected.  This gives \eqref{eq:341-3}.

  \textit{Step 3.} In this step we complete the proof of Lemma
  \ref{le:finalc2a2} by showing that
  $\tau(\omega(U))\subset\Lao$. 
  In view of \eqref{eq:341-3}, we just need to rule out the possibility
  \begin{equation}
    \label{eq:40}
    \tau(\omega(U))\subset\Sigma_{in}.
  \end{equation}
  Assume it holds. We derive a contradiction. Pick a sufficiently
  small $\e>0$ such that
  $$\ga_-=\hat p<p-\e,\quad \ga^+= \hat q>q+\e.$$
  Relation \eqref{eq:40} in particular implies that
  for any  $M>0$ there exists $T=T(M)$ such that
  \begin{equation}\label{eq:341-6}
    p-\e<U(x,t)<q+\e\quad(x\in(-M,M),\ t>T(M)).
  \end{equation}

  By Step 1 and Lemma \ref{le:basicrel}(ii), $\Om(U)$
  contains one of the constants $\ga_\pm$ (or a shift of one of the
  standing waves $\Phi_\pm$, and, consequently, also both constants
  $\ga_\pm$).  We only consider the case $\gamma_+\in\Omega(U)$, the
  case $\gamma_-\in\Omega(U)$ being similar.  Hence, there is a
  sequence $t_n\to\infty$ such that, denoting $x_n:=\ol\xi(t_n)$, we
  have
  \begin{equation}\label{eq:341-5}
    U(\cdot+x_n,t_n)\underset{n\to\infty}{\longrightarrow}\gamma_+,
  \end{equation}
  with the convergence in $L^\infty_{loc}(\R).$
 Clearly, \eqref{eq:341-5}, \eqref{eq:341-6} imply that
  $|x_n|\to\infty$.  We claim that necessarily $x_n\to-\infty.$
  Observe that, by
  \eqref{eq:37} and \eqref{eq:38},
  \begin{align}\label{star}
    &U(x,t)>\min(\Theta_-,\Theta_+) \quad
      (-\infty<x<\max\{\xi_-(t),\xi_+(t\}),\\ 
    &U(x,t)<\max(\Theta_-,\Theta_+)\quad
      (\min\{\xi_-(t),\xi_+(t)\}<x<\infty);\label{star1}
  \end{align}
  and $U_x(\xi_\pm(t),t)<0.$ Using the last relation and
  Lemma \ref{lemmaBPQ}, we obtain the
  following monotonicity relations for all $t>0$:
  \begin{align}
    \textrm{if }\xi_-(t)>\xi_-(0), &\textrm{ then }U_x(\cdot,t)<0 \textrm{ on }\left(\xi_-(0),\xi_-(t)\right), \label{eq:341-42} \\
    \textrm{if }\xi_+(t)<\xi_+(0), &\textrm{ then }U_x(\cdot,t)<0 \textrm{ on }\left(\xi_+(t),\xi_-(0)\right). \label{eq:341-43} 
  \end{align}
  From \eqref{star1} and \eqref{eq:341-5}, it follows that there is
  $n_1$ such that 
  $\xi_\pm(t_n)>x_n$ for all  $n>n_1$. If for some $n>n_1$
   it is also true that $x_n>\xi_-(0)$, then the relations
  $\xi_-(0)<x_n<\xi_-(t_n)$ and 
  \eqref{eq:341-42} give $U(\xi_-(0),t_n)>U(x_n,t_n)$. This inequality
  can hold only for finitely many $n$, due to
  \eqref{eq:341-6}, \eqref{eq:341-5}. Thus for all large enough $n$ we
  have $x_n\le \xi_-(0)$, hence, since $|x_n|\to\infty$, it must be
  true that $x_n\to-\infty$, as claimed. 
  
  Pick now a periodic solution $\psi$ of \eqref{eq:steady} with
  $\tau(\psi)\subset\Pi$ such that $\min\psi<p-\e$ and
  $\max\psi>q+\e$. We shift $\psi$ such that 
  $\max\psi=\psi(0)$. Let $2\rho>0$ be the
  minimal period of $\psi$. Thus we have
  \begin{equation*}
    \min\psi=\psi(\pm\rho)<p-\e,\qquad \max\psi=\psi(0)>q+\e.
  \end{equation*}
By \eqref{eq:341-5}, for $n$ large enough,
  \begin{equation}\label{dot}
    U(x_n+x,t_n)>\psi(0)=q+\e\quad(x\in(-\rho,\rho)).
  \end{equation}
By \eqref{star1}, necessarily $\xi_\pm(t_n)>x_n+\rho$. We 
now show that for some large enough $n_0$, the following must
  hold in addition to \eqref{dot}:
  \begin{equation}\label{eq:341-7}
    U(x_{n_0}\pm\rho,t)>\psi(\pm\rho)\quad( t>t_{n_0}).
  \end{equation}
  If this does not hold, then there exist arbitrarily large $n$ such that 
  for some  $\tilde{t}_n>t_n$ one has
  $\displaystyle U\left( x_n+\bar\rho,\tilde{t}_n\right)=\psi(\bar\rho)=p-\e$,
  where $\bar\rho$ is either $-\rho$ or $\rho$.
  Since $U(\cdot,t)>\min(\Theta_-,\Theta_+)$ on
  $(-\infty,\xi_+(t))$ (cp. \eqref{star}), it follows that
  $\displaystyle\xi_+\left(\tilde{t}_n\right)<x_n+\bar\rho$.
  But, due to
  $x_n\to-\infty$, we also have $x_n+\bar\rho<\xi_+(0)$ if $n$ is large
  enough; so, by \eqref{eq:341-43},
  $\displaystyle U\left(\xi_+(0),\tilde{t}_n\right)<U\left(
    x_n+\bar\rho,\tilde{t}_n\right)=p-\e$. This cannot be true for
  arbitrarily large $n$, due to
  \eqref{eq:341-6}, 
  so \eqref{eq:341-7} must indeed
  hold for some, arbitrarily large, $n_0$.

  Using \eqref{dot}, \eqref{eq:341-7}, and  the comparison principle,
  we obtain $U(x_{n_0},t)>\psi(0)=q+\e,$
  for all $t>t_{n_0}.$  This is a contradiction to \eqref{eq:341-6}.

  We have shown that the assumption \eqref{eq:40} leads to a
  contradiction, which
  concludes the proof of Lemma \ref{le:finalc2a2}.
\end{proof}

\subsubsection{Case (C3):  $(\Theta_-,0)\in \Sigma_{in}$  and
  $(\Theta_+,0)\in \Lambda_{out}$}
Our assumption in this subsection is that 
$(\Theta_-,0)\in \Sigma_{in}$ and $(\Theta_+,0)\in \Lambda_{out}$.
The case
$(\Theta_+,0)\in \Sigma_{in}$ and $(\Theta_-,0)\in \Lambda_{out}$ is
 analogous and will be skipped. 

For definiteness, we also assume that $\Theta_+=\hp$ (hence
$f(\hp)=0$); the other possibility, $\Theta_+=\hq$, can be treated in
an analogous way.
  
\begin{lemma}
  \label{le:c3om}
  Under condition {\rm (C3)},
  $ \tau\left(\omega(U)\right)\subset\Lambda_{out}$.
\end{lemma}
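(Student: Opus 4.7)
The plan is to mirror the three-step strategy of Step~3 of the proof of Lemma \ref{le:finalc2a2}, adapted to the asymmetric setup of (C3). First, under (C3) with $\Theta_+=\hp$, the limits of $U$ at $\pm\infty$ are distinct: $(\Theta_-,0)\in\Sigma_{in}$ forces $\Theta_-\in(\be_-,\be_+)\subset[p,q]$ by \eqref{A1beta}, while $\Theta_+=\hp\le p<\be_-<\Theta_-$. Theorem \ref{thmPP1}(i) applied to $U$ (viewed as a solution of \eqref{eq:1} with initial datum $U(\codt,0)\in\cV$) then shows that $\om(U)$ consists entirely of steady states. Since no element of $\om(U)$ is a nonconstant periodic steady state (Corollary \ref{co:tilde}) and all have trajectory in $\bar\Pi$, we have $\tau(\om(U))\subset\Sigma_{in}\cup\La_{out}$. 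The set $\tau(\om(U))\subset\R^2$ is connected (parabolic regularity gives connectedness of $\om(U)$ in $C^1_{loc}(\R)$), and $\Sigma_{in}$, $\La_{out}$ are disjoint closed sets, so we obtain the dichotomy
\[
\tau(\om(U))\subset\Sigma_{in}\quad\text{or}\quad\tau(\om(U))\subset\La_{out}.
\]

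Next, I would rule out $\tau(\om(U))\subset\Sigma_{in}$ by contradiction. Under this hypothesis, every $\varphi\in\om(U)$ takes values in $[p,q]$, and the $L^\infty_{loc}$-attraction of $\om(U)$ yields, for any $M>0$ and small $\de>0$, a time $T(M)$ with
\[
p-\de<U(x,t)<q+\de\qquad (|x|\le M,\ t\ge T(M)).
\]
The task is to produce a sequence $(x_n,t_n)$ with $t_n\to\infty$ and $x_n\to-\infty$ such that $U(\codt+x_n,t_n)\to\phi$ in $C^1_{loc}(\R)$, where $\phi$ is a steady state with $\tau(\phi)\subset\La_{out}$ and $\max\phi>q+\de$. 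Such a $\phi$ lies in $\Om(U)$ by the following argument: the set $\tau(\Om(U))\subset\bar\Pi$ is connected in $\R^2$, contains $(\hp,0)\in\La_{out}$ from the trivial right-tail limit $x_n\to+\infty$, and contains $\tau(\om(U))\subset\Sigma_{in}$; any connected subset of $\bar\Pi$ joining $\Sigma_{in}$ to $\La_{out}$ must meet $\Pi$, and via Lemma \ref{squeezinglemma} (applied to periodic orbits contained in $\Pi$ sufficiently close to $\La_{out}$) this forces $\tau(\Om(U))$ to contain a point of $\La_{out}$ at height~$>q+\de$. The main technical obstacle is then to show that this nontrivial $\La_{out}$-limit is realized along a sequence $x_n\to-\infty$ rather than $x_n\to+\infty$; on the right tail, $U(x,t)$ is controlled by $U(\infty,t)=\hp$ and should approach $\hp$ in a monotone fashion, precluding large-amplitude limits there. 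This will require a careful monotonicity argument based on Lemma \ref{lemmaBPQ} applied to the outermost zeros of $U(\codt,t)-c$ for $c\in(\hp,p)$, together with the strip bound to control the location of the critical points of $U(\codt,t)$ for large~$t$.

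The final step is the comparison argument, essentially identical to Step~3 of Lemma \ref{le:finalc2a2}. Pick a nonconstant periodic solution $\psi$ of \eqref{eq:steady} with $\tau(\psi)\subset\Pi$, shifted so that $\psi(0)=\max\psi>q+\de$ and $\psi(\pm\rho)=\min\psi<p-\de$, where $2\rho$ is its minimal period. The convergence $U(\codt+x_n,t_n)\to\phi$ on $[-\rho,\rho]$ gives $U(x+x_n,t_n)>\psi(x)$ there for large~$n$. Combining $x_n\to-\infty$, the strip bound, and Lemma \ref{lemmaBPQ}, one verifies the boundary inequalities $U(x_n\pm\rho,t)>\psi(\pm\rho)=\min\psi$ for all $t\ge t_n$. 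The parabolic comparison principle on $[x_n-\rho,x_n+\rho]\times[t_n,\infty)$ then gives $U(x_n,t)>\psi(0)>q+\de$ for every $t\ge t_n$, contradicting the strip bound at $x=x_n$ once $t\ge T(|x_n|+\rho)$.
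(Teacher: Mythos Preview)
Your opening step---using Theorem~\ref{thmPP1}(i) to get quasiconvergence of $U$ and then the connectedness dichotomy $\tau(\om(U))\subset\Sigma_{in}$ or $\tau(\om(U))\subset\La_{out}$---is fine and matches the paper. After that, however, the paper does \emph{not} try to mimic Step~3 of Lemma~\ref{le:finalc2a2}; it takes a much shorter route, and your attempted adaptation has real gaps.

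The paper rules out $\tau(\om(U))\subset\Sigma_{in}$ by direct comparison with an auxiliary solution that converges to an element of $\La_{out}$. In the heteroclinic case (A2), one picks a decreasing $\tilde u_0$ with $\ga_+>\tilde u_0>\ga_-$, $\tilde u_0\ge U(\cdot,0)$; by Fife--McLeod, $\tilde u(\cdot,t)\to\Phi^-(\cdot-\eta)$ uniformly, so every $\vp\in\om(U)$ satisfies $\vp\le\Phi^-(\cdot-\eta)$, and a nonperiodic steady state in $\bar\Pi$ with this bound must lie on $\La_{out}$. In the homoclinic case (A1), one invokes a threshold solution from \cite{Matano_Polacik_CPDE16} converging to $\Phi$, arranges $z(\tilde u_0(\cdot-\eta)-U(\cdot,0))=1$, and uses zero-number monotonicity to force $z(\Phi(\cdot-\eta)-\vp)\le1$ for $\vp\in\om(U)$, again forcing $\tau(\vp)\subset\La_{out}$. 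No analysis of $\Om(U)$, no tracking of critical points, no periodic-solution comparison is needed.

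Your route, by contrast, has two genuine holes. First, the claim that $\Om(U)$ contains a steady state $\phi$ with $\tau(\phi)\subset\La_{out}$ and $\max\phi>q+\de$ is not established: the connectedness argument gives only that $\tau(\Om(U))$ meets $\Pi$, and neither Lemma~\ref{squeezinglemma} nor Lemma~\ref{le:basicrel} upgrades this to a limit at height $>q$ (the right tail already gives $\hat p\in\Om(U)$, and nothing prevents all $\La_{out}$-contact from occurring near $(\hat p,0)$). Second---as you yourself flag---the boundary inequalities $U(x_n\pm\rho,t)>\min\psi$ for all $t\ge t_n$ that drive the final comparison rely, in the (C2) proof, on the two-hump structure and relations~\eqref{star}--\eqref{eq:341-43}; under (C3) there is no analogous structural information, and the invocation of Lemma~\ref{lemmaBPQ} ``applied to outermost zeros of $U(\cdot,t)-c$'' is only a hope, not an argument. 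Even if both gaps could eventually be closed, the resulting proof would be substantially longer than the paper's half-page comparison argument.
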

\begin{proof}
  Theorem \ref{thmPP1} implies that $U$ is quasiconvergent, hence and
  $\omega(U)$ contains only non-periodic steady states or constant
  steady states.

  If $\Lao$ is a heteroclinic loop, as in (A2), we choose a decreasing
  continuous function $\tilde u_0$ such that
  $\tilde u_0(-\infty) =\ga_+>\tilde u_0 >\ga_-=\tilde u_0(\infty)$
  and $\tilde u_0\ge U(\codt,0)$. By the comparison principle,
  the corresponding solution
  $\tilde u=u(\cdot,\cdot,\tilde u_0)$ of \eqref{eq:1} satisfies
  $\tilde u(\cdot,t)>U(\cdot,t)$ for all $t>0$.  By
  \cite[Theorem 3.1]{FMcL}, the (front-like) solution
  $\tilde u(\codt,t)$ converges in $L^\infty(\R)$ to a shift of the
  decreasing standing wave $\Phi^-$, say $\Phi^-(\codt-\eta)$, as
  $t\to\infty$.  This implies that for all $\varphi\in\om(U)$, we have
  $\varphi\le \Phi^-(\codt-\eta)$. Now, every $\varphi\in\om(U)$ is a
  steady state with 
  $\tau(\varphi)\subset\bar\Pi$.
  Therefore, $\varphi\le \Phi^-(\codt-\eta)$ implies that
  $\varphi$ is identical to
  $\hq=\ga^-$ or to a shift of $\Phi^-$. In particular,
  $\tau(\varphi)\subset \Lao$.

  If $\Lao$ is a homoclinic loop, as in (A1), we have $\ga=\hp$ since
  we are assuming that $\Theta^+=\hp$ and $f(\hp)=0$.  The arguments
  here are similar as for the heteroclinic loop, but instead of a
  front-like solution, we compare $U$ to a solution which converges to
  a shift of the ground sate $\Phi$.  For that, we find a continuous
  function $\tilde u_0$ with the following properties:
  \begin{itemize}
    \item[(s1)]\quad $\tilde u_0$ is even and monotone
      nondecreasing in $(-\infty,0)$;
    \item[(s2)]\quad $\tilde u_0>\ga, \quad\tilde u_0(\pm\infty)=\ga$;
    \item[(s3)]\quad $\tilde u_0(x)\ge U(x,0)$ for all sufficiently large $x>0$;
    \item[(s4)]\quad the 
      solution   $\tilde u(\cdot,t):=u(\cdot,t,\tilde u_0)$ converges
      in $L^\infty(\R)$ to  $\Phi$  as $t\to\infty$. 
  \end{itemize}
  Such a function $\tilde u_0$ is provided by \cite[Theorem
  2.6]{Matano_Polacik_CPDE16}. More specifically, take first a
  continuous function $u_1$ satisfying (s1)--(s3)
  and such that $u_1\equiv \be_+$ on the interval $(-\ell,\ell)$ (so,
  in particular, $u_1\le \be_+$). According to Theorem~2.6 of
  \cite{Matano_Polacik_CPDE16}, if $\ell$ is sufficiently large, then
  for some $\la>1/2$ the function $\tilde u_0:=\ga+2\la (u_1-\ga)$
  satisfies (s4) (the corresponding solution
  $\tilde u(\cdot,t)$ is a threshold solution in the terminology of
  \cite{Matano_Polacik_CPDE16}); it obviously satisfies
    (s1)--(s3) as well.

  Since $U(x,0)$ is decreasing for large $x>0$, relations (s1)--(s3)
   imply that for a sufficiently large
  $\eta>0$ we have $z(\tilde u_0(\cdot-\eta)-U(\codt,0))=1$.
  Therefore, $z(\tilde u(\cdot-\eta,t)-U(\codt,t))\le 1$ for all
  $t>0$.  As a consequence, taking into account that the difference of
  any two steady states \eqref{eq:1} has only simple zeros, we have
  $z(\Phi(\codt-\eta)-\varphi)\le 1$ for all $\varphi\in \om(U)$.
  Therefore, any $\varphi\in\om(U)$ is identical to $\hq=\ga$ or to a
  shift of $\Phi$. In particular, $\tau(\varphi)\subset \Lao$.

\end{proof}

Turning our attention to $\alpha(U)$, we start with a preliminary
lemma.

\begin{lemma}\label{le:3.17}
  Assume {\rm (C3)} holds.  Then $\alpha(U)$ does not contain any function
  $\varphi$ with $\tau(\varphi)\subset \Lao$.
\end{lemma}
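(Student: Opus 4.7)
The plan is to argue by contradiction: suppose $\varphi\in\alpha(U)$ has $\tau(\varphi)\subset\Lao$. Then there is a sequence $t_n\to-\infty$ such that $U(\cdot,t_n)\to\varphi$ in $L^\infty_{loc}(\R)$ and, by parabolic regularity, in $C^1_{loc}(\R)$. Since $\tau(\varphi)\subset\Lao$, the steady state $\varphi$ must be one of the constants $\hat p$, $\hat q$, a shift of the ground state $\Phi$ (if (A1) holds), or a shift of one of the standing waves $\Phi^\pm$ (if (A2) holds). The key tension is that $U(-\infty,t)=\Theta_-\in(\be_-,\be_+)$ places the behavior at $x=-\infty$ strictly inside $\Sigma_{in}$, far from $\Lao$, while the assumed convergence would place $U(\cdot,t_n)$ near $\Lao$.

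First I would dispose of all subcases with $\varphi\not\equiv\hat p$: in each such subcase $\sup\varphi=\hat q$, either attained (for $\hat q$ or $\Phi(\cdot-\xi_0)$) or approached at $\pm\infty$ (for $\Phi^\pm(\cdot-\xi_0)$). I fix a family of periodic solutions $\psi_k$ of \eqref{eq:steady} with $\tau(\psi_k)\subset\Pi$ approaching $\Lao$ from inside, normalized so that $\psi_k'(0)=0$, $\psi_k(0)=\max\psi_k\uparrow\hat q$, minimal period $2\rho_k\to\infty$, and $\min\psi_k=\psi_k(\pm\rho_k)\downarrow\hat p$. For each $k$ I pick a shift $y_k$ (bounded, chosen near the point where $\varphi$ is close to $\hat q$) and an index $n_k$ so that the local-uniform convergence $U(\cdot,t_n)\to\varphi$ yields $U(\cdot+y_k,t_{n_k})>\psi_k$ on $[-\rho_k,\rho_k]$. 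Then I apply the comparison principle to $U(\cdot+y_k,\cdot)$ versus $\psi_k$ on the rectangle $[-\rho_k,\rho_k]\times[t_{n_k},0]$ to conclude $U(y_k,0)\ge\psi_k(0)=\max\psi_k$. Letting $k\to\infty$ yields $U(y_k,0)\to \hat q$; passing to a subsequence (either $y_k$ bounded or $y_k\to\pm\infty$) and using $\tau(U(\cdot,0))\subset\Pi$ together with $U(\pm\infty,0)\in\{\Theta_-,\hat p\}$, each of which is strictly less than $\hat q$, yields a contradiction.

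The residual case $\varphi\equiv\hat p$ is handled separately: then $U(\cdot,t_n)\to\hat p$ locally uniformly, while $U(-\infty,t)=\Theta_->\be_->\hat p$ for every $t$. Consequently there exist points $y_n\to-\infty$ with $U(y_n,t_n)=\be_-$ (the transition from values near $\Theta_-$ to values near $\hat p$ must recede to $-\infty$). Translating by setting $V_n(x,t):=U(x+y_n,t+t_n)$ and extracting a $C^1_{loc}$ limit, I obtain an entire solution $V$ with $V(0,0)=\be_-$ and $\tau(V(\cdot,t))\subset\bar\Pi$ for all $t$; in particular $V(\cdot,0)\in A(U)$. Since $(\be_-,0)\in\cI(\Sigma_{in})\subset\R^2\setminus\bar\Pi$, the relation $V(0,0)=\be_-$ forces $V_x(0,0)\ne0$, and Lemma \ref{le:dichotomy} applied to $V$ combined with the previous case analysis (now applied to $V$) yields a contradiction. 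The main obstacle will be verifying the boundary inequalities $U(y_k\pm\rho_k,t)\ge\min\psi_k$ on the entire time interval $[t_{n_k},0]$ in the first step; this requires exploiting that $\tau(U(\cdot,t))\subset\bar\Pi$ gives a pointwise constraint $|U_x|\le\sqrt{2(F(\hat p)-F(U))}$ whenever $U$ is near $\hat p$, converting $C^0$-closeness to $\hat p$ into integrated closeness over long spatial scales, which together with an appropriate choice of parameters $\rho_k,y_k,n_k$ allows the comparison to close.
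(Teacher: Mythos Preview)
Your comparison argument has a genuine gap precisely where you flag ``the main obstacle.'' To run the comparison on $[y_k-\rho_k,y_k+\rho_k]\times[t_{n_k},0]$ you need $U(y_k\pm\rho_k,t)\ge\min\psi_k$ for \emph{every} $t\in[t_{n_k},0]$. Since $\Theta_+=\hat p$ and $\rho_k\to\infty$, the value $U(y_k+\rho_k,t)$ is approaching $\hat p$, while $\min\psi_k$ also approaches $\hat p$; both do so exponentially with rates governed by the same hyperbolic equilibrium $(\hat p,0)$, and the phase-plane inequality $|U_x|\le\sqrt{2(F(\hat p)-F(U))}$ only says $U-\hat p$ decays \emph{at most} exponentially in $x$---the same scale as $\psi_k-\hat p$ near its minimum---so it cannot decide the race in your favor uniformly over the unbounded time window $[t_{n_k},0]$. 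Without an independent uniform-in-$t$ lower bound on $U$ at the moving boundary points, the comparison does not close.

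Your treatment of the residual case $\varphi\equiv\hat p$ is also incomplete. Extracting the translated limit $V$ with $V(0,0)=\be_-$ and $V_x(0,0)\ne 0$ does not yield a contradiction: nothing prevents $V$ from being a steady state with $\tau(V)\subset\Sii$ (note $(\be_-,v)\in\Sii$ for suitable $v\ne0$), or a shift of $\Phi$ taking the value $\be_-$ at $0$, and in either case $V(\cdot,0)\in A(U)$ is harmless since the lemma concerns $\alpha(U)$, not $A(U)$. Your appeal to ``the previous case analysis (now applied to $V$)'' does not connect, because that analysis assumed an element of $\alpha(U)$ with trajectory in $\Lao$, which $V$ need not be.

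The paper proceeds by a different structural device that supplies exactly the uniform-in-time control you are missing. Using Lemma~\ref{lemmaBPQ} applied to the first zero $\eta(t)$ of $U(\cdot,t)-\Theta_-$, one gets $\eta(t)>\kappa$ for all $t<0$, hence either $U(\cdot,t)<\Theta_-$ or $U(\cdot,t)>\Theta_-$ on the \emph{fixed} half-line $(-\infty,\kappa]$ for all $t<0$. This immediately rules out half of the candidate $\varphi$'s by a pointwise bound, and for the remaining ones it provides lateral boundary control $U(\kappa,t),U(\kappa-\rho,t)<\be_+$ for a comparison \emph{from above} with periodic solutions $\psi$ normalized so that $\psi(\kappa)=\psi(\kappa-\rho)=\be_+$; sending $\psi$ toward $\Phi$ (or $\Phi^+$) forces $U(\cdot,0)\to\hat p$ on a half-line, contradicting $U(-\infty,0)=\Theta_-$. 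The essential difference is that the paper compares on a fixed spatial interval with boundary values bounded a priori by $\Theta_-$, whereas you compare from below on growing intervals where no such bound is available.
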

\begin{proof}
  We assume that
  \begin{equation}
    \label{eq:33a}
    N^\pm=z(U(\cdot,t)-\be_\pm)>0,
  \end{equation}
  the case when $N^-=0$ or $N^+=0$ having been settled by Lemma
  \ref{le:ep}(iii).


  Recall that we are also assuming, without loss of generality, that
  $f(\hp)=0 $ and $\Theta_+=\hp$.  Thus,
  $U(-\infty,t)=\Theta_-\in(\beta_-,\beta_+)$ and $U(\infty,t)=\hp$
  for all $t\in\R.$ Using Lemma \ref{le:zerob},
  we obtain that the function $U(\cdot,t)$ is
  decreasing to $\hp$ as $x\to\infty$, and monotone near
  $x=-\infty$, and the function $U(\cdot,t)-\Theta_-$ has only
  finitely many zeros, all of them simple, with
  $N:=z(U(\cdot,t)-\Theta_-)$ independent of $t$.

  We denote by $\eta(t)$ the first zero of $U(\cdot,t)-\Theta_-.$
  Since $\Theta_-=U(-\infty,t)$, the function $U(\cdot,t)$ is not
  monotone on $(-\infty,\eta(t))$. Therefore, by Lemma \ref{lemmaBPQ},
  there is $\kappa\in\R$ such that
  \begin{equation}
    \eta(t)>\kappa\quad (t<0).\label{eq:41}
  \end{equation}
  We distinguish the following two possibilities
  \begin{itemize}
  \item[(pi)] \quad $U(\cdot,t)<\Theta_-$ on
    $(-\infty,\eta(t))$ \quad ($t\in\R$) 
  \item[(pii)] \quad $U(\cdot,t)>\Theta_-$ on
    $(-\infty,\eta(t))$ \quad ($t\in\R$)
  \end{itemize}

  Assume (pi). Then \eqref{eq:41} implies that for all $t<0$
  \begin{equation}\label{eq:333-4a}
    U(x,t)\leq\Theta_-<\beta_+\quad(x\le\kappa)
  \end{equation}
  and any function in $\alpha(U)$ has to satisfy this relation.  In
  particular, the constant $\hq$ and any shift of $\Phi_-$ (if $\Lao$
  is a heteroclinic loop) are ruled out from $\al(U)$.  It remains to
  rule out the constant $\hp$, any shift of the ground state $\Phi$
  (if $\Lao$ is a homoclinic loop), and any shift of $\Phi_+$ (if
  $\Lao$ is a heteroclinic loop). Take any of these functions,
  denoting it by $\varphi$, and assume for a contradiction that
  $\varphi\in \alpha(U)$. By \eqref{eq:333-4a},
  \begin{equation}
    \label{eq:42}
    \varphi(x)\leq\Theta_-<\beta_+\quad(x\le\kappa).
  \end{equation}
  Take any periodic solution $\psi$ of \eqref{eq:steady} with
  $\tau(\psi)\subset\Pi$, $\psi(\kappa)=\beta_+,$ and
  $\psi'(\kappa)>0$.  Obviously, there is $\rho>0$ such that
  $\psi(\kappa-\rho)=\beta_+.$ We claim that
  \begin{equation}
    U(x,t)<\psi\quad(x\in[\kappa-\rho,\kappa],\ t\le0).\label{eq:43}
  \end{equation}
  Due \eqref{eq:333-4a}, this follows from the comparison principle if
  we can find a sequence $t_n\to-\infty$ such that the claim is valid
  for $t=t_n$, $n=1,2,\dots$. Note that the function $\varphi$, fixed
  as above, satisfies $\varphi<\psi$ on $(\kappa-\rho,\kappa)$.  This
  is trivial if $\varphi\equiv \hp$; if $\varphi$ is a shift of the
  ground state or the increasing standing wave, it follows from
  \eqref{eq:42} (otherwise, a shift of the graph of $\psi$ we would be
  touching the graph  $\varphi$ at some point,
  which is impossible for two distinct solutions of \eqref{eq:steady}).
  Since $\varphi\in \al(U)$, there is a sequence $t_n\to-\infty$ such that
  $U(\cdot,t_n)\to \varphi$ locally uniformly.  This sequence,
  possibly after omitting a finite number of terms, has the desired
  property.

  Thus, \eqref{eq:43} has to hold for any periodic solution $\psi$
  with the indicated properties. We now choose a sequence of such
  periodic orbits $\psi_k$ converging locally uniformly on $\R$ to a
  shift of the ground state (if $\Lao$ is a homoclinic loop) or a
  shift of $\Phi_+$ (if $\Lao$ is a heteroclinic loop).  In either
  case, the relations \eqref{eq:43} with $\psi=\psi_k$, $k=1,2,\dots$
  and $t=0$, contradict the relations $U(-\infty,0)=\Theta_->\hp$. This
  contradiction completes the proof if (pi) holds.  
 
  Now assume (pii). Then \eqref{eq:41} implies that for all $t<0$
  \begin{equation*}
    U(x,t)\geq\Theta_->\hp\quad (x\le\kappa).
  \end{equation*}
  Therefore, each function in $\alpha(U)$ has to satisfy this
  inequality, which shows that the following functions cannot be
  elements of $\alpha(U)$: the constant $\hp$, any shift of the ground
  state $\Phi$ (if $\Lao$ is a homoclinic loop), any shift of the
  increasing standing wave $\Phi_+$ (if $\Lao$ is a heteroclinic
  loop). Thus,  we only need to show that if $\Lao$ is a
  heteroclinic loop, then $\al(U)$ does not contain the constant $\hq$
  or any shift of $\Phi_-$. The arguments for this are analogous to
  the arguments used in the case (pi) to show that $\al(U)$
  does not contain the constant $\hp$ or any shift of $\Phi_+$
  and are omitted. 
\end{proof}

We conclude this section by the following lemma, which, in conjunction
with Lemma \ref{le:c3om}, shows that \eqref{eq:6a} holds in the case
(C3) as well.

\begin{lemma}\label{le:3.18}
  Assume {\rm (C3)}. Then $\tau\left( \alpha(U)\right)\subset\Sigma_{in}.$

\end{lemma}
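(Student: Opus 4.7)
The plan is to combine the exclusion provided by Lemma \ref{le:3.17} with the $\omega$-limit classifications already established for entire solutions satisfying (HU) in cases (C1), (C2), (C3). Let $\varphi \in \alpha(U)$ and let $\tilde U$ be the entire solution of \eqref{eq:1} with $\tilde U(\cdot,0) = \varphi$ and $\tilde U(\cdot,t) \in \alpha(U)$ for all $t \in \R$, obtained in the usual way from the compactness and invariance of $\alpha(U)$. By Corollary \ref{co:tilde}, $\tilde U$ satisfies (HU), so Lemma \ref{le:dichotomy} yields the dichotomy: either $\tilde U$ is identical to a steady state $\phi$ with $\tau(\phi) \subset \Sigma_{in}\cup\Lambda_{out}$, or $\bigcup_{t\in\R}\tau(\tilde U(\cdot,t)) \subset \Pi$.

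In the first case, $\varphi = \phi \in \alpha(U)$, so Lemma \ref{le:3.17} rules out $\tau(\phi) \subset \Lambda_{out}$. Since $\tau(\phi)$ is a single orbit of \eqref{eq:sys} (hence connected) and $\Sigma_{in}$, $\Lambda_{out}$ are disjoint subsets of $\bar\Pi\setminus\mathcal{P}_0$, we conclude $\tau(\varphi) = \tau(\phi) \subset \Sigma_{in}$, as required.

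The substantive step is to exclude the second alternative. Assume, toward a contradiction, that $\tilde U$ is not a steady state. By Theorem \ref{thm:GS}, the set $\omega(\tilde U)$ contains a steady state $\tilde\phi$; by (HU) and Corollary \ref{co:tilde}, $\tilde\phi$ is not a nonconstant periodic function, and $\tau(\tilde\phi) \subset \bar\Pi\setminus\mathcal{P}_0 = \Sigma_{in}\cup\Lambda_{out}$. Since $\alpha(U)$ is closed in $L^\infty_{\mathrm{loc}}(\R)$ and $\tilde U(\cdot,t) \in \alpha(U)$ for all $t$, one has $\omega(\tilde U) \subset \alpha(U)$, so $\tilde\phi \in \alpha(U)$; Lemma \ref{le:3.17} then forces $\tau(\tilde\phi) \subset \Sigma_{in}$. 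On the other hand, since $\Sigma_{in}$ is nontrivial, Corollary \ref{co:rment}(i) says that the limits $\tilde U(\pm\infty,t)$ are constants in $\Sigma_{in}\cup\Lambda_{out}$, so $\tilde U$ falls into one of the cases (C1), (C2), (C3). In case (C1), Lemma \ref{le:conv} shows $\tilde U(\cdot,t)$ converges in $L^\infty(\R)$ to a steady state with trajectory in $\Lambda_{out}$, hence $\tau(\omega(\tilde U)) \subset \Lambda_{out}$. In case (C2), if \eqref{eq:33} were to fail, Lemma \ref{le:ep}(iii) would force $\tilde U \equiv \hat p$ or $\tilde U \equiv \hat q$, contradicting the assumption that $\tilde U$ is not a steady state; hence \eqref{eq:33} holds, and Lemmas \ref{le:finalc2a1} (homoclinic $\Lambda_{out}$) and \ref{le:finalc2a2} (heteroclinic $\Lambda_{out}$) give $\tau(\omega(\tilde U)) \subset \Lambda_{out}$. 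In case (C3), Lemma \ref{le:c3om} yields the same inclusion directly. In all cases $\tau(\tilde\phi) \subset \Lambda_{out}$, contradicting $\tau(\tilde\phi) \subset \Sigma_{in}$. This rules out the second alternative and proves the lemma.

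The main obstacle, and the point where the earlier work of the section enters essentially, is forcing the $\omega$-limit trajectories of the auxiliary solution $\tilde U$ onto $\Lambda_{out}$; once this is achieved, the clash with Lemma \ref{le:3.17} closes the argument. The mild subtlety is that, although $U$ lies in case (C3), the entire solution $\tilde U$ through a point of $\alpha(U)$ need not, and so one must pass separately through all three cases (C1), (C2), (C3), each time using the relevant lemma established earlier in this section.
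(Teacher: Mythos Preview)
Your argument is correct and follows the same route as the paper: pass to the entire solution $\tilde U$ through an arbitrary $\varphi\in\alpha(U)$, use Corollary~\ref{co:tilde} to place $\tilde U$ in the setting of (HU), and then invoke the $\omega$-limit results already established in cases (C1), (C2), (C3) to force $\tau(\omega(\tilde U))\subset\Lambda_{out}$, which collides with Lemma~\ref{le:3.17}. The only slip is in your handling of case (C2) when \eqref{eq:33} fails for $\tilde U$: the hypotheses of Lemma~\ref{le:ep}(iii) (namely $\tilde U\le\beta_-$ or $\tilde U\ge\beta_+$) are \emph{not} met there, since in (C2) the vanishing of $N^+$ gives $\tilde U<\beta_+$ and the vanishing of $N^-$ gives $\tilde U>\beta_-$; the correct reference is Lemma~\ref{le:ep}(i), which yields $\omega(\tilde U)=\{\hat p\}$ or $\{\hat q\}$ (not that $\tilde U$ is constant), and this still gives $\tau(\omega(\tilde U))\subset\Lambda_{out}$, so your contradiction goes through unchanged.
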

 
\begin{proof}
  From lemma \ref{le:3.17} (combined with Lemma \ref{le:2.7}), we know
  that for any $\vp_0\in\alpha(U),$ the trajectory $\tau(\vp_0)$
 is disjoint from $\Lambda_{out}.$

  Fix an arbitrary $\vp_0\in\alpha(U)$, we need to prove that
  $\tau(\varphi_0)\subset \Sii$. Let $\tilde{U}$ be the entire
  solution with $\tilde{U}(\cdot,0)=\vp_0$. Then $\tilde U$ satisfies
  (HU) (cp. Corollary \ref{co:tilde}). Therefore, we may apply to
  $\tilde U$ the results concerning the $\om$-limit set already proved
  in this subsection and in the previous two subsections.  Thus, if
  $\vp_0$ is not a steady state, then
  $\tau(\om(\tilde U))\subset \Lao$. This would mean---since
  $\tau(\om(\tilde U))\subset\al(U) $ by the invariance and
  compactness of $\al(U)$---that $\al(U)$ contains a
  function $\vp$ with $\tau(\vp)\subset \Lao$, in contradiction to
  Lemma \ref{le:3.17}.  Therefore, $\vp_0$ has to be a steady
  state. It is not periodic (cp. Corollary \ref{co:tilde}) and
  $\tau(\vp_0)$ is not contained in $\Lao$ by Lemma \ref{le:3.17}. We
  are left with the desired option $\tau(\vp_0)\subset \Sii$, completing the
  proof.
\end{proof}



\section{Proof of Proposition \ref{prop:3.3} in the case $\Pi=\Pi_0$}
\label{completionprop}
Proposition \ref{prop:entire}, proved in the previous section,
implies 
that statement (ii) of Proposition \ref{prop:3.3} holds if
$\Pi\ne \Pi_0$. We now consider the case $\Pi= \Pi_0$ (and so
$\Sii=\{(0,0)\}$), assuming  that conditions (U), (NC), (R) hold.
 We further assume that
$\vp\in\omega(u)$, 
  $U$ is the entire solution of \eqref{eq:1} with $U(\cdot,0)=\vp$,
  and \begin{equation}
      \label{eq:5t}
     \underset{t\in\R}{\textstyle \bigcup}\tau\left( U(\cdot,t)\right)\subset\Pi_0.
    \end{equation}
    We prove that
    \begin{equation}
      \label{eq:6t}
      \alpha(U)=\{0\},\qquad \tau\left(\omega(U)\right)\subset\Lambda_{out},
    \end{equation}
    thereby completing the proof of Proposition \ref{prop:3.3} (note that
    $\alpha(U)=\{0\}$ is equivalent to $\tau(\alpha(U))=\{(0,0)\}=\Sii$).

    We use a similar  notation as in the previous section:
    \begin{equation}
  \label{eq:19t}
  \begin{aligned}
    \hat p&:=\inf\{a\in\R: (a,0)\in \Pi_0\}=\inf\{a\in\R: (a,0)\in
    \La_{out}\},\\ 
    \hat q&:=\sup\{a\in\R: (a,0)\in \Pi_0\}=\sup\{a\in\R: (a,0)\in
    \La_{out}\}.
  \end{aligned}
\end{equation}
Thus, $\{\hat p,\hat q\}=\{\ga, \Phi(0)\}$ if (A1) holds; and
$\hat p=\ga_-$, $\hat q =\ga_+$ if (A2) holds, where conditions (A1),
(A2) are as in Section \ref{sec:entire} (cp. \eqref{eq:21},
\eqref{eq:22}).

Recall from Lemmas \ref{le:pmnm}, \ref{le:bddcp} that
$k:=z(U(\cdot,t))$, $\ell:=z(U_x(\cdot,t))$ are finite and independent of $t$,
all zeros of $U(\cdot,t)$, $U_x(\cdot,t)$ are  
simple for all $t$, and the zeros of $U_x(\cdot,t)$ are
contained in an interval $(-d,d)$ independent of $t$. 
Further,  $U(\cdot,t)$ has no positive local minima and no negative
local maxima. This means that the zeros and critical points of
$U(\cdot,t)$ alternate.

Clearly, one of the following possibilities occurs:
\begin{equation}
  \label{eq:55}
  \text{$k=0$;   \quad $\ell\ge 2$; \quad $\ell=1$ and $k=1$; \quad
  $\ell=1$ and $k=2$; \quad $\ell=0$ and $k=1$.}
\end{equation}
We differentiate with respect to these four possibilities.

If $k=0,$ then \eqref{eq:6t} is a direct consequence of Lemma \ref{le:ep}(i).

Next we show that $\ell\geq2$ is impossible.
Assume it holds.
Then $U(\cdot,t)$ has at least one zero contained between
two successive critical points---hence contained in $(-d,d)$---for all
$t$. If  $U(\pm \infty,t)\le 0$, then the function
$U(\cdot,t)$ assumes its positive global maximum in $(-d,d)$, at one of the
local maxima.
We claim that $U$ has to stay below $\hat q-\vartheta$ for some
$\vartheta>0.$ Assume otherwise:  
then there exist sequences $(x_n)$ in $(-d,d)$ and $(t_n)$ in $\R$ such that 
$(U(x_n,t_n),U_x(x_n,t_n))$ converges to $(\hat q,0)$ as $n\to\infty,$
and by Lemma 
\ref{le:basicrel}(i), up to a subsequence, $U(\cdot+x_n,\cdot+t_n)$
converges in $C^1_{loc}(\R)$ to some steady state $\phi$
as $n\to\infty$ with $\tau(\phi)\subset\Lambda_{out}.$ On the other
hand, $U(\cdot+x_n,t_n)$ admits two critical  
points  in $(-d,d)$  where it takes opposite signs,  which clearly
contradicts the convergence to $\phi$. Thus our claim is proved and
Lemma \ref{le:ep}(i) now implies that 
$\om(U)=\{\hat p\}$. This, however, is also prevented by the existence of
a zero in   $(-d,d)$ and we have a contradiction. Similarly one shows
that the relations $U(\pm \infty,t)\ge 0$ lead to a contradiction.
If $U(- \infty,t)> 0>U(\infty,t)$ or $U(- \infty,t)< 0<U(\infty,t)$,
then, by 
  \cite[Theorem 3.1]{FMcL}, the (front-like) solution
  $U(\codt,t)$ converges as $t\to\infty$ to
  a standing wave of \eqref{eq:1} 
  in  $C_b^1(\R)$. But this implies that  $U(\codt,t)$ has no critical
  points in $[-d,d]$, and we have a  contradiction again.

Now consider the case  $\ell=1=k$. We denote by
$\xi(t)$, $\eta(t)$ the critical point and the zero of $U(\cdot,t)$,
respectively. 
For definiteness, we assume that $\xi(t)<\eta(t)$ and
$U(\cdot,t)>0$ in $(-\infty,\eta(t))$; the other possibilities that
can occur in the case $\ell=k=1$ can be treated similarly. It follows that $\xi(t)$
is the point of positive maximum of $U(\cdot,t).$
First, we dispose of the possibility that $\Lao$ is a homoclinic
loop. Since $\xi(t)<\eta(t)$ and $\xi(t)$ is
the unique critical point of $U(\cdot,t)$, we have
$U_x(\cdot,t)<0$ in $[\eta(t),\infty)$, in particular
$U(\infty,t)<U(\eta(t),t)=0$. Therefore, $U(\infty,t)$ converges to a
stable equilibrium of the equation $\dot \zeta=f(\zeta)$
as $t\to\infty$. In 
 view \eqref{eq:5t}, this equilibrium has to equal $\hat p$, which
 gives $f(\hat p)=0$. So if $\Lao$ is a homoclinic
loop as in (A1), the ground state $\Phi$ satisfies
$\Phi(\pm\infty)=\hat p$ and $\Phi(0)=\hat q$.
Using Lemma \ref{le:basicrel}(i) and the facts that
$\Phi$ has two zeros while $U(\cdot,t)$ has only one and that
$U(\xi(t),t)>0$ with $\xi(t)$ bounded,  
one shows easily that the global maximum of
$U(\cdot,t)$, namely $U(\xi(t),t)$,
has to stay below  $\hat q-\vartheta$ for some
$\vartheta>0$. By Lemma \ref{le:ep}(i), 
$\om(U)=\{\hat p\}$, which contradicts the  relation 
$U(\xi(t),t)>0$. We may thus proceed assuming that $\Lao$ is a heteroclinic
loop, in particular $\hp$, $\hq$ are zeros of $f$.
As above, if $U(\xi(t),t)$ 
stays below  $\hat q-\vartheta$ for some
$\vartheta>0$, then Lemma \ref{le:ep}(i) yields a contradiction. 
Thus, there is a sequence $t_n\in \R$ such that that
$U(\xi(t_n),t_n)\to \hq $ and, then, by Lemma \ref{le:basicrel}(i)
and the fact that $\xi(t_n)\in (-d,d)$, up to some subsequence,
$U(\cdot,t_n)\to \hq $
in $L^\infty_{loc}(\R)$. Obviously, the sequence $\{t_n\}$ is
unbounded.  Pick any periodic solution $\psi$ of \eqref{eq:steady} with
$\tau(\psi)\subset\Pi_0$ and $\min\psi=\psi(-d)\le 0$. Let $2\rho>0$ be the
minimal period of $\psi$. Then, for any large enough $n$
we have $U(\cdot,t_n)>\psi$ in $[-d-2\rho, -d]$. Since also
$U(\cdot,t)>0\ge \psi(-d)=\psi(-d-2\rho)$ in $(-\infty,-d]$,
the comparison principle
gives $U(\cdot,t)>\psi$ in $[-d-2\rho,-d]$ for all $t>t_n$. Consequently, 
\begin{equation}
\label{eq:56}
U(x,t)>\psi(-d-\rho)=\max \psi\quad (x\in [-d-\rho,-d], \, t>t_n),
\end{equation}
since $U_x(\cdot,t)>0$ in $(-\infty,-d)$ (the only critical point of
$U(\cdot,t)$ is in $(-d,d)$ and it is the maximum point). 
Using \eqref{eq:56} and taking admissible periodic solutions
with $\max\psi\to\hq$ (and $\rho\to\infty)$, we obtain 
two conclusions. First, $U(\cdot,t)\to \hq$
in $L^\infty_{loc}(-\infty,-d)$ as $t\to\infty$ and, consequently,
$\tau(\om(U))=\{(\hq,0)\}\subset \Lao$. Second, 
the sequence $\{t_n\}$ has to be bounded from below (otherwise
\eqref{eq:56} leads to $U(\cdot,0)\equiv \hq$, which is absurd).
This implies that there is $\vartheta>0$ such that
$U(\xi(t),t)<\hat q-\vartheta$ for all $t<0$.
Lemma \ref{le:ep}(ii) now shows that $\tau(\al(U))=\{(0,0)\}$,
completing the proof of \eqref{eq:6t} in the case $\ell=1=k$.

In the case $\ell=1$ and $k=2$, 
we denote by
$\xi(t)$ the unique critical point  of $U(\cdot,t)$ and assume
for definiteness
$U(\xi(t),t)>0$, so $U(\xi(t),t)$ is the global maximum of
$U(\cdot,t)$. If $U(\xi(t),t)<0$, the arguments are analogous.
Since $k=2$, we have  $U(\pm \infty,t)<0$.
The possibility
$U(\pm \infty,t)=\hp$ can be treated in much the same way
as the case (C1) with $\Theta_-=\Theta_+$ in Subsection \ref{subsc1}:
\eqref{eq:6t} holds in this case. 
Consider the opposite possibility,
$U(-\infty,t)>\hp$ or $U(\infty,t)>\hp$. We assume, again just for
definiteness, that the former holds. Then,
since $U(-\infty,t)$ is a solution of $\dot \zeta=f(\zeta)$, 
$U(-\infty,t)\to 0$ as $t\to-\infty$ and $U(-\infty,t)\to \hp$ as
$t\to \infty$. In particular, $f(\hp)=0$. If $\Lao$ is a
heteroclinic loop, it is easy to prove, using
\cite[Theorem  3.1]{FMcL} as in \cite[Proof of Lemma 3.4]{P:examples}
for example, that $U(\cdot,t)\to \hp$ as $t\to\infty$, uniformly on
$\R$. This, of course, is impossible when $k=2$. Thus $\Lao$ has to be
a homoclinic loop, as in (A1), and the ground state $\Phi$ satisfies
$\Phi(\pm\infty)=\hat p$ and $\Phi(0)=\hat q$.
We claim that there is $\vartheta>0$ such that
$U(\xi(t),t)<\hat q-\vartheta$ for all $t<0$.
Indeed, otherwise, by Lemma \ref{le:basicrel}(i) and the boundedness
of $\xi(t)$,
there is a sequence $t_n\to-\infty$ such that
$U(\cdot,t_n)$ approaches a shift of the ground state in
$L_{loc}^\infty(\R)$. This in
conjunction with the property that $U(-\infty,t)\to 0$
as $t\to-\infty$ would imply that $U(\cdot,t_n)$ has more than one
critical point if $n$ is large enough, a contradiction to $\ell=1$.
Thus, our claim is proved and 
Lemma \ref{le:ep}(ii) now implies
that $\tau(\al(U))=\{(0,0)\}$.
For the proof of 
\eqref{eq:6t}, we now prove that
$\dist((0,0),\tau(\om(U)))>0$. Once proved,
this will yield a nonstationary periodic orbit $\cO$ of \eqref{eq:sys}
such that
$\tau(\om(U))\subset\R^2\setminus{\overline{\mathcal{I}}(\mathcal{O})}$
from which \eqref{eq:6t} follows at once upon an application of Lemma
\ref{squeezinglemma}. Assume for a contradiction that
$\dist((0,0),\tau(\om(U)))=0$. Then
Lemma \ref{le:basicrel}(i) yields a sequence $(x_n,t_n)\in\R^2$ such
that $t_n\to\infty$ and $U(\cdot+x_n,t_n)\to 0$ in
$C^2_{loc}(\R)$. This implies that given any periodic solution
$\psi$ of \eqref{eq:steady} with $\tau(\psi)\subset\Pi_0$ one has
$z(U(\cdot,t_n)-\psi)\to\infty$ as $n\to\infty$. On the other hand,
since $U(\pm \infty,0)<0$, picking $\psi$ near 0, so that
$\psi>U(\pm \infty,0)$, we obtain that for $t\ge 0$ 
the zero number $z(U(\cdot,t)-\psi)$ is finite and therefore bounded
as $t$ increases to infinity. This contradiction completes the proof
of \eqref{eq:6t} in the case  $\ell=1$ and $k=2$.

Finally, we deal with the case $\ell=0$ and $k=1$. 
Clearly,
$U(\pm \infty,t)$ are nonzero and have opposite signs.
Assume for definiteness that $U(-\infty,t)<0<U(\infty,t)$.
The assumption $\ell=0$ then means that $U_x>0$ everywhere. 
Being solutions of $\dot \zeta=f(\zeta)$,
$U(\pm \infty,t)$ converge to stable equilibria of this equation as
$t\to\infty$. By \eqref{eq:5t}, these equilibria have to be $\hp$, $\hq$: 
$U(-\infty,t)\to \hp$, $U(\infty,t)\to\hq$. In particular,
$f(\hp)=f(\hq)=0$
and $\Lao$ is
a heteroclinic loop. Using
\cite[Theorem  3.1]{FMcL}, we obtain that the (front-like) solution
$U(\cdot,t)$ approaches a
shift of the increasing standing wave $\Phi_+$, as $t\to\infty$,
uniformly on $\R$, so $\tau(\om(U))\subset \Lao$. 
We now claim that $\al(U)=\{0\}$.  If
$U(-\infty,t)=\hp$, $U(\infty,t)=\hq$, our claim can be proved by
essentially the same arguments 
as those used in the case
(C1) with $\Theta_-<\Theta_+$ in Subsection \ref{subsc1};
see the proof of Lemma \ref{le:c1alpha}, the relevant part 
starts with ``Case (i) of Lemma \ref{le:conv}.''
If
$U(-\infty,t)>\hp$, then $U(-\infty,t)\to 0$ as $t\to-\infty$. This, 
in conjunction with the relation $U_x>0$,
implies that $\phi\ge 0$ for any $\phi\in\al(U)$. Similarly, if
$U(\infty,t)<\hq$, then  $\phi\le 0$ for
any $\phi\in\al(U)$. Thus, if $U(-\infty,t)>\hp$ and
$U(\infty,t)<\hq$, we are done:  $\al(U)=\{0\}$.
It remains to consider the possibility when exactly one of this
inequalities holds, say $U(-\infty,t)>\hp$ and
$U(\infty,t)=\hq$ (the case $U(-\infty,t)=\hp$ and
$U(\infty,t)<\hq$ is analogous).
If there is $\phi\in\al(U)$, $\phi\not\equiv 0$, then
$\al(U)$ contains the constant $\hq$. To see this take the
solution $\tilde U$ of \eqref{eq:1} with $\tilde U(\codt,0)=\phi$
and $\tilde U(\codt,t)\in \al(U)$ for all $t \in\R$. Then, since
$\phi\ge 0$,   $\phi\not\equiv 0$, we have 
$\tilde U(\codt,t)\to \hq$ in $L^\infty_{loc}(\R)$. The compactness
of $\al(U)$ gives $\hq\in\al(U)$, as claimed. This, however, can be
proven to be contradictory by the same comparison argument
involving the function \eqref{eq:57} as in the proof of
Lemma \ref{le:c1alpha}. This shows that $\al(U)=\{0\}$ and completes
the proof of \eqref{eq:6t} in the case $\ell=0$, $k=1$.

\section{Morse decompositions and the  proofs 
	of the quasiconvergence results}\label{morse-dec}

In this section, we prove Theorems \ref{thm:1}, \ref{thm:2}, and
\ref{thm:3}, giving also a stronger and more precise version of
Theorem \ref{thm:2}, see Theorem
\ref{thm:4} below. We prepare the proofs by recalling
some results concerning Morse decompositions and chain recurrence. 

Throughout this section, we assume that $u_0$ is as in
\eqref{localizedu0}---specifically, 
$u_0\in\mathcal{V}$ and  $u_0(-\infty)=u_0(+\infty)=0$---and
the solution of \eqref{eq:1}, \eqref{ic1}
is bounded. In what follows,
the $\om$-limit set of this solution,  $\om(u_0)$,
is viewed as a compact subset of $L^\infty_{loc}(\R)$
equipped with the induced topology
(in which it is a compact metric space).

We start by recalling the following result
of \cite[Lemma 6.2]{p-Fe}. Consider 
a bounded set $Y$  in $C_b(\R)$ which is
positively invariant for \eqref{eq:1}, meaning that
$\bar u_0\in Y$ implies that $ u(\cdot,t,\bar u_0)\in Y$
for all $t>0$.
\begin{lemma}
	\label{le:cont}
	Let   $Y$ be a bounded set in $C_b(\R)$
	which is positively invariant for \eqref{eq:1}.
	Equip $Y$ with a metric from $L^\infty_{loc}(\R)$.   Given any $T>0$,
	there is $L=L(T)\in (0,\infty)$ such that for each  $t\in (0,T)$
	the map $\bar u_0\mapsto u(\cdot,t,\bar u_0):Y\to
	Y$ is Lipschitz continuous with Lipschitz constant $L$. 
\end{lemma}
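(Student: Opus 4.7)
The plan is to reduce the claim to a Gaussian kernel estimate for the linear parabolic equation satisfied by the difference of two solutions, and then pair this estimate with an exponentially weighted sup metric that generates the $L^\infty_{loc}(\R)$ topology on the bounded set $Y$.

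First I would fix $\bar u_0,\bar v_0\in Y$, set $u:=u(\cdot,\cdot,\bar u_0)$, $v:=u(\cdot,\cdot,\bar v_0)$, and $w:=u-v$. By hypothesis (MF), $f$ is globally Lipschitz, with some constant $L_f$, so $w$ solves the linear parabolic equation
\[
w_t=w_{xx}+c(x,t)w,\qquad \|c\|_\infty\le L_f,
\]
where $c(x,t):=(f(u(x,t))-f(v(x,t)))/(u(x,t)-v(x,t))$, extended by $0$ at points where $u=v$. Positive invariance and boundedness of $Y$ give $\|w(\cdot,t)\|_\infty\le 2M$ for all $t\ge 0$, where $M:=\sup_{\varphi\in Y}\|\varphi\|_\infty$.

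Next I would invoke the standard heat-kernel bound: with $G(z,t):=(4\pi t)^{-1/2}e^{-z^2/(4t)}$ and
\[
W(x,t):=e^{L_f t}\int_\R G(x-y,t)|w(y,0)|\,dy,
\]
the parabolic maximum principle applied on $\R\times(0,T]$ to each of $W\pm w$ (which satisfy linear parabolic inequalities with non-positive zeroth-order coefficient after factoring out $e^{L_f t}$) yields the pointwise estimate $|w(x,t)|\le W(x,t)$. The global boundedness of both $w$ and $W$ supplies the growth control at spatial infinity needed to legitimate the maximum principle on the whole line.

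Finally I would equip $Y$ with the metric $d(\varphi,\psi):=\sup_{x\in\R}e^{-|x|}|\varphi(x)-\psi(x)|$, which on this bounded set generates the $L^\infty_{loc}(\R)$ topology: for one direction $\|\varphi-\psi\|_{L^\infty(-R,R)}\le e^R d(\varphi,\psi)$; for the other, local uniform smallness combined with the exponential tail and the uniform bound $|\varphi-\psi|\le 2M$ yields $d(\varphi,\psi)\to 0$. Multiplying the Gaussian bound by $e^{-|x|}$ and using the reverse triangle inequality $|y|-|x|\le|x-y|$ together with $e^{|z|}\le e^z+e^{-z}$, I would compute
\[
e^{-|x|}|w(x,t)|\le e^{L_f t}d(\bar u_0,\bar v_0)\int_\R G(z,t)e^{|z|}\,dz \le 2e^{(L_f+1)t}d(\bar u_0,\bar v_0),
\]
using the textbook identity $\int_\R G(z,t)e^{\pm z}\,dz=e^t$. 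Taking suprema over $x\in\R$ and over $t\in(0,T)$ gives the uniform Lipschitz constant $L(T):=2e^{(L_f+1)T}$. The only genuinely delicate step is justifying the Gaussian bound $|w|\le W$ on the whole line with a possibly sign-changing potential $c$; everything else is a routine computation with the heat kernel and the exponential weight.
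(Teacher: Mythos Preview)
The paper does not actually prove this lemma; it merely cites it as \cite[Lemma~6.2]{p-Fe}. Your argument is correct and self-contained. The choice of the exponentially weighted sup metric $d(\varphi,\psi)=\sup_x e^{-|x|}|\varphi(x)-\psi(x)|$ is exactly the standard device for turning $L^\infty_{loc}$ into a metric on an $L^\infty$-bounded set, and your Gaussian comparison is the expected one: after writing $w=e^{L_f t}z$, the rescaled function $z$ solves $z_t=z_{xx}+(c-L_f)z$ with $c-L_f\le 0$, the heat-kernel average of $|w(\cdot,0)|$ is nonnegative and solves the pure heat equation, hence is a supersolution, and the Phragm\'en--Lindel\"of form of the maximum principle applies since all functions involved are bounded. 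The computation $\int_\R G(z,t)e^{|z|}\,dz\le 2e^{t}$ and the resulting constant $L(T)=2e^{(L_f+1)T}$ are accurate.

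One small remark: Lipschitz continuity is metric-dependent, so strictly speaking the lemma is a statement about \emph{some} metric inducing the $L^\infty_{loc}$ topology, and your choice of $d$ makes this explicit. This is consistent with how the lemma is used in the paper (only continuity of the flow on $\omega(u_0)$ is actually needed downstream), and it is almost certainly the same metric used in the cited reference.
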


We now consider the solution flow on 
$\omega(u_0)$. For any $t\in \R$, let $G(t):\om(u_0)\to\om(u_0)$ be defined
by $G(t)\vp=U(\cdot,t)$,  where $U(\cdot,t)$ is the entire solution
of \eqref{eq:1} with $U(\cdot,0)=\vp$. As noted in Section
\ref{sec:22}, this entire solution is well (and uniquely) defined
and satisfies $U(\cdot,t)\in\om(u_0)$ for all $t\in \R$.
We claim that the family $G(t)$, $t\in\R$, defines a flow on
$\om(u_0)$, that is,
\begin{itemize}
	\item[(i)] $G(0)$ is the identity on $\om(u_0)$,
	\item[(ii)] $G(t+ s) = G(t)G(s)$ \quad ($s, t\in \R)$, 
	\item[(iii)] for each $t_0 \in \R$, the map $G(t_0)$ is continuous.   
\end{itemize}
Property (i) is  obvious. The group property (ii) follows
from the uniqueness of $U$  and the time-translation invariance of
\eqref{eq:1}. The continuity of $G(t_0)$ for
$t_0 > 0$ follows from Lemma \ref{le:cont} applied to
$Y=\om(u_0)$. Let now  $t_0<0$. 
Properties (i) and (ii)  imply that $G(t_0)$ is the inverse to the
continuous map $G(-t_0)$.  Since $\om(u_0)$ is compact, the inverse is
continuous, too.

Obviously, for any fixed $\varphi$, the map $t\mapsto
G(t)\varphi:\R\to\om(u_0)$ is continuous. In fact, the map 
$(\varphi,t)\mapsto G(t)\varphi:\om(u_0)\times\R\to\om(u_0)$
is (jointly) continuous. This can be proved  easily using Lemma
\ref{le:cont}, but the fact that the joint continuity follows from
the separate continuity in $t$ and
$\varphi$ is 
a general property of flows
(see \cite[Section 8A]{Marsden-M}).

Next we note that 
the flow $G(t)$, $t\in\R$, on $\om(u_0)$ is chain recurrent in the
following sense. Let $d$ be a metric on $\om(u_0)$
compatible with the topology of $L^\infty_{loc}(\R)$.
For any $\varphi \in \omega (u_0)$, $\varepsilon > 0$, $T > 0$,
there exist
an integer $k \geq 1$, real numbers 
$t_1, \cdots, t_k \geq T$,
and elements
$\varphi_0, \cdots, \varphi_k \in \omega (u_0)$ with $\varphi_0 =
\varphi=\varphi_k$ such that

\begin{equation*}
d(G(t_{i + 1}) \varphi_i,\varphi_{i + 1}) < \varepsilon \qquad (0 \leq i < k).
\end{equation*}
This chain recurrence property of the $\om$-limit set of  solutions with
compact orbits is well-known from 
\cite[Sect. II.6.3]{Conley_78}, where it is proved for flows on
compact metric spaces. For semiflows, including those generated by
parabolic equations, proofs can be found in 
\cite[Lemma~7.5]{Chen_Polacik1995}, \cite[Proposition 1.5]{Mischaikow-S-T},
\cite[Lemma~4.5]{p-Fo:conv-asympt}. Of course, the continuity result
\ref{le:cont} is needed here, as the limit set $\om(u_0)$ is taken
with respect to the topology of $L^\infty_{loc}(\R)$.

Finally, we recall that
a Morse decomposition for $G$ is a system $\cM_1,\dots,\cM_k$ of
mutually disjoint
compact subsets of $\om(u_0)$ with the following properties:
\begin{itemize}
	\item[(mi)] For $j=1,\dots,k$, the set $\cM_j$
	is invariant under $G$: $G(t)\varphi\in \cM_j$ for all
	$\varphi\in\cM_j$ and $t\in\R$.
	\item[(mii)]  If $\varphi\in
	\om(u_0)\setminus \cup_{j=1,\dots,k} \cM_j$ and 
	$U(\cdot,t)=G(t)\varphi$ is the corresponding entire solution, then
	for some $i,j\in \{1,\dots,k\}$ with $i<j$ one has
	$\al(U)\subset \cM_i$ and $\om(U)\subset \cM_j$.
\end{itemize}
(Note that in our
definition of $\al(U)$, $\om(U)$, we use
the convergence in the topology of
$L^\infty_{loc}(\R)$, and the same topology is used on 
$\om(u_0)$.) The following result of \cite[Theorem II.7.A]{Conley_78}
will be instrumental below. The chain recurrence property of the flow
$G$ implies that if  $\cM_1,\dots,\cM_k$ is a Morse decomposition for $G$,
then
\begin{equation}
\label{eq:39}
\om(u_0)\subset \bigcup_{j=1,\dots,k} \cM_j.
\end{equation}

In the proofs of our theorems, we build Morse decompositions for $G$
using chains of \eqref{eq:sys}.
Consider
the system 
\begin{equation}\label{eq:4.2}
\Sigma_j,\ j=1,\ldots k,
\end{equation}
of all chains $\Sigma$ of \eqref{eq:sys} with the property that
$\Sigma\cap\tau(\om(u_0))\ne\emptyset$ (as
noted in Section \ref{sec:22}, conditions (ND), (MF) imply that there are
only finitely many chains).

Given any two  distinct
chains $\Sigma$, $\tilde
\Sigma$, we have, according to Lemma \ref{le:insert}(ii),
that either
$\Sigma\subset \mathcal{I}(\tilde \Sigma)$, or
$\tilde \Sigma\subset \mathcal{I}( \Sigma)$, or else
there are periodic orbits $\cO_1$, $\cO_2$ of \eqref{eq:sys}
such that 
$\ol{\cI}(\cO_1)\cap\ol{\cI}( \cO_2)=\emptyset$ and 
$\Sigma\subset \mathcal{I}(\cO)$,
$\tilde \Sigma\subset \mathcal{I}(\tilde\cO)$.
For chains $\Sigma$, $\tilde \Sigma$
intersecting $\tau(\om(u_0))$, the last possibility is
ruled out by Lemma \ref{le:3.5}(i). 
Thus,  relabelling the chains in \eqref{eq:chains} if necessary,
we may assume that 
\begin{equation}\label{eq:4.3}
\Sigma_j\subset\cI(\Sigma_{j+1}),\ j=1,\ldots k-1.
\end{equation}

We will utilize Morse decompositions with Morse sets of the form
\begin{equation}
\label{eq:49}
\{\varphi\in\om(u_0):\tau(\varphi)\subset \Sigma\},
\end{equation}
or
\begin{equation}
\label{eq:49a}
\{\varphi\in\om(u_0):\tau(\varphi)\subset \ol{\cI}(\Sigma)\},
\end{equation}
where $\Sigma$ is one of the chains \eqref{eq:chains}. Let us prove
first of all that these are compact subsets of $\om(u_0)$.
\begin{lemma}
	\label{le:compact}
	If $\Sigma$ is any  of the chains \eqref{eq:chains}, then the sets
	\eqref{eq:49}, \eqref{eq:49a} are closed subsets of $\om(u_0)$. 
\end{lemma}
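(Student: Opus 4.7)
The plan is straightforward: both sets are characterized by the requirement that the spatial trajectory lies in a closed subset of $\R^2$, and convergence in $L^\infty_{loc}(\R)$ for elements of $\om(u_0)$ upgrades automatically to $C^1_{loc}(\R)$, which is enough to pass such closedness to limits.

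More concretely, I would argue as follows. Suppose $\{\varphi_n\}$ is a sequence in the set \eqref{eq:49} (respectively \eqref{eq:49a}) such that $\varphi_n\to\varphi$ in the topology of $\om(u_0)$, i.e.\ in $L^\infty_{loc}(\R)$, with $\varphi\in\om(u_0)$. For each $n$ the function $\varphi_n$ arises as $U_n(\cdot,0)$ for an entire solution $U_n$ with $U_n(\cdot,t)\in\om(u_0)$, and the family $\{U_n\}$ is uniformly bounded. Standard interior parabolic estimates applied to this uniformly bounded family yield uniform bounds on $\partial_xU_n$, hence by the Arzel\`a-Ascoli theorem the convergence $\varphi_n\to\varphi$ actually takes place in $C^1_{loc}(\R)$ (this is the same upgrade that is invoked in Section~\ref{sec:inv} to define $\tau(\om(u))$ and deduce its connectivity). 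Consequently, for each $x\in\R$,
\begin{equation*}
\bigl(\varphi_n(x),\varphi_n'(x)\bigr)\ \underset{n\to\infty}{\longrightarrow}\ \bigl(\varphi(x),\varphi'(x)\bigr).
\end{equation*}

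Now I would invoke the relevant closedness property of the target set in $\R^2$. As noted in Section~\ref{sec:22}, every chain $\Sigma$ is compact in $\R^2$, so $\Sigma$ is closed; and the set $\ol{\cI}(\Sigma)=\cI(\Sigma)\cup\Sigma$ is closed by construction. Thus, for the set \eqref{eq:49}, the hypothesis $\tau(\varphi_n)\subset\Sigma$ for all $n$ gives $(\varphi_n(x),\varphi_n'(x))\in\Sigma$ for every $n$ and every $x$, so the pointwise limit satisfies $(\varphi(x),\varphi'(x))\in\Sigma$ by closedness; since $x$ was arbitrary, $\tau(\varphi)\subset\Sigma$, i.e.\ $\varphi$ lies in \eqref{eq:49}. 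The identical argument, with $\ol{\cI}(\Sigma)$ in place of $\Sigma$, handles \eqref{eq:49a}.

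There is no serious obstacle here. The only point worth flagging is the regularity upgrade from $L^\infty_{loc}$ to $C^1_{loc}$, which is needed because the topology on $\om(u_0)$ is the one from $L^\infty_{loc}$ while the condition $\tau(\varphi)\subset\Sigma$ (or $\ol{\cI}(\Sigma)$) involves the derivative $\varphi'$; however this upgrade is built into the parabolic setting and has already been used in the paper, so I would simply cite it rather than reprove it.
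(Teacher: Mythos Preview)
Your proposal is correct and follows essentially the same argument as the paper: upgrade the $L^\infty_{loc}$ convergence in $\om(u_0)$ to $C^1_{loc}$ convergence (the paper invokes compactness of $\om(u_0)$ in $C^1_{loc}(\R)$ directly, while you spell out the parabolic-estimate reason), then use closedness of $\Sigma$ (respectively $\ol{\cI}(\Sigma)$) in $\R^2$ to pass the spatial-trajectory inclusion to the limit pointwise in $x$.
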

\begin{proof}
	We prove the result for \eqref{eq:49}; the proof for \eqref{eq:49a}
	is similar and is omitted. Assume that $\varphi_n$, $n=1,2,\dots$
	belong to the set   \eqref{eq:49} and $\varphi_n\to\varphi$ in
	$\om(u_0)$. This means, a priori, that $\varphi_n\to\varphi$ in
	$L^\infty_{loc}(\R)$, but since $\om(u_0)$ is compact in
	$C^1_{loc}(\R)$ (cp. Section \ref{sec:inv}), we  also have
	$\varphi_n\to\varphi$ in
	$C^1_{loc}(\R)$. Pick 
	any $x\in\R$. Then $(\varphi_n(x),\varphi_n'(x))\to
	(\varphi(x),\varphi'(x))$. Since the set $ \Sigma$ is
	obviously closed in $\R^2$ and $(\varphi_n(x),\varphi_n'(x))\in
	\tau(\varphi_n)\subset \Sigma$, we obtain that
	$(\varphi(x),\varphi'(x))\in\Sigma$. Since $x\in\R$ was arbitrary,
	we have proved that $\varphi$ belongs to the set \eqref{eq:49}.
\end{proof}

We are ready to complete the proofs of our main theorems.
In proving the quasiconvergence results,
Theorems \ref{thm:1} and  \ref{thm:3}, our goal is to show
that there is a chain $\Sigma$ of
\eqref{eq:sys} such that 
\begin{equation}
\label{eq:final}
\tau(\om(u_0)\subset \Sigma.
\end{equation}
This inclusion implies, 
by Lemma \ref{le:2.7}, 
that $\om(u_0)$ consists of steady states of
\eqref{eq:1}, and also gives an additional information that
the spatial trajectories of the
functions in $\om(u_0)$ are all contained in one chain.

\begin{proof}[Proof of Theorems \ref{thm:1}, \ref{thm:3}]
	We use the chains in \eqref{eq:4.2} to define the following sets
	\begin{equation}
	\label{eq:morse}
	\cM_j:=\{\varphi\in\om(u_0):\tau(\varphi)\subset
	\Sigma_j\},\quad j=1,\dots,k. 
	\end{equation}
	They are obviously mutually disjoint---as the chains \eqref{eq:4.2}
	are such, cp. \eqref{eq:4.3}---and by Lemma \ref{le:compact} they are
	compact in $\om(u_0)$. Since the sets $\cM_j$
	consist of steady states (cp. Lemma
	\ref{le:2.7}), they are invariant for $G$. Take now an arbitrary 
	$\varphi\in
	\om(u_0)\setminus \cup_{j=1,\dots,k} \cM_j$, if there is any such
	$\varphi$, and let    
	$U(\cdot,t)=G(t)\varphi$ be the corresponding entire solution.
	By the definition of the sets \eqref{eq:morse} and \eqref{eq:4.2},
	$\tau(\vp)$ is not contained in any chain. Therefore,
	Proposition \ref{prop:3.3} tells us that---under the hypotheses of Theorem \ref{thm:1} or Theorem \ref{thm:3}--- there are
	two chains $\Sii$, $\Sigma_{out}$ such that
	$\Sii\subset\cI(\Sigma_{out})$ and
	\begin{equation}
	\label{eq:6b}
	\tau\left(\alpha(U)\right)\subset\Sigma_{in},\qquad \tau\left(\omega(U)\right)\subset\Sigma_{out}
	\end{equation}
	(for $\Sigma_{out}$, we take the chain containing the loop $\Lao$, with
	$\Lao$ as in \eqref{eq:6}). 
	Since $\al(U),\om(U)\subset \om(u_0)$, the inclusions
	\eqref{eq:6b} imply that $\Sii=\Sigma_\ell$, $\Sigma_{out}=\Sigma_j$ for
	some $\ell,j\in\{1,\dots,k\}$; and $\Sii\subset\cI(\Sigma_{out})$ implies
	that $\ell<j$. We have thus proved that
	$\cM_1,\dots,\cM_k$ is a Morse decomposition for
	the flow $G$. From \eqref{eq:39} and the connectedness of $\om(u_0)$
	we now conclude that $k=1$, that is, there is only one chain in
	\eqref{eq:4.2} and \eqref{eq:final} holds for that chain, as desired. 
\end{proof}

\begin{remark}
	\label{rm:onR}{\rm
		Hypotheses  (R) of Theorem \ref{thm:3} is only needed in the
		proof of Proposition \ref{prop:3.3} in the case that (U) holds and
		$\Pi=\Pi_0$ is the connected
		component of $\mathcal{P}_0$ whose closure contains $(0,0)$
		(see Section \ref{completionprop}).
		If this part of 
		Proposition \ref{prop:3.3} could be proved under weaker or no
		conditions in place of (R), then the above proof would work without
		change. 
	}
\end{remark}

We next state and prove a  stronger
version of Theorem \ref{thm:2}.
Recall from Section \ref{sec:3} that if  (U) holds, Proposition \ref{prop:3.3} holds true for any 
connected component $\Pi$ of $\mathcal{P}_0$ distinct from $\Pi_0.$
. 

\begin{theorem}
	\label{thm:4}
	Assume that {\rm (U)} holds.   If 
	$\varphi\in \om(u_0)$ is such that $\tau(\vp)$ is not contained in
	$\Pi_0\cup\{(0,0)\}$, then  $\varphi$ is a steady
	state of \eqref{eq:1}.  
\end{theorem}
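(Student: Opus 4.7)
The plan is to argue by contradiction and to adapt the Morse decomposition argument from the proofs of Theorems \ref{thm:1} and \ref{thm:3}. Suppose $\varphi \in \omega(u_0)$ is not a steady state and $\tau(\varphi) \not\subset \Pi_0 \cup \{(0,0)\}$. Write $\Lambda_0 := \Lambda_{out}(\Pi_0)$, so that $\mathcal{I}(\Lambda_0) = \Pi_0 \cup \{(0,0)\}$ and the hypothesis reads $\tau(\varphi) \not\subset \mathcal{I}(\Lambda_0)$. Since $\varphi$ is not a steady state, $\tau(\varphi)$ is not contained in any chain (by Lemma \ref{le:2.7}), so $\tau(\varphi) \cap \Pi \neq \emptyset$ for some connected component $\Pi$ of $\mathcal{P}_0$. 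The crucial first reduction is $\Pi \neq \Pi_0$: otherwise $\tau(\varphi) \cap \mathcal{I}(\Lambda_0) \neq \emptyset$, and Lemma \ref{le:3.4} applied to the loop $\Lambda_0$ forces $\tau(U(\cdot,t)) \subset \mathcal{I}(\Lambda_0)$ for every $t \in \R$, contradicting the hypothesis. Proposition \ref{prop:3.3}, whose conclusions require no extra condition when $\Pi \neq \Pi_0$, then yields $\tau(U(\cdot,t)) \subset \Pi$ for all $t$, together with $\tau(\alpha(U)) \subset \Sigma_{in}(\Pi)$ and $\tau(\omega(U)) \subset \Lambda_{out}(\Pi)$.

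To convert this local information into a global contradiction via the chain recurrence of the flow $G$ on $\omega(u_0)$, I will set up a Morse decomposition of $\omega(u_0)$ with one extra Morse set encoding the ``interior'' of $\Lambda_0$. Define
\[
\mathcal{M}_\star := \{\psi \in \omega(u_0) : \tau(\psi) \subset \overline{\mathcal{I}}(\Lambda_0)\},
\]
which is closed by the analogue of Lemma \ref{le:compact}, and invariant: for any non-steady-state $\psi \in \mathcal{M}_\star$, Lemma \ref{le:3.4} applied to $\Lambda_0$ gives $\tau(U_\psi(\cdot,t)) \subset \mathcal{I}(\Lambda_0)$, while steady-state elements are fixed by $G$. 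For each chain $\Sigma_j$ intersecting $\tau(\omega(u_0))$ that is not contained in $\overline{\mathcal{I}}(\Lambda_0)$, set $\mathcal{M}_j := \{\psi \in \omega(u_0) : \tau(\psi) \subset \Sigma_j\}$ as in the proof of Theorem \ref{thm:1}. After a minor adjustment at steady states whose spatial trajectories lie on $\Lambda_0$ (these can appear in both $\mathcal{M}_\star$ and in the Morse set of the chain containing $\Lambda_0$, and may be assigned to exactly one of them), the sets $\mathcal{M}_\star, \mathcal{M}_1, \ldots, \mathcal{M}_k$ are mutually disjoint, closed, and invariant. For any $\psi \in \omega(u_0)$ lying outside every Morse set, $\psi$ is not a steady state and $\tau(\psi) \not\subset \overline{\mathcal{I}}(\Lambda_0)$; repeating the reduction of the previous paragraph with $\psi$ in place of $\varphi$ yields $\Pi_\psi \neq \Pi_0$, and Proposition \ref{prop:3.3} gives $\alpha(U_\psi) \subset \mathcal{M}_{\Sigma_{in}(\Pi_\psi)}$ and $\omega(U_\psi) \subset \mathcal{M}_{\Sigma_{out}(\Pi_\psi)}$ with these chains properly nested (and outside $\overline{\mathcal{I}}(\Lambda_0)$, since $\Pi_\psi \cap \mathcal{I}(\Lambda_0) = \emptyset$).

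By \eqref{eq:39} together with the connectedness of $\omega(u_0)$ and the disjointness of the closed Morse sets, exactly one of them must equal $\omega(u_0)$. If $\omega(u_0) = \mathcal{M}_j$ for some $j \geq 1$, then $\varphi$ would be a steady state, a contradiction. If $\omega(u_0) = \mathcal{M}_\star$, then $\tau(\varphi) \subset \overline{\mathcal{I}}(\Lambda_0)$; combined with $\tau(\varphi) \not\subset \mathcal{I}(\Lambda_0)$, the trajectory $\tau(\varphi)$ must meet $\Lambda_0$, and either $\tau(\varphi) \subset \Lambda_0$ (so $\varphi$ is a steady state by Lemma \ref{le:2.7}) or $\tau(\varphi)$ meets both $\Lambda_0$ and $\mathcal{I}(\Lambda_0)$, in which case Lemma \ref{le:3.4} forces $\tau(U(\cdot,t)) \subset \mathcal{I}(\Lambda_0)$, contradicting $\tau(\varphi) \cap \Lambda_0 \neq \emptyset$. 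Each case produces a contradiction, so no such $\varphi$ exists. The hardest part will be the rigorous verification of the Morse decomposition axioms, in particular securing the disjointness between $\mathcal{M}_\star$ and the chain-Morse-set associated with the chain $\Sigma_0 \supset \Lambda_0$; once this is settled, the remainder of the argument parallels the proofs of Theorems \ref{thm:1} and \ref{thm:3}.
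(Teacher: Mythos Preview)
Your strategy is the paper's: build a Morse decomposition on $\omega(u_0)$ with one ``big'' invariant set absorbing everything near the origin, plus chain-Morse-sets outside, then use chain recurrence and connectedness. The crucial reduction $\Pi\neq\Pi_0$ via Lemma~\ref{le:3.4} is exactly right, and your final paragraph handling the case $\omega(u_0)=\mathcal M_\star$ is clean.

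The gap is precisely where you flag it: the disjointness of $\mathcal M_\star$ and the chain-Morse-set for $\Sigma_0$ (the chain containing $\Lambda_0$). When $\Sigma_0$ contains loops other than $\Lambda_0$, the sets $\mathcal M_\star=\{\psi:\tau(\psi)\subset\ol\cI(\Lambda_0)\}$ and $\mathcal M_{\Sigma_0}=\{\psi:\tau(\psi)\subset\Sigma_0\}$ overlap on steady states whose spatial trajectories lie on $\Lambda_0$, and your proposed fix of ``assigning to exactly one'' does not work: removing $\{\psi:\tau(\psi)\subset\Lambda_0\}$ from $\mathcal M_{\Sigma_0}$ destroys closedness (shifts of a ground state in an adjacent loop $\Lambda'\subset\Sigma_0$ converge in $C^1_{loc}$ to the shared equilibrium, which has trajectory in $\Lambda_0$), and the symmetric reassignment likewise fails. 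The Morse-set axioms then collapse.

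The paper resolves this not by reassignment but by enlarging the big set to $\mathcal M_0:=\{\psi:\tau(\psi)\subset\ol\cI(\Sigma_0)\}$; then the remaining chain-Morse-sets are for chains strictly outside $\ol\cI(\Sigma_0)$ and disjointness is automatic. The price is that in the terminal case $\omega(u_0)=\mathcal M_0$ one only knows $\tau(\varphi)\subset\ol\cI(\Sigma_0)$, not $\tau(\varphi)\subset\ol\cI(\Lambda_0)$; the paper closes this with Lemma~\ref{le:3.5}(i), which rules out $\tau(\varphi)$ entering $\cI(\Lambda')$ for any loop $\Lambda'\subset\Sigma_0$ with $\Lambda'\neq\Lambda_0$, forcing $\tau(\varphi)\subset\Sigma_0$ and hence $\varphi$ stationary. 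Equivalently, you could merge your $\mathcal M_\star$ and $\mathcal M_{\Sigma_0}$ into the single closed invariant set $\{\psi:\tau(\psi)\subset\ol\cI(\Lambda_0)\cup\Sigma_0\}$; this is disjoint from the remaining $\mathcal M_j$, and in the terminal case you get either $\tau(\varphi)\subset\Sigma_0$ (done) or $\tau(\varphi)\subset\ol\cI(\Lambda_0)$, where your own final-paragraph argument applies.
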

Obviously, Theorem \ref{thm:2} follows from this result.

\begin{proof}[Proof of Theorem \ref{thm:4}]
	Let $\La_0:=\Lao(\Pi_0)$ be the outer loop associated with $\Pi_0$
	and
	let $\Sigma_0$ the the chain containing the loop $\La_0$. Note that
	$\cI(\La_0)=\Pi_0\cup \{(0,0)\}$.
	
	We first consider the possibility that
	\begin{equation}
	\label{eq:50} \tau(\om(u_0))\cap\ol\cI(\Sigma_0)=\emptyset.
	\end{equation}
	This in  particular implies that
	$\Sigma_j\cap\ol\cI(\Sigma_0)=\emptyset$ for $j=1,\dots,k$. In this
	situation, one can almost verbatim repeat the previous proof to
	conclude that $k=1$ and  $\om(u_0)\subset \cM_1$ consists of steady
	states. We only remark that if $\varphi\in
	\om(u_0)\setminus \cup_{j=1,\dots,k} \cM_j$, then, due to condition
	\eqref{eq:50},  one has $\tau(\varphi)\cap\Pi_0=\emptyset$. 
	Thus  Proposition \ref{prop:3.3} 
	applies to $\varphi$.

	Now assume that 
	\begin{equation}
	\label{eq:50n} \tau(\om(u_0))\cap\ol\cI(\Sigma_0)\ne\emptyset.
	\end{equation}
	We distinguish the following two possibilities: 
	\begin{align}
	\label{eq:51}
	\tau(\om(u_0))&\not\subset\ol\cI(\Sigma_0),\\
	\label{eq:52}
	\tau(\om(u_0))&\subset\ol\cI(\Sigma_0).
	\end{align}
	
	The first one, \eqref{eq:51}, can actually be ruled out. Indeed, if 
	\eqref{eq:51} holds, Proposition \ref{prop:3.3} ensures that at least one of the chains
	\eqref{eq:4.2} is disjoint from  $\ol\cI(\Sigma_0)$. Denoting
	by $m$ the number of such chains, we list those 
	chains as
	\begin{equation}
	\label{eq:chainst}
	\tilde \Sigma_1,\dots, \tilde \Sigma_m. 
	\end{equation}
	Here,  the labels are chosen such that
	$\tilde\Sigma_i\subset\cI(\tilde\Sigma_{i+1})$ for $ i=1,\ldots m-1$
	(cp. \eqref{eq:4.3}).
	
	Consider the following subsets of $\om(u_0)$:
	\begin{equation}
	\label{eq:morse2}
	\begin{aligned}
	\cM_0&:=  \{\varphi\in\om(u_0):\tau(\varphi)\subset\ol\cI(\Sigma_0)
	\},\\
	\cM_j&:=\{\varphi\in\om(u_0):\tau(\varphi)\subset
	\tilde \Sigma_j\}\quad(j=1,\dots,m).
	\end{aligned}
	\end{equation}
	All these sets are nonempty by \eqref{eq:51} and the definition of
	the sets $\tilde \Sigma_j$.
	We claim that these sets constitute a Morse decomposition on $\om(u_0)$.
	Clearly, the sets are mutually disjoint. By Lemma \ref{le:compact}, they are
	compact in $\om(u_0)$. The sets $\cM_j$, $j=1,\dots,m$,
	consist of steady states (cp. Lemma
	\ref{le:2.7}), hence they  are invariant for $G$.
	To prove the invariance of $\cM_0$, take any
	$\varphi\in\om(u_0)$ with $\tau(\varphi)\subset\ol\cI(\Sigma_0)$
	and let $U(\cdot,t)=G(t)\varphi$ be the corresponding entire solution.
	If $\tau(\varphi)\cap\cI(\Sigma_0)\ne \emptyset$, then, by
	Lemma \ref{le:3.4},  $\tau(U(\cdot,t))\subset\cI(\Sigma_0)$---in particular
	$U(\codt,t)\in \cM_0$---for all $t\in \R$. Otherwise,
	$\tau(\varphi)\subset \Sigma_0$ and $\varphi$ is a steady state, so
	$U(\codt,t)\in \cM_0$ holds trivially. Thus, the invariance of $\cM_0$  
	is proved.

	Take now an arbitrary 
	$\varphi\in
	\om(u_0)\setminus \cup_{j=1,\dots,k} \cM_j$, if there is any such
	$\varphi$, and let    
	$U(\cdot,t)=G(t)\varphi$ be the corresponding entire solution.
	We have $\tau(\varphi)\cap\ol\cI(\Sigma_0)=\emptyset$ and 
	$\tau(\vp)$ is not contained in any chain. Applying 
	Proposition \ref{prop:3.3} (with $\Pi\ne\Pi_0$), we obtain
	similarly as in the proof of Theorems \ref{thm:1}, \ref{thm:3}, that
	there are 
	two chains $\Sii$, $\Sigma_{out}$ such that
	$\Sii\subset\cI(\Sigma_{out})$ and
	\begin{equation}
	\label{eq:6bb}
	\tau\left(\alpha(U)\right)\subset\Sigma_{in},\qquad \tau\left(\omega(U)\right)\subset\Sigma_{out}.
	\end{equation}
	Arguing similarly as in the proof of Theorems \ref{thm:1},
	\ref{thm:3}, we obtain from \eqref{eq:6bb} that
	$\cM_0,\dots,\cM_k$ is a Morse decomposition for
	the flow $G$, and so  \eqref{eq:39}  holds. This time, however, since
	there are at least two Morse sets in \eqref{eq:morse2},
	from \eqref{eq:39} we obtain a contradiction to
	the connectedness of $\om(u_0)$.
	
	We have thus ruled  \eqref{eq:51},  so \eqref{eq:52} has to hold.
	Take now any $\varphi\in \om(u_0)$ such that
	$\tau(\vp)$ is not contained in
	$\Pi_0\cup\{(0,0)\}=\cI(\La_0)$. Note that, due to Lemma
	\ref{le:3.4}, $\tau(\varphi)\cap\Pi_0=\emptyset$. 
	We claim that  $\tau(\varphi)\subset \Sigma_0$,
	in particular, by Lemma
	\ref{le:2.7},  $\varphi$ is a steady state of \eqref{eq:1}.
	Once this claim is proved,  the proof of Theorem \ref{thm:4} will be
	complete.
	
	Suppose our claim is not true.  Then
	$\tau(\varphi)\cap\cI(\La)\ne\emptyset$, where $\La$ is a loop in
	$\Sigma_0$ different from $\La_0$.
	We can therefore find a periodic orbit $\cO$ of \eqref{eq:sys}
	such that $\cO\subset \cI(\La)$ and
	$\tau(\varphi)\cap\ol\cI(\cO)\ne\emptyset$. Since $\La\ne\La_0$, we
	have $\{(0,0)\}\not\in \cI(\cO)$.  Lemma \ref{le:3.5} now implies that
	that $\vp\not\in\om(u_0)$ and we have a
	contradiction. 
\end{proof}

\bibliographystyle{amsplain} 

\end{document}